\numberwithin{equation}{section}
\newtheorem{theorem}{Theorem}[section]
\newtheorem{proposition}[theorem]{Proposition}
\newtheorem{lemma}[theorem]{Lemma}
\newtheorem{conjecture}[theorem]{Conjecture}
\theoremstyle{remark}
\newtheorem{remark}{Remark}[section]
\theoremstyle{definition}
\newtheorem{definition}{Definition}[section]
\begin{document}
\title[Orthogonal Quantum Group Invariants of Links]{Orthogonal Quantum
Group Invariants of Links}
\author{Lin Chen}
\address{L. Chen, Simons Center for Geometry and Physics, State University
of New York, Stony Brook, NY 11794, USA (chenlin@math.sunysb.edu)}
\author{Qingtao Chen}
\address{Q. Chen, Department of Mathematics, University of Southern
California, Los Angeles, CA 90089, USA (qingtaoc@usc.edu)}

\begin{abstract}
We study the Chern-Simons partition function of orthogonal quantum group
invariants, and propose a new orthogonal Labastida-Mari\~{n}o-Ooguri-Vafa
conjecture as well as degree conjecture for free energy associated to the
orthogonal Chern-Simons partition function. We prove the degree conjecture
and some interesting cases of orthogonal LMOV conjecture. In particular, We
provide a formula of colored Kauffman polynomials for torus knots and links,
and applied this formula to verify certain case of the conjecture at roots
of unity except $1$. We also derive formulas of Lickorish-Millett type for
Kauffman polynomials and relate all these to the orthogonal LMOV conjecture.
\end{abstract}

\maketitle

\setcounter{tocdepth}{5} \setcounter{page}{1}

\section{Introduction}

\label{sec1}

\subsection{Overview}

Jones' seminal papers \cite{J1, J2} initiated a new era in knot theory. The
HOMFLY polynomial \cite{HOMFLY} and Kauffman \cite{Kau} polynomial for links
were subsequently discovered. In the 1990's, Witten-Reshetikhin-Turaev
constructed the colored version of these invariants, either by path
integrals in physics \cite{Wit}, or by the representation theory of quantum
groups \cite{RT1, RT2}. These works lead to a unified understanding of
quantum group invariants of links.

The colored HOMFLY polynomials, which are associated to the special linear
quantum groups, have been studied more carefully after physicists proposed a
conjectural relationship between Chern-Simons theory and Gromov-Witten
invariants. The Mari\~{n}o-Vafa formula and the topological vertex \cite%
{AKMV, LLLZ, LLZ1, LLZ2} are examples illustrating this so-called string
duality. The Labastida-Mari\~{n}o-Ooguri-Vafa conjecture \cite{LM, LMV, OV}
gave highly non-trivial relations between colored HOMFLY polynomials. The
first such relation is the classical Lichorish-Millett theorem \cite{LiM}.
The integers coefficients that appear in the LMOV conjecture are called the
BPS numbers in string theory, and also related to the integrality in the
Gopakumar-Vafa conjecture \cite{GV} for Gromov-Witten invariants \cite{Pen}.
By using the cabling technique, Xiao-Song Lin and Hao Zheng \cite{LZ}
obtained a formula for colored HOMFLY polynomials of torus links in terms of
Littlewood-Richardson coefficients, and they were able to check certain
cases of the LMOV conjecture for a few (small) torus knots and links. The
LMOV conjecture was recently proved by Kefeng Liu and Pan Peng \cite{LP},
based on the cabling technique and a careful degree analysis of the cut-join
equations.

Actually the LMOV conjecture is part of a bigger picture, the large $N$
duality, proposed by 't Hooft \cite{tHo} in the 1970's. Large $N$ duality
states that the duality between Chern-Simons gauge theory of $S^{3}$ and
topological string theory on the resolved conifold.

In mathematics, the LMOV conjecture predicts that the reformulated
invariants (some combination) of colored HOMFLY/Kauffman polynomials are in
the ring $\mathbb{Z}[t,t^{-1}][q-q^{-1}]$, where $q$ is the quantum
deformation number. Through this way, these reformulated invariants has the
similar expression as the original HOMFLY/Kauffman polynomials which has
variables $q-q^{-1}$, $t$ and $t^{-1}$.

\subsection{Orthogonal Labastida-Mari\~{n}o-Ooguri-Vafa Conjecture}

The study of colored Kauffman polynomials is more difficult. For instance,
the definition of the Chern-Simons partition function for the orthogonal
quantum groups involves representations of the Brauer centralizer algebras,
which admit more complicated orthogonal relations (see \cite{Ram1, Ram2,
Ram3}). The orthogonal analog of cut-join equation \cite{LLZ1, LP} can be
found in \cite{Che1}.

In this paper, we propose a new conjecture on the reformulated invariants,
developed in collaboration with Nicolai Reshetikhin, which is the orthogonal
quantum group analog of the original LMOV conjecture. Let $\mathcal{L}$ be a
link with $L$ components and let $Z_{CS}^{SO}(\mathcal{L},q,t)$ be the
orthogonal Chern-Simons partition function defined in Section \ref{sec4}.
Expand the free energy
\begin{equation*}
F^{SO}(\mathcal{L},q,t)=\log Z_{CS}^{SO}(\mathcal{L},q,t)=\sum_{%
\overrightarrow{\mu }\neq \overrightarrow{0}}F_{\overrightarrow{\mu }%
}^{SO}pb_{\overrightarrow{\mu }}(\overrightarrow{z})\text{.}
\end{equation*}%
Then the reformulated invariants are defined by
\begin{equation*}
g_{\overrightarrow{\mu }}(q,t)=\sum_{k|\overrightarrow{\mu }}\frac{\mu (k)}{k%
}F_{\overrightarrow{\mu }/k}^{SO}(q^{k},t^{k}).
\end{equation*}%
We conjecture that

\begin{conjecture}[Orthogonal LMOV]
\begin{equation*}
\frac{\mathrm{z}_{\overrightarrow{\mu }}(q-q^{-1})^{2}\cdot \lbrack g_{%
\overrightarrow{\mu }}(q,t)-g_{\overrightarrow{\mu }}(q,-t)]}{2\overset{L}{%
\underset{\alpha =1}{\prod }}\overset{\ell (\mu ^{\alpha })}{\underset{i=1}{%
\prod }}(q^{\mu _{i}^{\alpha }}-q^{-\mu _{i}^{\alpha }})}\in \mathbb{Z}%
[q-q^{-1}][t,t^{-1}].
\end{equation*}
\end{conjecture}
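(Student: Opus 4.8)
The plan is to transport the architecture of Liu--Peng's proof of the original LMOV conjecture \cite{LP} to the orthogonal setting, replacing the Hecke-algebra cut-join equation by its Brauer-algebra analogue from \cite{Che1} and using the degree conjecture established earlier in this paper as the crucial input. The first move is structural: expanding $Z_{CS}^{SO}(\mathcal{L},q,t)$ over irreducible Brauer centralizer representations and taking the logarithm, one obtains, after the standard symmetric-function bookkeeping, an explicit finite $\mathbb{Q}$-linear expression for each coefficient $F_{\overrightarrow{\mu}}^{SO}$ in terms of products of colored Kauffman polynomials of $\mathcal{L}$. Feeding this into the M\"obius-inversion definition of $g_{\overrightarrow{\mu}}(q,t)$ shows that $g_{\overrightarrow{\mu}}$ is a rational function of $q$ and $t$ with denominators supported on cyclotomic factors and powers of $q$. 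Thus the statement splits into two essentially independent claims: (a) the displayed quotient has no poles, i.e.\ it lies in $\mathbb{Q}[q-q^{-1}][t,t^{-1}]$; and (b) its coefficients are integers.

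For (a) I would run an induction on the total size $|\overrightarrow{\mu}|$. The orthogonal cut-join equation expresses the leading behaviour of the $F_{\overrightarrow{\mu}}^{SO}$ in terms of data attached to partitions of smaller size, so the pole order of $g_{\overrightarrow{\mu}}$ near $q=\pm 1$ is governed recursively by that of the $g_{\overrightarrow{\nu}}$ with $|\overrightarrow{\nu}|<|\overrightarrow{\mu}|$; the degree conjecture, now a theorem, supplies precisely the lower bound on the order of vanishing of $g_{\overrightarrow{\mu}}(q,t)-g_{\overrightarrow{\mu}}(q,-t)$ at $q=\pm 1$ that makes division by $(q-q^{-1})^{2}$ legitimate. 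The antisymmetrization $g_{\overrightarrow{\mu}}(q,t)-g_{\overrightarrow{\mu}}(q,-t)$ is not cosmetic: the Kauffman invariant carries a $t\mapsto -t$ symmetry reflecting the two families $B_n/D_n$, and only the $t$-odd part of $g_{\overrightarrow{\mu}}$ has the extra order of vanishing, so the $t$-even part must be subtracted off before one can divide.

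For (b) I would specialize $q$ to a primitive root of unity $\zeta\ne 1$ and use the realization of the colored Kauffman polynomials as quantum $SO$ link invariants inside a modular tensor category. As in the HOMFLY case, the cyclic-group symmetry of these invariants at roots of unity, combined with the Laurent-polynomiality already obtained in (a), forces the rational coefficients to be integral; the Lickorish--Millett-type formulas for Kauffman polynomials derived in this paper are used both to pin down the lowest-order coefficients and to carry out the verification at $\zeta$. This is also the step where the explicit formula for colored Kauffman polynomials of torus knots and links, obtained here via the cabling technique, serves as the main computational engine.

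The principal obstacle is step (a). The Brauer-algebra induction-restriction rule, unlike the Pieri rule for the symmetric group, involves contraction terms in addition to the usual box-adding and box-removing operations, so the orthogonal cut-join equation of \cite{Che1} carries many more terms than its $\mathfrak{sl}_N$ counterpart and the degree bookkeeping becomes correspondingly delicate. The heart of the argument is to show that these contraction contributions do not erode the degree gap predicted by the degree conjecture; I expect this to require a careful uniform estimate, organized by the cabling formula, on how the Brauer characters of the relevant framing and braiding operators behave in the variable $q-q^{-1}$. Without such control the induction in (a) does not close, and one is forced — as in the present paper — to settle for particular families of links and for special values of $q$.
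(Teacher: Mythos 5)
The statement you have been asked to prove is stated in the paper as a \emph{conjecture} (Conjecture \ref{Main Conj}); the paper does not prove it, and only establishes special cases: trivial links (Theorem \ref{Thm1.3}), column-shaped partitions (Theorem \ref{Thm1.4}), a reduction of the general case to row-shaped partitions (Theorem \ref{Thm1.5}), the asymptotic statement as $q\to 1$ (Theorem \ref{Thm1.6}), and a handful of torus knots and links (Theorem \ref{Thm1.7}). Your text is therefore not competing with an actual proof in the paper, and judged as a proof of the full statement it contains two genuine gaps, one of which you concede yourself.

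The first gap is in your step (a). You invoke the degree theorem $val_{u}(F_{\overrightarrow{\mu }}^{SO})\geq \ell (\overrightarrow{\mu })-2$ to ``make division by $(q-q^{-1})^{2}$ legitimate,'' but the denominator of the conjectured quantity is $\prod_{\alpha ,i}(q^{\mu _{i}^{\alpha }}-q^{-\mu _{i}^{\alpha }})$, which has $\ell(\overrightarrow{\mu })$ factors vanishing at $q=1$ \emph{and} additional zeros at nontrivial roots of unity. The degree theorem controls the order of vanishing only at $q=1$ (i.e.\ at $u=0$); it gives no cancellation at the other zeros, where the factor $(q-q^{-1})^{2}$ in the numerator does not help. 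This is precisely the point the paper singles out as ``more difficult,'' for which it can offer only the explicit torus-link computations of Theorem \ref{Thm1.7} as evidence. Your proposed induction on $\Vert\overrightarrow{\mu }\Vert$ via the orthogonal cut-join equation is not carried out, and you yourself state that without a uniform estimate on the Brauer-algebra contraction terms ``the induction in (a) does not close.'' The second gap is step (b): the claim that ``cyclic-group symmetry at roots of unity, combined with Laurent-polynomiality, forces the rational coefficients to be integral'' is an assertion, not an argument; in the Liu--Peng proof of the type-$A$ LMOV conjecture integrality rests on a delicate congruence analysis of the M\"obius-inverted multi-cover sum, and no orthogonal analogue of that analysis is supplied here. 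As written, your proposal is a plausible research program modeled on \cite{LP}, but it does not prove the conjecture, and it also does not recover the concrete partial results (Theorems \ref{Thm1.4} and \ref{Thm1.5}) that the paper actually establishes by skein-theoretic and cabling arguments.
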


and

\begin{conjecture}[Degree]
\begin{equation*}
val_{u}(F_{\overrightarrow{\mu }}^{SO})\geq \ell (\overrightarrow{\mu })-2,
\end{equation*}%
where $q=e^{u}$, $val_{u}(F_{\overrightarrow{\mu }}^{SO})$ is the valuation
of the variable $u$ and $\ell (\overrightarrow{\mu })$ is the sum of the
lengths of the partition corresponding to each component of the link $%
\mathcal{L}$.
\end{conjecture}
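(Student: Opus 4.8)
I would adapt the degree analysis that Liu and Peng \cite{LP} carried out for colored HOMFLY polynomials, replacing the Hecke/symmetric-group combinatorics by the BMW (Birman--Murakami--Wenzl) / Brauer combinatorics appropriate to $U_q(\mathfrak{so})$. The first step is to trade colored invariants for quantum traces. By the cabling principle, the orthogonal Chern--Simons invariant $W^{SO}_{\vec\lambda}(\mathcal L)$ of $\mathcal L$ colored by irreducible modules is a finite $\Q$-linear combination, with coefficients read off from the Brauer-algebra character table, of Markov-trace quantities $\widetilde W_{\vec\mu}(\mathcal L)$ attached to a braid presentation of $\mathcal L$ and to cabling data $\vec\mu$. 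Substituting $s_{\lambda}=\sum_{\mu}\tfrac{\chi_{\lambda}(\mu)}{\mathrm{z}_{\mu}}p_{\mu}$ into $Z^{SO}_{CS}$ and re-expanding $F^{SO}=\log Z^{SO}_{CS}$ in the power-sum basis $pb_{\vec\mu}$, each coefficient $F^{SO}_{\vec\mu}$ is written explicitly as a rational combination of products $\prod_{j}\widetilde W_{\vec\nu_{(j)}}(\mathcal L)$ indexed by the set partitions of the parts of $\vec\mu$, carrying the inclusion--exclusion signs that turn a partition function into a connected free energy; in every such term $\sum_{j}\ell(\vec\nu_{(j)})=\ell(\vec\mu)$. (Framings are disposed of first by the standard ribbon factors $q^{\,\kappa_{\lambda}}$, which are units in $u$ after extracting the overall twist.) The content of the conjecture is that, although a single product $\prod_{j}\widetilde W_{\vec\nu_{(j)}}(\mathcal L)$ has a pole at $u=0$ of order as large as $\ell(\vec\mu)$, in the connected combination the $u$-valuation rises to at least $\ell(\vec\mu)-2$.

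\textbf{Step 1: valuation and leading term of a single quantum trace.} I would expand $\widetilde W_{\vec\mu}(\mathcal L)$ in powers of $u$, using that each BMW braiding generator, once its framing scalar is divided out, equals the flip $\tau_{i}$ on the relevant tensor factors plus a term of order $u$, and that the loop value is $\delta=\tfrac{t-t^{-1}}{q-q^{-1}}+1$ with $\mathrm{val}_{u}\delta=-1$. Each coefficient in the $u$-expansion is then a $\Q[t^{\pm1}]$-combination of Markov traces of Brauer diagrams; since the Markov trace of a Brauer diagram is a power of $\delta$ determined by the number of free loops produced on closure, and this number is bounded by the number of cabling loops, one gets the crude bound $\mathrm{val}_{u}\widetilde W_{\vec\mu}(\mathcal L)\ge-\ell(\vec\mu)$. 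The key point, exactly as in \cite{LP}, is that at $q=1$ all braidings become trivial, so $\widetilde W_{\vec\mu}(\mathcal L)\big|_{q\to1}$ is \emph{link-independent}: it is the corresponding expression in the quantum dimensions of tensor powers of the vector representation, and as a function of $\vec\mu$ it factors completely over the parts of $\vec\mu$. Hence the whole order-$\ell(\vec\mu)$ part of $\widetilde W_{\vec\mu}(\mathcal L)$ is ``totally disconnected'' and is annihilated by the inclusion--exclusion whenever $\ell(\vec\mu)\ge2$.

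\textbf{Step 2: the cut-and-join recursion and induction.} The cancellation of the leading term is only the first order of improvement; to reach the sharp bound $\ell(\vec\mu)-2$ I would run an induction on $|\vec\mu|$ driven by the orthogonal cut-and-join equation of \cite{Che1}. That equation expresses a derivative of $Z^{SO}_{CS}(\mathcal L)$ in the cabling variables through operators of join type $z_{a+b}\partial_{a}\partial_{b}$, cut type $z_{a}z_{b}\partial_{a+b}$, and the orthogonal cup--cap type; passing to $F^{SO}=\log Z^{SO}_{CS}$ turns it into a recursion that produces $F^{SO}_{\vec\mu}$ from the $F^{SO}_{\vec\nu}$ with $|\vec\nu|<|\vec\mu|$ together with quadratic terms $F^{SO}_{\vec\nu'}F^{SO}_{\vec\nu''}$. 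Assuming $\mathrm{val}_{u}F^{SO}_{\vec\nu}\ge\ell(\vec\nu)-2$ for all $\vec\nu$ of smaller weight and tracking how each operator shifts $\ell$ and $\mathrm{val}_{u}$ --- here one uses the extra $u$-vanishing contributed by the braiding corrections and the contracted diagrams, which is precisely what Step 1 quantifies --- one recovers the bound for $\vec\mu$. The base cases $|\vec\mu|=1$ (where $F^{SO}_{\vec\mu}$ is a genuine quantum dimension, of valuation $-1=\ell(\vec\mu)-2$) and the low-$\ell(\vec\mu)$ configurations are handled by the direct computation sketched after the statement.

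\textbf{Where the difficulty lies.} The serious work is in Step 1 and its bookkeeping in Step 2. In the symmetric-group setting of \cite{LP} the leading term of a cabled invariant only sees how many parallel strands there are; the Brauer algebra introduces the additional cup--cap diagrams --- and, reflecting the $t\mapsto-t$ symmetry already visible in the statement of the orthogonal LMOV conjecture, sign subtleties in the eigenvalues of the braiding --- and one must show that these neither create link-dependent poles of order exceeding $\ell(\vec\mu)$ nor survive the inclusion--exclusion beyond order $\ell(\vec\mu)-2$. I expect this to require a careful analysis of the Markov trace on the BMW tower --- most naturally via its realization on the centralizer algebra of $U_q(\mathfrak{so}_N)$ and the branching and orthogonality combinatorics of \cite{Ram1,Ram2,Ram3} --- and this is the step I would budget the most effort for.
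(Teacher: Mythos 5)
Your proposal is a plan for a Liu--Peng-style argument (leading-order cancellation plus a cut-and-join induction), which is genuinely different from what the paper does; but as written it has a real gap. The entire burden of raising the valuation from the naive $-\ell(\overrightarrow{\mu})$ (or $1-\ell(\overrightarrow{\mu})$ after your Step 1) all the way to $\ell(\overrightarrow{\mu})-2$ --- a gain of roughly $2\ell(\overrightarrow{\mu})-2$ orders in $u$ --- is deferred to the cut-and-join recursion of Step 2, and you never establish the valuation-shifting properties of the orthogonal cut, join and cup--cap operators that would make that induction close. This is not routine bookkeeping: in the BMW/Brauer tower the contracted diagrams contribute extra powers of the loop value $1+\frac{t-t^{-1}}{q-q^{-1}}$, each of $u$-valuation $-1$, and you would have to show these never drag a connected contribution below valuation $\ell(\overrightarrow{\mu})-2$. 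You flag this yourself as ``where the difficulty lies,'' which is an accurate admission that the proof is incomplete precisely at its core.

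The paper's route avoids cut-and-join entirely. (i) For $\overrightarrow{\mu}=((1),\dots,(1))$ it inducts on the number of components via the Kauffman skein relation: switching a crossing between two components changes $F_{(1)^{L}}^{SO}$ by $z$ times a difference of $F_{(1)^{L-1}}^{SO}$'s of the two resolved $(L-1)$-component links, and for a disjoint union $F_{(1)^{L}}^{SO}$ vanishes outright by a logarithm identity (Lemma 7.2); this yields $(q-q^{-1})^{2-L}F_{(1)^{L}}^{SO}\in\mathbb{Z}[t,t^{-1}][q-q^{-1}]$ directly. (ii) Column partitions reduce to (i) applied to a cabled link, and the Brauer branching rule $m_{(\mu_{1})}\otimes\cdots\otimes m_{(\mu_{\ell})}\sim m_{\mu}$ reduces an arbitrary $\overrightarrow{\mu}$ to one whose components are single rows, hence rectangular, on the cabled link. (iii) The decisive trick for a rectangular $\lambda^{\alpha}=(a_{\alpha}{}^{\ell_{\alpha}})$ is the fractional framing $\tau_{\alpha}=w_{\alpha}+\frac{1}{a_{\alpha}}$: at these framings $F_{\overrightarrow{\lambda}}^{SO}(\mathcal{L};q,t,\overrightarrow{\tau})$ equals an explicit monomial in $t$ times $F_{(1)^{\ell(\overrightarrow{\lambda})}}^{SO}$ of a twisted cabling link, so the bound from (i) applies; since the framing dependence is polynomial in $\overrightarrow{\tau}$ and the vanishing holds on a Zariski-dense lattice of framings, it holds identically, and specializing to $\overrightarrow{\tau}=0$ finishes the proof. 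To salvage your approach you would need to actually prove the degree estimates for the orthogonal cut-and-join operators; otherwise the framing/Zariski-density argument is the missing idea.
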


This conjecture is a mathematical formulation of the conjecture made by
Bouchard-Florea-Mari\~{n}o \cite{BFM}, and the integer coefficients on the
right hand side of the above conjecture is closely related to BPS numbers in
string theory \cite{BFM}. More recent progress can be found in \cite{Mar},
which is a refined version of \cite{BFM}. The framing version can be found
in \cite{BR, PBR}. Our formulation is still quite different from that in
\cite{BFM, Mar}. The reason for this is that \cite{BFM, Mar} uses
representations of Hecke algebra, whereas our approach is based on
representations of the Birman-Murakami-Wenzl algebra, and uses type-B Schur
function instead of type-A Schur function as the basis in the orthogonal
Chern-Simons partition function.

Theorems that partly answer the orthogonal LMOV conjecture proposed in this
paper are listed below. For more precise statements of these theorems, see
Sections \ref{sec5}, \ref{sec7}, \ref{sec8} and \ref{sec9}.

\begin{theorem}
\label{Thm1.3}The conjecture is true for all partitions when the link is
trivial (namely is a disjoint union of unlinked unknots).
\end{theorem}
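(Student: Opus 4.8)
The plan is to reduce everything to a single unknot by multiplicativity, then compute the unknot free energy in closed form. For a trivial link $\mathcal L=U^{\sqcup L}$ the colored Kauffman invariants are multiplicative over components, so the orthogonal partition function factors, $Z_{CS}^{SO}(\mathcal L,q,t)=\prod_{\alpha=1}^{L}Z_{CS}^{SO}(U,q,t)$ in disjoint variable sets $z^{1},\dots,z^{L}$. Taking logarithms, $F^{SO}(\mathcal L)=\sum_{\alpha}F^{SO}(U)$, each summand in its own variables; hence $F_{\overrightarrow\mu}^{SO}=0$ unless $\overrightarrow\mu$ is concentrated in a single component, where it equals the unknot coefficient $F_{\mu}^{SO}$. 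The defining formula for $g_{\overrightarrow\mu}$ then shows $g_{\overrightarrow\mu}$ is likewise supported on single components, with $g_{\overrightarrow\mu}=g_{\mu}$ computed from the unknot. Thus both the Degree Conjecture and the Orthogonal LMOV Conjecture reduce to the case $\mathcal L=U$.

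For the unknot, $Z_{CS}^{SO}(U)=\sum_{\lambda}\dim_{q}^{SO}(\lambda)\,\mathfrak s_{\lambda}(z)$, where $\dim_{q}^{SO}(\lambda)$ is the orthogonal quantum dimension (an explicit hook--content product) and $\mathfrak s_{\lambda}$ is the type-B Schur function from Section~\ref{sec4}. Substituting the product formula for $\dim_{q}^{SO}(\lambda)$ and applying the appropriate type-B Cauchy/Littlewood identity, $Z_{CS}^{SO}(U)$ becomes an infinite product of factors $\big(1-c\cdot m(z)\big)^{\mp1}$ with $c$ a monomial in $q^{\pm1},t^{\pm1}$ and $m(z)$ a monomial in one or two of the $z$-variables. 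Since $\log(1-w)=-\sum_{d\ge1}w^{d}/d$, the one-variable factors contribute only $pb_{(m)}(z)$ and the two-variable factors only $pb_{(d,d)}(z)$, so
\begin{equation*}
F^{SO}(U;q,t,z)=\sum_{d\ge1}F_{(d)}^{SO}(q,t)\,pb_{(d)}(z)+\sum_{d\ge1}F_{(d,d)}^{SO}(q,t)\,pb_{(d,d)}(z),
\end{equation*}
all other $F_{\mu}^{SO}$ vanishing. I would then extract explicit rational expressions for $F_{(d)}^{SO}$ and $F_{(d,d)}^{SO}$. Two properties are needed: they are \emph{Adams-compatible} (up to the factor $\tfrac1d$ and a parity sign, each is a fixed rational function evaluated at $(q^{d},t^{d})$), and after clearing the denominator $q^{d}-q^{-d}$ the result is regular at $q=1$, with $u$-valuation $-1$ in the length-one case and $\ge 0$ in the length-two case.

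Granting the valuation statement, the Degree Conjecture for $U$ is immediate: $val_{u}(F_{(d)}^{SO})\ge-1=\ell((d))-2$, $val_{u}(F_{(d,d)}^{SO})\ge0=\ell((d,d))-2$, and the remaining coefficients are $0$, so the bound holds vacuously. For the Orthogonal LMOV Conjecture I feed the formulas into the M\"obius sum defining $g_{\overrightarrow\mu}$: since $F^{SO}$ is supported on $\overrightarrow\mu\in\{(d),(d,d)\}$ so is $g_{\overrightarrow\mu}$, and Adams-compatibility together with $\sum_{k\mid n}\mu(k)=\delta_{n,1}$ forces the sums to collapse, so that all but finitely many $g_{\overrightarrow\mu}$ vanish (in the cleanest normalization only $g_{(1)}$ and $g_{(1,1)}$ survive) and the survivors are explicit Laurent expressions. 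For each it remains to verify
\begin{equation*}
\frac{\mathrm z_{\overrightarrow\mu}(q-q^{-1})^{2}\big(g_{\overrightarrow\mu}(q,t)-g_{\overrightarrow\mu}(q,-t)\big)}{2\prod_{\alpha}\prod_{i}(q^{\mu_{i}^{\alpha}}-q^{-\mu_{i}^{\alpha}})}\in\mathbb Z[q-q^{-1}][t,t^{-1}];
\end{equation*}
the vanishing cases give $0$, and in the remaining finitely many cases the antisymmetrization in $t$ kills the $t$-even part of $g$, after which the prefactor $(q-q^{-1})^{2}$ cancels the denominator and leaves a genuine element of $\mathbb Z[q-q^{-1}][t,t^{-1}]$.

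The real work is the closed-form evaluation in the second step: one must pin down the correct type-B Littlewood identity and combine it with the orthogonal quantum-dimension formula to write $F_{(d)}^{SO}$ and $F_{(d,d)}^{SO}$ down explicitly. The length-two coefficients $F_{(d,d)}^{SO}$ have no analog in the HOMFLY (type-A) case, and it is precisely there that the two delicate points sit: showing these coefficients are regular at $q=1$ (so that the $\ell=2$ degree bound holds), and that after the $t\mapsto-t$ antisymmetrization the corresponding reformulated invariant lands in $\mathbb Z[q-q^{-1}][t,t^{-1}]$. Once these are in hand, everything else is bookkeeping.
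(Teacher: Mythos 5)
Your overall strategy (reduce to a single unknot by multiplicativity over disjoint components, then compute the unknot free energy in closed form) matches the paper's, and the reduction step is fine. The genuine gap is in the closed-form computation, which you yourself flag as ``the real work'' and leave undone --- and the route you propose for it starts from a false premise. You write $Z_{CS}^{SO}(U)=\sum_{\lambda}\dim_q(V_\lambda)\,sb_{\lambda}(z)$, but the paper warns explicitly in Section \ref{sec4} that the orthogonal partition function \emph{cannot} be rewritten in this Schur-expanded form, because type-A character orthogonality fails for the Brauer algebra; the definition is $Z_{CS}^{SO}=\sum_{\mu}\frac{pb_{\mu}(z)}{\mathrm{z}_{\mu}}\sum_{A}\chi_A(\gamma_{\mu})W_A^{SO}$, and this is a genuinely different object from your sum. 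Consequently your predicted structure of the answer is also wrong: a Littlewood-type product identity applied to your expression would indeed produce $pb_{(d,d)}$ terms in the free energy, but the actual unknot free energy has none. The paper's Proposition 10.1 shows $\sum_{A}\chi_A(\gamma_{\mu})\dim_q(V_A)=pb_{\mu}$ evaluated at the eigenvalues of $K_{2\rho}$, i.e. $\prod_i\bigl(1+\frac{t^{\mu_i}-t^{-\mu_i}}{q^{\mu_i}-q^{-\mu_i}}\bigr)$ --- this uses the Frobenius-type defining relation of the type-B Schur functions, not a Cauchy/Littlewood identity --- and a one-line exponentiation then gives $F^{SO}(\bigcirc^L)=\sum_{n\ge1}\frac1n\bigl(1+\frac{t^n-t^{-n}}{q^n-q^{-n}}\bigr)\sum_{\alpha}pb_n(z_\alpha)$, supported only on single-row partitions $(n)$ in a single component.

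From there the conclusion is immediate and your final bookkeeping would go through: $val_u(F_{(n)}^{SO})=-1=\ell((n))-2$ and all other coefficients vanish, which gives the degree conjecture; and since $F_{(n)}^{SO}(q,t)=\frac1n f(q^n,t^n)$ for a fixed rational function $f$, the M\"obius sum collapses via $\sum_{k\mid n}\mu(k)=\delta_{n,1}$, so only $g_{(1)}=1+\frac{t-t^{-1}}{q-q^{-1}}$ survives and the LMOV expression equals $t-t^{-1}\in\mathbb Z[q-q^{-1}][t,t^{-1}]$. But as written, your argument has a hole exactly at the step carrying all the content, the tool you name to fill it would compute the wrong quantity, and the spurious $(d,d)$ terms you predict would have forced you to verify a nontrivial $\ell=2$ case that does not in fact arise.
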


\begin{theorem}
\label{Thm1.4}The conjecture is true for partitions of the shape $%
\overrightarrow{\mu }=((1^{d_{1}}),(1^{d_{2}}),\cdots ,(1^{d_{L}}))$, where $%
(1^{d_{\alpha }})=(1,1,...,1)$ $\vdash d_{\alpha }$ for $1\leq \alpha \leq L$%
.
\end{theorem}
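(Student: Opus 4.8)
The plan is to restrict everything to the column sector, reduce the column-colored invariants to Kauffman polynomials of cables, feed in the Degree Conjecture, and then read off the required divisibility and integrality from a plethystic structure of the column-colored partition function. To isolate the column sector: since $pb_{(1^{d_\alpha})}$ is the $d_\alpha$-th power of $pb_{(1)}$ in the $\alpha$-th block of variables, the numbers $F^{SO}_{((1^{d_1}),\dots ,(1^{d_L}))}$ are exactly the Taylor coefficients of $\log Z^{SO}_{CS}(\mathcal L,q,t)$ restricted to the locus on which all higher power sums vanish. On that locus each type-B Schur function collapses to a multiple of a power of $pb_{(1)}$, so the only link data entering are the quantum group invariants $W_{\overrightarrow\lambda}(\mathcal L)$; via the cabling construction, using the antisymmetrizer idempotents of the Birman-Murakami-Wenzl algebra, these reduce to Kauffman polynomials of the cable links $\mathcal L^{(k_1,\dots ,k_L)}$ with $k_\alpha\le d_\alpha$. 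Equivalently, packaging the column colors into formal variables $a_1,\dots ,a_L$ gives a generating series $\Phi(\mathcal L)=\sum_{\overrightarrow d}W_{((1^{d_1}),\dots ,(1^{d_L}))}(\mathcal L)\prod_\alpha a_\alpha^{d_\alpha}$ whose logarithm carries all the relevant $F^{SO}$.

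Next I would invoke the Degree Conjecture (proven in this paper), which gives $val_u\bigl(F^{SO}_{\overrightarrow\mu}\bigr)\ge \ell(\overrightarrow\mu)-2$. It is worth being explicit that this by itself does not suffice. In $g_{\overrightarrow\mu}=\sum_{k\mid\overrightarrow\mu}\tfrac{\mu(k)}{k}F^{SO}_{\overrightarrow\mu/k}(q^k,t^k)$ the substitution $q\mapsto q^k$ does not raise the $u$-valuation, so for $k>1$ the bound only gives $val_u\ge \ell(\overrightarrow\mu)/k-2$, whereas for $\overrightarrow\mu=((1^{d_1}),\dots ,(1^{d_L}))$ the denominator in the conjecture equals $(q-q^{-1})^{\ell(\overrightarrow\mu)}$, which together with the $(q-q^{-1})^2$ in the numerator forces $val_u(g_{\overrightarrow\mu})\ge \ell(\overrightarrow\mu)-2$. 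The extra vanishing must come from cancellation among the Möbius terms.

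Producing that cancellation is the heart of the matter, and the mechanism I would use is plethystic. The plan is to show that $\Phi(\mathcal L)$ is a plethystic exponential, $\Phi(\mathcal L)=\exp\bigl(\sum_{m\ge 1}\tfrac1m\,\psi^m(\widehat W(\mathcal L))\bigr)$, where $\psi^m$ denotes the Adams operation $q\mapsto q^m$, $t\mapsto t^m$, $a_\alpha\mapsto a_\alpha^m$ and $\widehat W(\mathcal L)$ is a single primitive series. Standard Möbius inversion on power sums then identifies $g_{\overrightarrow\mu}$ with the $\overrightarrow\mu$-coefficient $\widehat W_{\overrightarrow\mu}(\mathcal L)$ of $\widehat W(\mathcal L)$, so the two required properties become statements about $\widehat W_{\overrightarrow\mu}(\mathcal L)$ alone: one writes it explicitly through the Kauffman polynomial of the single cable $\mathcal L^{(d_1,\dots ,d_L)}$ with the BMW idempotents inserted, using Theorem \ref{Thm1.3} for the base case and normalizing constants and the Lickorish-Millett-type formula for the Kauffman polynomial (derived elsewhere in the paper) for the $d_\alpha=1$ instance. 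For the $t$-parity, the operation $g\mapsto\tfrac12[g(q,t)-g(q,-t)]$ picks out the part of the cabled Kauffman polynomial on which the contraction generator of the Birman-Murakami-Wenzl algebra acts nontrivially, whereas on the complementary, $t$-even part the picture reduces to the unitary HOMFLY case and contributes its classical integer coefficients.

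The main obstacle is exactly establishing the plethystic-exponential form of $\Phi(\mathcal L)$, i.e.\ pinning down this Möbius cancellation: the relevant algebra is the Birman-Murakami-Wenzl algebra expressed in the type-B Schur basis, and its contraction generator couples the column colors in a way that has no analogue for $\mathrm{SU}$, so carrying out the argument while simultaneously controlling the $(q-q^{-1})$-adic valuation and the $t$-parity of the coefficients of $\widehat W_{\overrightarrow\mu}(\mathcal L)$ is the delicate point.
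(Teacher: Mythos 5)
There is a genuine gap, and it begins with a misdiagnosis of where the difficulty lies. For $\overrightarrow{\mu }=((1^{d_{1}}),\cdots ,(1^{d_{L}}))$ every part of every $\mu ^{\alpha }$ equals $1$, so the condition $k\mid \overrightarrow{\mu }$ in $g_{\overrightarrow{\mu }}=\sum_{k\mid \overrightarrow{\mu }}\frac{\mu (k)}{k}F_{\overrightarrow{\mu }/k}^{SO}(q^{k},t^{k})$ forces $k=1$ and hence $g_{\overrightarrow{\mu }}=F_{\overrightarrow{\mu }}^{SO}$. The M\"{o}bius cancellation and the plethystic-exponential structure of $\Phi (\mathcal{L})$ that you single out as ``the heart of the matter'' simply do not arise in the column case; you have organized the whole argument around an obstacle that is vacuous here, while leaving the actual obstacle unaddressed.

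That actual obstacle is integrality: the conjecture asserts membership in $\mathbb{Z}[q-q^{-1}][t,t^{-1}]$, and no valuation estimate --- neither the Degree Conjecture nor any refinement of it over $\mathbb{Q}(t)((u))$ --- can produce integer coefficients. The paper's proof (Theorems \ref{Thm7.3} and \ref{Thm7.5}) supplies this by a mechanism absent from your proposal: (i) the cabling identity $F_{((1^{d_{1}}),\cdots ,(1^{d_{L}}))}^{SO}(\mathcal{L})=\frac{1}{\overrightarrow{d}!}F_{(1)^{d}}^{SO}(\beta _{\overrightarrow{d}})$, which reduces everything to the partition $(1),\cdots ,(1)$ on the $d$-component cabled link (and whose factor $\overrightarrow{d}!=\mathrm{z}_{\overrightarrow{\mu }}$ is exactly what the numerator of the conjecture restores); and (ii) an induction on crossing changes between distinct components, using the skein relation to get $F_{(1)^{L}}^{SO}(\mathcal{L}_{+})-F_{(1)^{L}}^{SO}(\mathcal{L}_{-})=z\cdot (F_{(1)^{L-1}}^{SO}(\mathcal{L}_{||})-F_{(1)^{L-1}}^{SO}(\mathcal{L}_{=}))$, terminating at a disjoint union of knots where $F_{(1)^{L}}^{SO}$ vanishes identically by the combinatorial identity $\sum_{\Lambda \vdash L}\frac{(-1)^{\ell (\Lambda )-1}\ell (\Lambda )!L!}{\ell (\Lambda )|Aut\Lambda |\Lambda !}=0$ (Lemma \ref{Lemma7.2}). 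It is this skein induction, not a parity analysis of the contraction generator $e_{i}$, that delivers both the divisibility by $(q-q^{-1})^{d-2}$ and the integrality of the coefficients; your proposal would need to be rebuilt around it.
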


\begin{theorem}
\label{Thm1.5}The conjecture is true if and only if it is true for
partitions of the shape $\overrightarrow{\mu }=((d_{1}),(d_{2}),\cdots
,(d_{L}))$.
\end{theorem}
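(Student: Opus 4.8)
The plan is to reduce an arbitrary coloring to one-row colorings by a cabling argument. The ``only if'' direction is immediate, since one-row multi-partitions $\overrightarrow{\mu}=((d_1),\dots,(d_L))$ are a special case, so the content is the converse: assuming the Orthogonal LMOV Conjecture for \emph{every} link carrying a one-row coloring, I want to deduce it for an arbitrary link $\mathcal{L}$ with arbitrary $\overrightarrow{\mu}=(\mu^1,\dots,\mu^L)$, where $\ell(\mu^\alpha)=r_\alpha$.

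First I would form the cabled link $\widetilde{\mathcal{L}}$ by replacing the $\alpha$-th component of $\mathcal{L}$ with $r_\alpha$ parallel copies, labelled $(\alpha,\beta)$ with $1\le\beta\le r_\alpha$; it has $\sum_\alpha r_\alpha$ components. Since the quantum group invariant of such a cable equals the invariant of the companion with the corresponding strands colored by the tensor product of their colors, and since the type-B Schur functions satisfy the comultiplication rule $sb_{\lambda}(x_1+\cdots+x_r)=\sum_{\rho_1,\dots,\rho_r}N^{\lambda}_{\rho_1\cdots\rho_r}\prod_{j=1}^{r}sb_{\rho_j}(x_j)$, where $N^{\lambda}_{\rho_1\cdots\rho_r}$ is the multiplicity of $V_\lambda$ in $V_{\rho_1}\otimes\cdots\otimes V_{\rho_r}$ for the orthogonal quantum group (a Newell--Littlewood coefficient) --- this being precisely the combinatorics already built into the definition of $Z_{CS}^{SO}$ in Section \ref{sec4} and into the orthogonal cut-join equation of \cite{Che1} --- I expect the cabling identity
\begin{equation*}
Z_{CS}^{SO}\bigl(\widetilde{\mathcal{L}};\{w^{\alpha,\beta}\},q,t\bigr)=Z_{CS}^{SO}\Bigl(\mathcal{L};\,z^\alpha\mapsto\textstyle\sum_{\beta=1}^{r_\alpha}w^{\alpha,\beta},\,q,t\Bigr),
\end{equation*}
and, after taking logarithms, the same substitution of variables carrying $F^{SO}(\mathcal{L})$ into $F^{SO}(\widetilde{\mathcal{L}})$. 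Nailing down this identity --- in particular the framing conventions for the parallel copies, so that no extra framing factor appears --- is the step I expect to be the main obstacle; everything downstream of it is elementary.

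Next I would expand both sides in the power-sum basis $pb_{\overrightarrow{\mu}}(\overrightarrow{z})$. Because $pb_n$ is additive, $pb_n\bigl(\sum_\beta w^{\alpha,\beta}\bigr)=\sum_\beta pb_n(w^{\alpha,\beta})$, so on the right each factor of $pb_{\overrightarrow{\mu'}}$ becomes a sum over the $r_\alpha$ cable variables. I would then read off the coefficient of $\prod_{\alpha}\prod_{\beta=1}^{r_\alpha}pb_{\mu^\alpha_\beta}(w^{\alpha,\beta})$, i.e.\ of $pb_{\overrightarrow{\nu}}$ for the one-row ``unpacking'' $\overrightarrow{\nu}$ given by $\nu^{\alpha,\beta}=(\mu^\alpha_\beta)$. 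On the left the coefficient is $F^{SO}_{\overrightarrow{\nu}}(\widetilde{\mathcal{L}})$; on the right, since distinct cable-variable sets are algebraically independent, only $\overrightarrow{\mu'}=\overrightarrow{\mu}$ contributes, with multiplicity $c_{\overrightarrow{\mu}}:=\prod_{\alpha}\prod_{v\ge1}m_v(\mu^\alpha)!$ counting the rearrangements of equal parts, where $m_v(\mu^\alpha)$ is the number of parts of $\mu^\alpha$ equal to $v$. This gives $F^{SO}_{\overrightarrow{\nu}}(\widetilde{\mathcal{L}})=c_{\overrightarrow{\mu}}\,F^{SO}_{\overrightarrow{\mu}}(\mathcal{L})$. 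Since $\overrightarrow{\nu}/k$ is the one-row unpacking of $\overrightarrow{\mu}/k$ and $c_{\overrightarrow{\mu}/k}=c_{\overrightarrow{\mu}}$ (scaling all parts by $1/k$ leaves the multiplicity profile unchanged), the M\"obius-type definition of the reformulated invariants propagates this to $g_{\overrightarrow{\nu}}(\widetilde{\mathcal{L}};q,t)=c_{\overrightarrow{\mu}}\,g_{\overrightarrow{\mu}}(\mathcal{L};q,t)$.

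Finally I would match the remaining factors in the conjectural expression. Since $\mathrm{z}_{(d)}=d$, one has $\mathrm{z}_{\overrightarrow{\nu}}=\prod_\alpha\prod_i\mu^\alpha_i$, while $\mathrm{z}_{\overrightarrow{\mu}}=\prod_\alpha\mathrm{z}_{\mu^\alpha}=\bigl(\prod_\alpha\prod_i\mu^\alpha_i\bigr)\bigl(\prod_\alpha\prod_v m_v(\mu^\alpha)!\bigr)=c_{\overrightarrow{\mu}}\,\mathrm{z}_{\overrightarrow{\nu}}$; and the denominator product $\prod(q^{\bullet}-q^{-\bullet})$ over all parts of $\overrightarrow{\nu}$ equals the one over all parts of $\overrightarrow{\mu}$. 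Feeding these relations together with $g_{\overrightarrow{\nu}}(\widetilde{\mathcal{L}})=c_{\overrightarrow{\mu}}\,g_{\overrightarrow{\mu}}(\mathcal{L})$ into the conjectural fraction, the factors $c_{\overrightarrow{\mu}}$ cancel and the quantity for $(\mathcal{L},\overrightarrow{\mu})$ becomes identical to the quantity for $(\widetilde{\mathcal{L}},\overrightarrow{\nu})$, which lies in $\mathbb{Z}[q-q^{-1}][t,t^{-1}]$ by the assumed one-row case applied to the link $\widetilde{\mathcal{L}}$. As $\mathcal{L}$ and $\overrightarrow{\mu}$ are arbitrary, this establishes the theorem.
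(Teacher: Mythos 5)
Your proposal is correct and follows essentially the same route as the paper: cable each component into $\ell(\mu^{\alpha})$ parallel strands, use the tensor-product decomposition of type-B Schur functions to identify $c_{\overrightarrow{\mu}}\,F^{SO}_{\overrightarrow{\mu}}(\mathcal{L})$ with $F^{SO}_{\overrightarrow{\nu}}(\widetilde{\mathcal{L}})$ for the one-row unpacking $\overrightarrow{\nu}$, and then cancel the $\mathrm{z}$-factors against the multiplicity $c_{\overrightarrow{\mu}}$. The ``main obstacle'' you flag is exactly what Section \ref{sec8} supplies: the equivalence $m_{(\mu_{1})}\otimes\cdots\otimes m_{(\mu_{\ell})}\sim m_{\mu}$ of idempotent combinations in the BMW algebra, proved there via the branching rule and the identity $pb_{\mu_{1}}\cdots pb_{\mu_{\ell}}=pb_{\mu}=\sum_{A}\chi_{A}(\gamma_{\mu})sb_{A}$ in the Grothendieck group of $SO(2N+1)$, applied to a zero-writhe braid representative so that no framing factors intervene.
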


\begin{theorem}
\label{Thm1.6}The conjecture asymptotically holds (for all partitions $%
\overrightarrow{\mu }$ and all knots/links) as $q$ tends to $1$.
\end{theorem}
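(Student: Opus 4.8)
The claim I want to establish is that, in the limit $q\to 1$ (equivalently, after specializing the quantum parameter so that $q-q^{-1}\to 0$), the quantity
\[
P_{\overrightarrow{\mu}}(q,t):=\frac{\mathrm{z}_{\overrightarrow{\mu}}(q-q^{-1})^{2}\,\bigl[\,g_{\overrightarrow{\mu}}(q,t)-g_{\overrightarrow{\mu}}(q,-t)\,\bigr]}{2\,\prod_{\alpha=1}^{L}\prod_{i=1}^{\ell(\mu^{\alpha})}(q^{\mu_{i}^{\alpha}}-q^{-\mu_{i}^{\alpha}})}
\]
appearing in the Orthogonal LMOV conjecture is regular at $q=1$ and that its value there lies in $\mathbb{Z}[t,t^{-1}]$. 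I would first dispose of regularity. Writing $q=e^{u}$, the denominator vanishes to order $\ell(\overrightarrow{\mu})$ in $u$ and the factor $(q-q^{-1})^{2}$ to order $2$. The Degree Conjecture, which is proved elsewhere in the paper, gives $val_{u}(F_{\overrightarrow{\nu}}^{SO})\geq\ell(\overrightarrow{\nu})-2$ for every $\overrightarrow{\nu}$; since $\ell(\overrightarrow{\mu}/k)=\ell(\overrightarrow{\mu})$ and rescaling $u\mapsto ku$ preserves $u$-valuations, the Möbius formula $g_{\overrightarrow{\mu}}=\sum_{k\mid\overrightarrow{\mu}}\tfrac{\mu(k)}{k}F_{\overrightarrow{\mu}/k}^{SO}(q^{k},t^{k})$ yields $val_{u}(g_{\overrightarrow{\mu}}(q,\pm t))\geq\ell(\overrightarrow{\mu})-2$. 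Hence the numerator vanishes to order at least $\ell(\overrightarrow{\mu})$, so $P_{\overrightarrow{\mu}}$ is a power series in $u$ with coefficients in $\mathbb{Q}[t,t^{-1}]$ and $P_{\overrightarrow{\mu}}(1,t):=\lim_{q\to1}P_{\overrightarrow{\mu}}(q,t)$ exists in $\mathbb{Q}[t,t^{-1}]$. Only the integrality of this limit remains.

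Next I would use Theorem \ref{Thm1.5} to reduce to one-row colors $\overrightarrow{\mu}=((d_{1}),\dots,(d_{L}))$: the cabling and Möbius identities underlying that reduction are Laurent polynomial in $q$ and $t$, so they survive specialization at $q=1$. In this case $\ell(\overrightarrow{\mu})=L$ and $\mathrm{z}_{\overrightarrow{\mu}}=\prod_{\alpha}d_{\alpha}$, so the numerical prefactor degenerates as
\[
\frac{\mathrm{z}_{\overrightarrow{\mu}}(q-q^{-1})^{2}}{2\prod_{\alpha}(q^{d_{\alpha}}-q^{-d_{\alpha}})}\ \xrightarrow[\,q\to1\,]{}\ 2^{\,1-L}u^{\,2-L},
\]
and therefore $P_{\overrightarrow{\mu}}(1,t)$ equals $2^{1-L}$ times the coefficient of $u^{L-2}$ in $g_{\overrightarrow{\mu}}(q,t)-g_{\overrightarrow{\mu}}(q,-t)$. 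The problem is thus reduced to computing that single coefficient and recognizing it as $2^{L-1}$ times an element of $\mathbb{Z}[t,t^{-1}]$.

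For the leading coefficient, the plan is to expand $Z_{CS}^{SO}(\mathcal{L},q,t)$ in the type-$B$ Schur basis and analyze the $q\to1$ degeneration of the colored Kauffman invariants occurring there. As $q\to1$ the braiding and fusion data of the Birman--Murakami--Wenzl algebra collapse to the corresponding classical Brauer/symmetric-group data, so that the leading $u$-behaviour of the colored invariant of $\mathcal{L}$ is governed by the classical orthogonal dimensions of the coloring representations, their quadratic Casimir eigenvalues, and the linking matrix of $\mathcal{L}$ — the linking information entering precisely through the classical $r$-matrix at first order in $u$. Substituting this into $F^{SO}=\log Z_{CS}^{SO}$ and passing to the plethystic logarithm produces, for one-row colors, a closed Lickorish--Millett type formula for the order-$u^{L-2}$ part of $F_{((d_{1}),\dots,(d_{L}))}^{SO}$; the Möbius sum then assembles the corresponding part of $g_{((d_{1}),\dots,(d_{L}))}(q,\pm t)$, and forming the difference in $t$ isolates twice its odd-in-$t$ part. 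Checking that the resulting Laurent polynomial in $t$ is divisible by $2^{L-1}$, i.e. that $P_{\overrightarrow{\mu}}(1,t)\in\mathbb{Z}[t,t^{-1}]$, is exactly the content of the Lickorish--Millett type formulas for colored Kauffman polynomials established later in the paper, which I would invoke to finish.

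The main obstacle is this last step. One has to (i) describe the $q\to1$ limit of the orthogonal quantum group invariants sharply enough to extract the order-$u^{L-2}$ term as an explicit expression in dimensions, Casimirs and linking numbers — delicate because the Brauer/BMW orthogonality relations are far more involved than their type-$A$ analogues (see \cite{Ram1, Ram2, Ram3}) — and (ii) carry out the Möbius/plethystic cancellation \emph{exactly} rather than merely to leading order, verifying in particular that the rational factors $\mathrm{z}_{\overrightarrow{\mu}}$, $\prod_{\alpha}d_{\alpha}$, the $1/k$ coming from the reformulation, and the powers of $2$ all conspire to leave integer coefficients; the appearance of $1/k$ (and, for small $L$, even of $1/k^{2}$) in the reformulated invariant is what makes this a genuine integrality assertion rather than a formality.
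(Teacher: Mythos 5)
Your first paragraph is, in substance, the paper's entire proof. The paper glosses ``asymptotically holds'' immediately after Theorem \ref{Thm8.3} as: the left-hand side of Conjecture \ref{Main Conj} is \emph{regular} at $q=1$; and this follows exactly as you argue, from the degree estimate $val_{u}(F_{\overrightarrow{\nu }}^{SO})\geq \ell (\overrightarrow{\nu })-2$ (Theorem \ref{Thm8.3}, proved via the rectangular reduction of Sections \ref{sec8}--\ref{sec9}), together with the observations that $\ell (\overrightarrow{\mu }/k)=\ell (\overrightarrow{\mu })$, that the substitution $u\mapsto ku$ preserves $u$-valuations so the M\"{o}bius sum defining $g_{\overrightarrow{\mu }}$ inherits the bound, and that the denominator contributes valuation $\ell (\overrightarrow{\mu })$ against the numerator's $\geq 2+\ell (\overrightarrow{\mu })-2$. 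So for the statement as the paper intends it, you are done after that paragraph, and by the same route the paper takes.

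Everything after that addresses a strictly stronger assertion --- that the limiting value at $q=1$ lies in $\mathbb{Z}[t,t^{-1}]$ --- which the paper neither claims nor proves as part of Theorem \ref{Thm1.6}. If that stronger reading were the intended one, your argument would have a genuine gap precisely where you flag it: the extraction of the order-$u^{L-2}$ coefficient of $g_{((d_{1}),\cdots ,(d_{L}))}(q,t)-g_{((d_{1}),\cdots ,(d_{L}))}(q,-t)$ from the $q\to 1$ degeneration of the BMW data, and the verification that the resulting Laurent polynomial is divisible by $2^{L-1}$ after the M\"{o}bius/plethystic cancellation, are only described, not carried out; the Lickorish--Millett formulas of Section \ref{sec6} govern the trivial colors $(1)^{L}$ and do not by themselves supply the needed closed form for general one-row colors $(d_{\alpha })$. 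Since the theorem only asserts regularity, this incompleteness does not affect its proof, but you should not present the integrality of the $q=1$ value as established.
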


\begin{theorem}
\label{Thm1.7}The conjecture is true when $\mathcal{L}$ is: the torus
knots/links $T(2,k)$, where $k$ is odd/even, and each component of the
partition $\overrightarrow{\mu }$ is of the form $(1)$, $(1,1)$ or $(2)$;
the two components torus link $T(2,2k)$ for partition $(3),(1)$; the three
components torus link $T(3,3k)$ for the partition $(2),(1),(1)$. These
examples give evidence for the conjecture at non-trivial roots of unity.
\end{theorem}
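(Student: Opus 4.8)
The plan is to run the colored Kauffman polynomial formula for torus knots and links (Section~\ref{sec7}) through the whole computational chain $Z_{CS}^{SO}\rightsquigarrow F^{SO}\rightsquigarrow F^{SO}_{\overrightarrow{\mu}}\rightsquigarrow g_{\overrightarrow{\mu}}$ for the colorings listed in the statement, and then to check the two requirements of the Orthogonal LMOV conjecture directly: that the numerator is divisible by $2\prod_{\alpha}\prod_{i}(q^{\mu_i^{\alpha}}-q^{-\mu_i^{\alpha}})$, and that the resulting quotient lies in $\mathbb{Z}[q-q^{-1}][t,t^{-1}]$. Recall that for a torus knot or link $T(m,n)$ the formula writes the $SO$ quantum invariant colored by a tuple $\overrightarrow{\lambda}$ as a finite sum $\sum_{\overrightarrow{\beta}}c^{\overrightarrow{\beta}}_{\overrightarrow{\lambda}}(m,n)\,\widehat{W}_{\overrightarrow{\beta}}$ of unknot invariants, each coefficient being a product of twist eigenvalues $\theta_{\beta}^{\,n/m}$ with type-B fusion (Littlewood--Richardson) multiplicities coming from the Brauer algebra decomposition of the relevant tensor powers. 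For the colorings in the statement the tensor powers that occur are only $V^{\otimes 1}$, $V^{\otimes 2}$ and $V^{\otimes 3}$, whose Brauer algebra decompositions are completely explicit, so every coefficient $c^{\overrightarrow{\beta}}_{\overrightarrow{\lambda}}$ and every unknot invariant $\widehat{W}_{\overrightarrow{\beta}}$ that appears can be written in closed form.

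Granting these closed forms, the next step is bookkeeping. Expand $F^{SO}=\log Z_{CS}^{SO}$ in the power-sum basis $pb_{\overrightarrow{\mu}}(\overrightarrow{z})$ to read off the coefficients $F^{SO}_{\overrightarrow{\mu}}$, then apply $g_{\overrightarrow{\mu}}(q,t)=\sum_{k\mid\overrightarrow{\mu}}\frac{\mu(k)}{k}F^{SO}_{\overrightarrow{\mu}/k}(q^{k},t^{k})$. For every $\overrightarrow{\mu}$ on the list the divisor set $\{k:k\mid\overrightarrow{\mu}\}$ is $\{1\}$, $\{1,2\}$ or $\{1,3\}$, so $g_{\overrightarrow{\mu}}$ is either $F^{SO}_{\overrightarrow{\mu}}$ itself or a two-term M\"obius combination, and in all cases it is an explicit Laurent function of $q$ and $t$. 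Passing to $g_{\overrightarrow{\mu}}(q,t)-g_{\overrightarrow{\mu}}(q,-t)$ extracts twice the part of $g_{\overrightarrow{\mu}}$ that is odd in $t$ --- a harmless manipulation, but the one that explains the $2$ in the denominator --- after which one multiplies by $\mathrm{z}_{\overrightarrow{\mu}}(q-q^{-1})^{2}$ and divides by $2\prod_{\alpha}\prod_{i}(q^{\mu_i^{\alpha}}-q^{-\mu_i^{\alpha}})$. By Theorem~\ref{Thm1.5} it would in fact be enough to treat the single-row colorings $(d)$, but carrying along the $(1,1)$ case and the multi-component cases is cheap and provides a useful internal check.

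For the two genuinely higher colorings $((3),(1))$ on $T(2,2k)$ and $((2),(1),(1))$ on $T(3,3k)$ the numerator is a long Laurent expression whose dependence on $k$ enters only through powers $\theta_{\beta}^{\,k}$ of the twist eigenvalues, so instead of verifying the polynomial identity term by term I would evaluate both sides at $q=\zeta$, a primitive $N$-th root of unity with $N>1$. At such $\zeta$ the unknot invariants $\widehat{W}_{\overrightarrow{\beta}}$ and the twist eigenvalues collapse to simple closed forms; once the identity --- including integrality of the quotient --- is confirmed for all such $\zeta$, one upgrades to the Laurent polynomial statement using the degree bound $val_{u}(F^{SO}_{\overrightarrow{\mu}})\ge\ell(\overrightarrow{\mu})-2$, which is already a theorem (Section~\ref{sec5}) and which also bounds $val_{u}$ of $g_{\overrightarrow{\mu}}$ from below. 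This is exactly the sense in which these examples give evidence at non-trivial roots of unity, as opposed to the $q\to1$ limit handled by Theorem~\ref{Thm1.6}.

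The main obstacle is the exact divisibility step for those two higher colorings. There the numerator is an alternating combination of several twist eigenvalues and several type-B branching coefficients, and one must show that, after antisymmetrizing in $t$, it is divisible by $2\prod_{\alpha}\prod_{i}(q^{\mu_i^{\alpha}}-q^{-\mu_i^{\alpha}})$ with an \emph{integer} quotient, and that this holds \emph{uniformly in} $k$. The uniformity in $k$ is the real crux: it forces one to see how the overall $k$-dependence factors out --- essentially how $\theta_{\beta}^{\,k}$ organizes into the power-sum variables of the free energy expansion --- rather than to verify the statement numerically one root of unity at a time. Everything else in the argument is finite, explicit and mechanical once the torus link formula of Section~\ref{sec7} is in hand.
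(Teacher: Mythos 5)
Your setup --- running the torus-link formula (Theorem \ref{Thm3.6}; note it lives in Section \ref{sec3}, not Section \ref{sec7}) through the chain $Z_{CS}^{SO}\to F^{SO}\to F_{\overrightarrow{\mu }}^{SO}\to g_{\overrightarrow{\mu }}$ and then checking divisibility and integrality directly --- is exactly what the paper does, and for each fixed small $k$ everything you describe is indeed finite and mechanical. The gap is in your final step, the one you yourself identify as the crux: uniformity in $k$. Evaluating both sides at a primitive $N$-th root of unity $\zeta $ cannot establish membership in $\mathbb{Z}[q-q^{-1}][t,t^{-1}]$. That membership is an arithmetic statement about the \emph{coefficients} of a Laurent polynomial; knowing the values at roots of unity (whatever ``integrality'' is taken to mean for an algebraic number depending on $t$) neither forces the coefficients to be integers nor even, without further argument, shows that the quotient by $2\prod_{\alpha }\prod_{i}(q^{\mu _{i}^{\alpha }}-q^{-\mu _{i}^{\alpha }})$ is a polynomial rather than a genuine rational function. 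Likewise the degree bound $val_{u}(F_{\overrightarrow{\mu }}^{SO})\geq \ell (\overrightarrow{\mu })-2$ only controls the order of vanishing at $q=1$; it cannot be combined with root-of-unity data to ``upgrade'' to the full polynomial statement. The sentence ``these examples give evidence at non-trivial roots of unity'' in the theorem is a remark on the \emph{significance} of the result (Theorems \ref{Thm1.6} and \ref{Thm1.8} already settle the conjecture near $q=1$, so fully verified examples are the only information available away from $q=1$); it is not a hint that the proof proceeds by specialization at roots of unity.

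What the paper actually does for general $k$ is more pedestrian but sound: it rewrites $\mathrm{z}_{\overrightarrow{\mu }}\,[g_{\overrightarrow{\mu }}(q,t)-g_{\overrightarrow{\mu }}(q,-t)]/2$ in the power-sum ($pb$) basis rather than the $sb$ basis, obtaining a closed form in which $k$ enters only through monomials $q^{\pm 2k}$, $t^{\pm k}$ and quotients such as $(q^{2k}-q^{-2k})/\bigl((q-q^{-1})(q^{3}-q^{-3})\bigr)$, and then verifies divisibility and integrality of coefficients by a case analysis on the residue of $k$ modulo a small integer (modulo $6$ for the coloring $(2)$ on $T(2,k)$, with the other listed colorings handled ``in such a manner''). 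If you replace your root-of-unity step with this kind of explicit residue analysis --- proving once and for all that the relevant Gauss-quotient factors lie in $\mathbb{Z}[q-q^{-1}]$ for each residue class of $k$ --- your argument closes; as written, the last step does not go through.
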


We also prove the degree conjecture.

\begin{theorem}
\label{Thm1.8}The following degree estimation holds
\end{theorem}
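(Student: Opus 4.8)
The plan is to reduce the degree estimate to a careful analysis of the quantum-group invariants assembled into the Chern-Simons partition function, following the strategy pioneered by Liu–Peng in the HOMFLY case but carried out over the Brauer/BMW combinatorics rather than the Hecke combinatorics. First I would write the orthogonal partition function $Z^{SO}_{CS}(\mathcal L,q,t)$ in the basis of type-B Schur functions, so that the coefficients are the colored Kauffman invariants $W^{SO}_{\overrightarrow\lambda}(\mathcal L)$, and then convert the expansion of $F^{SO}=\log Z^{SO}_{CS}$ into the power-sum basis $pb_{\overrightarrow\mu}$. The key algebraic input is the transition between these two bases, which is governed by the characters of the Brauer centralizer algebras and by adjoint/Adams-type operations $q\mapsto q^k$; tracking the $u$-adic valuation ($q=e^u$) through this transition is the heart of the matter. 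I would isolate, term by term in the plethystic/Möbius-type sum defining $F^{SO}_{\overrightarrow\mu}$, the lowest-order contribution in $u$ and show each such contribution has $u$-valuation at least $\ell(\overrightarrow\mu)-2$.

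The key steps, in order, are: (1) establish a \emph{base estimate} for a single unknot component, namely that the reduced colored Kauffman invariant $W^{SO}_{\lambda}$ of the unknot, minus its ``classical'' $q\to 1$ limit, vanishes to the appropriate order in $u$ — this is essentially the content of Theorem~\ref{Thm1.3} read at the level of $u$-valuations; (2) propagate this through the \emph{connected-sum / cabling} structure: using the orthogonal cut-join equation from \cite{Che1} one expresses the colored invariants of $\mathcal L$ via those of cabled components, and each cabling of a color of size $d$ contributes a factor whose leading $u$-behavior is controlled by $\prod(q^{d_i}-q^{-d_i})\sim \prod (2d_i u)$, hence carries valuation equal to the number of parts; (3) take the logarithm and pass to the $pb_{\overrightarrow\mu}$ basis, where the plethystic exponential and the Möbius inversion defining $g_{\overrightarrow\mu}$ and $F^{SO}_{\overrightarrow\mu}$ only combine terms of equal or higher valuation, so no cancellation can lower the bound; (4) bookkeep the deficit of $2$: it comes from the single overall factor of $(q-q^{-1})^{-2}$ (equivalently the two ``framing/self-linking'' normalization factors) that appears once, not once per component, exactly as in the HOMFLY degree bound $\mathrm{val}_u(F_{\overrightarrow\mu})\ge \ell(\overrightarrow\mu)-2$ of Liu–Peng.

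The main obstacle I expect is step (2) together with the combinatorial control in step (3): the orthogonal case has strictly richer structure than the linear case because the Brauer algebra has ``contraction'' morphisms with no analog in the symmetric group, so the Littlewood–Richardson-type expansion of a cabled color picks up extra terms (governed by the Littlewood restriction rules and by the $t$-dependent correction factors in the type-B framing), and one must check these extra terms do not have anomalously low $u$-valuation. Concretely, the danger is that a product of several $(q^{\mu_i^\alpha}-q^{-\mu_i^\alpha})$ factors in the denominator of the reformulated invariant could be only partially matched by zeros in the numerator coming from these contraction terms. I would handle this by an induction on $|\overrightarrow\mu|$ using the orthogonal cut-join recursion of \cite{Che1}: the recursion strictly decreases $|\overrightarrow\mu|$ while each elementary move changes $\ell(\overrightarrow\mu)$ and the number of denominator factors in lockstep, so the $\ell(\overrightarrow\mu)-2$ bound is preserved. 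A secondary, more technical point is uniformity in $t$: since $t=q^N$ enters the $R$-matrix eigenvalues and the quantum dimensions, one must verify the valuation estimates are insensitive to $N$, which I would do by noting that every $t$-dependent prefactor in the type-B normalization is a Laurent monomial or a ratio that is regular and nonzero at $u=0$, hence contributes $u$-valuation $0$.
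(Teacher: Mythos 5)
Your step (3) contains the central gap, and it is fatal to the proposed argument. You assert that the plethystic/M\"obius passage from $Z^{SO}_{\overrightarrow{\mu}}$ to $F^{SO}_{\overrightarrow{\mu}}$ ``only combines terms of equal or higher valuation, so no cancellation can lower the bound,'' and accordingly you plan to verify the bound term by term. This inverts the actual situation. The individual terms $Z^{SO}_{\Lambda}$ entering $F^{SO}_{\overrightarrow{\mu}}=\sum_{\Lambda}\frac{(-1)^{\ell(\Lambda)-1}\ell(\Lambda)!}{\ell(\Lambda)|Aut\Lambda|}Z^{SO}_{\Lambda}$ have \emph{poles} at $q=1$ (the quantum dimensions contribute pole order up to $\Vert\overrightarrow{\mu}\Vert$, and even for the unknot the character-averaged coefficient $Z^{SO}_{\overrightarrow{\mu}}$ still has pole order $\ell(\overrightarrow{\mu})$), whereas the target valuation $\ell(\overrightarrow{\mu})-2$ is nonnegative as soon as $\ell(\overrightarrow{\mu})\geq 2$. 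The whole content of the degree conjecture is that these polar parts cancel in the alternating sum; no term-by-term estimate can see this, and your ``base estimate for the unknot'' plus ``no cancellation can lower the bound'' would at best give $val_u\geq -\ell(\overrightarrow{\mu})$, not $\geq\ell(\overrightarrow{\mu})-2$. Relatedly, your worry about matching the denominator factors $\prod(q^{\mu_i^{\alpha}}-q^{-\mu_i^{\alpha}})$ concerns the integrality statement of Conjecture \ref{Main Conj}, not the degree bound, which has no denominator.

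The mechanism that actually produces the cancellation in the paper is quite different from your cut-join induction on $|\overrightarrow{\mu}|$, which you leave as a sketch. The paper first treats $\overrightarrow{\mu}=(1)^L$ (Theorem \ref{Thm7.3}) by induction on crossing changes between \emph{distinct} components: the skein relation gives $F^{SO}_{(1)^L}(\mathcal{L}_+)-F^{SO}_{(1)^L}(\mathcal{L}_-)=z\bigl(F^{SO}_{(1)^{L-1}}(\mathcal{L}_{\Vert})-F^{SO}_{(1)^{L-1}}(\mathcal{L}_{=})\bigr)$, and the base case is a split link, where $F^{SO}_{(1)^L}$ vanishes identically by the combinatorial identity of Lemma \ref{Lemma7.2} --- this vanishing is exactly the cancellation you cannot obtain termwise. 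General $\overrightarrow{\mu}$ is then reduced to this case in two steps you do not have: the BMW branching rule $m_{(\mu_1)}\otimes\cdots\otimes m_{(\mu_\ell)}\sim m_{\mu}$ converts $F^{SO}_{\overrightarrow{\mu}}(\mathcal{L})$ into an $F^{SO}$ of a cabled link colored by rows, and for rectangular partitions a fractional-framing deformation $\tau_{\alpha}=w_{\alpha}+\tfrac{1}{a_{\alpha}}$ identifies the result with $F^{SO}_{(1)^{\ell(\overrightarrow{\lambda})}}$ of a twisted cabling; the framing is then removed by observing that polynomials in $\overrightarrow{\tau}$ vanishing on a lattice vanish identically (Proposition \ref{degree}). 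Without an argument replacing these three ingredients --- in particular, without a source for the cancellation of the $q=1$ poles --- the proposal does not establish the theorem.
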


\begin{equation*}
val_{u}(F_{\overrightarrow{\mu }}^{SO})\geq \ell (\overrightarrow{\mu })-2%
\text{.}
\end{equation*}

In addition, we use the cabling technique developed in \cite{LZ} to
calculate colored Kauffman polynomials for torus knots and links, which are
employed to test the orthogonal LMOV conjecture (Theorem \ref{Thm1.7}).

This paper is organized as follows: In Section 2, we review some basic
knowledge of partitions, the Birman-Murakami-Wenzl algebra and irreducible
representation of the Brauer algebra. In Section 3, we review the definition
of the quantum group invariants of links and use the cabling formula to
simplify the computation of these invariants. As an application of the
cabling formula, we obtain colored Kauffman polynomials of all torus knots
and links for all partitions (irreducible representations). In Section 4, we
define the Chern-Simons partition function for orthogonal quantum groups and
the corresponding reformulated invariants. Also, we compute the orthogonal
Chern-Simons partition function for disjoint union of unknots (Theorem \ref%
{Thm1.3}). In Section 5, we propose a new orthogonal LMOV conjecture and
degree conjecture. Then we test torus knots and links as supporting examples
(Theorem \ref{Thm1.7}), which can not be treated as special cases of the
proof in the following sections. In Section 6, we obtain formulas of
Lickorish-Millett type by using skein relations at the intersections of two
different link components. This trick is also widely used in Section 7.
Anyway, this section is quite independent and such Lickorish-Millett type
formulas can also be treated as an application of the orthogonal LMOV
conjecture, which is the starting point of this paper. In Section 7, we
prove the equivalence between the vanishing of the first three coefficients
of $F_{\overrightarrow{\mu }}$ for trivial partitions $\overrightarrow{\mu }$
(each component of partitions have only one box), predicted by the degree
conjecture, and the Lichorish-Millett type formulas obtained in Section 6.
We also prove the orthogonal LMOV conjecture for column-like Young diagram
(Theorem \ref{Thm1.4}) as a generalization of such Lichorish-Millett type
formulas. In Section 8 and 9, we prove that if the orthogonal LMOV
conjecture is valid for the case of rows, then the orthogonal LMOV is valid
for all partitions (Theorem \ref{Thm1.5}). In addition, the proof of the
degree conjecture is also presented there (Theorem \ref{Thm1.7}), which
implies that the orthogonal LMOV Conjecture asymptotically holds (for all
partitions $\overrightarrow{\mu }$ and all knots/links) as $q$ tends to $1$
(Theorem \ref{Thm1.6}). In Section 10 (Appendix), we first compute explicit
expressions of the Chern-Simon partition function for the unknot. We then
review an alternative definition of the colored Kauffman polynomial via the
Markov trace (skein approach) and test the Hopf link for the orthogonal LMOV
conjecture by using this new definition. We also give an explicit
computation of the quantum trace for orthogonal quantum groups directly from
the universal $R$-matrix. Finally, we list the character table of Brauer
algebra and type-B Schur functions, whose specialization gives colored
Kauffman polynomials of the unknot (quantum dimensions) for small
partitions. These tables are mainly used to compute colored Kauffman
polynomial for torus knots and links. The tables of the integers
coefficients predicted by the orthogonal LMOV conjecture are also presented.

\subsection{Acknowledgments}

The authors greatly benefited from Nicolai Reshetikhin's early participation
in this project, and owe him a lot for his help, advices and support. We
also thank Kefeng Liu, Pan Peng and Hao Zheng for explaining their works to
us and many very useful discussions. We thank Marcos Mari\~{n}o for
communicating with us on this subject and his interest and numerous useful
comments. We thank Francis Bonahon for his enthusiasm, advices, and support.
Part of this work was done while the authors visited the Center of
Mathematical Science at Zhejiang University. The second author also thanks
the Hausdorff Institute of Mathematics at Bonn and IH\'{E}S for their
hospitality. The first draft of this paper was ready in the Fall 2008. The
first author passed away in a tragic accident in 2009. The current version
is presented here by the second author in memory of his good friend and
collaborator Lin Chen.

\section{Young Diagram and Birman-Murakami-Wenzl Algebra}

\label{sec2}

\subsection{Partition and Young Diagram}

A \emph{composition} $\mu $ of $n$, denoted by $\mu \models n$, is a finite
sequence of positive integers $(\mu _{1},\mu _{2},...,\mu _{\ell })$ such
that

\begin{equation*}
\mu _{1}+\mu _{2}+...+\mu _{\ell }=n\text{.}
\end{equation*}

The number of parts in $\mu $ is called the length of $\mu $ and denote by $%
\ell =\ell (\mu )$. The size of composition $\mu $ is defined by%
\begin{equation*}
|\mu |=\overset{\ell (\lambda )}{\underset{i=1}{\sum }}\mu _{i}.
\end{equation*}

A \emph{partition} $\lambda $ is a composition such that

\begin{equation*}
\lambda _{1}\geq \lambda _{2}\geq ...\geq \lambda _{\ell }>0\text{.}
\end{equation*}

Denote by $\mathcal{P}$ the set of all partitions. We identify a partition
with its Young diagram.

If $|\lambda |=d$, we say $\lambda $ is a partition of $d$, denote by $%
\lambda \vdash d$.

We use $m_{i}(\lambda )$ to denote the number of times that $i$ appears in $%
\lambda $. Denote the automorphism group of the partition $\lambda $ by $%
Aut(\lambda )$.

The order of $Aut(\lambda )$ is given by%
\begin{equation*}
|Aut(\lambda )|=\underset{i}{\prod }m_{i}(\lambda )!
\end{equation*}

There is another way to rewrite a partition $\lambda $ in the following
format%
\begin{equation*}
(1^{m_{1}(\lambda )}2^{m_{2}(\lambda )}\cdot \cdot \cdot )
\end{equation*}

For Instance, we have $(5,3,3,2,2,2,1)=(1^{1}2^{3}3^{2}5^{1})$

Define the following numbers associated to a partition $\lambda $.

\begin{equation*}
\mathrm{z}_{\lambda }=\underset{i}{\prod }i^{m_{i}(\lambda )}m_{i}(\lambda
)!,
\end{equation*}

\begin{equation*}
\kappa _{\lambda }=\underset{j=1}{\overset{\ell (\lambda )}{\prod }}\lambda
_{j}(\lambda _{j}-2j+1)\text{.}
\end{equation*}

\subsection{Partitionable set and infinite series}

Following the notations of \cite{LP}, we present some basic knowledge of
partitionable set here.

The concept of partition can be generalized to the following partitionable
set.

A countable set $(S,+)$ is called a \emph{partitionable} set if the
following holds

\begin{enumerate}
\item $S$ is totally ordered.

\item $S$ is an Abelian semi-group with summation $"+"$.

\item The minimum element $\mathbf{0}$ in $S$ is the zero-element of the
semi-group, i.e., for any $a\in S$,
\end{enumerate}

\begin{equation*}
\mathbf{0}+a=a=a+\mathbf{0}.
\end{equation*}

For simplicity, we may briefly write $S$ instead of $(S,+)$.

The following sets are examples of partitionable set:

\begin{enumerate}
\item The set of all nonnegative integers $\mathbb{Z}_{\geq 0}$;

\item The set of all partitions $\mathcal{P}$. The order of $\mathcal{P}$
can be defined as follows:

$\forall \lambda $, $\mu \in \mathcal{P}$, $\lambda \geq \mu $ iff $|\lambda
|>|\mu |$, or $|\lambda |=|\mu |$ and there exists a $j$ such that $\lambda
_{i}=\mu _{j}$ for $i\leq j-1$ and $\lambda _{j}>\mu _{i}$. The summation is
taken to be $"\cup "$ and the zero-element is $(0)$.

\item $\mathcal{P}^{n}$. The order of $\mathcal{P}^{n}$ is defined similarly
as $(2)$:

$\forall \overrightarrow{A}$, $\overrightarrow{B}\in \mathcal{P}^{n}$, $%
\overrightarrow{A}\geq \overrightarrow{B}$ iff $\overset{n}{\underset{i=1}{%
\sum }}|A^{i}|>\overset{n}{\underset{i=1}{\sum }}|B^{i}|$, or $\overset{n}{%
\underset{i=1}{\sum }}|A^{i}|=\overset{n}{\underset{i=1}{\sum }}|B^{i}|$ and
there is a $j$ such that $A^{i}=B^{i}$ for $i\leq j-1$ and $A^{j}>B^{j}$.
\end{enumerate}

Define

\begin{equation*}
\overrightarrow{A}\cup \overrightarrow{B}=(A^{1}\cup B^{1},A^{2}\cup
B^{2},...,A^{n}\cup B^{n})
\end{equation*}

$((0),(0),...,(0))$ is the zero-element. Then $\mathcal{P}^{n}$ is a
partitionable set.

Let $S$ be a partitionable set. One can define partition with respect to $S$
in the similar manner as that of $\mathbb{Z}_{\geq 0}$: a finite sequence of
non-increasing non-minimum elements in $S$. We will call it an \emph{%
S-partition}, $(\mathbf{0})$ the zero $S$-partition. Denote by $\mathcal{P}%
(S)$ the set of all $S$-partitions.

For an $S$-partition $\Lambda $, we can define the automorphism group of $%
\Lambda $ in a similar way as that in the definition of traditional
partition. Given $\beta \in S$, denote by $m_{\beta }(\Lambda )$ the number
of times that $\beta $ occurs in the parts of $\Lambda $, we then have
\begin{equation*}
Aut\Lambda =\prod_{\beta \in S}m_{\beta }(\Lambda )!\,.
\end{equation*}%
Introduce the following quantities associated with $\Lambda $,
\begin{equation*}
\mathfrak{u}_{\Lambda }=\frac{\ell (\Lambda )!}{|Aut\Lambda |}\,,\qquad {%
\Theta }_{\Lambda }=\frac{(-1)^{\ell (\Lambda )-1}}{\ell (\Lambda )}%
\mathfrak{u}_{\Lambda }\,.
\end{equation*}

The following Lemma will be used in Section \ref{sec4} to deduce the
reformulated invariants.

\begin{lemma}[\protect\cite{LP}, Lemma 2.3]
\label{Lemma2.1} Let $S$ be a partionable set. If $f(t)=\underset{n\geq 0}{%
\sum }a_{n}t^{n}$, then%
\begin{equation*}
f\left( \underset{\beta \in S}{\underset{\beta \neq \mathbf{0}}{\sum }}%
A_{\beta }p_{\beta }(x)\right) =\underset{\Lambda \in \mathcal{P}(S)}{\sum }%
a_{\ell (\Lambda )}A_{\Lambda }p_{\Lambda }(x)\mathfrak{u}_{\Lambda }\text{,}
\end{equation*}

where
\begin{equation*}
p_{\Lambda }(x)=\underset{j=1}{\overset{\ell (\Lambda )}{\prod }}p_{\Lambda
_{j}}\text{, }A_{\Lambda }=\underset{j=1}{\overset{\ell (\Lambda )}{\prod }}%
A_{\Lambda _{j}}\text{.}
\end{equation*}
\end{lemma}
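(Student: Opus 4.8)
The plan is to expand both sides as formal power series and compare coefficients of monomials in the independent variables $p_\beta(x)$, $\beta\in S$, $\beta\neq\mathbf 0$. First I would observe that since $f(t)=\sum_{n\geq 0}a_n t^n$, substituting $t=\sum_{\beta\neq\mathbf 0}A_\beta p_\beta(x)$ gives
\begin{equation*}
f\Bigl(\sum_{\beta\neq\mathbf 0}A_\beta p_\beta(x)\Bigr)
=\sum_{n\geq 0}a_n\Bigl(\sum_{\beta\neq\mathbf 0}A_\beta p_\beta(x)\Bigr)^{n}.
\end{equation*}
The next step is to apply the multinomial theorem to the $n$-th power: a term in the expansion is indexed by an ordered $n$-tuple $(\beta_1,\dots,\beta_n)$ of nonzero elements of $S$, contributing $A_{\beta_1}\cdots A_{\beta_n}\,p_{\beta_1}(x)\cdots p_{\beta_n}(x)$. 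Because the product is commutative, I would group such ordered tuples according to the underlying multiset of entries, which — after sorting in non-increasing order — is exactly an $S$-partition $\Lambda$ with $\ell(\Lambda)=n$.

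The combinatorial heart of the argument is the counting step: for a fixed $S$-partition $\Lambda$ with $\ell(\Lambda)=n$, the number of ordered $n$-tuples giving rise to $\Lambda$ is the multinomial coefficient $n!/\prod_{\beta\in S}m_\beta(\Lambda)! = \ell(\Lambda)!/|Aut\,\Lambda| = \mathfrak u_\Lambda$, using the definitions $|Aut\,\Lambda|=\prod_{\beta}m_\beta(\Lambda)!$ and $\mathfrak u_\Lambda=\ell(\Lambda)!/|Aut\,\Lambda|$ given just above. Each such tuple contributes $a_n A_\Lambda p_\Lambda(x)$ with $A_\Lambda=\prod_{j=1}^{\ell(\Lambda)}A_{\Lambda_j}$ and $p_\Lambda(x)=\prod_{j=1}^{\ell(\Lambda)}p_{\Lambda_j}$, and $a_n=a_{\ell(\Lambda)}$. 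Summing over all $n\geq 0$ is the same as summing over all $\Lambda\in\mathcal P(S)$ (the $n=0$ term corresponding to the zero $S$-partition $(\mathbf 0)$, for which $\mathfrak u_{(\mathbf 0)}=1$, $A_{(\mathbf 0)}=p_{(\mathbf 0)}=1$, matching the constant term $a_0$), which yields exactly
\begin{equation*}
\sum_{\Lambda\in\mathcal P(S)}a_{\ell(\Lambda)}A_\Lambda p_\Lambda(x)\,\mathfrak u_\Lambda.
\end{equation*}

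I expect the only genuine subtlety to be bookkeeping rather than a real obstacle: one must be careful that the substitution into $f$ is well defined as a formal power series — i.e. for each fixed total "degree" only finitely many $S$-partitions contribute — which follows because $S$ is countable and totally ordered with minimum $\mathbf 0$, so the nonzero elements below any given element form a finite set and the grading by $\sum_j\Lambda_j$ (or by $\ell(\Lambda)$ together with the bounded support) makes each graded piece finite. Beyond that, the proof is a direct application of the multinomial theorem together with the definition of $\mathfrak u_\Lambda$, so I would keep it short.
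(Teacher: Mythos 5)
Your argument is correct and is exactly the paper's proof: the paper simply records the key identity $\bigl(\sum_{\beta\neq\mathbf 0}\eta_\beta\bigr)^n=\sum_{\ell(\Lambda)=n}\eta_\Lambda\mathfrak u_\Lambda$, which is the multinomial-grouping step you spell out, and then sums over $n$. Your additional remarks on the counting $\ell(\Lambda)!/|Aut\,\Lambda|$ and on formal well-definedness are just the details the paper leaves implicit.
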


\begin{proof}
Note that%
\begin{equation*}
\left( \underset{\beta \in S}{\underset{\beta \neq \mathbf{0}}{\sum }}\eta
_{\beta }\right) ^{n}=\underset{\ell (\Lambda )=n}{\underset{\Lambda \in
\mathcal{P}(S)}{\sum }}\eta _{\Lambda }\mathfrak{u}_{\Lambda }\text{.}
\end{equation*}
\end{proof}

\subsection{Birman-Murakami-Wenzl Algebra}

The centralizer algebra
\begin{equation}
\mathrm{End}_{U_{q}(\mathfrak{so}(2N+1))}(V^{\otimes n})=\{f\in \mathrm{End}%
(V^{\otimes n})|fx=xf,\forall x\in U_{q}(\mathfrak{so}(2N+1))\}
\end{equation}%
for the standard representation of $U_{q}(\mathfrak{so}(2N+1))$ on $V=%
\mathbb{C}^{2N+1}$ is isomorphic, when $N>n$, to the Birman-Murakami-Wenzl
algebra $C_{n}$.

Let $\mathbb{C}(t,q)$ be the field of rational functions with two variables.
For each positive integer $n$, the Birman-Murakami-Wenzl algebra is defined
to be an algebra $C_n$ over $\mathbb{C}(t,q)$ as follows. The algebra $C_1$
is of one dimensional, and thus is identified to $\mathbb{C}(t,q)$. For $n>1$%
, $C_n$ is generated over $\mathbb{C}(t,q)$ by the generators $%
g_1,g_2,\cdots,g_{n-1},e_1,e_2,\cdots,e_{n-1}$ and the relations

(A1)\hskip0.5cm $g_{i}g_{i+1}g_{i}=g_{i+1}g_{i}g_{i+1}$ for $1\leq i\leq n-2$

(A2)\hskip 0.5cm $g_ig_j=g_jg_i$ if $|i-j|\geqslant 2$

(A3)\hskip 0.5cm $e_ig_i=t^{-1}e_i$

(A4)\hskip 0.5cm $e_ig_{i-1}^{\pm 1}e_i=t^{\pm 1}e_i$

(A5)\hskip 0.5cm $(q-q^{-1})(1-e_i)=g_i-g_i^{-1}$.

The first two properties are the braiding relations. The following two
properties are immediate from the above definition

(P1)\hskip 0.5cm $e_i^2=xe_i$ for $x=1+\frac{t-t^{-1}}{q-q^{-1}}$

(P2)\hskip 0.5cm $(g_i-t^{-1})(g_i+q^{-1})(g_i-q)=0$.

When the variable $q,t$ approaches to $1$, while $x=1+\frac{t-t^{-1}}{%
q-q^{-1}}$ is fixing, the above BMW algebra specializes to the Brauer
algebra $Br_{n}$, which is semisimple and isomorphic to the centralizer
algebra $\mathrm{End}_{\mathfrak{so}(2N+1)}(V^{\otimes n})$ if $N>n$, cf.
\cite{Bra} and also \cite{Wey}. The Birman-Murakami-Wenzl algebras are
semisimple except possibly when $q$ takes the value of roots of unity or $%
t=q^{m}$ for some integer $m$. Obviousely, the BMW algebra is the
deformation of the Brauer algebra.

\subsection{Irreducible Representations of Brauer Algebras}

For our purpose, we focus the generic case when the BMW Algebras $C_n$ are
semisimple. In this situation, its irreducible representation can be
described similar to the Brauer Algebras $Br_n$.

As the centralizer algebra $\mathrm{End}_{\mathfrak{so}(2N+1)}V^{\otimes n}$%
, $Br_{n}$ contains the group algebra $\mathbb{C}[S_{n}]$ as a direct
summand, thus all the irreducible representations of $S_{n}$ are also
irreducible representations of $Br_{n}$, labeled by partitions of the
integer $n$. Indeed, the set of irreducible representations of $Br_{n}$ are
bijective to the set of partitions of the integers $n-2k$, where $%
k=0,1,\cdots ,[\frac{n}{2}]$ \cite{Ram2, Wen1}. Thus the semi-simple algebra
$Br_{n}$ can be decomposed into the direct sum of simple algebras
\begin{equation*}
Br_{n}\cong \overset{\lbrack \frac{n}{2}]}{\bigoplus_{k=0}}%
\bigoplus_{\lambda \vdash n-2k}M_{d_{\lambda }\times d_{\lambda }}(\mathbb{C}%
).
\end{equation*}

The work of Beliakova and Blanchet \cite{BB} constructed an explicit basis
of the above decomposition. An up and down tableau $\Lambda =(\lambda
_{1},\lambda _{2},\cdots ,\lambda _{n})$ is a tube of $n$ Young diagrams
such that $\lambda _{1}=(1)$ and each $\lambda _{i}$ is obtained by adding
or removing one box from $\lambda _{i-1}$. Let $\lambda $ be a partition of $%
n-2k$. Denote by $|\Lambda |=\lambda $ if $\lambda _{n}=\lambda $, and we
say an up and down tableau $\Lambda $ is of shape $\lambda $. There is a
minimal path idempotent $p_{\Lambda }\in Br_{n}$ associated to each $\Lambda
$. Then the minimal central idempotent $\pi _{\lambda }$ of $Br_{n}$
correspond to the irreducible representation labeled by $\lambda $ is given
by
\begin{equation*}
\pi _{\lambda }=\sum_{|\Lambda |=\lambda }p_{\Lambda }.
\end{equation*}%
In particular, the dimension of the irreducible representations $d_{\lambda
} $ is the number of up and down tableau of shape $\lambda $. More detail
can be found in \cite{BB, Wen1}.

The characters table and the orthogonal relations can be found in \cite%
{Ram1, Ram2, Ram3}. The values of a character of $Br_{n}$ is completely
determined by its values on the set of elements $e^{k}\otimes \gamma
_{\lambda }$, where $e$ is the conjugacy class of $e_{1},\cdots ,e_{n-1}$
and $\gamma _{\lambda }$ is the conjugacy class in $S_{n-2k}$ labeled by the
partition $\lambda $ of $n-2k$. The notion $e^{k}\otimes \gamma _{\lambda }$
stands for the tangle in the following diagram.
\begin{align*}
& \ e_{0}\ \ \ \ \ e_{2}\ \ \ \ \cdots \ \ \ \ \ e_{2k}\ \ \ \ \ \ \gamma
_{\lambda }\ \ \  \\
& {%
\includegraphics[height=1.0153in, width=2.1318in]
{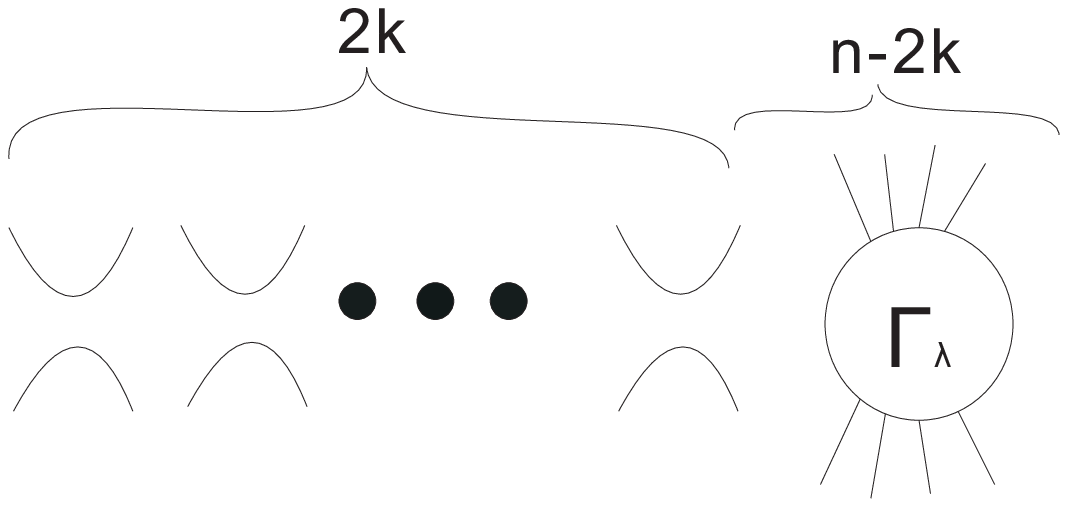}}
\end{align*}%
where $\Gamma _{\lambda }$ is a diagram in the conjugacy class of $S_{n-2k}$
labeled by a partition $\lambda $ of $n-2k$.

Denote $\chi _{A}$ the character of the irreducible representation of $%
Br_{n} $ labeled by a partition $A\vdash n-2k$ for some $k$, and denote by $%
\chi _{B}^{S_{n}}$ the character of the irreducible representation of $S_{n}$
labeled by a partition $B\vdash n$. It is known that when $A$ is a partition
of $n$, then $\chi _{A}(e^{m}\otimes \gamma _{\lambda })=0$ for all $m>0$
and partition $\lambda \vdash n-2m$, and $\chi _{A}(\gamma _{\mu })=\chi
_{A}^{S_{n}}(\gamma _{\mu })$ for partition $\mu \vdash n$ coincide with the
characters of the permutation group $S_{n}$ \cite{Ram2}.

\subsection{Schur-Weyl Duality}

Both $\mathfrak{so}(2N+1)$ and $Br_{n}$ acts on the tensor product $%
V^{\otimes n}$ and their actions commute each other. As a bi-module, $%
V^{\otimes n}$ has the following decomposition
\begin{equation*}
V^{\otimes n}=\bigoplus_{\lambda }V_{\lambda }\otimes U_{\lambda },
\end{equation*}%
where $\lambda $ runs through all the partitions of $n,n-2,n-4,\cdots ,0$, $%
V_{\lambda }$ (resp. $U_{\lambda }$) is the irreducible representation of $%
\mathfrak{so}(2N+1)$ (resp. $Br_{n}$) labeled by $\lambda $. A similar
decomposition holds for the pair $U_{q}(\mathfrak{so}(2N+1))$ and $C_{n}$.

A power symmetric function of a sequence of variables $z=(z_{i})_{i\in
\mathbb{Z}}$ is defined by

\begin{equation*}
pb_{n}(z)=(z_{0})^{n}+\underset{i=1}{\overset{+\infty }{\sum }}%
[(z_{i})^{n}+(z_{-i})^{n}].
\end{equation*}

For a partition $\lambda $,%
\begin{equation*}
pb_{\lambda }(z)=\overset{\ell (\lambda )}{\underset{j=1}{\prod }}%
pb_{\lambda _{j}}(z).
\end{equation*}

Denote $\widehat{Br}_{n}$ the set of all the characters of $Br_{n}$. For
each partition $A$, we use $sb_{A}$ to denote the type-B Schur function
associated to $A$ with infinitely many variables $z_{0},z_{\pm 1},z_{\pm
2},\cdots $, which are completely determined by the system of equations
inductively
\begin{equation}
x^{k}pb_{\lambda }=\sum_{A\in \widehat{Br}_{n}}\chi _{A}(e^{\otimes
k}\otimes \gamma _{\lambda })sb_{A}.
\end{equation}%
The parameter $x$ is the structure constant in the definition of the Brauer
algebra $Br_{n}$. The type-B Schur functions is independent of this
parameter $x$, as one can see from the character formula of Brauer algebra,
given by A. Ram in \cite{Ram2} Theorem 5.1. If $A$ is a partition of $n$,
then $sb_{A}$ is a symmetric polynomial of degree $n$ (not necessarily
homogeneous).

Throughout this paper, we fix the following notations for partition set $%
\mathcal{P}^{L}$, where $L$ is the number of components of link $\mathcal{L}$%
.

For $\overrightarrow{\mu }=(\mu ^{1},\mu ^{2},\cdots ,\mu ^{L})\in \mathcal{P%
}^{L}$, denote%
\begin{equation}
|\overrightarrow{\mu }|=(|\mu ^{1}|,|\mu ^{2}|,\cdots ,|\mu ^{L}|)\in
\mathbb{Z}^{L}
\end{equation}

and define%
\begin{equation}
||\overrightarrow{\mu }||=\underset{\alpha =1}{\overset{L}{\sum }}|\mu
^{\alpha }|.
\end{equation}%
Write%
\begin{equation}
\ell (\overrightarrow{\mu })=\underset{\alpha =1}{\overset{L}{\sum }}\ell
(\mu ^{\alpha })
\end{equation}
for the sum of the length of each partition.

We denote $pb_{\overrightarrow{\mu }}(\overrightarrow{z})=\overset{L}{%
\underset{\alpha =1}{\prod }}pb_{\mu ^{\alpha }}(z_{\alpha })$, where $%
z_{\alpha }=(z_{\alpha ,i})_{i\in \mathbb{Z}}$.

Let $\widehat{Br}_{|\overrightarrow{\mu }|}$ denotes the set $\widehat{Br}%
_{|\mu ^{1}|}\times \cdots \times \widehat{Br}_{|\mu ^{L}|}$, then $\chi _{%
\overrightarrow{A}}(\gamma _{\overrightarrow{\mu }})=\underset{\alpha =1}{%
\overset{L}{\prod }}\chi _{A^{\alpha }}(\gamma _{\mu ^{\alpha }})$ for the
character $\chi _{A^{\alpha }}$ of $Br_{|\mu ^{\alpha }|}$ labeled by $%
A^{\alpha }$, a partition of $|\mu ^{\alpha }|-2k^{\alpha }$, and the
conjugacy class $\gamma _{\mu ^{\alpha }}$ of $Br_{d_{\alpha }}$ labeled by $%
\mu ^{\alpha }$.

\section{Colored Kauffman Polynomials and Cabling Formula}

\label{sec3}

\subsection{Colored Kauffman Polynomials (Orthogonal Quantum Groups
Invariants)}

Let $B_{m}$ be the braid group of $m$ strands which is generated by $\sigma
_{1},\cdots ,\sigma _{m-1}$ with following defining relations:

\begin{equation}
\left\{
\begin{array}{c}
\sigma _{i}\sigma _{j}=\sigma _{j}\sigma _{i} \\
\sigma _{i}\sigma _{j}\sigma _{i}=\sigma _{j}\sigma _{i}\sigma _{j}%
\end{array}%
\right.
\begin{array}{c}
if\text{ }|i-j|\geq 2 \\
if\text{ }|i-j|=1%
\end{array}%
\end{equation}

Every link can be represented by the closure of some element in braid group $%
B_{m}$. This kind of braid representation is not unique. We fix such a braid
representation, then we define the quantum group invariants of link via this
braid. Finally we will see such kind of definition is independent of the
choice of the braid representation.

Let $\mathfrak{g}$ be a finite dimensional complex simple Lie algebra and $%
U_{q}(\mathfrak{g})$ be the corresponding quantized enveloping algebra.

The ribbon category structure associated with $U_{q}(\mathfrak{g})$ is given
by the following data:

\begin{enumerate}
\item Associated to each pair of $U_{q}(\mathfrak{g})$-modules $V$ and $W$,
there is an isomorphism
\begin{equation*}
\check{\mathcal{R}}_{V,\,W}:\,V\otimes W\rightarrow W\otimes V
\end{equation*}%
such that
\begin{align*}
\check{\mathcal{R}}_{U\otimes V,\,W}& =(\check{\mathcal{R}}_{U,\,W}\otimes
\mathrm{id}_{V})(\mathrm{id}_{U}\otimes \check{\mathcal{R}}_{V,\,W}) \\
\check{\mathcal{R}}_{U,\,V\otimes W}& =(\mathrm{id}_{V}\otimes \check{%
\mathcal{R}}_{U,\,W})(\check{\mathcal{R}}_{U,\,V}\otimes \mathrm{id}_{W})
\end{align*}%
for $U_{q}(\mathfrak{g})$-modules $U$, $V$, $W$.

Given $f\in \mathrm{Hom}_{U_{q}(\mathfrak{g})}(U,\widetilde{U})$, $g\in
\mathrm{Hom}_{U_{q}(\mathfrak{g})}(V,\widetilde{V})$, one has the following
naturality condition:
\begin{equation*}
(g\otimes f)\circ \check{\mathcal{R}}_{U,\,V}=\check{\mathcal{R}}_{%
\widetilde{U},\,\widetilde{V}}\circ (f\otimes g)\,.
\end{equation*}

\item There exists an element $K_{2\rho }\in U_{q}(\mathfrak{g})$, called
the enhancement of $\check{\mathcal{R}}$, such that
\begin{equation*}
K_{2\rho }(v\otimes w)=K_{2\rho }(v)\otimes K_{2\rho }(w)
\end{equation*}%
for any $v\in V$, $w\in W$. Here $\rho $ is the half-sum of all positive
roots of $\mathfrak{g}$.

Moreover, for every $z\in \mathrm{End}_{U_{q}(\mathfrak{g})}(V,W)$ with $z=%
\underset{i}{\sum }x_{i}\otimes y_{i}$, $x_{i}\in End(V)$, $y_{i}\in End(W)$
one has the quantum trace%
\begin{equation*}
\mathrm{tr}_{W}(z)=\underset{i}{\sum }\mathrm{tr}(y_{i}K_{2\rho })\cdot
x_{i}\in \mathrm{End}_{U_{q}(\mathfrak{g})}(V)
\end{equation*}

\item For any $U_{q}(\mathfrak{g})$-module $V$, the ribbon structure ${%
\theta }_{V}:V\rightarrow V$ associated to $V$ satisfies
\begin{equation*}
\theta _{V}^{\pm 1}=\mathrm{tr}_{V}\check{\mathcal{R}}_{V,V}^{\pm 1}.
\end{equation*}%
The ribbon structure also satisfies the following naturality condition
\begin{equation*}
x\cdot \theta _{V}=\theta _{\widetilde{V}}\cdot x.
\end{equation*}%
for any $x\in \mathrm{Hom}_{U_{q}(\mathfrak{g})}(V,\widetilde{V})$.
\end{enumerate}

Let $\mathcal{L}$ be a link with $L$ components $\mathcal{K}_{\alpha }$, $%
\alpha =1,\ldots ,L$, represented by the closure of $\beta \in B_{m}$. We
associate each $\mathcal{K}_{\alpha }$ an irreducible representation $%
V_{A^{\alpha }}$ of quantized universal enveloping algebra $U_{q}(\mathfrak{g%
})$ labeled by highest weight $A^{\alpha }$. In the sense of \cite{Ram2},
these irreducible representations can be labeled by partitions. By abuse of
notations, we use $A^{\alpha }$'s to denote those partitions. Let $%
i_{1},\cdots ,i_{m}$ be integers such that $i_{k}=\alpha $ if the $k$-th
strand of $\beta $ belongs to the $\alpha $-th component of $\mathcal{L}$.

Let $U$, $V$ be two $U_{q}(\mathfrak{g})$-modules labeling two outgoing
strands of the crossing, the braiding $\check{\mathcal{R}}_{U,V}$ (resp. $%
\check{\mathcal{R}}_{V,U}^{-1}$) is assigned as in following figure.

\begin{align*}
& {\includegraphics[width=1in]
{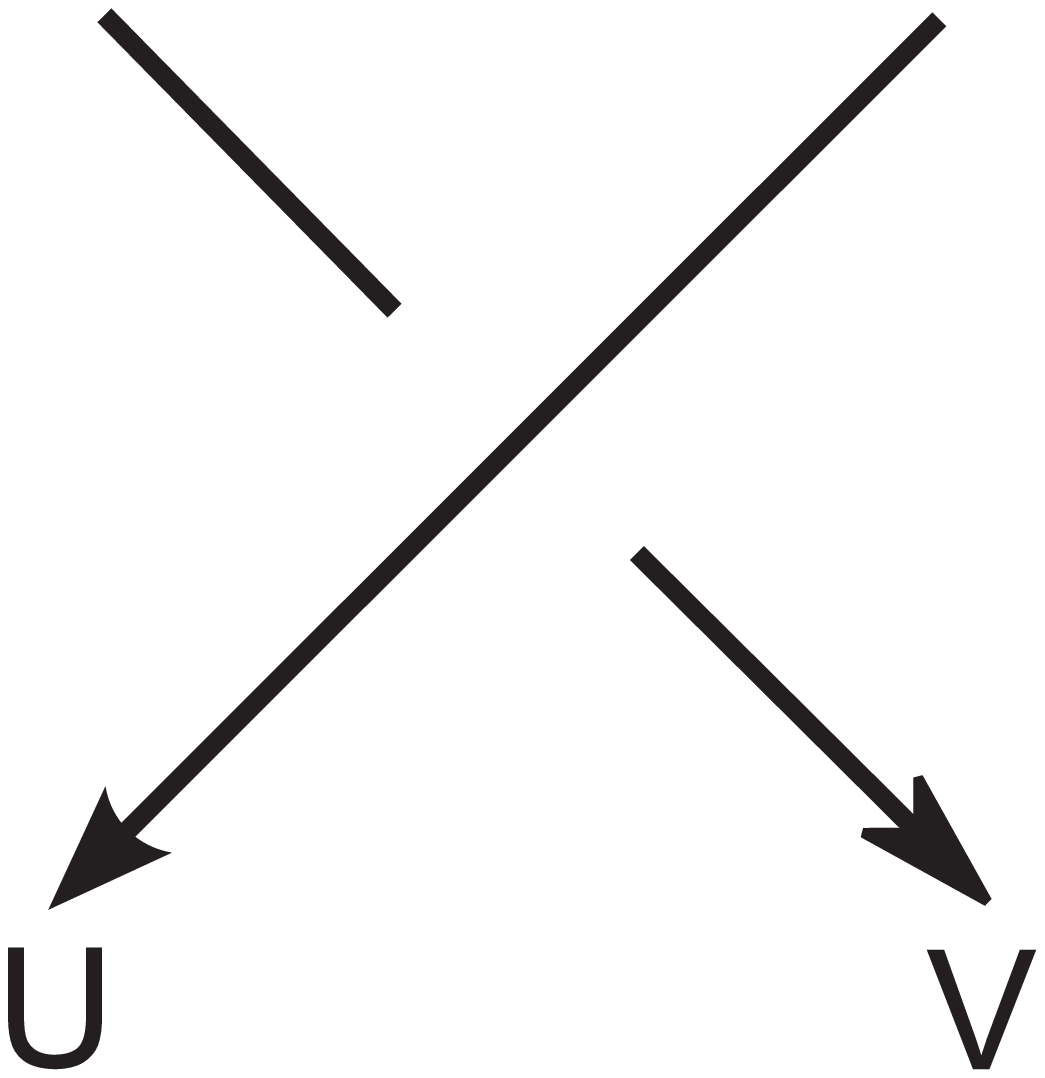}\hskip0.75in\includegraphics[width=1in]
{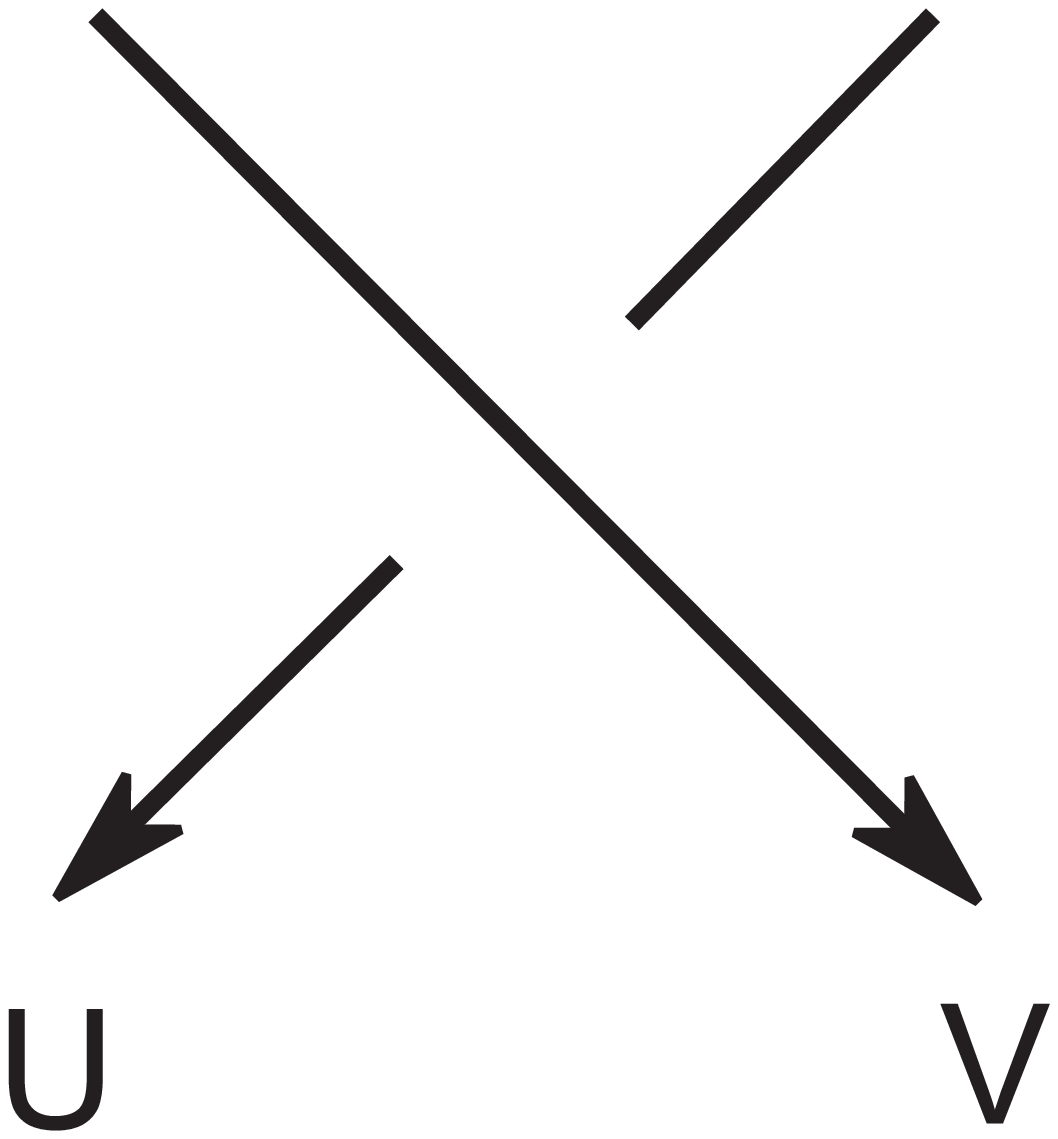}} \\
& \text{ \ \ \ \ \ \ }\check{\mathcal{R}}_{U,V}\text{ \ \ \ \ \ \ \ \ \ \ \
\ \ \ \ \ \ \ \ \ \ \ \ \ \ \ \ }\check{\mathcal{R}}_{V,U}^{-1} \\
& \text{ \ \ \ \ \ \ \ \ \ \ \ \ \ \ Assign crossing by }\check{\mathcal{R}}.
\end{align*}

The above assignment will give a representation of $B_{m}$ on $U_{q}(%
\mathfrak{g})$-module $V_{A^{i_{1}}}\otimes \cdots \otimes V_{A^{i_{m}}}$.
Namely, for any generator $\sigma _{j}\in B_{m}$,

define
\begin{equation*}
h(\sigma _{j})=\mathrm{id}_{V_{A^{i_{1}}}}\otimes \cdots \otimes \check{%
\mathcal{R}}_{V_{A^{i_{j+1}}},V_{A^{i_{j}}}}\otimes \cdots \otimes \mathrm{id%
}_{V_{A^{i_{m}}}}\,,
\end{equation*}

and

\begin{equation*}
h(\sigma _{j}^{-1})=\mathrm{id}_{V_{A^{i_{1}}}}\otimes \cdots \otimes \check{%
\mathcal{R}}_{V_{A^{i_{j}}},V_{A^{i_{j+1}}}}^{-1}\otimes \cdots \otimes
\mathrm{id}_{V_{A^{i_{m}}}}\,,
\end{equation*}

Therefore, any link $\mathcal{L}$ will provide an isomorphism
\begin{equation*}
h(\beta )\in \mathrm{End}_{U_{q}(\mathfrak{g})}(V_{A^{i_{1}}}\otimes \cdots
\otimes V_{A^{i_{m}}})\,.
\end{equation*}

The representation of braid group $B_{n}$ on $V^{\otimes n}$ factors through
the BMW algebra $C_{n\text{ }}$by sending $\sigma _{j}$ to $g_{j}\in C_{n}$.
By abuse of notations, we still denote this via $g_{j}=h(\sigma _{j})$.

The quantum trace%
\begin{equation*}
\mathrm{tr}_{V_{A^{i_{1}}}\otimes \cdots \otimes V_{A^{i_{m}}}}h(\beta )
\end{equation*}

defines framing dependent link invariant of link $\mathcal{L}$.

In order to eliminate the framing dependency, we make the following
refinement \cite{LZ}

\begin{equation*}
W_{V_{A^{1}},\cdots ,V_{A^{L}}}^{\mathfrak{so}(2N+1)}(\mathcal{L};q)=\theta
_{V_{A^{1}}}^{-w(\mathcal{K}_{1})}\cdot \cdot \cdot \theta _{V_{A^{L}}}^{-w(%
\mathcal{K}_{L})}\mathrm{tr}_{V_{A^{i_{1}}}\otimes \cdots \otimes
V_{A^{i_{m}}}}(h(\beta ))\text{,}
\end{equation*}%
where $w(\mathcal{K}_{\alpha })$ is the writhe number of $\mathcal{K}%
_{\alpha }$ in $\beta $, i.e., the number of positive crossing minus the
number of negative crossings.

The above quantity is invariant under the Markov moves, hence is an
invariant of the underlying link $\mathcal{L}$.

Quantum group invariants of links can be defined over any complex simple Lie
algebra $\mathfrak{g}$. However, in this paper, we mainly consider the
quantum group invariants of links defined over $\mathfrak{so}(2N+1)$. More
generally, one can also include the case for $\mathfrak{so}(2N)$ and $%
\mathfrak{sp}(2N)$; however, we will not do so, since the quantum group
invariants associated to these Lie algebras all give the colored Kauffman
polynomials. To distinguish $U_{q}(\mathfrak{so}(2N+1))$ from the quantum
group corresponding to spin group, we only consider those representations
parameterized by the highest weights in the root lattice of the Lie group $%
SO(2N+1)$, instead of the spin group. These highest weights are, similar to
the case of $\mathfrak{sl}_{N}$, partitions of length at most $N$, i.e $%
\{\mu |\mu _{1}\geqslant \mu _{2}\geqslant \cdots \geqslant \mu
_{N}\geqslant 0\}$.

Let's consider $U_{q}(\mathfrak{so(}2N+1))$, the quantized universal
enveloping algebra of orthogonal lie algebra $\mathfrak{so(}2N+1)$, which is
generated by $\{H_{i},X_{i}^{+},X_{i}^{-}\}$ together with the following
defining relations:%
\begin{equation*}
\lbrack H_{i},H_{j}]=0\text{, }[H_{i},X_{j}^{\pm }]=\pm (C)_{ij}X_{j}^{\pm }%
\text{ and }[X_{i}^{+},X_{j}^{-}]=\delta _{ij}\frac{q^{H_{i}}-q^{-H_{i}}}{%
q-q^{-1}}\text{,}
\end{equation*}%
where $C$ is the Cartan matrix of $\mathfrak{g=so(}2N+1)$

and the Serre type relations

\begin{equation*}
\underset{k=0}{\overset{1-(C)_{ij}}{\sum }}(-1)^{k}\frac{[1-(C)_{ij}]_{q}!}{%
[1-(C)_{ij}-k]_{q}![k]_{q}!}(X_{i}^{\pm })^{1-(C)_{ij}-k}X_{j}^{\pm
}(X_{i}^{\pm })^{k}=0\text{, for all }i\neq j\text{,}
\end{equation*}%
where $[k]_{q}!=\overset{k}{\underset{i=1}{\prod }}[i]_{q}$ and the $q$%
-number is defined as%
\begin{equation*}
\lbrack n]_{q}=\frac{q^{n}-q^{-n}}{q-q^{-1}}\text{.}
\end{equation*}

When $q\rightarrow 1$, the universal enveloping algebra $U_{q}(\mathfrak{so(}%
2N+1))$ reduces to the Lie algebra $\mathfrak{so(}2N+1)$.

Drinfeld \cite{Dri} defined the universal $\mathcal{R}$-matrix of $U_{q}(%
\mathfrak{g})$ as
\begin{equation}
\mathcal{R}=q^{\underset{i,j}{\sum }(C^{-1})_{ij}H_{i}\otimes
H_{j}}\prod_{\beta \in \vartriangle ^{+}}\exp _{q}[(1-q^{-2})X_{\beta
}^{+}\otimes X_{\beta }^{-}]\text{,}
\end{equation}%
where $\vartriangle ^{+}$denotes the set of positive roots and the $q$%
-exponential is of the form
\begin{equation*}
\exp _{q}(x)=\sum_{k=0}^{\infty }q^{\frac{1}{2}k(k+1)\frac{x^{k}}{[k]_{q}!}}%
\text{.}
\end{equation*}

The ribbon category structure is defined by letting $\check{R}=P_{12}%
\mathcal{R}$ for the above universal $\mathcal{R}$-matrix, and taking $%
K_{2\rho }$ to be $q^{-\rho ^{\ast }}$. The operator $P_{12}:V\otimes
W\rightarrow W\otimes V$ switches the two components, and $\rho ^{\ast }$
denotes the element in the Cartan subalgebra $\mathfrak{h}\subset \mathfrak{g%
}$ corresponding to $\rho $.

The positive roots of $\mathfrak{so}(2N+1)$ are given by $\vartheta _{i}\pm
\vartheta _{j}$ for $1\leqslant i<j\leqslant N$ and $\vartheta
_{1},\vartheta _{2},\cdots ,\vartheta _{N}$, where $\vartheta _{i}$ has
eigenvalue $x_{i}$ when acting on the matrix element%
\begin{equation*}
\mathrm{diag}\{-x_{N},-x_{N-1},\cdots ,-x_{1},0,x_{1},\cdots ,x_{N-1},x_{N}\}
\end{equation*}
in the Cartan subalgebra. The sum of the positive roots is given by%
\begin{equation*}
2\rho =\underset{i=1}{\overset{N}{\sum }}\vartheta _{i}+\underset{1\leqslant
i<j\leqslant N}{\sum }[(\vartheta _{i}-\vartheta _{j})+(\vartheta
_{i}+\vartheta _{j})]=\underset{i=1}{\overset{N}{\sum }}(2N+1-2i)\vartheta
_{i}\text{,}
\end{equation*}%
and%
\begin{equation*}
K_{2\rho }=\mathrm{diag}\{q^{1-2N},q^{3-2N},\cdots
,q^{-3},q^{-1},1,q,q^{3}\cdots ,q^{2N-3},q^{2N-1}\}.
\end{equation*}

Alternatively, we can write%
\begin{equation*}
K_{2\rho }(v_{i})=\left\{
\begin{array}{c}
q^{2i-1-2N}v_{i} \\
v_{i} \\
q^{2i-3-2N}v_{i}%
\end{array}%
\right.
\begin{array}{c}
1\leq i\leq N \\
i=N+1 \\
N+2\leq i\leq 2N+1%
\end{array}%
\text{.}
\end{equation*}

The universal matrix $\check{\mathcal{R}}$ acting on $V\otimes V$ for the
natural representation of $U_{q}(\mathfrak{so}(2N+1))$ on $V$ is given by
Turaev \cite{Tur}:

\begin{align*}
\check{\mathcal{R}}=& q\sum_{i\neq N+1}E_{i,i}\otimes
E_{i,i}+E_{N+1,N+1}\otimes E_{N+1,N+1}+\sum_{j}\underset{i\neq 2N+2-j}{%
\sum_{i\neq j}}E_{j,i}\otimes E_{i,j} \\
& +q^{-1}\sum_{i\neq N+1}E_{2N+2-i,i}\otimes
E_{i,2N+2-i}+(q-q^{-1})\sum_{i<j}E_{i,i}\otimes E_{j,j} \\
& -(q-q^{-1})\sum_{i<j}q^{\overline{i}-\overline{j}}E_{2N+2-j,i}\otimes
E_{j,2N+2-i}\text{,}
\end{align*}%
where $E_{i,j}$ is the $(2N+1)\times (2N+1)$ matrix with%
\begin{equation*}
(E_{i,j})_{kl}=\left\{
\begin{array}{c}
1 \\
0%
\end{array}%
\right.
\begin{array}{c}
(k,l)=(i,j) \\
elsewhere%
\end{array}%
\end{equation*}

and

\begin{equation*}
\overline{i}=\left\{
\begin{array}{c}
i+\frac{1}{2} \\
i \\
i-\frac{1}{2}%
\end{array}%
\right.
\begin{array}{c}
1\leq i\leq N \\
i=N+1 \\
N+2\leq i\leq 2N+1%
\end{array}%
\text{.}
\end{equation*}

The ribbon structure $\theta _{V_{A^{\alpha }}}$ is equal to $q^{<A^{\alpha
},A^{\alpha }+2\rho >}$ for $1\leq \alpha \leq L$.

Define the orthogonal quantum group invariants $W_{A^{1},\cdots ,A^{L}}^{SO}(%
\mathcal{L},q,t)\in \mathbb{C}(q,t)$ such that
\begin{equation}
W_{A^{1},\cdots ,A^{L}}^{SO}(\mathcal{L};q,q^{2N})=W_{V_{A^{1}},\cdots
,V_{A^{L}}}^{\mathfrak{so}(2N+1)}(\mathcal{L};q)=q^{\underset{\alpha =1}{%
\overset{L}{\sum }}-<A^{\alpha },A^{\alpha }+2\rho >w(\mathcal{K}_{\alpha })}%
\mathrm{tr}_{V_{A^{i_{1}}}\otimes \cdots \otimes V_{A^{i_{m}}}}(h(\beta ))%
\text{.}
\end{equation}

Then we want to compute the identity $q^{<A^{\alpha },A^{\alpha }+2\rho >}$.
We will first introduce the representation theory of the BMW algebra.

From now on, we only restrict ourselves in the case when the
Birman-Murakami-Wenzl algebra $C_{n}$ is semisimple and $N$ is large. The
representations of $C_{n}$ can be described in the same way as the Brauer
algebra $Br_{n}$. The semi-simplicity implies that the representation $%
V^{\otimes n}$ of $C_{n}$ admits a direct sum decomposition
\begin{equation*}
V^{\otimes n}=\bigoplus_{\lambda \in \widehat{Br}_{n}}d_{\lambda }\cdot
V_{\lambda }.
\end{equation*}%
The multiplicities $d_{\lambda }$ are all positive integers. In particular,
any irreducible representation $V_{A}$ of $U_{q}(so(2N+1))$ appear as a
direct summand of $V^{\otimes r}$ for integer $r=|A|,|A|+2,|A|+4,\cdots $.
By Schur lemma,
\begin{equation*}
C_{n}\cong \mathrm{End}_{U_{q}(\mathfrak{so}(2N+1))}V^{\otimes n}\cong
\bigoplus_{\lambda \in \widehat{Br}_{n}}C_{\lambda },
\end{equation*}%
where $C_{\lambda }=\mathrm{End}_{U_{q}(\mathfrak{so}(2N+1))}(d_{\lambda
}V_{\lambda })$ is isomorphic to the $d_{\lambda }\times d_{\lambda }$
matrix algebra, labelled by the characters $\widehat{Br}_{n}$ of $Br_{n}$ as
the decomposition of $V^{\otimes n}$.

A \emph{minimal idempotent} $p\in C_{n}$ satisfies $p^{2}=p$ and the action
of $U_{q}(\mathfrak{so}(2N+1))$ on the subspace $p\cdot V^{\otimes n}$ is an
irreducible representation. Another description of $p$ is that, there exist
exactly one $\lambda \in \widehat{Br}_{n}$ such that the restriction of $p$
to $C_{\lambda }$ is non-zero, and it's a diagonalizable matrix with exactly
one eigenvalue 1 and all others 0.

Let $y$ be an element in $C_{n}$, and the normal (or say, non-quantum) trace
of its $\lambda $ component via the above isomorphism is denoted by $\zeta
_{n}^{\lambda }(y)$. Since $y$ and all the idempotents are elements in $%
C_{n} $, they are finite linear combinations of products of the generators $%
g_{i}$'s and $e_{i}$'s, which imply $\zeta _{n}^{\lambda }(y)$ is, in
general, a rational function of $q$ and $t$.

It is not hard to get the following identity from the Turaev's \cite{Tur}
construction of universal matrix $\check{\mathcal{R}}$ (See Section \ref%
{sec10} for detail):%
\begin{equation*}
\theta _{V}=q^{2N}\cdot \mathrm{id}_{V}\text{,}
\end{equation*}%
where $V$ is the standard representation of $U_{q}(\mathfrak{so}(2N+1))$ on $%
\mathbb{C}^{2N+1}$.

More generally, we have the following lemma obtained by Reshetikhin \cite%
{Res}

\begin{lemma}
\label{AM type lemma}For each partition $\lambda \vdash n-2f$ with $\ell
(\lambda )\leq N$, one has%
\begin{equation*}
\theta _{V_{\lambda }}=q^{\kappa _{\lambda }+2N(n-2f)}\cdot \mathrm{id}%
_{V\lambda }\text{,}
\end{equation*}%
where $\kappa _{\lambda }=\underset{j=1}{\overset{\ell (\lambda )}{\prod }}%
\lambda _{j}(\lambda _{j}-2j+1)$.
\end{lemma}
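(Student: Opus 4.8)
The plan is to compute the ribbon element $\theta_{V_\lambda}$ directly from the general formula for the ribbon structure in terms of the Casimir, rather than from the $R$-matrix. For any finite-dimensional complex simple Lie algebra $\mathfrak{g}$ and any irreducible $U_q(\mathfrak{g})$-module $V_\lambda$ with highest weight $\lambda$, the ribbon element acts as a scalar, and that scalar is $q^{\langle\lambda,\lambda+2\rho\rangle}$ (up to the normalization conventions fixed earlier in the excerpt, where the invariant bilinear form on $\mathfrak{h}^*$ is chosen so that $\theta_V = q^{2N}\cdot\mathrm{id}_V$ for the standard $(2N+1)$-dimensional representation $V$ of $U_q(\mathfrak{so}(2N+1))$). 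So the heart of the matter is the identity
\begin{equation*}
\langle\lambda,\lambda+2\rho\rangle = \kappa_\lambda + 2N|\lambda|,
\end{equation*}
where $|\lambda| = n-2f$, together with checking that the normalization is consistent. First I would recall the coordinates fixed in the excerpt: $2\rho = \sum_{i=1}^N (2N+1-2i)\vartheta_i$, and a partition $\lambda$ of length $\le N$ corresponds to the weight $\sum_i \lambda_i \vartheta_i$. The form $\langle\cdot,\cdot\rangle$ is the one normalized by the condition on the standard representation; in these $\vartheta$-coordinates it is (a multiple of) the standard Euclidean form, and one pins down the multiple by evaluating on $\lambda = \vartheta_1$ (the standard representation has highest weight $\vartheta_1$), where $\langle\vartheta_1,\vartheta_1+2\rho\rangle$ must equal $2N$.

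Next I would carry out the bilinear-form computation. Writing $\langle\vartheta_i,\vartheta_j\rangle = c\,\delta_{ij}$ for the appropriate constant $c$, one gets
\begin{equation*}
\langle\lambda,\lambda+2\rho\rangle = c\sum_{i=1}^N \lambda_i\bigl(\lambda_i + 2N+1-2i\bigr) = c\sum_{i=1}^N \lambda_i(\lambda_i-2i+1) + 2Nc\sum_{i=1}^N\lambda_i = c\,\kappa_\lambda + 2Nc\,|\lambda|.
\end{equation*}
The normalization $\langle\vartheta_1,\vartheta_1+2\rho\rangle = c(1 + 2N+1-2) = 2Nc = 2N$ forces $c=1$, and then the displayed identity $\langle\lambda,\lambda+2\rho\rangle = \kappa_\lambda + 2N|\lambda|$ is immediate, with $|\lambda| = n-2f$ since $\lambda\vdash n-2f$. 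This reproduces exactly the exponent claimed in the lemma, namely $\kappa_\lambda + 2N(n-2f)$.

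The one genuinely nontrivial input — the part I would attribute to Reshetikhin's cited work rather than prove from scratch — is that the ribbon automorphism $\theta_{V_\lambda}$ really does act by the scalar $q^{\langle\lambda,\lambda+2\rho\rangle}$ and not by something more complicated; concretely this follows because $\check{\mathcal R}^2_{V_\lambda,V_\lambda}$ acts on the highest-weight component by the ``Casimir eigenvalue'' $q^{2\langle\lambda,\lambda+2\rho\rangle}$ and $\theta_{V_\lambda}^2$ is extracted from $\check{\mathcal R}^2$ via the compatibility $\theta_{U\otimes V} = (\theta_U\otimes\theta_V)\check{\mathcal R}_{V,U}\check{\mathcal R}_{U,V}$ of the ribbon structure with the braiding, applied inductively starting from the standard representation together with the decomposition of $V^{\otimes n}$ into the $V_\lambda$'s described in the Schur–Weyl section above. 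So the main obstacle is really bookkeeping: keeping the bilinear-form normalization straight (it is easy to be off by a factor of $2$ depending on whether one uses the form for which short roots have squared length $1$ or $2$), and making sure the substitution $t = q^{2N}$ used elsewhere in the paper is consistent with $\theta_V = q^{2N}\cdot\mathrm{id}_V$. Once the normalization is fixed by the standard-representation check, everything else is the elementary algebraic identity above.
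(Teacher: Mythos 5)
Your proof is correct, and it takes a more direct route than the one written in the paper. The paper attributes the lemma to Reshetikhin and then justifies it by quoting the Aiston--Morton twist formula $\theta_{V_\lambda}=q^{\kappa_\lambda+nN-n^2/N}\cdot\mathrm{id}$ for the type-A (Hecke-algebra) setting, translating the normalization conventions of \cite{LZ} to the present ones, and then asserting that "it is quite easy to get" the orthogonal analogue $q^{\kappa_\lambda+2N(n-2f)}$ --- the passage from type A to type B is left implicit. You instead start from the general fact that the ribbon element acts on $V_\lambda$ by $q^{\langle\lambda,\lambda+2\rho\rangle}$ (which the paper itself states just before defining $W^{SO}$, so this is an input both arguments rely on) and compute the pairing explicitly in the $\vartheta_i$-coordinates using $2\rho=\sum_i(2N+1-2i)\vartheta_i$, pinning the overall constant of the bilinear form by the standard-representation check $\theta_V=q^{2N}\cdot\mathrm{id}_V$ that the paper verifies in its appendix. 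Your calculation $\sum_i\lambda_i(\lambda_i+2N+1-2i)=\kappa_\lambda+2N|\lambda|$ is exactly right (note that $\kappa_\lambda$ must be read as a sum, not the product the paper's display typographically suggests), and the normalization check $\langle\vartheta_1,\vartheta_1+2\rho\rangle=2N$ correctly fixes $c=1$. What your approach buys is a self-contained derivation that never leaves type B and makes the exponent transparent as a weight pairing; what the paper's approach buys is a direct link to the established Aiston--Morton/Lin--Zheng framing literature, at the cost of an unproved analogy between the two Lie types.
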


This result can be understand in the following way. First we have%
\begin{equation*}
\theta _{V}=q^{2N}\cdot \mathrm{id}_{V}\text{.}
\end{equation*}

A result of Aiston-Morton (Theorem 5.5 of \cite{AM}, cf., Theorem 4.1 of
\cite{LZ}) states that%
\begin{equation*}
\theta _{V_{\lambda }}=q^{\kappa _{\lambda }+nN-\frac{n^{2}}{N}}\cdot
\mathrm{id}_{V\lambda }\text{.}
\end{equation*}

In \cite{LZ}, they use a different normalization for universal $\check{%
\mathcal{R}}$-matrices, thus they have%
\begin{equation*}
q^{\frac{1}{N}}\theta _{V}=q^{N}\cdot \mathrm{id}_{V}
\end{equation*}%
and also a different corresponding normalization for $h:\mathbb{C}%
B_{n}\rightarrow C_{n}(V)$ factoring through the Hecke algebra $\mathcal{H}%
_{n}(q)$ via%
\begin{equation*}
q^{\frac{1}{N}}\sigma _{i}\mapsto g_{i}\mapsto q^{\frac{1}{N}}h_{V}(\sigma
_{i})
\end{equation*}

Then we translate their normalization to ours, i.e.,%
\begin{equation*}
\sigma _{i}\mapsto g_{i}\mapsto h(\sigma _{i})\text{,}
\end{equation*}%
\begin{equation*}
\theta _{V}=q^{N}\cdot \mathrm{id}_{V}
\end{equation*}

and%
\begin{equation*}
\theta _{V_{\lambda }}=q^{\kappa _{\lambda }+nN}\cdot \mathrm{id}_{V\lambda }%
\text{.}
\end{equation*}

Then it is quite easy to get%
\begin{equation*}
\theta _{V_{\lambda }}=q^{\kappa _{\lambda }+2N(n-2f)}\cdot \mathrm{id}%
_{V\lambda }\text{,}
\end{equation*}

Now we can write down the explicit formula for orthogonal quantum group
invariants as follows
\begin{equation}
W_{A^{1},\cdots ,A^{L}}^{SO}(\mathcal{L};q,t)=q^{-\overset{L}{\underset{%
\alpha =1}{\sum }}\kappa _{A^{\alpha }}w(\mathcal{K}_{\alpha })}t^{-\overset{%
L}{\underset{\alpha =1}{\sum }}|A^{\alpha }|w(\mathcal{K}_{\alpha })}\cdot
\mathrm{tr}_{V_{A^{i_{1}}}\otimes \cdots \otimes V_{A^{i_{m}}}}(h(\beta ))
\end{equation}%
for all sufficiently integers $N$. In particular, when the link is trivial
with $L$ components, the quantum group invariant is the product of quantum
dimensions
\begin{equation}
W_{A^{1},\cdots ,A^{L}}^{SO}(\bigcirc ^{L};q,q^{2N})=\prod_{\alpha
=1}^{L}\dim _{q}(V_{A^{\alpha }})\text{.}  \label{E:qdim of unknots}
\end{equation}

The quantum dimension is computed by Wenzl \cite{Wen2}, which we quote it
here. Let $\lambda $ be a partition. We also identify it to the
corresponding Young diagram. For each pair of positive integers $(i,j)$,
define%
\begin{equation*}
h(i,j)=\lambda _{i}+\lambda _{j}^{\prime }-i-j+1
\end{equation*}%
to be the \emph{hook length}, where $\lambda ^{\prime }$ is the transposed
Young diagram of $\lambda $. Also define%
\begin{equation*}
d(i,j)=\left\{
\begin{array}{c}
\lambda _{i}+\lambda _{j}-i-j+1 \\
-\lambda _{i}^{\prime }-\lambda _{j}^{\prime }+i+j-1%
\end{array}%
\right.
\begin{array}{c}
i\leq j \\
i>j%
\end{array}%
\end{equation*}

\begin{theorem}[Wenzl \protect\cite{Wen2}]
\label{Wenzl} Let $\lambda $ be a Young diagram with $m$ rows and let $%
\mathcal{Q}_{\lambda }(t,q)$ be the rational function given by
\begin{equation}
\mathcal{Q}_{\lambda }(t,q)=\prod_{(j,j)\in \lambda }(1+\frac{tq^{\lambda
_{j}-\lambda _{j}^{\prime }}-t^{-1}q^{\lambda _{j}^{\prime }-\lambda _{j}}}{%
[h(j,j)]_{q}})\underset{i\neq j}{\prod_{(i,j)\in \lambda }}\frac{%
tq^{d(i,j)}-t^{-1}q^{-d(i,j)}}{[h(i,j)]_{q}}
\end{equation}%
Then the quantum trace $\dim _{q}V_{\lambda }$ of the representation of $%
U_{q}(\mathfrak{so}(2N+1))$ corresponding to $\lambda $ equal to $\mathcal{Q}%
_{\lambda }(q^{2N},q)$ for all $N>|\lambda |$.
\end{theorem}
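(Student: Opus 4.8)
The plan is to obtain the formula from the quantum analogue of the Weyl dimension formula, followed by a combinatorial reorganization of the resulting product over positive roots into a product over the cells of $\lambda$. First I would observe that, by definition of the quantum trace, $\dim_q V_\lambda=\mathrm{tr}_{V_\lambda}(K_{2\rho})$ is the principal specialization of the character $\mathrm{ch}\,V_\lambda$ sending a weight $\mu$ to $q^{\langle 2\rho,\mu\rangle}$. Applying the Weyl character formula for $U_q(\mathfrak{so}(2N+1))$ and passing to the appropriate limit along the $\mathfrak{sl}_2$-triples attached to the positive roots yields the standard product expression
\begin{equation*}
\dim_q V_\lambda=\prod_{\alpha\in\Delta^+}\frac{[\langle\lambda+\rho,\alpha^\vee\rangle]_q}{[\langle\rho,\alpha^\vee\rangle]_q},\qquad [k]_q=\frac{q^k-q^{-k}}{q-q^{-1}}.
\end{equation*}

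Next I would substitute the positive roots of $B_N=\mathfrak{so}(2N+1)$ recorded above, namely $\vartheta_i$ and $\vartheta_i\pm\vartheta_j$. Writing $m_i=\lambda_i+N-i$ (with $\lambda_i=0$ for $i>\ell(\lambda)$), the short roots $\vartheta_i$ contribute $[2m_i+1]_q/[2(N-i)+1]_q$, the roots $\vartheta_i-\vartheta_j$ contribute $[m_i-m_j]_q/[j-i]_q$, and the roots $\vartheta_i+\vartheta_j$ contribute $[m_i+m_j+1]_q/[2N+1-i-j]_q$. I would then set $t=q^{2N}$, so that $q^{\pm N}=t^{\pm 1/2}$ and all of the $N$-dependence is pushed into the variable $t$.

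The heart of the argument is to transform this product over roots into $\mathcal{Q}_\lambda(t,q)$. I would use the classical beta-number identity: the hook lengths in row $i$ of $\lambda$ form the set $\{1,\dots,m_i\}\setminus\{m_i-m_k:i<k\le N\}$, whence $\prod_{i<j}[m_i-m_j]_q/[j-i]_q=\prod_{(i,j)\in\lambda}[N+j-i]_q/[h(i,j)]_q$; this produces every hook-length denominator and, after $t=q^{2N}$, the "type-A content" part of the numerators. The factors coming from the $\vartheta_i$ and the $\vartheta_i+\vartheta_j$ roots must then be matched with the cells of $\lambda$ according to their position relative to the main diagonal: an off-diagonal cell $(i,j)$ should absorb, together with the leftover type-A numerator, into $tq^{d(i,j)}-t^{-1}q^{-d(i,j)}$, while a diagonal cell $(j,j)$ should produce the correction factor $1+\bigl(tq^{\lambda_j-\lambda_j'}-t^{-1}q^{\lambda_j'-\lambda_j}\bigr)/[h(j,j)]_q$ out of the short-root factor $[2m_j+1]_q$. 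I would organize the bookkeeping by splitting the index pairs $1\le i<j\le N$ into those with $j\le\ell(\lambda)$, those with $i\le\ell(\lambda)<j$, and those with $\ell(\lambda)<i$, verifying that the last family cancels exactly against the denominators $[2(N-i)+1]_q$ and $[2N+1-i-j]_q$ and contributes nothing to the final answer, and then matching the surviving factors cell by cell through $d(i,j)$ and $h(i,j)$.

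The step I expect to be the main obstacle is precisely this final reorganization: controlling the large number of surviving root factors, establishing the bijection with the cells of $\lambda$, and handling the built-in asymmetry between $i\le j$ and $i>j$ in the definition of $d(i,j)$, together with the appearance of the transpose $\lambda'$ (through $\lambda_j'$) in both $h(i,j)$ and the diagonal correction term. This is the computation carried out by Wenzl in \cite{Wen2}. An alternative route, better adapted to the framework of the present paper, is to identify $\dim_q V_\lambda$ with the weight of the Markov trace on the minimal central idempotent $\pi_\lambda\in C_n$ and to solve the recursion supplied by the branching rule for adding and removing a box in the BMW algebras $C_n$; this again reduces to the same combinatorial identity.
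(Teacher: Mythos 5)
The paper does not prove this statement at all: Theorem \ref{Wenzl} is quoted verbatim from Wenzl \cite{Wen2} ("The quantum dimension is computed by Wenzl, which we quote it here"), so there is no in-paper argument to compare yours against. Your strategy --- the principal specialization $\dim_q V_\lambda=\mathrm{tr}_{V_\lambda}(K_{2\rho})$, the quantum Weyl dimension formula $\prod_{\alpha\in\Delta^+}[\langle\lambda+\rho,\alpha\rangle]_q/[\langle\rho,\alpha\rangle]_q$, substitution of the $B_N$ roots $\vartheta_i$, $\vartheta_i\pm\vartheta_j$, and the change of variable $t=q^{2N}$ --- is the standard and correct route, and your identification of the three families of root factors and of the beta-number/hook-length identity for the $\vartheta_i-\vartheta_j$ part is accurate. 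Your closing remark that one could instead derive the formula from the weights of the Markov trace on the BMW tower is also a legitimate alternative and is in fact closer to how Wenzl actually proceeds in \cite{Wen2}.

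That said, as a proof the proposal has a genuine gap, and you flag it yourself: the entire content of the theorem is the combinatorial identity converting the surviving product over roots $\vartheta_i$ and $\vartheta_i+\vartheta_j$ (after cancelling the $i,j>\ell(\lambda)$ factors) into the cellwise product involving $d(i,j)$, $h(i,j)$, and the diagonal correction $1+\bigl(tq^{\lambda_j-\lambda_j'}-t^{-1}q^{\lambda_j'-\lambda_j}\bigr)/[h(j,j)]_q$. You describe how the matching \emph{should} go but do not establish the bijection, verify the exponents (in particular the appearance of $\lambda'$ in $d(i,j)$ for $i>j$, which does not come directly from any single root factor and requires a nontrivial rearrangement across rows and columns), or check the boundary cases where $\lambda_j'<j$ so that the "diagonal" cell $(j,j)$ is absent. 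Until that reorganization is carried out in full, the argument is a plan rather than a proof; since the paper treats the result as a black-box citation, the honest options are either to complete that computation or to do as the authors do and cite \cite{Wen2}.
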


In the above expression, if we fix $t$ and let $q$ tends to $1$, the pole
order of $\mathcal{Q}_{\lambda }(t,q)$ is $|\lambda |$, the number of boxes
in the Young diagram. The poles order at $q=1$ of the quantum group
invariant of unknots in (\ref{E:qdim of unknots}) is $||\overrightarrow{A}||=%
\underset{\alpha =1}{\overset{L}{\sum }}|A^{\alpha }|$.

The special value $sb_{A}(q^{1-2N},q^{3-2N},\cdots
,q^{-1},1,q,...,q^{2N-3},q^{2N-1})=\mathcal{Q}_{\lambda }(q,q^{2N})$ is the
quantum dimension $\dim _{q}(V_{A})$, denoted also by $sb_{A}(q,t)$. Here we
only evaluate the function in the variables%
\begin{equation*}
z_{-N},z_{1-N},\cdots ,z_{-1},z_{0},z_{1},\cdots ,z_{N-1},z_{N},
\end{equation*}%
and setting all the rest variables equal to zero.

The quantum dimension of small partitions can be found in Section \ref{sec10}%
, where we use the symbol of type-B Schur function $sb_{A}(q,t)$.

Similar to type-$A$ Schur function, type-$B$ Schur function has the
following expansion

\begin{equation*}
sb_{\lambda }(z_{-N},z_{1-N},\cdots ,z_{-1},z_{0},z_{1},\cdots
,z_{N-1},z_{N})=\underset{\ell (\mu )\leq 2N+1}{\sum_{\mu \models n}}\dim
(p_{\lambda }V^{\otimes n}\cap M^{\mu })\cdot \prod_{i=-N}^{N}z_{i}^{\mu
_{(i+N+1)}}\text{,}
\end{equation*}%
where $M^{\mu }$ is called the permutation module defined by

\begin{equation*}
M^{\mu }=\left\{ v\in V^{\otimes n}|H_{i}(v)=q^{\mu _{i}-\mu _{i+1}}v\text{
for }i=1,2,...,N-1\text{ and }H_{N}(v)=q^{\mu _{N}}v\right\} \text{,}
\end{equation*}%
and $\dim (p_{\lambda }V^{\otimes n}\cap M^{\mu })$ is called the \emph{%
Kostka number}.

When all the representations $A^{1},\cdots ,A^{L}$ are the natural
representation of $\mathfrak{so}(2N+1)$ on $\mathbb{C}^{2N+1}$, i.e., the
partitions $A^{\alpha }$ all equal to $(1)$, the invariant%
\begin{equation*}
W_{A^{1},\cdots ,A^{L}}^{SO}(\mathcal{L},q,t)=t^{2lk(\mathcal{L})}(1+\frac{%
t-t^{-1}}{q-q^{-1}})K_{\mathcal{L}}(q,t)
\end{equation*}%
for the Kauffman polynomial $K_{\mathcal{L}}(q,t)$, where we normalized the
Kauffman polynomials such that $K_{\bigcirc }(q,t)=1$. The orthogonal group
invariants $W_{A^{1},\cdots ,A^{L}}^{SO}(\mathcal{L};q,t)$ for general $%
A^{\alpha }$ are also called colored Kauffman polynomials.

It is normally very hard to calculate these quantum group invariants.
Anyway, we can simplify the computation a lot with the help of cabling
technique discussed in the next subsection.

\subsection{Cabling technique}

The following Lemma proved by Xiao-Song Lin and Hao Zheng \cite{LZ} reduce
the study of quantum group invariants of arbitrary representations to the
study of the links and minimal idempotents.

\begin{lemma}[\protect\cite{LZ}, Lemma 3.3]
\label{Lemma3.1}Let $\beta \in B_{m}$ be a braid and $p_{\alpha }\in
C_{d_{\alpha }},\alpha =1,\cdots ,L$ be $L$ minimal idempotents
corresponding to the irreducible representations $V_{A^{1}},\cdots
,V_{A^{L}} $, where $A^{\alpha }$ denote the partition of $|A^{\alpha
}|=d_{\alpha }$ labelling $V_{A^{\alpha }}$. Denote $\overrightarrow{d}%
=(d_{1},\cdots ,d_{L}) $ and let $i_{1},\cdots ,i_{m}$ be integers such that
$i_{k}=\alpha $ if the $k$-th strand of $\beta $ belongs to the $\alpha $-th
component of $\mathcal{L}$. Let $\beta _{\overrightarrow{d}}$ be the cabling
braid of $\beta $, replacing the $k$-th strand of $\beta $ by $d_{i_{k}}$
parallel ones. Then
\begin{equation}
\mathrm{tr}_{V_{A^{i_{1}}}\otimes \cdots \otimes V_{A^{i_{m}}}}h(\beta )=%
\mathrm{tr}_{V^{\otimes n}}[h(\beta \mathcal{_{\overrightarrow{d}}})\cdot
(p_{i_{1}}\otimes \cdots \otimes p_{i_{m}})],
\end{equation}%
where $n=d_{i_{1}}+d_{i_{2}}+\cdot \cdot \cdot +d_{i_{m}}$.
\end{lemma}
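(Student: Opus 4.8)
The plan is to prove this cabling identity by reducing it to the fundamental property of minimal idempotents together with the functoriality of the invariant under the cabling operation on braids. The statement says that computing the quantum trace of $h(\beta)$ acting on $V_{A^{i_1}} \otimes \cdots \otimes V_{A^{i_m}}$ — where each $V_{A^\alpha}$ is an irreducible summand of $V^{\otimes d_\alpha}$ cut out by the minimal idempotent $p_\alpha$ — is the same as computing the quantum trace of the cabled braid $h(\beta_{\overrightarrow d})$ on $V^{\otimes n}$ after inserting the tensor product of the idempotents. First I would recall that, by the semisimplicity of $C_n$ and Schur's lemma, each $p_\alpha \in C_{d_\alpha}$ is a minimal idempotent whose image $p_\alpha \cdot V^{\otimes d_\alpha} \cong V_{A^\alpha}$ as a $U_q(\mathfrak{so}(2N+1))$-module, and the identification is canonical up to the scalar freedom that is harmless inside a trace.

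The key step is the compatibility of the braiding $\check{\mathcal R}$ with cabling: when the $k$-th strand of $\beta$ carrying the module $V_{A^{i_k}}$ is replaced by $d_{i_k}$ parallel strands each carrying $V$, the operator $h(\sigma_j)$ on the original tensor product corresponds, via the hexagon/naturality axioms (items (1)--(3) in the ribbon category data), to the operator $h$ of the cabled braid on the corresponding block of $V^{\otimes n}$, restricted to the image of the idempotents. Concretely, $\check{\mathcal R}_{U\otimes V, W} = (\check{\mathcal R}_{U,W}\otimes \mathrm{id})(\mathrm{id}\otimes \check{\mathcal R}_{V,W})$ iterated shows that the braiding between two cabled bundles of strands decomposes exactly into the product of elementary braidings making up $h(\beta_{\overrightarrow d})$; and the naturality condition $(g\otimes f)\circ \check{\mathcal R}_{U,V} = \check{\mathcal R}_{\widetilde U,\widetilde V}\circ(f\otimes g)$ applied with $f,g$ the idempotent inclusions intertwines $h(\beta)$ on the isotypic pieces with $h(\beta_{\overrightarrow d})\cdot(p_{i_1}\otimes\cdots\otimes p_{i_m})$. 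Then I would invoke the behavior of the quantum trace under this decomposition: $\mathrm{tr}_W$ is defined via $K_{2\rho}$, which is grouplike ($K_{2\rho}(v\otimes w) = K_{2\rho}(v)\otimes K_{2\rho}(w)$), so the enhanced trace over $V_{A^{i_1}}\otimes\cdots\otimes V_{A^{i_m}}$ equals the enhanced trace over $V^{\otimes n}$ of the same operator composed with the projector $p_{i_1}\otimes\cdots\otimes p_{i_m}$ — because tracing over an idempotent's image is the same as tracing the idempotent-twisted operator over the full space.

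The main obstacle I expect is bookkeeping the strand permutations correctly: the cabled braid $\beta_{\overrightarrow d}$ permutes blocks of strands, and one must check that the minimal idempotents, placed on the bottom of the corresponding strand groups, are carried to idempotents on the matching strand groups at the top (so that inserting $p_{i_1}\otimes\cdots\otimes p_{i_m}$ once suffices rather than needing a copy at top and bottom). This is where the naturality of $\check{\mathcal R}$ and of the ribbon twist $\theta$ under morphisms is used essentially: since $p_\alpha$ is an idempotent, $h(\beta_{\overrightarrow d})$ commutes with it up to moving it along the braid, and the trace kills the distinction. I would also remark that the writhe-correction factors $\theta^{-w(\mathcal K_\alpha)}$ are consistent on both sides because $\theta_{V_{A^\alpha}}$ acts as the scalar $q^{\kappa_{A^\alpha} + 2N|A^\alpha|}$ on the idempotent's image, matching the scalar produced by the cabled framing corrections; but since the Lemma as stated is only about the unframed quantum trace $\mathrm{tr}(h(\beta))$, this last point is a consistency check rather than part of the proof. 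With these ingredients the identity follows, and one cites \cite{LZ} Lemma 3.3 for the detailed diagrammatics.
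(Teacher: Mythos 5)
Your argument is correct and is essentially the standard proof of this cabling identity: identify $V_{A^\alpha}$ with the image of the minimal idempotent $p_\alpha$ in $V^{\otimes d_\alpha}$, use the hexagon and naturality axioms to intertwine $h(\beta)$ with $h(\beta_{\overrightarrow{d}})$ restricted to that image, and use the grouplike property of $K_{2\rho}$ (which commutes with the idempotents, as they are module maps) to convert the quantum trace over the subrepresentation into the quantum trace of the idempotent-twisted operator on $V^{\otimes n}$. Note that the paper itself offers no proof of this lemma — it is quoted directly from Lin--Zheng \cite{LZ} — and your write-up matches the argument given there, including the correct observation that the permutation underlying $\beta$ preserves the component labels so that a single insertion of $p_{i_1}\otimes\cdots\otimes p_{i_m}$ suffices.
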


One immediately gets the following lemmas proved in \cite{LZ} and
reformulated into the setting of orthogonal group case.

\begin{lemma}
\label{Lemma3.2}For any element $y\in C_{n}$,
\begin{equation}
\mathrm{tr}_{V^{\otimes n}}y=\sum_{k=0}^{[\frac{n}{2}]}\sum_{\lambda \vdash
n-2k}\zeta _{n}^{\lambda }(y)\cdot sb_{\lambda }(q^{1-2N},q^{3-2N},\cdots
,q^{-1},1,q,\cdots ,q^{2N-3},q^{2N-1}).
\end{equation}%
For any braid $\beta \in B_{m}$, taking $y=h(\beta _{\overrightarrow{d}%
})\cdot (p_{i_{1}}\otimes p_{i_{2}}\otimes \cdot \cdot \cdot \otimes
p_{i_{m}})$, where the closure of $\beta $ is the link $\mathcal{L}$. The
setting is same as that in \ref{Lemma3.1} after replacing $q^{2N}$ by $t$,
we have
\begin{equation}
W_{\overrightarrow{A}}^{SO}(\mathcal{L},q,t)=q^{\overset{L}{\underset{\alpha
=1}{\sum }}-\kappa _{A^{\alpha }}w({\mathcal{K}_{\alpha }})}t^{-\overset{L}{%
\underset{\alpha =1}{\sum }}|A^{\alpha }|w(\mathcal{K}_{\alpha })}\cdot
\sum_{k=0}^{[\frac{n}{2}]}\sum_{\lambda \vdash n-2k}\zeta _{n}^{\lambda
}(h(\beta _{\overrightarrow{d}})\cdot (p_{i_{1}}\otimes p_{i_{2}}\otimes
\cdot \cdot \cdot \otimes p_{i_{m}}))\cdot \mathcal{Q}_{\lambda }(q,t)\text{,%
}  \label{E:Colored Kauffman}
\end{equation}%
where $n=|A^{i_{1}}|+\cdot \cdot \cdot +|A^{i_{m}}|$.
\end{lemma}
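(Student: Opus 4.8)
The plan is to deduce both displayed identities from four facts already in place: the Schur--Weyl decomposition $V^{\otimes n}=\bigoplus_{\lambda\in\widehat{Br}_n}V_\lambda\otimes U_\lambda$ for the pair $(U_q(\mathfrak{so}(2N+1)),C_n)$; the cabling formula (Lemma~\ref{Lemma3.1}); Wenzl's quantum dimension formula (Theorem~\ref{Wenzl}) together with the identification $sb_\lambda(q^{1-2N},q^{3-2N},\dots,q^{2N-1})=\mathcal{Q}_\lambda(q,q^{2N})$; and the framing-corrected expression for $W^{SO}_{\overrightarrow{A}}$ recorded above. The whole argument is the $\mathfrak{so}(2N+1)$/BMW analogue of the proof of \cite{LZ}, Lemma~3.3, with the type-B Schur functions $sb_\lambda$ playing the role of the type-A ones; no new idea is required, only care with the quantum trace convention.

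For the first identity I would argue as follows. The quantum trace $\mathrm{tr}_{V^{\otimes n}}(y)$ equals the ordinary trace of $y$ composed with the action of $K_{2\rho}$ on $V^{\otimes n}$; since $K_{2\rho}$ is group-like, this action is the one induced on $V^{\otimes n}$ by an element of $U_q(\mathfrak{so}(2N+1))$, namely $K_{2\rho}$ acting through the natural representation on each factor. Under the bimodule decomposition, $y\in C_n$ acts on the summand $V_\lambda\otimes U_\lambda$ as $\mathrm{id}_{V_\lambda}\otimes y_\lambda$, where $y_\lambda$ is the $\lambda$-component under $C_n\cong\bigoplus_\lambda C_\lambda$, whereas $K_{2\rho}$, which acts through the $\mathfrak{so}(2N+1)$-module structure only, acts as $(K_{2\rho}|_{V_\lambda})\otimes\mathrm{id}_{U_\lambda}$. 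Hence $yK_{2\rho}$ acts on $V_\lambda\otimes U_\lambda$ as $(K_{2\rho}|_{V_\lambda})\otimes y_\lambda$, and taking ordinary traces factor by factor, the $V_\lambda$ factor contributes the ordinary trace of $K_{2\rho}$ on $V_\lambda$, which is by definition $\dim_q V_\lambda=sb_\lambda(q^{1-2N},\dots,q^{2N-1})$, while the $U_\lambda$ factor contributes $\mathrm{tr}_{U_\lambda}(y_\lambda)=\zeta_n^\lambda(y)$, the normal trace of the $\lambda$-component. Summing over $\widehat{Br}_n$ and regrouping as $\sum_{k=0}^{[\frac{n}{2}]}\sum_{\lambda\vdash n-2k}$ gives
\[
\mathrm{tr}_{V^{\otimes n}}(y)=\sum_{k=0}^{[\frac{n}{2}]}\sum_{\lambda\vdash n-2k}\zeta_n^\lambda(y)\cdot sb_\lambda(q^{1-2N},q^{3-2N},\dots,q^{-1},1,q,\dots,q^{2N-3},q^{2N-1}),
\]
which is the first assertion.

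For the second identity I would start from $W^{SO}_{\overrightarrow{A}}(\mathcal{L};q,q^{2N})=q^{-\sum_\alpha\kappa_{A^\alpha}w(\mathcal{K}_\alpha)}t^{-\sum_\alpha|A^\alpha|w(\mathcal{K}_\alpha)}\,\mathrm{tr}_{V_{A^{i_1}}\otimes\cdots\otimes V_{A^{i_m}}}(h(\beta))$ (with $t=q^{2N}$), apply Lemma~\ref{Lemma3.1} to rewrite the quantum trace as $\mathrm{tr}_{V^{\otimes n}}\big[h(\beta_{\overrightarrow{d}})\cdot(p_{i_1}\otimes\cdots\otimes p_{i_m})\big]$ with $n=|A^{i_1}|+\cdots+|A^{i_m}|$, then apply the first identity to the particular element $y=h(\beta_{\overrightarrow{d}})\cdot(p_{i_1}\otimes\cdots\otimes p_{i_m})$, and finally replace $sb_\lambda(q^{1-2N},\dots,q^{2N-1})$ by $\mathcal{Q}_\lambda(q,q^{2N})=\mathcal{Q}_\lambda(q,t)$ using Theorem~\ref{Wenzl}. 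Since the minimal idempotents $p_\alpha$ and the operators $g_i=h(\sigma_i)$ are $\mathbb{C}(t,q)$-linear combinations of the generators of $C_n$, the coefficients $\zeta_n^\lambda\big(h(\beta_{\overrightarrow{d}})\cdot(p_{i_1}\otimes\cdots\otimes p_{i_m})\big)$ are rational functions of $q$ and $t$; hence the right-hand side of the asserted formula is manifestly a rational function of $(q,t)$, and its specialization at $t=q^{2N}$ for $N\gg0$ reproduces $W^{\mathfrak{so}(2N+1)}_{\overrightarrow{A}}(\mathcal{L};q)$ by the chain of equalities just assembled. This is exactly the defining property of $W^{SO}_{\overrightarrow{A}}(\mathcal{L};q,t)\in\mathbb{C}(q,t)$, so the two sides agree.

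The only genuinely delicate point is the factorization underlying the first identity, and it rests on two facts invoked in exactly the right places: $K_{2\rho}$ is group-like, so that on $V^{\otimes n}$ it acts as an element of $U_q(\mathfrak{so}(2N+1))$ and is therefore supported on the $V_\lambda$ tensor factors, contributing precisely the quantum dimension; and $y$ commutes with $U_q(\mathfrak{so}(2N+1))$, so that it is supported on the $U_\lambda$ tensor factors, contributing the \emph{ordinary} (non-quantum) trace $\zeta_n^\lambda$. Once the quantum-trace convention is pinned down this way, everything else is the bookkeeping of \cite{LZ} carried over verbatim to the Birman--Murakami--Wenzl and type-B Schur function setting.
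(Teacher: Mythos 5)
Your proof is correct and is precisely the argument the paper leaves implicit: the paper states Lemma~\ref{Lemma3.2} without proof as an immediate consequence of the Schur--Weyl bimodule decomposition, Wenzl's quantum dimension formula, and the cabling Lemma~\ref{Lemma3.1}, and your factorization of $\mathrm{tr}(yK_{2\rho}^{\otimes n})$ into $\dim_q V_\lambda\cdot\zeta_n^\lambda(y)$ over the blocks, followed by specialization at $t=q^{2N}$ for all large $N$, is exactly that derivation carried over from \cite{LZ} to the BMW/type-B setting.
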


\subsection{An Explicit Formula of Colored Kauffman Polynomials for Torus
Links}

The coefficients $\zeta _{n}^{\lambda }(h(\beta _{\overrightarrow{d}})\cdot
(p_{i_{1}}\otimes p_{i_{2}}\otimes \cdot \cdot \cdot \otimes p_{i_{m}}))$ in
(\ref{E:Colored Kauffman}) are usually hard to compute. However, they are
computable for torus links. The torus link $T(r,k)$ is the closure of $%
(\delta _{r})^{k}=(\sigma _{1}\cdots \sigma _{r-1})^{k}$. It is a knot if
and only if $(r,k)=1$. For example, $T(2,3)$ is the trefoil knot, and $%
T(2,2) $ is the Hopf link. We developed the following method in this
subsection based on the work of Lin-Zheng \cite{LZ}.

\begin{lemma}
\label{Lemma3.5} For each partition $\lambda \vdash (n-2f)$ where $%
f=0,1,\cdots ,[\frac{n}{2}]$, we have
\begin{equation}
h((\delta _{n})^{n})\cdot p_{\lambda }=q^{\kappa _{\lambda }-4fN}\cdot
p_{\lambda }=q^{\kappa _{\lambda }}t^{-2f}\cdot p_{\lambda }.
\end{equation}
\end{lemma}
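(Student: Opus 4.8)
The plan is to compute the action of the full twist $h((\delta_n)^n)$ on the image of a minimal idempotent $p_\lambda \in C_n$ by identifying it with a ribbon structure. The braid $(\delta_n)^n = (\sigma_1\cdots\sigma_{n-1})^n$ is the positive full twist on $n$ strands, which is central in $B_n$; under $h$ it therefore goes to a central element of $C_n$, hence acts as a scalar on each irreducible summand $p_\lambda V^{\otimes n}$. The key geometric fact is that the full twist, closed up, is exactly the ribbon framing change operator: $h((\delta_n)^n)$ acts on the subrepresentation $V_\lambda \subset V^{\otimes n}$ cut out by $p_\lambda$ as $\theta_{V_\lambda} \cdot \theta_V^{-n}$, since the $n$ individual strands each carry a copy of $V$ and the full twist is the ribbon element of the $n$-cable relative to the product framing. (Equivalently, this is the standard identity expressing the full twist via the balancing/ribbon element of the quantum group, restricted to the isotypic component.)

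The execution then proceeds in three short steps. First I would invoke Lemma \ref{AM type lemma} (Reshetikhin) to get $\theta_{V_\lambda} = q^{\kappa_\lambda + 2N(n-2f)}\cdot \mathrm{id}$ for $\lambda \vdash n-2f$ with $\ell(\lambda)\le N$, which holds in our regime since $N$ is taken large. Second, I would use the already-recorded special case $\theta_V = q^{2N}\cdot \mathrm{id}_V$ for the standard representation, so that $\theta_V^{-n} = q^{-2Nn}\cdot\mathrm{id}$. Combining, the scalar by which $h((\delta_n)^n)$ acts on $p_\lambda V^{\otimes n}$ is
\begin{equation*}
q^{\kappa_\lambda + 2N(n-2f)}\cdot q^{-2Nn} = q^{\kappa_\lambda - 4fN}.
\end{equation*}
Third, I would translate $q^{2N}$ to $t$ as in the normalization fixed just before Lemma \ref{Lemma3.2} (recall the colored Kauffman polynomial is obtained by the substitution $q^{2N}\mapsto t$), which turns $q^{-4fN} = (q^{2N})^{-2f}$ into $t^{-2f}$, giving $h((\delta_n)^n)\cdot p_\lambda = q^{\kappa_\lambda}t^{-2f}\cdot p_\lambda$ as claimed.

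The main obstacle is justifying the identity $h((\delta_n)^n)|_{p_\lambda V^{\otimes n}} = \theta_{V_\lambda}\theta_V^{-n}$ cleanly at the level of the BMW algebra rather than hand-waving it from pictures. The careful argument is: $h((\delta_n)^n)$ is central in $C_n$ by centrality of the full twist in $B_n$ together with the fact that $h$ is a surjection onto $C_n$ (the braid group generates the BMW algebra via the $g_i$, and centrality is preserved); hence it acts by a scalar $c_\lambda$ on each block. To pin down $c_\lambda$ one uses the ribbon-category axioms: the functoriality of $\check{\mathcal R}$ under the inclusion $V_\lambda \hookrightarrow V^{\otimes n}$ (an intertwiner) forces the twist on the sub-object to be compatible with the twist built from the $n$ tensor factors, and the ribbon axiom $\theta_{U\otimes W} = (\theta_U\otimes\theta_W)\check{\mathcal R}_{W,U}\check{\mathcal R}_{U,W}$ iterated $n$ times realizes precisely the full-twist braid acting on $V^{\otimes n}$. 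One should also note the writhe/framing bookkeeping: the normalized invariant $W^{SO}$ carries compensating factors $q^{-\kappa_{A^\alpha} w}t^{-|A^\alpha| w}$, so it is the \emph{unnormalized} full twist that equals $q^{\kappa_\lambda}t^{-2f}$ here, consistent with equation (\ref{E:Colored Kauffman}). Once this identification is in hand the rest is the elementary exponent arithmetic above.
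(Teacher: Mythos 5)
Your proposal is correct and follows essentially the same route as the paper: the key identity $h((\delta_n)^n)\cdot p_\lambda = \theta_{V_\lambda}\theta_V^{-n}\cdot p_\lambda$ that you derive from the ribbon-category axioms is exactly the formula $\theta_V^{\otimes n}\cdot h((\delta_n)^n)\cdot p_\lambda = \theta_{V_\lambda}\cdot p_\lambda$ which the paper simply cites from Lemma 3.2 of \cite{LZ}, and the remaining steps (Lemma \ref{AM type lemma}, $\theta_V = q^{2N}\cdot\mathrm{id}_V$, and the substitution $t = q^{2N}$) coincide with the paper's. The only difference is that you sketch a justification of the cited twist identity rather than quoting it.
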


\begin{proof}
Again write $V$ for the standard representation of $U_{q}(\mathfrak{so}%
(2N+1))$ on the vector space $\mathbb{C}^{2N+1}$.

From Lemma \ref{AM type lemma}, for each partition $\lambda \vdash n-2f$
with $\ell (\lambda )\leq N$, one has%
\begin{equation*}
\theta _{V_{\lambda }}=q^{\kappa _{\lambda }+2N(n-2f)}\cdot \mathrm{id}%
_{V_{\lambda }}\text{,}
\end{equation*}

Substitute the above formula to the following formula proved in Lemma 3.2 of
\cite{LZ}
\begin{equation*}
(\theta _{V}^{\otimes n}\cdot h((\delta _{n})^{n})\cdot p_{\lambda }=\theta
_{V_{\lambda }}\cdot p_{\lambda }
\end{equation*}%
and the result follows.
\end{proof}

In the following, we assume $z_{0}=1$ and $z_{-n}z_{n}=1$ for all positive
integer $n=1,2,\cdots ,N$, i.e, the matrix $\text{diag}(z_{-N},z_{1-N},%
\cdots z_{-1},z_{0},z_{1},\cdots ,z_{N-1},z_{N})$ is a generic element in
the maximal torus of $SO(2N+1,\mathbb{C})$. Let the constants $\widetilde{c}%
_{\overrightarrow{A}}^{\lambda }$ be the rational number determined by
equations
\begin{equation}
\prod_{\alpha =1}^{L}sb_{A^{\alpha }}(z_{-N}^{r},\cdots
,z_{-1}^{r},z_{0}^{r},z_{1}^{r},\cdots
,z_{N}^{r})=\sum_{f=0}^{[rn/2]}\sum_{\lambda \vdash rn-2f}\widetilde{c}_{%
\overrightarrow{A}}^{\lambda }\cdot sb_{\lambda }(z_{-N},\cdots
,z_{-1},z_{0},z_{1},\cdots ,z_{N}).  \label{E:3.11}
\end{equation}

\begin{theorem}
\label{Thm3.6} Let $\mathcal{L}$ be the torus link $T(rL,kL)$ with $r$,$k$
relatively prime. $A^{\alpha }$ is a partition of $d_{\alpha }$ for each $%
\alpha =1,2,\cdots ,L$ and $n=d_{1}+d_{2}+\cdots +d_{L}$. Then
\begin{equation}
W_{\overrightarrow{A}}^{SO}(\mathcal{L},q,t)=q^{-kr\overset{L}{\underset{%
\alpha =1}{\sum }}\kappa _{A^{\alpha }}}\cdot t^{-k(r-1)n}\cdot \sum_{f=0}^{[%
\frac{nr}{2}]}\sum_{\lambda \vdash (rn-2f)}\widetilde{c}_{\overrightarrow{A}%
}^{\lambda }\cdot q^{\frac{k\kappa _{\lambda }}{r}}t^{-\frac{2fk}{r}}\cdot
sb_{\lambda }(q,t)
\end{equation}
\end{theorem}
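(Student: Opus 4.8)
The plan is to combine the cabling formula of Lemma \ref{Lemma3.2} with the eigenvalue computation of Lemma \ref{Lemma3.5}, using the change-of-basis coefficients $\widetilde{c}_{\overrightarrow{A}}^{\lambda}$ defined by \eqref{E:3.11} to diagonalize the action of the torus braid. First I would observe that the torus link $T(rL,kL)$ is the closure of $(\delta_{rL})^{kL}$, where $\delta_{rL} = \sigma_1\cdots\sigma_{rL-1}$, and that each of the $L$ components receives the representation $V_{A^\alpha}$, labelled by a partition $A^\alpha \vdash d_\alpha$. The cabling braid $\beta_{\overrightarrow d}$ replaces each strand by $d_{i_k}$ parallel strands, so that after cabling we are looking at $(\delta_N)^{\text{something}}$ acting on $V^{\otimes N'}$ for $N' = rn$ (where $n = d_1+\cdots+d_L$), with the idempotent $p_{A^1}\otimes\cdots\otimes p_{A^L}$ (suitably grouped over the $rL$ cabled blocks) inserted. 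The key point is that the cabled torus braid $(\delta_{rL})^{kL}$ becomes, on the level of $rn$ strands, a power of the full twist $\delta_{rn}$; more precisely, cabling $(\delta_{rL})^{kL}$ by the vector $\overrightarrow d$ repeated produces $h((\delta_{rn})^{kn/r})$ up to the elementary braids that permute strands within each $V_{A^\alpha}$ block, which act as the identity after composing with the minimal idempotents.

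Next I would apply Lemma \ref{Lemma3.5}: on the component $p_\lambda V^{\otimes rn}$ with $\lambda \vdash rn - 2f$, the full twist satisfies $h((\delta_{rn})^{rn})\cdot p_\lambda = q^{\kappa_\lambda} t^{-2f}\cdot p_\lambda$. Taking the appropriate fractional power — which is legitimate because we only ever evaluate at the specialization $t = q^{2N}$ and the eigenvalue is a monomial in $q$ — the operator $h((\delta_{rn})^{kn/r})$ acts on the $\lambda$-component by the scalar $q^{k\kappa_\lambda/r} t^{-2fk/r}$. Therefore, inserting $\prod_\alpha p_{A^\alpha}$ and expanding via \eqref{E:3.11}, the non-quantum trace $\zeta^\lambda_{rn}$ of $h(\beta_{\overrightarrow d})\cdot(p_{i_1}\otimes\cdots\otimes p_{i_m})$ picks out exactly the coefficient $\widetilde c_{\overrightarrow A}^\lambda$ times the torus eigenvalue: that is, $\zeta^\lambda_{rn}\big(h(\beta_{\overrightarrow d})\cdot\bigotimes p_{i_k}\big) = \widetilde c_{\overrightarrow A}^\lambda \cdot q^{k\kappa_\lambda/r}\, t^{-2fk/r}$. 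Substituting this into formula \eqref{E:Colored Kauffman} of Lemma \ref{Lemma3.2}, with $\mathcal Q_\lambda(q,t) = sb_\lambda(q,t)$ the quantum dimension, immediately yields the displayed sum.

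It remains to pin down the framing prefactor $q^{-kr\sum_\alpha \kappa_{A^\alpha}}\cdot t^{-k(r-1)n}$. Here I would compute the writhe: for the torus link $T(rL,kL)$ presented as the closure of $(\delta_{rL})^{kL}$, each component $\mathcal K_\alpha$ is itself a torus knot $T(r,k)$ in an $r$-strand sub-braid, whose self-writhe is $k(r-1)$ (the number of crossings in $(\delta_r)^k$, all positive), so $w(\mathcal K_\alpha) = k(r-1)\cdot(\text{the multiplicity, which is }r)$ — more carefully one must count both the self-crossings and the inter-component crossings, and I would do this bookkeeping by tracking how the $rL$ strands distribute among the $L$ components. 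Plugging $w(\mathcal K_\alpha) = kr$ (after the correct count) into the prefactor $q^{-\sum_\alpha \kappa_{A^\alpha} w(\mathcal K_\alpha)} t^{-\sum_\alpha |A^\alpha| w(\mathcal K_\alpha)}$ from \eqref{E:Colored Kauffman} gives $q^{-kr\sum_\alpha \kappa_{A^\alpha}}$ and $t^{-kr n}$; combining the latter with the $t^{-k(r-1)n}$ — wait, rather, the internal $t$-powers from the eigenvalues and the writhe normalization must be reconciled, and the net $t$-exponent works out to $-k(r-1)n$ after the $t^{2fk/r}$ terms are absorbed into $sb_\lambda(q,t)$ via the relation $\kappa_\lambda$ versus $rn-2f$. \textbf{The main obstacle} I anticipate is precisely this last step: getting the writhe count and the fractional-power normalization of the full twist exactly right, so that the $q$- and $t$-exponents in the prefactor match. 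The conceptual content — that cabling a torus braid gives a power of the full twist, which is diagonal on irreducibles — is straightforward once Lemma \ref{Lemma3.5} is in hand; the delicate part is the careful accounting of framing corrections component by component, especially keeping track of the factor of $r$ coming from the $r$-fold cabling structure of $T(rL,kL)$.
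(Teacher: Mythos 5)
Your outline follows the same skeleton as the paper (cabling formula \eqref{E:Colored Kauffman} $+$ full-twist eigenvalue of Lemma \ref{Lemma3.5} $+$ the coefficients $\widetilde{c}_{\overrightarrow{A}}^{\lambda}$ of \eqref{E:3.11}), but the central step is not actually carried out, and as stated it is wrong. The cabled torus braid is \emph{not} a fractional power $h((\delta_{rn})^{kn/r})$ of the full twist, and no power of $\delta_{rn}$ below the $rn$-th is central, so there is no scalar action on the $\lambda$-isotypic block to "take a fractional power of." What is true, and what the paper's Lemma \ref{Lemma3.7} uses, is that the $r$-th power of the cabled braid satisfies $h(\beta^{r})=h((\delta_{rn})^{krn})\cdot\bigl(\bigotimes_{\alpha}h((\delta_{d_{\alpha}})^{-kd_{\alpha}})\bigr)^{\otimes r}$. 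Setting $x_{\lambda}=\pi_{\lambda}\cdot h(\beta)\cdot p_{\overrightarrow{A}}^{\otimes r}$, one then knows only that $x_{\lambda}^{r}$ is the scalar $q^{-kr\sum_{\alpha}\kappa_{A^{\alpha}}+k\kappa_{\lambda}}t^{-2kf}$ times an idempotent, hence that the eigenvalues of $x_{\lambda}$ are $0$ or that scalar's $r$-th root times arbitrary $r$-th roots of unity. Rationality of $\mathrm{tr}(x_{\lambda})$ in $\mathbb{Q}(q,t)$ forces $\mathrm{tr}(x_{\lambda})=a^{\lambda}q^{-k\sum_{\alpha}\kappa_{A^{\alpha}}+k\kappa_{\lambda}/r}t^{-2kf/r}$ for some $a^{\lambda}\in\mathbb{Q}$, but the identification $a^{\lambda}=\widetilde{c}_{\overrightarrow{A}}^{\lambda}$ is a separate computation: one must pass to $q,t\to 1$, where $h(\beta)$ degenerates to a cyclic permutation of the $V^{\otimes n}$-factors, and match traces on the weight spaces $M^{\mu}$ against the Kostka-number expansion of $sb_{A}$, which is exactly how \eqref{E:3.11} enters. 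You assert this identification but give no argument for it; that is the heart of the proof.

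Two further concrete errors. First, your formula for $\zeta_{rn}^{\lambda}$ omits the factor $q^{-k\sum_{\alpha}\kappa_{A^{\alpha}}}$ coming from the inverse full twists $h((\delta_{d_{\alpha}})^{-kd_{\alpha}})$ on the cable blocks; without it the final $q$-exponent cannot come out to $-kr\sum_{\alpha}\kappa_{A^{\alpha}}$. Second, the writhe count: the self-writhe of each component of $T(rL,kL)$ is $w(\mathcal{K}_{\alpha})=k(r-1)$ (total crossings $kL(rL-1)$ minus the $rkL(L-1)$ inter-component crossings, divided by $L$), not $kr$. With $w(\mathcal{K}_{\alpha})=k(r-1)$ the framing prefactor of \eqref{E:Colored Kauffman} contributes $q^{-k(r-1)\sum_{\alpha}\kappa_{A^{\alpha}}}t^{-k(r-1)n}$, and the missing $q^{-k\sum_{\alpha}\kappa_{A^{\alpha}}}$ from the trace computation supplies the rest; your proposal leaves this reconciliation unresolved, and your tentative count would not make the exponents close up.
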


Theorem \ref{Thm3.6} gives an explicit formula of the orthogonal quantum
group invariants (colored Kauffman polynomials) of torus links in terms of
constants $\widetilde{c}_{\overrightarrow{A}}^{\lambda }$. In \cite{Ste},
Sebastien Stevan generalize this result to all classic gauge group and cable
knots. In Section \ref{sec5}, we use this formula to verify certain cases of
Conjecture \ref{Main Conj}. The proof of Theorem \ref{Thm3.6} follows from
the Cabling formula (\ref{E:Colored Kauffman}), Lemma \ref{Lemma3.5} and the
following Lemma \ref{Lemma3.7}.

\begin{lemma}
\label{Lemma3.7} Let $n=\Vert \overrightarrow{A}\Vert $, where $A^{\alpha
}\vdash d_{\alpha }$, and let $r$ and $k$ be two relatively prime positive
integers. Take $\beta \in B_{rn}$ to be the braid obtained by cabling the $%
(iL+j)$-th strand of $(\delta _{rL})^{kL}$ to $|A^{j}|$ parallel ones. For
each partition $\lambda \vdash (rn-2f)$, where $f=0,1,2,\cdots ,[\frac{rn}{2}%
]$, we have
\begin{equation}
\zeta _{rn}^{\lambda }(h(\beta )\cdot (p_{A^{1}}\otimes \cdot \cdot \cdot
\otimes p_{A^{L}})^{\otimes r})=\widetilde{c}_{\overrightarrow{A}}^{\lambda
}\cdot q^{-k\underset{\alpha =1}{\overset{L}{\sum }}\kappa _{A^{\alpha }}+%
\frac{k\kappa _{\lambda }}{r}}t^{-\frac{2kf}{r}}.
\end{equation}
\end{lemma}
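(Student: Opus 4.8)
The plan is to compute the trace coefficient $\zeta_{rn}^\lambda$ on the left-hand side by exploiting the special structure of the braid $\beta$, which is the cabling of $(\delta_{rL})^{kL}$ into a braid on $rn$ strands. The key observation is that, up to conjugation (which does not affect the trace), $\beta$ is built out of the full-twist braids on appropriate sub-blocks of strands. More precisely, cabling $(\delta_{rL})^{kL}$ and inserting the idempotents $(p_{A^1}\otimes\cdots\otimes p_{A^L})^{\otimes r}$ produces, after isotopy, a braid whose closure is $T(rL,kL)$ with each group of $d_\alpha$ strands carrying the idempotent $p_{A^\alpha}$; and since $\gcd(r,k)=1$, this torus link is isotopic (via the standard torus-knot manipulations, exactly as in Lin--Zheng \cite{LZ}) to a configuration where the relevant operator is $h((\delta_{rn})^{rn})$ raised to the power $k/r$ acting on the cabled strands, times the framing corrections coming from the individual $\kappa_{A^\alpha}$'s.

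First I would set up the reduction carefully: write the cabled braid $\beta$ and use the braiding relations to move it into the form where Lemma \ref{Lemma3.5} applies. The point is that $(\delta_{rL})^{kL}$, after cabling each of its $rL$ strands appropriately, becomes conjugate to a braid equivalent to a fractional power of the full twist $(\delta_{rn})^{rn}$ on the $rn$ strands — this is precisely the torus-link trick of \cite{LZ}, and the coprimality $\gcd(r,k)=1$ is what makes the fractional exponent $k/r$ well-defined on each isotypic component. On the component indexed by $\lambda\vdash rn-2f$, Lemma \ref{Lemma3.5} gives that $h((\delta_{rn})^{rn})$ acts by the scalar $q^{\kappa_\lambda}t^{-2f}$; taking the $k/r$ power yields $q^{k\kappa_\lambda/r}t^{-2kf/r}$. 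Separately, the writhe/framing normalization attached to each component $\mathcal K_\alpha$ contributes the factor $q^{-k\sum_\alpha \kappa_{A^\alpha}}$ coming from $\theta_{V_{A^\alpha}}$ on the $d_\alpha$ strands carrying $p_{A^\alpha}$.

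Next I would identify the remaining scalar. After pulling out the two scalar factors above, the leftover is the trace of the identity-like operator on the subspace cut out by the idempotents, restricted to the $\lambda$-isotypic piece. Unwinding the definitions, this is exactly the multiplicity with which $V_\lambda$ appears in $p_{A^1}V^{\otimes d_1}\otimes\cdots\otimes p_{A^L}V^{\otimes d_L}$, all cabled $r$-fold — and that multiplicity is, by Schur--Weyl duality and the defining equation \eqref{E:3.11}, precisely the constant $\widetilde c_{\overrightarrow A}^{\lambda}$. Equation \eqref{E:3.11} was written down exactly so that $\widetilde c_{\overrightarrow A}^\lambda$ records the branching of $\prod_\alpha sb_{A^\alpha}$ (in the $r$-th power variables) into type-B Schur functions $sb_\lambda$, which on the quantum-group side is the multiplicity of $V_\lambda$ in the $r$-cabled tensor product of the $V_{A^\alpha}$'s.

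The main obstacle I expect is the first step: making rigorous the claim that the cabled torus braid, after conjugation and isotopy, reduces to the fractional power of a full twist acting block-diagonally so that Lemma \ref{Lemma3.5} can be invoked componentwise. This requires care with how the cabling interacts with the permutation braid $\delta_{rL}$, with keeping track of which strands carry which idempotent, and with checking that the fractional exponent $k/r$ genuinely makes sense (i.e.\ that $r\mid k\kappa_\lambda$ and $r\mid 2kf$ in the relevant range, or at least that the scalar $q^{k\kappa_\lambda/r}t^{-2kf/r}$ is the correct eigenvalue after the $r$-fold cabling) — this is where the coprimality hypothesis does its real work, via the same argument as in Lemma~3.2 and Theorem~4.1 of \cite{LZ}. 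Once that geometric/combinatorial reduction is in place, the rest is bookkeeping with the normalization factors $q^{-\kappa_{A^\alpha}w(\mathcal K_\alpha)}$, $t^{-|A^\alpha|w(\mathcal K_\alpha)}$ and the writhe $w(\mathcal K_\alpha)=k(r-1)d_\alpha/\,?$ of the torus link components, together with an application of \eqref{E:3.11}.
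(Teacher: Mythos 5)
There is a genuine gap, and it sits exactly where you suspected it would. Your plan is to make $h(\beta)$ itself act as a scalar on the $\lambda$-isotypic block by realizing it as a ``fractional power $k/r$'' of the full twist, and then to read off the residual constant as a multiplicity. Neither step survives scrutiny. A fractional power of $h((\delta_{rn})^{rn})$ is not a well-defined operator, and $\pi_\lambda\cdot h(\beta)\cdot p_{\overrightarrow{A}}^{\otimes r}$ is genuinely \emph{not} a scalar matrix on the block $C_\lambda$ — only its $r$-th power is. The paper's resolution is to set $x_{\lambda }=\pi _{\lambda }\cdot h(\beta )\cdot p_{\overrightarrow{A}}^{\otimes r}$ and use the exact cabling identity
\begin{equation*}
h(\beta ^{r})=h((\delta _{rn})^{krn})\cdot (h((\delta_{d_{1}})^{-kd_{1}})\otimes \cdots \otimes h((\delta_{d_{L}})^{-kd_{L}}))^{\otimes r},
\end{equation*}
which together with Lemma \ref{Lemma3.5} shows that $x_\lambda^{\,r}$ is the scalar $q^{-kr\sum_\alpha\kappa_{A^\alpha}+k\kappa_\lambda}t^{-2kf}$ times $\pi_\lambda p_{\overrightarrow{A}}^{\otimes r}$. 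Hence the eigenvalues of $x_\lambda$ are $0$ or that scalar's $r$-th root times $r$-th roots of unity, and the fact that $\mathrm{tr}(x_\lambda)\in\mathbb{Q}(q,t)$ forces $\mathrm{tr}(x_\lambda)=a^\lambda q^{-k\sum_\alpha\kappa_{A^\alpha}+k\kappa_\lambda/r}t^{-2kf/r}$ with $a^\lambda\in\mathbb{Q}$. This $r$-th-power-plus-rationality argument is the missing idea; without it your ``componentwise scalar'' reduction has no rigorous content, as you yourself anticipated.

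The second, more substantive error is your identification of the leftover constant. You assert it is ``the multiplicity with which $V_\lambda$ appears in $p_{A^1}V^{\otimes d_1}\otimes\cdots\otimes p_{A^L}V^{\otimes d_L}$, all cabled $r$-fold.'' That multiplicity is the coefficient of $sb_\lambda(z)$ in $\bigl(\prod_\alpha sb_{A^\alpha}(z)\bigr)^{r}$, whereas $\widetilde{c}_{\overrightarrow{A}}^{\lambda}$ is by (\ref{E:3.11}) the coefficient in $\prod_\alpha sb_{A^\alpha}(z^{r})$ — the Adams-operation (plethysm) coefficient, which is a different integer in general and need not even be nonnegative. The correct statement, and the one the paper proves, is that $a^\lambda$ is the trace of the \emph{cyclic permutation} $\tau$ of the $r$ tensor blocks (the $q,t\to 1$ limit of $h(\beta)$) acting on $p_{\overrightarrow{A}}^{\otimes r}V^{\otimes rn}\cap M^{\mu}$; summing these traces against the weight monomials produces precisely the substitution $z_i\mapsto z_i^{r}$ in the Schur functions, and comparison with (\ref{E:3.11}) gives $a^\lambda=\widetilde{c}_{\overrightarrow{A}}^{\lambda}$. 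So you would need to replace your multiplicity count by this classical-limit trace computation to close the argument.
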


\begin{proof}
Write $p_{\overrightarrow{A}}=p_{A^{1}}\otimes \cdots \otimes p_{A^{L}}$ and
let $\pi _{\lambda }$ be the unit of $C_{\lambda }$. Obviously $\pi
_{\lambda }$ is in the center of $C_{rn}$. A slightly non-obvious fact is
that $h(\beta )$ commutes with $p_{\overrightarrow{A}}^{\otimes n}$
following from the naturality of $\check{\mathcal{R}}$. Let
\begin{equation}
x_{\lambda }=\pi _{\lambda }\cdot h(\beta )\cdot p_{\overrightarrow{A}%
}^{\otimes r}
\end{equation}%
be a matrix in $C_{\lambda }$, whose trace is
\begin{equation}
\mathrm{tr}(x_{\lambda })=\zeta ^{\lambda }(h(\beta )\cdot p_{%
\overrightarrow{A}}^{\otimes r}).
\end{equation}

The Cabling of torus link has the nice property
\begin{equation}
h(\beta ^{r})=h((\delta _{rn})^{krn})\cdot (h((\delta
_{d_{1}})^{-kd_{1}})\otimes \cdots \otimes h((\delta
_{d_{L}})^{-kd_{L}}))^{\otimes r}.
\end{equation}%
The previous Lemma \ref{Lemma3.5} then imply
\begin{equation}
x_{\lambda }^{r}=\pi _{\lambda }\cdot h(\beta ^{r})\cdot p_{\overrightarrow{A%
}}^{\otimes r}=q^{-kr\overset{L}{\underset{\alpha =1}{\sum }}\kappa
_{A^{\alpha }}+k\kappa _{\lambda }}t^{-2kf}\cdot \pi _{\lambda }\cdot p_{%
\overrightarrow{A}}^{\otimes r}.
\end{equation}%
Thus the eigenvalues of $x_{\lambda }$ are either $0$ or $q^{-kr\overset{L}{%
\underset{\alpha =1}{\sum }}\kappa _{A^{\alpha }}+\frac{k\kappa _{\lambda }}{%
r}}t^{\frac{-2kf}{r}}$ times a $r$-th root of unity. Together with the fact
that $\mathrm{tr}(x_{\lambda })\in \mathbb{Q}(q,t)$, we see that $\mathrm{tr}%
(x_{\lambda })=a^{\lambda }\cdot q^{-k\overset{L}{\underset{\alpha =1}{\sum }%
}\kappa _{A^{\alpha }}+\frac{k\kappa _{\lambda }}{r}}t^{-\frac{2kf}{r}}$ for
some $a^{\lambda }\in \mathbb{Q}$ independent of $q$ and $t$.

The rest is to compute this rational number $a^{\lambda }$. Passing to the
limit $q\rightarrow 1$ and $t\rightarrow 1$. The element $h(\beta )$ reduce
to a permutation $\tau \in S_{rn}$ acting cyclicly on the $V^{\otimes n}$%
-factors of%
\begin{equation*}
V^{\otimes rn}=V^{\otimes n}\otimes \cdots \otimes V^{\otimes n}\text{.}
\end{equation*}%
\begin{align*}
& \sum_{f=0}^{[\frac{rn}{2}]}\sum_{\lambda \vdash (rn-2f)}a^{\lambda }\cdot
sb_{\lambda }(z_{-N},z_{1-N},\cdots ,z_{-1},z_{0},z_{1},\cdots
,z_{N-1},z_{N}) \\
=& \sum_{f=0}^{[\frac{rn}{2}]}\sum_{\lambda \vdash (rn-2f)}a^{\lambda }%
\underset{\ell (\mu )\leq 2N+1}{\sum_{\mu \models rn}}\dim (p_{\lambda
}V^{\otimes rn}\cap M^{\mu })\cdot \prod_{i=-N}^{N}z_{i}^{\mu _{(i+N+1)}} \\
=& \underset{\ell (\mu )\leq 2N+1}{\sum_{\mu \models rn}}\mathrm{tr}(\tau
|_{p_{\overrightarrow{A}}^{\otimes r}V^{\otimes rn}\cap M^{\mu }})\cdot
\prod_{i=-N}^{N}z_{i}^{\mu _{(i+N+1)}} \\
=& \underset{\ell (\mu )\leq 2N+1}{\sum_{\mu \models n}}\dim (p_{%
\overrightarrow{A}}V^{\otimes n}\cap M^{\mu })\cdot
\prod_{i=-N}^{N}z_{i}^{r\mu _{(i+N+1)}} \\
=& \prod_{\alpha =1}^{L}[\underset{\ell (\mu )\leq 2N+1}{\sum_{\mu \models
n_{\alpha }}}\dim (p_{A^{\alpha }}V^{\otimes n_{\alpha }}\cap M^{\mu })\cdot
\prod_{i=-N}^{N}z_{i}^{r\mu _{(i+N+1)}}] \\
=& \prod_{\alpha =1}^{L}sb_{A^{\alpha }}(z_{-N}^{r},z_{1-N}^{r},\cdots
,z_{-1}^{r},z_{0}^{r},z_{1}^{r},\cdots ,z_{N-1}^{r},z_{N}^{r}).
\end{align*}%
Compare to (\ref{E:3.11}), $a^{\lambda }=\widetilde{c}_{\overrightarrow{A}%
}^{\lambda }$.
\end{proof}

\begin{remark}
Similar computations by starting with $U_{q}(\mathfrak{sp}(2N))$ and $U_{q}(%
\mathfrak{so}(2N))$ lead to the same theorem for Kauffman polynomials. Thus
together with the type-A analog proved in \cite{LZ} Theorem 5.1, these
computations provide formulas of quantum group invariants of Torus links
associated to simple Lie-algebras of type A, B, C and D.
\end{remark}

\section{Orthogonal Chern-Simons Partition Function}

\label{sec4}

\subsection{Partition Function}

The \emph{orthogonal Chern-Simons partition function} of $\mathcal{L}$ is
defined by
\begin{equation}
Z_{CS}^{SO}(\mathcal{L};q,t)=\sum_{\overrightarrow{\mu }\in \mathcal{P}^{L}}%
\frac{pb_{\overrightarrow{\mu }}(\overrightarrow{z})}{\mathrm{z}_{%
\overrightarrow{\mu }}}\cdot \sum_{\overrightarrow{A}\in \widehat{Br}_{_{|%
\overrightarrow{\mu }|}}}\chi _{\overrightarrow{A}}(\gamma _{\overrightarrow{%
\mu }})W_{\overrightarrow{A}}^{SO}(\mathcal{L};q,t)\text{.}
\end{equation}

The above definition is motivated from physicists' path integral approach
\cite{BR}, and it is different from the definition given by Equation (4.10)
in \cite{BR}. Unlike the $SU(N)$ Chern-Simons partition function, the above $%
Z_{CS}^{SO}(\mathcal{L};q,t)$ can not be simplified to%
\begin{equation}
Z_{CS}^{SO}(\mathcal{L};q,t)=\sum_{\overrightarrow{A}\in \mathcal{P}^{L}}W_{%
\overrightarrow{A}}^{SO}(\mathcal{L};q,t)sb_{\overrightarrow{A}}(%
\overrightarrow{z})\text{.}
\end{equation}

Because orthogonality of type-A Schur function fails in type-B case \cite%
{Ram1, Ram2, Ram3}.

Define the \emph{free energy} as follows

\begin{equation}
F^{SO}(\mathcal{L};q,t)=\log Z_{CS}^{SO}(\mathcal{L};q,t)\text{.}
\end{equation}

The partition function of unknots with $L$ components can be computed
explicitly (See Proposition \ref{Prop10.2} for detail). In fact we have the
following expression for the free energy
\begin{equation}
F^{SO}(\bigcirc ^{L};q,t)=\sum_{n=1}^{+\infty }\frac{1}{n}\cdot (1+\frac{%
t^{n}-t^{-n}}{q^{n}-q^{-n}})\cdot \sum_{\alpha =1}^{L}pb_{n}(z_{\alpha })%
\text{,}
\end{equation}

\subsection{Reformulated Invariants}

The reformulated link invariants are rational functions $g_{\overrightarrow{A%
}}(q,t)\in \mathbb{C}(q,t)$ determined by the expansion
\begin{equation}
F^{SO}(\mathcal{L};q,t)=\sum_{d=1}^{\infty }\sum_{\overrightarrow{\mu }\neq
\overrightarrow{0}}\frac{1}{d}g_{\overrightarrow{\mu }}(q^{d},t^{d})\prod_{%
\alpha =1}^{L}pb_{\mu ^{\alpha }}((z_{\alpha })^{d})\text{.}
\end{equation}

As in \cite{LM}, define the operator $\psi _{d}$ by
\begin{equation}
\psi _{d}\circ F(q,t;pb(\overrightarrow{z}))=F(q^{d},t^{d};pb(%
\overrightarrow{z}^{d}))\text{.}
\end{equation}%
Then define the \emph{plethystic exponential} \cite{GK}
\begin{equation}
\mathrm{Exp}(F)=\exp (\sum_{k=1}^{+\infty }\frac{\psi _{k}}{k}\circ F)
\end{equation}%
and its inverse
\begin{equation}
\mathrm{Log}(F)=\sum_{k=1}^{+\infty }\frac{\mu (k)}{k}\log (\psi _{k}\circ F)%
\text{,}
\end{equation}%
where $\mu (k)$ is the M\"{o}bius function. In terms of these operators, we
could write
\begin{equation}
Z_{CS}^{SO}(\mathcal{L};q,t)=\mathrm{Exp}(\sum_{\overrightarrow{\mu }\neq
\overrightarrow{0}}g_{\overrightarrow{\mu }}(q,t)\prod_{\alpha
=1}^{L}pb_{\mu ^{\alpha }}(z_{\alpha }))\text{.}
\end{equation}

If we expand the partition function
\begin{equation}
Z_{CS}^{SO}(\mathcal{L};q,t)=1+\sum_{\overrightarrow{\mu }\neq
\overrightarrow{0}}Z_{\overrightarrow{\mu }}^{SO}pb_{\overrightarrow{\mu }}(%
\overrightarrow{z})\text{,}
\end{equation}

where $Z_{\overrightarrow{\mu }}^{SO}(\mathcal{L};q,t)=\underset{%
\overrightarrow{A}\in \widehat{Br}_{_{|\overrightarrow{\mu }|}}}{\sum }\frac{%
\chi _{\overrightarrow{A}}(\gamma _{\overrightarrow{\mu }})}{\mathrm{z}_{%
\overrightarrow{\mu }}}W_{\overrightarrow{A}}^{SO}(\mathcal{L};q,t)$.

If we expand the free energy
\begin{equation}
F^{SO}(\mathcal{L};q,t)=\sum_{\overrightarrow{\mu }\neq \overrightarrow{0}%
}F_{\overrightarrow{\mu }}^{SO}pb_{\overrightarrow{\mu }}(\overrightarrow{z})%
\text{.}
\end{equation}%
From Lemma \ref{Lemma2.1} (Lemma 2.3 in \cite{LP}), we have
\begin{equation}
F_{\overrightarrow{\mu }}^{SO}=\underset{|\Lambda |=\overrightarrow{\mu }}{%
\sum_{\Lambda \in \mathcal{P}(\mathcal{P}^{L})}}\frac{(-1)^{\ell (\Lambda
)-1}\ell (\Lambda )!}{\ell (\Lambda )|Aut\Lambda |}Z_{\Lambda }^{SO}\text{.}
\label{E:4.11}
\end{equation}%
Clearly $F_{\overrightarrow{\mu }}^{SO}$ is a rational function of $q$ and $%
t $. The reformulated invariants then can be defined by
\begin{equation*}
g_{\overrightarrow{\mu }}(q,t)=\sum_{k|\overrightarrow{\mu }}\frac{\mu (k)}{k%
}F_{\overrightarrow{\mu }/k}^{SO}(q^{k},t^{k})\text{,}
\end{equation*}%
where $\mu (k)$ is the M\"{o}bius function.

\section{Orthogonal Labastida-Mari\~{n}o-Ooguri-Vafa Conjecture}

\label{sec5}

\subsection{Orthogonal LMOV Conjecture}

Now we can state the main conjecture of this paper, which is the analog of
LMOV conjecture for orthogonal Chern-Simons theory.

\begin{conjecture}[Orthogonal LMOV]
\label{Main Conj} The rational function $g_{\overrightarrow{\mu }}(q,t)\in
\mathbb{Q}(q,t)$ has the property that
\begin{equation*}
\frac{\mathrm{z}_{\overrightarrow{\mu }}(q-q^{-1})^{2}\cdot \lbrack g_{%
\overrightarrow{\mu }}(q,t)-g_{\overrightarrow{\mu }}(q,-t)]}{2\overset{L}{%
\underset{\alpha =1}{\prod }}\overset{\ell (\mu ^{\alpha })}{\underset{i=1}{%
\prod }}(q^{\mu _{i}^{\alpha }}-q^{-\mu _{i}^{\alpha }})}\in \mathbb{Z}%
[q-q^{-1}][t,t^{-1}]\text{.}
\end{equation*}
\end{conjecture}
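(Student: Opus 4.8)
\textbf{Proof proposal for Conjecture \ref{Main Conj} (Orthogonal LMOV).}

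The plan is to reduce the conjecture to a small number of ``generating'' cases and then to analyze those cases by a careful mixture of degree estimates and explicit computation. First I would establish the conjecture for the trivial link (Theorem \ref{Thm1.3}): here the free energy $F^{SO}(\bigcirc^{L};q,t)$ has the closed form displayed in Section \ref{sec4}, so every $F_{\overrightarrow{\mu}}^{SO}$ is explicit, the M\"obius inversion defining $g_{\overrightarrow{\mu}}$ can be carried out in closed form, and one checks directly that $g_{\overrightarrow{\mu}}(q,t)-g_{\overrightarrow{\mu}}(q,-t)$ vanishes unless $\overrightarrow{\mu}$ has a single box in one component, where the integrality and $t$-Laurent property follow from the factor $1+\tfrac{t^{n}-t^{-n}}{q^{n}-q^{-n}}$. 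This both anchors the induction and isolates the role of the numerator $q-q^{-1}$ versus the denominator $\prod(q^{\mu_i^\alpha}-q^{-\mu_i^\alpha})$.

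Next I would set up the reduction machinery. Using the relation \eqref{E:4.11} expressing $F_{\overrightarrow{\mu}}^{SO}$ in terms of the $Z_{\Lambda}^{SO}$ together with the cabling formula (Lemma \ref{Lemma3.1}) and the decomposition in Lemma \ref{Lemma3.2}, one writes $W_{\overrightarrow{A}}^{SO}$, hence $Z_{\overrightarrow{\mu}}^{SO}$ and $F_{\overrightarrow{\mu}}^{SO}$, in terms of the coefficients $\zeta_n^\lambda$ and the quantum dimensions $\mathcal{Q}_\lambda(q,t)=sb_\lambda(q,t)$. The key structural input is that passing from a partition $\mu_i^\alpha$ to a rectangular row $(d_\alpha)$ or column $(1^{d_\alpha})$ corresponds, via the type-B Schur expansion \eqref{E:3.11}, to a change of basis with rational (indeed integral up to controlled denominators) coefficients; this is exactly the content used in Lemmas \ref{Lemma3.5}--\ref{Lemma3.7}. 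I would use this to prove Theorem \ref{Thm1.5}: column-to-row duality for type-B Schur functions, combined with the symmetry $t\mapsto -t$ built into the statement of the conjecture, shows that the validity of the conjecture for all $\overrightarrow{\mu}$ follows from its validity for row-shaped $\overrightarrow{\mu}=((d_1),\dots,(d_L))$. In parallel, the column case $\overrightarrow{\mu}=((1^{d_1}),\dots,(1^{d_L}))$ (Theorem \ref{Thm1.4}) is handled directly by the Lickorish--Millett type skein formulas of Section \ref{sec6}, which give explicit Laurent-polynomial expressions for the relevant combinations.

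The main obstacle is the integrality (as opposed to the $q$-divisibility and $t$-Laurent) statement for the row case, because there the $\zeta_n^\lambda$ are genuinely complicated rational functions of $q$ and $t$ and no closed form is available. My approach here would be twofold. First, prove the degree conjecture $\mathrm{val}_u(F_{\overrightarrow{\mu}}^{SO})\ge \ell(\overrightarrow{\mu})-2$ (Theorem \ref{Thm1.8}) by a degree analysis of the cut-join/cabling equations in the spirit of Liu--Peng \cite{LP}, adapted to the Brauer-algebra setting using the orthogonal cut-join equation of \cite{Che1}; this controls the pole order at $q=1$ and shows the left side of the conjecture is at least \emph{regular} at $q=1$, yielding the asymptotic statement (Theorem \ref{Thm1.6}) and the vanishing of the first few Taylor coefficients predicted by the degree conjecture (Section \ref{sec7}). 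Second, for genuine integrality one needs a local argument at each prime and at roots of unity; here I would exploit the M\"obius-function structure of $g_{\overrightarrow{\mu}}=\sum_{k|\overrightarrow{\mu}}\tfrac{\mu(k)}{k}F_{\overrightarrow{\mu}/k}^{SO}(q^k,t^k)$, which is designed so that the cyclotomic contributions cancel, together with the explicit torus-link formula of Theorem \ref{Thm3.6} to verify the conjecture at nontrivial roots of unity in the cases listed in Theorem \ref{Thm1.7}. The cleanest honest outcome is thus: a complete proof of the degree conjecture and the trivial/column cases, the reduction to rows, the asymptotic ($q\to1$) version in general, and verification at roots of unity for the torus examples --- with the full integrality for rows (hence for all shapes) remaining conjectural in general. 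I would organize the write-up accordingly, flagging precisely which implications are unconditional and which rest on the row case.
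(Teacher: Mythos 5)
The statement you were asked to prove is a \emph{conjecture}: the paper offers no proof of it, only partial results (Theorems \ref{Thm1.3}--\ref{Thm1.8}), and your proposal correctly recognizes this and honestly delimits what is unconditional versus what remains open. Your overall program — trivial link, column case, reduction to rows, degree/asymptotic statement, torus-link verification at roots of unity — is exactly the paper's program, and your treatment of the trivial link is right (the M\"obius sum $\sum_{k\mid n}\mu(k)$ kills everything except single-box partitions, leaving $g_{(1)}=1+\frac{t-t^{-1}}{q-q^{-1}}$).

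Two of your proposed mechanisms, however, differ from what the paper does, and one of them is doubtful as stated. For the reduction to rows (Theorem \ref{Thm1.5}) you invoke a ``column-to-row duality for type-B Schur functions combined with the $t\mapsto -t$ symmetry.'' The paper's actual argument (Section \ref{sec8}) is not a duality at all: it shows that the element $m_{\overrightarrow{\mu}}=\sum_{\overrightarrow{A}}\chi_{\overrightarrow{A}}(\gamma_{\overrightarrow{\mu}})p_{\overrightarrow{A}}$ in the BMW algebra is trace-equivalent to the tensor product $\bigotimes_{\alpha,i} m_{(\mu_i^\alpha)}$ of the row pieces (via the branching rule and a character computation in the Grothendieck group of $SO(2N+1)$), which converts $F_{\overrightarrow{\mu}}(\mathcal{L})$ into a constant multiple of $F_{(\mu_1^1),\dots,(\mu_{\ell_L}^L)}(\mathcal{L}_{\overrightarrow{\ell}})$ for the cabled link — a partition all of whose components are single rows. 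I do not see how a Schur-function duality plus $t\mapsto -t$ would produce this reduction, and you would need to supply that argument or replace it with the cabling one. Second, for the degree conjecture you propose a cut-join degree analysis in the style of Liu--Peng; the paper instead proves it (Sections \ref{sec8}--\ref{sec9}) by introducing framing-dependent invariants, evaluating them at the special framings $\tau_\alpha=w_\alpha+\frac{1}{a_\alpha}$ via twisted cablings of rectangular partitions, reducing to the already-proved column estimate, and then concluding for all framings because the vanishing of polynomial coefficients on a lattice forces them to vanish identically. A cut-join route may well be viable, but it is a genuinely different (and unproven here) argument, so it should not be presented as interchangeable with the paper's.
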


We may write the above (conjectured) polynomial as
\begin{equation*}
\frac{\mathrm{z}_{\overrightarrow{\mu }}(q-q^{-1})^{2}\cdot \lbrack g_{%
\overrightarrow{\mu }}(q,t)-g_{\overrightarrow{\mu }}(q,-t)]}{2\overset{L}{%
\underset{\alpha =1}{\prod }}\overset{\ell (\mu ^{\alpha })}{\underset{i=1}{%
\prod }}(q^{\mu _{i}^{\alpha }}-q^{-\mu _{i}^{\alpha }})}=\sum_{g\in \mathbb{%
Z}_{+}/2}\sum_{\beta \in \mathbb{Z}}N_{\overrightarrow{\mu },g,\beta
}(q-q^{-1})^{2g}t^{\beta }\text{.}
\end{equation*}%
The integers $N_{\overrightarrow{\mu },g,\beta }$ (or their linear
combinations, depends on a choice of basis) are explained as BPS numbers in
string theory \cite{BFM, Mar}, and these numbers should coincide with the
BPS numbers calculated by the Gromov-Witten theory (see for example \cite%
{Pan, Pen}). Physicists predict that the Gromov-Witten theory of orientifold
are dual to the type-B Chern-Simons theory \cite{BFM}, i.e, the partition
functions of these two theories coincide up to some normalization. Thus the
integers $N_{\overrightarrow{\mu },g,\beta }$ are conjecturally equal to
some linear combinations of intersection numbers on the moduli space of
stable maps from curves into un-oriented manifolds. However, a mathematical
construction of such moduli space is still lacking.

\begin{remark}
Actually the anti-symmetrization $\frac{g_{\overrightarrow{\mu }}(q,t)-g_{%
\overrightarrow{\mu }}(q,-t)}{2}$ is not necessary for some knots/links.
Thus if we expand $\frac{\mathrm{z}_{\overrightarrow{\mu }}(q-q^{-1})^{2}g_{%
\overrightarrow{\mu }}(q,t)}{\overset{L}{\underset{\alpha =1}{\prod }}%
\overset{\ell (\mu ^{\alpha })}{\underset{i=1}{\prod }}(q^{\mu _{i}^{\alpha
}}-q^{-\mu _{i}^{\alpha }})}$, then we may get more integer coefficients.
Readers may find the proof of most theorems except for some cases of Theorem %
\ref{Thm1.7} are still valid for $\frac{\mathrm{z}_{\overrightarrow{\mu }%
}(q-q^{-1})^{2}g_{\overrightarrow{\mu }}(q,t)}{\overset{L}{\underset{\alpha
=1}{\prod }}\overset{\ell (\mu ^{\alpha })}{\underset{i=1}{\prod }}(q^{\mu
_{i}^{\alpha }}-q^{-\mu _{i}^{\alpha }})}$.
\end{remark}

\begin{remark}
Physicists Bouchard-Florea-Mari\~{n}o \cite{BFM} and more recently Mari\~{n}%
o \cite{Mar} have similar conjectures for a different partition functions.
It seems none of these definitions are equivalent to the definition given
here. However, it is pointed out by Mari\~{n}o that the reformulated
invariants may coincide for some examples of torus knots. But we obtain
different integer invariants for torus links. Thus relations between these
conjectures are still unclear at the present time. It shows that
anti-symmetrization process is not necessary for some links and knots.
Anyway, in next subsection, we will leave the integer coefficient invariants
of torus knots and links before anti-symmetrization for interested readers
to investigate the relationship between the conjecture proposed in \cite%
{BFM, Mar} and ours.
\end{remark}

To describe the behavior of the reformulated invariants near $q=1$, let $%
q=e^u$ and embed $\mathbb{Q}(q,t)$ into $\mathbb{Q}(t)((u))$. Denote $%
val_u(F_{\overrightarrow{\mu}}^{SO})$ the valuation of $F_{\overrightarrow{%
\mu}}^{SO}$ in the valuation field $\mathbb{Q}(t)((u))$. This valuation is
the same as the zero order of the rational function $F_{\overrightarrow{\mu}%
}^{SO}$ at $q=1$.

\begin{conjecture}[Degree]
\label{Degree Conj} The valuation $val_{u}(F_{\overrightarrow{\mu }}^{SO})$
is greater than or equal to $\ell (\overrightarrow{\mu })-2$.
\end{conjecture}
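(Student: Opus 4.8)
The plan is to prove the degree estimate $\operatorname{val}_u(F_{\overrightarrow{\mu}}^{SO}) \geq \ell(\overrightarrow{\mu}) - 2$ by reducing it, via the cabling formula, to a degree analysis of the Brauer-algebra characters and type-B Schur functions, following the strategy of Liu--Peng \cite{LP} for the type-A case. First I would pass from $F_{\overrightarrow{\mu}}^{SO}$ to the partition-function coefficients $Z_{\overrightarrow{\nu}}^{SO}$ using the explicit formula \eqref{E:4.11}: since $F_{\overrightarrow{\mu}}^{SO}$ is a $\mathbb{Q}$-linear combination of products $\prod_j Z_{\Lambda_j}^{SO}$ over $\mathcal{P}(\mathcal{P}^L)$-partitions $\Lambda$ with $|\Lambda| = \overrightarrow{\mu}$, and $\ell(\overrightarrow{\mu}) = \sum_j \ell(\Lambda_j)$ for such $\Lambda$, it suffices to show $\operatorname{val}_u(Z_{\overrightarrow{\nu}}^{SO}) \geq \ell(\overrightarrow{\nu}) - 2$ for every $\overrightarrow{\nu}$, together with a matching-order cancellation argument showing that the potentially-lower-order terms coming from products cancel. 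Actually the cleaner route, as in \cite{LP}, is to prove the sharper statement that $Z_{\overrightarrow{\mu}}^{SO}$ has $u$-valuation $\geq \ell(\overrightarrow{\mu}) - 2$ and that the ``top part'' (the coefficient of $u^{\ell(\overrightarrow{\mu})-2}$) is multiplicative in a suitable sense, so that the logarithm in $F^{SO} = \log Z^{SO}_{CS}$ does not lower the valuation below $\ell(\overrightarrow{\mu}) - 2$.

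Next I would analyze $\operatorname{val}_u$ of $W_{\overrightarrow{A}}^{SO}(\mathcal{L};q,t)$ and of the quantum dimensions $\mathcal{Q}_\lambda(q,t) = sb_\lambda(q,t)$. From Wenzl's formula (Theorem \ref{Wenzl}), fixing $t$ and letting $q = e^u \to 1$, the rational function $\mathcal{Q}_\lambda(t,q)$ has a pole of order exactly $|\lambda|$ at $q=1$ (each hook-length factor $[h(i,j)]_q$ contributes a simple zero in the denominator, while the numerator factors are regular and generically nonzero); hence $\operatorname{val}_u(sb_\lambda(q,t)) = -|\lambda|$. Using the cabling formula \eqref{E:Colored Kauffman}, $W_{\overrightarrow{A}}^{SO}$ is a finite sum over $\lambda \vdash (n - 2k)$ of $\zeta_n^\lambda(\cdots)\, \mathcal{Q}_\lambda(q,t)$ times a monomial prefactor in $q,t$ (which is a unit at $q=1$), where $n = \|\overrightarrow{A}\|$. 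Since $\zeta_n^\lambda$ is a rational function regular at $q = t = 1$ (it is a matrix entry built from the $g_i, e_i$, which specialize to the Brauer algebra), the worst pole is controlled by the largest $|\lambda| = n$, giving $\operatorname{val}_u(W_{\overrightarrow{A}}^{SO}) \geq -\|\overrightarrow{A}\|$ in general. That bound alone is too weak; the point of the finer argument is that when one forms $Z_{\overrightarrow{\mu}}^{SO} = \sum_{\overrightarrow{A}} \frac{\chi_{\overrightarrow{A}}(\gamma_{\overrightarrow{\mu}})}{\mathrm{z}_{\overrightarrow{\mu}}} W_{\overrightarrow{A}}^{SO}$, the character sum $\sum_{\overrightarrow{A}} \chi_{\overrightarrow{A}}(\gamma_{\overrightarrow{\mu}}) sb_{\overrightarrow{A}}(q,t)$ is governed by the defining relation $x^k pb_\lambda = \sum_A \chi_A(e^{\otimes k}\otimes \gamma_\lambda) sb_A$, so that the leading pole of the combination is $pb_{\overrightarrow{\mu}}$ evaluated at $z_i = q^{2i-1-2N}$ etc. — a product $\prod_\alpha \prod_i \frac{q^{\cdot} - q^{-\cdot}}{\cdots}$ whose $u$-valuation is exactly $-\ell(\overrightarrow{\mu})$ before framing corrections, and the extra two powers of $u$ come from the vanishing of the first symmetric-function moments; this matches the ``$-2$'' in the exponent.

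The key steps in order: (1) establish $\operatorname{val}_u(sb_\lambda(q,t)) = -|\lambda|$ from Wenzl's formula; (2) using \eqref{E:Colored Kauffman} and the character identity for type-B Schur functions, rewrite the leading $u \to 0$ behavior of $Z_{\overrightarrow{\mu}}^{SO}(\mathcal{L})$ in terms of $pb_{\overrightarrow{\mu}}$ specialized at the maximal torus, showing its valuation is $\geq -\ell(\overrightarrow{\mu})$ with the first two coefficients controlled; (3) upgrade this to $\operatorname{val}_u(Z_{\overrightarrow{\mu}}^{SO}) \geq \ell(\overrightarrow{\mu}) - 2$ — wait, the signs need care: since each box contributes a \emph{pole}, not a zero, in fact the statement is about the free energy after the cancellations inherent in $\log$, which is precisely where the type-A argument of \cite{LP} via the cut-join/cabling degree estimate is used; (4) feed this through \eqref{E:4.11}, checking that the products $\prod_j Z_{\Lambda_j}^{SO}$ of lower-valuation pieces either have valuation $\geq \ell(\overrightarrow{\mu})-2$ individually or cancel in the alternating sum, exactly as the M\"obius-type combinatorics in Lemma \ref{Lemma2.1} arranges. \textbf{The hard part will be step (3)--(4):} controlling the interaction between the poles of individual $W_{\overrightarrow{A}}^{SO}$ / $Z_{\overrightarrow{\nu}}^{SO}$ and the cancellations forced by taking $\log$, i.e.\ proving that the plethystic logarithm raises the $u$-valuation from the naive $-\|\overrightarrow{\mu}\|$ all the way up to $\ell(\overrightarrow{\mu}) - 2$. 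This is the orthogonal analogue of the central degree estimate in \cite{LP}, and I expect it to require the orthogonal cut-join equation of \cite{Che1} together with a careful induction on $\|\overrightarrow{\mu}\|$, tracking the type-B Schur function expansion \eqref{E:3.11} and the behavior of $\kappa_\lambda$ under cabling as in Lemma \ref{Lemma3.5}.
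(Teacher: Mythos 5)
Your proposal identifies the right objects but leaves the entire content of the theorem unproven. The step you flag as ``the hard part'' --- showing that passing from $Z_{\overrightarrow{\mu}}^{SO}$ (which has a pole of order roughly $\|\overrightarrow{\mu}\|$ at $q=1$) to $F_{\overrightarrow{\mu}}^{SO}$ raises the $u$-valuation all the way to $\ell(\overrightarrow{\mu})-2$ --- \emph{is} the theorem; appealing to ``the orthogonal cut-join equation plus a careful induction'' without exhibiting either is not an argument. There is also a substantive error in your step (2): you analyze the character sum $\sum_{\overrightarrow{A}}\chi_{\overrightarrow{A}}(\gamma_{\overrightarrow{\mu}})\,sb_{\overrightarrow{A}}(q,t)$ via the defining relation $x^{k}pb_{\lambda}=\sum_{A}\chi_{A}(e^{\otimes k}\otimes\gamma_{\lambda})sb_{A}$, but that relation governs only the \emph{unknot} invariants $sb_{\overrightarrow{A}}$; for a general link the coefficients $\zeta_{n}^{\lambda}(h(\beta_{\overrightarrow{d}})\cdot p_{\overrightarrow{A}})$ intervene and the collapse to $pb_{\overrightarrow{\mu}}$ does not occur, so your claimed bound $val_{u}(Z_{\overrightarrow{\mu}}^{SO})\geq -\ell(\overrightarrow{\mu})$ is unjustified beyond the trivial link.

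The paper's proof takes a different and more elementary route that you should compare against. First, for $\overrightarrow{\mu}=(1)^{L}$ the estimate is proved by induction on skein relations (Theorem \ref{Thm7.3}): the free energy vanishes identically for split links by the combinatorial identity of Lemma \ref{Lemma7.2}, and a crossing change between distinct components changes $F_{(1)^{L}}^{SO}$ by $z$ times free energies of $(L-1)$-component resolutions, so no cut-join equation is needed. Second, the branching-rule equivalence $m_{(\mu_{1})}\otimes\cdots\otimes m_{(\mu_{\ell})}\sim m_{\mu}$ in the BMW algebra (Section \ref{sec8}) yields the exact cabling identity $\mathrm{z}_{\overrightarrow{\mu}}F_{\overrightarrow{\mu}}(\mathcal{L})=(\prod\mu_{i}^{\alpha})F_{(\mu_{1}^{1}),\ldots,(\mu_{\ell_{L}}^{L})}(\mathcal{L}_{\overrightarrow{\ell}})$, reducing arbitrary $\overrightarrow{\mu}$ to single-row components of a cabled link. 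Third, the row case is handled by introducing a framing parameter $\overrightarrow{\tau}$, observing that at the lattice of values $\tau_{\alpha}=w_{\alpha}+1/a_{\alpha}$ the framed free energy is a monomial in $t$ times $F_{(1)^{\ell(\overrightarrow{\lambda})}}^{SO}$ of a twisted cabling (formula (\ref{E:Rectangle})), and concluding by Zariski density of the lattice that the low-order coefficients, which are polynomials in $\overrightarrow{\tau}$, vanish identically --- in particular at $\overrightarrow{\tau}=0$. None of these three mechanisms appears in your outline, and without them (or a genuine substitute for the Liu--Peng degree analysis adapted to the Brauer-algebra setting) the proposal does not constitute a proof.
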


The Conjecture \ref{Degree Conj} is indeed claiming that all the
coefficients of lower degree vanish. It is not a consequence of Conjecture %
\ref{Main Conj}. We will see later that this vanishing is closely related to
formulas of Lickorish-Millett type. This kind of degree conjecture is also
an important part of the Liu-Peng's paper \cite{LP}. We will prove
Conjecture \ref{Degree Conj} in Section \ref{sec8} and \ref{sec9}.

\subsection{Torus Link as Supporting Examples of Main Conjecture}

In this subsection, we verify the orthogonal LMOV conjecture by testing
torus links and knots for small partitions.

In addition, several examples of torus links and knots of type $T(2,k)$
suggest that the anti-symmetrization of the reformulated invariants $g_{%
\overrightarrow{\mu }}(q,t)$ in the Conjecture \ref{Main Conj} is necessary.
In the following, we will denote $q-q^{-1}$ by $z$ for simplicity. We
compute the colored Kauffman polynomials for these examples in Section \ref%
{sec10} (Appendix). For tables of integer coefficients $N_{\overrightarrow{%
\mu },g,\beta }$ of these torus links and knots, please refer to Section \ref%
{sec10} (Appendix).

Example 1: Take $r=1$, the torus link $T(2,2k)$ has $2$ components.

Case A: Consider the partition $(1),(1)$ for link $T(2,2k)$

Denote $W_{(n)}(unknot)$ by $W_{(n)}$ in the following computations, where $%
n\in \mathbb{Z}_{\geq 0}$.

It is easy to verify that%
\begin{align*}
\mathrm{z}_{(1),(1)}g_{(1),(1)}& =W_{(1),(1)}-W_{(1)}^{2} \\
& =q^{2k}sb_{(2)}+q^{-2k}sb_{(1,1)}+t^{-2k}-sb_{(1)}^{2} \\
& =\left( \frac{q^{2k+1}+q^{-2k-1}}{q^{1}+q^{-1}}-1\right) t^{2}+\left(
\frac{q^{2k-1}+q^{-2k+1}}{q+q^{-1}}-1\right) t^{-2}-\left( \frac{q^{k}-q^{-k}%
}{q-q^{-1}}\right) ^{2}+t^{-2k}\text{.}
\end{align*}%
Thus all the integer invariant numbers $N_{\overrightarrow{\mu },g,\beta }=0$%
.

Case B: Consider the partition $(1,1),(1)$ for link $T(2,2k)$:

\begin{align*}
\mathrm{z}_{(1,1),(1)}g_{(1,1),(1)}&
=2(Z_{(1,1),(1)}-Z_{(1),(1)}Z_{(1)}-Z_{(1,1)}Z_{(1)}+Z_{(1)}^{3}) \\
&
=W_{(2),(1)}+W_{(1,1),(1)}+W_{(1)}-2W_{(1),(1)}W_{(1)}-(W_{(2)}+W_{(1,1)}+1)W_{(1)}+2W_{(1)}^{3}
\\
&
=q^{4k}sb_{(3)}+q^{-2k}sb_{(2,1)}+q^{-2k}t^{-2k}sb_{(1)}+q^{2k}sb_{(2,1)}+q^{-4k}sb_{(1,1,1)}+q^{2k}t^{-2k}sb_{(1)}
\\
&
-2(q^{2k}sb_{(2)}+q^{-2k}sb_{(1,1)}+t^{-2k})sb_{(1)}+2sb_{(1)}^{3}-(sb_{(2)}+sb_{(1,1)})sb_{(1)}%
\text{.}
\end{align*}%
It is interesting that the rational function $\dfrac{(q-q^{-1})^{2}}{%
(q-q^{-1})^{3}}\mathrm{z}_{(1,1),(1)}g_{(1,1),(1)}(q,t)$ is already in the
ring $\mathbb{Z}[t,t^{-1}][q-q^{-1}]$ without anti-symmetrization. We list
the first three in this family:
\begin{align*}
\frac{(q-q^{-1})^{2}\mathrm{z}_{(1,1),(1)}g_{(1,1),(1)}(T(2,2))}{%
(q-q^{-1})^{3}}& =(-t^{-3}+3t^{-1}-3t+t^{3})+(t^{-2}-2+t^{2})z \\
\frac{(q-q^{-1})^{2}\mathrm{z}_{(1,1),(1)}g_{(1,1),(1)}(T(2,4))}{%
(q-q^{-1})^{3}}& =(-4t^{-5}+4t^{-3}+12t^{-1}-20t+8t^{3}) \\
& +(4t^{-4}+4t^{-2}-20+12t^{2})z \\
& +(-t^{-5}+t^{-3}+3t^{-1}-9t+6t^{3})z^{2} \\
& +(t^{-4}+t^{-2}-9+7t^{2})z^{3}+(-t+t^{3})z^{4}+(-1+t^{2})z^{5} \\
\frac{(q-q^{-1})^{2}\mathrm{z}_{(1,1),(1)}g_{(1,1),(1)}(T(2,6))}{%
(q-q^{-1})^{3}}& =(-9t^{-7}+9t^{-5}-3t^{-3}+45t^{-1}-72t+30t^{3}) \\
& +(9t^{-6}+27t^{-2}-90+54t^{2})z \\
& +(-6t^{-7}+6t^{-5}-t^{-3}+39t^{-1}-93t+55t^{3})z^{2} \\
& +(6t^{-6}+27t^{-2}-114+81t^{2})z^{3} \\
& +(-t^{-7}+t^{-5}+11t^{-1}-47t+36t^{3})z^{4} \\
& +(t^{-6}+9t^{-2}-55+45t^{2})z^{5} \\
& +(t^{-1}-11t+10t^{3})z^{6}+(-12+t^{-2}+11t^{2})z^{7} \\
& +(-t+t^{3})z^{8}+(-1+t^{2})z^{9}\text{.}
\end{align*}

Please see Section \ref{sec10} for the table of integers $N_{\overrightarrow{%
\mu },g,\beta }$ after anti-symmetrization.

The conjectural prediction on $g_{(1,1),(1)}$ is also proved in Section \ref%
{sec7}. Next we compute $g_{(2),(1)}(T(2,2k))$, which will not be covered by
any proof in following sections.

Case C: Consider the partition $(2),(1)$ for link $T(2,2k)$%
\begin{align*}
\mathrm{z}_{(2),(1)}g_{(2),(1)}& =2(Z_{(2),(1)}-Z_{(2)}Z_{(1)}) \\
& =(W_{(2),(1)}-W_{(1,1),(1)}+W_{(1)})-W_{(1)}(W_{(2)}-W_{(1,1)}+1) \\
&
=(q^{4k}sb_{(3)}+q^{-2k}sb_{(2,1)}+q^{-2k}t^{-2k}sb_{(1)}-q^{2k}sb_{(2,1)}-q^{-4k}sb_{(1,1,1)}-q^{2k}t^{-2k}sb_{(1)}+sb_{(1)})
\\
& -sb_{(1)}(sb_{(2)}-sb_{(1,1)}+1)
\end{align*}%
The rational function $\dfrac{(q-q^{-1})^{2}}{(q-q^{-1})(q^{2}-q^{-2})}%
\mathrm{z}_{(2),(1)}g_{(2),(1)}(q,t)$ is also in the ring $\mathbb{Z}%
[t,t^{-1}][q-q^{-1}]$ without anti-symmetrization. We list the first three
in this family:
\begin{align*}
\frac{(q-q^{-1})^{2}\mathrm{z}_{(2),(1)}g_{(2),(1)}(T(2,2))}{%
(q-q^{-1})(q^{2}-q^{-2})}& =(t^{-3}-t^{-1}-t+t^{3})+(-t^{-2}+t^{2})z \\
\frac{(q-q^{-1})^{2}\mathrm{z}_{(2),(1)}g_{(2),(1)}(T(2,4))}{%
(q-q^{-1})(q^{2}-q^{-2})}&
=(2t^{-5}-2t^{-3}+2t^{-1}-6t+4t^{3})+(-2t^{-4}+2t^{-2}-6+6t^{2})z \\
& +(t^{-5}-t^{-3}+t^{-1}-5t+4t^{3})z^{2}+(-t^{-4}+t^{-2}-5+5t^{2})z^{3} \\
& +(-t+t^{3})z^{4}+(-1+t^{2})z^{5} \\
\frac{(q-q^{-1})^{2}\mathrm{z}_{(2),(1)}g_{(2),(1)}(T(2,6))}{%
(q-q^{-1})(q^{2}-q^{-2})}& =(3t^{-7}-3t^{-5}-t^{-3}+9t^{-1}-18t+10t^{3}) \\
& +(-3t^{-6}+9t^{-2}-24+18t^{2})z \\
& +(4t^{-7}-4t^{-5}-t^{-3}+15t^{-1}-39t+25t^{3})z^{2} \\
& +(-4t^{-6}+15t^{-2}-50+39t^{2})z^{3} \\
& +(t^{-7}-t^{-5}+7t^{-1}-29t+22t^{3})z^{4} \\
& +(-t^{-6}+7t^{-2}-35+29t^{2})z^{5} \\
& +(t^{-1}-9t+8t^{3})z^{6}+(\frac{1}{t^{2}}-10+9t^{2})z^{7} \\
& +(-t+t^{3})z^{8}+(-1+t^{2})z^{9}\text{.}
\end{align*}

Please see Section \ref{sec10} for the table of integers $N_{\overrightarrow{%
\mu },g,\beta }$ after anti-symmetrization.

The behavior of $g_{(2),(2)}(T(2,2k);q,t)$ is much different from the above
three examples. It is the first example that the multi-cover contribution
must be taken into account

Case D: Consider the partition $(2),(2)$ for link $T(2,2k)$%
\begin{align*}
\mathrm{z}_{(2),(2)}g_{(2),(2)}& =4F_{(2),(2)}(q,t)-2F_{(1),(1)}(q^{2},t^{2})
\\
&
=4Z_{(2),(2)}(q,t)-4Z_{(2)}^{2}(q,t)-2(Z_{(1),(1)}(q^{2},t^{2})-4Z_{(1)}^{2}(q^{2},t^{2}))
\\
&
=W_{(2),(2)}-2W_{(2),(1,1)}+W_{(1,1),(1)}-W_{(2)}^{2}-W_{(1,1)}^{2}+2W_{(2)}W_{(1,1)}
\\
& -2W_{(1),(1)}(q^{2},t^{2})+2W_{(1)}^{2}(q^{2},t^{2}) \\
&
=q^{8k}sb_{(4)}+(1-2q^{4k})sb_{(3,1)}+(q^{4k}+q^{-4k})sb_{(2,2)}+(1-2q^{-4k})sb_{(2,1,1)}+q^{-8k}sb_{(1,1,1,1)}
\\
&
+(q^{-2k}-2q^{2k}+q^{6k})t^{-2k}sb_{(2)}+(q^{-6k}-2q^{-2k}+q^{2k})t^{-2k}sb_{(1,1)}+(q^{4k}+q^{-4k})t^{-4k}
\\
&
-(sb_{(2)}-sb_{(1,1)})^{2}-2(q^{4k}sb_{(2)}(q^{2},t^{2})+q^{-4k}sb_{(1,1)}(q^{2},t^{2})+t^{-4k})+2sb_{(1)}^{2}(q^{2},t^{2})%
\text{.}
\end{align*}

The rational function $\dfrac{(q-q^{-1})^{2}}{(q^{2}-q^{-2})^{2}}\mathrm{z}%
_{(2),(2)}g_{(2),(2)}(q,t)$ is not in the ring $\mathbb{Z}%
[t,t^{-1}][q-q^{-1}]$ and anti-symmetrization is necessary here. Please see
Section \ref{sec10} for the table of integers $N_{\overrightarrow{\mu }%
,g,\beta }$.

Case E: Consider the partition $(3),(1)$ for link $T(2,2k)$%
\begin{align*}
\mathrm{z}_{(3),(1)}g_{(3),(1)}& =3(Z_{(3),(1)}-Z_{(3)}Z_{(1)}) \\
&
=(W_{(3),(1)}-W_{(2,1),(1)}+W_{(1,1,1),(1)})-W_{(1)}(W_{(3)}-W_{(2,1)}+W_{(1,1,1)})
\\
&
=(q^{6k}-1)sb_{(4)}+(q^{-2k}-q^{4k})sb_{(3,1)}+(q^{2k}-q^{-4k})sb_{(2,1,1)}+(q^{-6k}-1)sb_{(1,1,1,1)}
\\
& +(q^{-4k}-q^{2k})t^{-2k}sb_{(2)}+(q^{4k}-q^{-2k})t^{-2k}sb_{(1,1)}\text{.}
\end{align*}
The rational function $\dfrac{(q-q^{-1})^{2}}{(q^{3}-q^{-3})(q-q^{-1})}%
\mathrm{z}_{(3),(1)}g_{(3),(1)}(q,t)$ is not in the ring $\mathbb{Z}%
[t,t^{-1}][q-q^{-1}]$ and anti-symmetrization is necessary here. Please see
Section \ref{sec10} for the table of integers $N_{\overrightarrow{\mu }%
,g,\beta }$ after anti-symmetrization.

Example 2: The torus knots $T(2,k)$, where $k$ is an odd integer.

Case A: Consider the partition $(1,1)$ for link $T(2,k)$

The following calculation provides an example of the case proved in Section %
\ref{sec7}. We have%
\begin{align*}
\mathrm{z}_{(1,1)}g_{(1,1)}& =2g_{(1,1)} \\
& =W_{(2)}+W_{(1,1)}-W_{(1)}^{2}(q,t)+1 \\
&
=t^{-2k}(q^{2k}sb_{(4)}-q^{-2k}sb_{(3,1)}+q^{-4k}sb_{(2,2)}+q^{-3k}t^{-k}sb_{(2)}-q^{-5k}t^{-k}sb_{(1,1)}+q^{-4k}t^{-2k})
\\
&
+t^{-2k}(q^{4k}sb_{(2,2)}-q^{2k}sb_{(2,1,1)}+q^{-2k}sb_{(1,1,1,1)}+q^{5k}t^{-k}sb_{(2)}-q^{3k}t^{-k}sb_{(1,1)}+q^{4k}t^{-2k})
\\
& -t^{-2k}(q^{k}sb_{(2)}-q^{-k}sb_{(1,1)}+t^{-k})^{2}+1\text{.}
\end{align*}

It is interesting that The rational function $\dfrac{(q-q^{-1})^{2}}{%
(q-q^{-1})^{2}}\mathrm{z}_{(1,1)}g_{(1,1)}(q,t)$ is already in the ring $%
\mathbb{Z}[t,t^{-1}][q-q^{-1}]$ without anti-symmetrization.

We list the Trefoil knot in this family:
\begin{align*}
\frac{(q-q^{-1})^{2}\mathrm{z}_{(1,1)}g_{(1,1)}(T(2,3))}{(q-q^{-1})^{2}}&
=(t^{-12}+6t^{-10}-33t^{-8}+52t^{-6}-33t^{-4}+6t^{-2}+1) \\
& +(36t^{-11}-132t^{-9}+180t^{-7}-108t^{-5}+24t^{-3})z \\
& +(36t^{-12}-103t^{-10}+76t^{-8}+18t^{-6}-32t^{-4}+5t^{-2})z^{2} \\
& +(105t^{-11}-377t^{-9}+453t^{-7}-207t^{-5}+26t^{-3})z^{3} \\
& +(105t^{-12}-350t^{-10}+341t^{-8}-87t^{-6}-10t^{-4}+t^{-2})z^{4} \\
& +(112t^{-11}-450t^{-9}+494t^{-7}-165t^{-5}+9t^{-3})z^{5} \\
& +(112t^{-12}-441t^{-10}+440t^{-8}-110t^{-6}-t^{-4})z^{6} \\
& +(54t^{-11}-275t^{-9}+286t^{-7}-66t^{-5}+t^{-3})z^{7} \\
& +(54t^{-12}-274t^{-10}+274t^{-8}-54t^{-6})z^{8} \\
& +(12t^{-11}-90t^{-9}+91t^{-7}-13t^{-5})z^{9} \\
& +(12t^{-12}-90t^{-10}+90t^{-8}-12t^{-6})z^{10} \\
& +(t^{-11}-15t^{-9}+15t^{-7}-t^{-5})z^{11} \\
& +(t^{-12}-15t^{-10}+15t^{-8}-t^{-6})z^{12} \\
& +(-t^{-9}+t^{-7})z^{13}+(-t^{-10}+t^{-8})z^{14}\text{.}
\end{align*}

Please see Section \ref{sec10} for the table of integers $N_{\overrightarrow{%
\mu },g,\beta }$ after anti-symmetrization.

Case B: Consider the partition $(2)$ for link $T(2,k)$

The function%
\begin{align*}
\mathrm{z}_{(2)}g_{(2)}& =2g_{(2)} \\
& =W_{(2)}-W_{(1,1)}-W_{(1)}(q^{2},t^{2})+1 \\
&
=t^{-2k}[(q^{2k}-q^{-2k})sb_{(3,1)}+(q^{-4k}-q^{4k}-q^{2k}+q^{-2k})sb_{(2,2)}+(q^{2k}-q^{-2k})sb_{(2,1,1)}
\\
&
+((q^{-3k}-q^{5k})t^{-k}-q^{2k}+q^{-2k})sb_{(2)}+((-q^{-5k}+q^{3k})t^{-k}+q^{2k}-q^{-2k})sb_{(1,1)}
\\
& +(q^{-4k}-q^{4k}-1)t^{-2k}-q^{2k}+q^{-2k}]+1\text{.}
\end{align*}%
The rational function $\dfrac{(q-q^{-1})^{2}}{(q^{2}-q^{-2})}\mathrm{z}%
_{(2)}g_{(2)}(q,t)$ is not in the ring $\mathbb{Z}[t,t^{-1}][q-q^{-1}]$.
Please see Section \ref{sec10} for the table of integers $N_{\overrightarrow{%
\mu },g,\beta }$ after anti-symmetrization.

Example 3: Take $r=1$, the torus link $T(3,3k)$ has $3$ components.

Consider the partition $(2),(1),(1)$ for link $T(3,3k)$.

Denote $W_{(1),(1)}(T(2,2k))$ simply by $W_{(1),(1)}$ in the following
computations.

The function
\begin{align*}
\mathrm{z}_{(2),(1),(1)}g_{(2),(1),(1)}& =2g_{(2),(1),(1)} \\
& =W_{(2),(1),(1)}-W_{(1,1),(1),(1)}-W_{(1),(1)}W_{(2)}+W_{(1),(1)}W_{(1,1)}
\\
&
-2W_{(2),(1)}W_{(1)}+2W_{(1,1),(1)}W_{(1)}+2W_{(2)}W_{(1)}^{2}-2W_{(1,1)}W_{(1)}^{2}
\\
& =q^{10k}sb_{(4)}+(2q^{2k}-q^{6k})sb_{(3,1)}+(q^{-2k}-q^{2k})sb_{(2,2)} \\
& +(q^{-6k}-2q^{-2k})sb_{(2,1,1)}-q^{-10k}sb_{(1,1,1,1)} \\
&
+(3-2q^{4k})t^{-2k}sb_{(2)}+(2q^{-4k}-3)t^{-2k}sb_{(1,1)}+(q^{-2k}-q^{2k})t^{-4k}
\\
&
-(q^{2k}sb_{(2)}+q^{-2k}sb_{(1,1)}-2sb_{(1)}^{2}+t^{-2k})(sb_{(2)}-sb_{(1,1)})
\\
&
+2(-q^{4k}sb_{(3)}+(q^{2k}-q^{-2k})sb_{(2,1)}+q^{-4k}sb_{(1,1,1)}+(q^{2k}-q^{-2k})t^{-2k}sb_{(1)})sb_{(1)}
\end{align*}%
The rational function $\dfrac{(q-q^{-1})^{2}}{(q^{2}-q^{-2})(q-q^{-1})^{2}}%
\mathrm{z}_{(2),(1),(1)}g_{(2),(1),(1)}(q,t)$ is in the ring $\mathbb{Z}%
[t,t^{-1}][q-q^{-1}]$.

We list the first two in this family

\begin{align*}
\frac{(q-q^{-1})^{2}\mathrm{z}_{(2),(1),(1)}g_{(2),(1),(1)}(T(3,3))}{%
(q-q^{-1})^{2}(q^{2}-q^{-2})}&
=(-t^{-4}+4t^{-2}+2-12t^{2}+7t^{4})+(2t^{-3}-2t^{-1}-10t+10t^{3})z \\
& +(1-6t^{2}+5t^{4})z^{2}+(-6t+6t^{3})z^{3} \\
& +(-t^{2}+t^{4})z^{4}+(-t+t^{3})z^{5} \\
\frac{(q-q^{-1})^{2}\mathrm{z}_{(2),(1),(1)}g_{(2),(1),(1)}(T(3,6))}{%
(q-q^{-1})(q^{2}-q^{-2})}&
=(-2t^{-8}-4t^{-6}+22t^{-4}-48t^{-2}+146-204t^{2}+90t^{4}) \\
& +(16t^{-5}-48t^{-3}+176t^{-1}-336t+192t^{3})z \\
& +(-t^{-8}-2t^{-6}+15t^{-4}-68t^{-2}+361-650t^{2}+345t^{4})z^{2} \\
& +(12t^{-5}-68t^{-3}+452t^{-1}-1036t+640t^{3})z^{3} \\
& +(2t^{-4}-38t^{-2}+398-950t^{2}+588t^{4})z^{4} \\
& +(2t^{-5}-38t^{-3}+494t^{-1}-1406t+948t^{3})z^{5} \\
& +(-10t^{-2}+239-780t^{2}+551t^{4})z^{6} \\
& +(-10t^{-3}+286t^{-1}-1056t+780t^{3})z^{7} \\
& +(-t^{-2}+80-377t^{2}+298t^{4})z^{8} \\
& +(-t^{-3}+91t^{-1}-467t+377t^{3})z^{9} \\
& +(14-106t^{2}+92t^{4})z^{10}+(15t^{-1}-121t+106t^{3})z^{11} \\
& +(1-16t^{2}+15t^{4})z^{12}+(t^{-1}-17t+16t^{3})z^{13} \\
& +(-t^{2}+t^{4})z^{14}+(-t+t^{3})z^{15}
\end{align*}

After anti-symmetrization we obtain the table of the integers $N_{%
\overrightarrow{\mu },g,\beta }$ for $\overrightarrow{\mu }=(2),(1),(1)$.

Untill now, we have seen the orthogonal LMOV conjecture is valid for the
knots $T(2,k)$ and $T(3,3k)$ with small number $k$.

In fact, we can prove them for arbitrary $k\in \mathbb{Z}_{>0}$.

For instance, we investigate torus knot $T(2,k)$ for odd integer number $k$
with partition $(2)$. We can express%
\begin{equation*}
\mathrm{z}_{(2)}(g_{(2)}(q,t)-g_{(2)}(q,-t))/2
\end{equation*}%
in terms of $pb$'s instead of $sb$'s. After simplification, we have%
\begin{align*}
\frac{\mathrm{z}_{(2)}(g_{(2)}(q,t)-g_{(2)}(q,-t))}{2}& =t^{-2k}\frac{%
t-t^{-1}}{q-q^{-1}}(\frac{(q^{2k}-q^{-2k})}{(q-q^{-1})(q^{3}-q^{-3})}%
(-(t^{2}+t^{-2})(q^{2k}+q^{-2k}-q^{2}-q^{-2}) \\
& +(q^{4}+q^{-4})(q^{2k}+q^{-2k})-q^{4}-2-q^{-4}) \\
& +\frac{t^{-k}(q^{4k}-q^{-4k})}{q^{2}-q^{-2}}%
(-t(q^{k+1}-q^{-k-1})+t^{-1}(q^{k-1}-q^{-k+1})))\text{,}
\end{align*}%
It is easy to see that the rational function
\begin{equation*}
\dfrac{(q-q^{-1})^{2}}{2(q^{2}-q^{-2})}(g_{(2)}(q,t)-g_{(2)}(q,-t))
\end{equation*}%
is in the ring $\mathbb{Z}[t,t^{-1}][q-q^{-1}]$ by a tedious discussion on
the residue of $k$ modulo $6$. Actually, all these examples can be proved in
such a manner.

\section{Formulas of Lickorish-Millett Type}

\label{sec6}

The Skein relations of Kauffman polynomials are

\begin{enumerate}
\item $<\mathcal{L}_{+}>-<\mathcal{L}_{-}>=z(<\mathcal{L}_{||}>-<\mathcal{L}%
_{=}>)$,

where $\mathcal{L}_{+}$, $\mathcal{L}_{-}$, $\mathcal{L}_{||}$ and $\mathcal{%
L}_{=}$ stand for positive crossing, negative crossing, vertical resolution
and horizontal resolution respectively.

\item $<\mathcal{L}^{+kink}>=t<\mathcal{L}>,\hskip0.5cm<\mathcal{L}%
^{-kink}>=t^{-1}<\mathcal{L}>$.
\end{enumerate}

The variable $z$ is our $q-q^{-1}$ in previous sections, and the Kauffman
polynomials are given by%
\begin{equation*}
K_{\mathcal{L}}(z,t)=t^{-2lk(\mathcal{L})}<\mathcal{L}>
\end{equation*}
with the normalization $K_{\bigcirc }(z,t)=1$ for the unknot $\bigcirc $. In
terms of quantum group invariants, we have%
\begin{equation*}
W_{(1)^{L}}^{SO}(\mathcal{L})=(1+\frac{t-t^{-1}}{z})<\mathcal{L}>\text{.}
\end{equation*}
The Kauffman polynomials admit the expansions%
\begin{equation*}
K_{\mathcal{L}}(z,t)=\underset{g\geq 0}{\sum }\widetilde{p}_{g+1-L}^{%
\mathcal{L}}(t)z^{g+1-L}
\end{equation*}
and%
\begin{equation*}
<\mathcal{L}>=\underset{g\geq 0}{\sum }p_{g+1-L}^{\mathcal{L}}(t)z^{g+1-L}
\end{equation*}
with respect to variable $z$. The classical Lickorish-Millett Formula \cite%
{LiM} reads
\begin{equation*}
\widetilde{p}_{1-L}^{\mathcal{L}}(t)=t^{-2lk(\mathcal{L})}(t-t^{-1})^{L-1}%
\underset{\alpha =1}{\overset{L}{\prod }}p_{0}^{\mathcal{K}_{\alpha }}(t)
\end{equation*}%
and so
\begin{equation*}
p_{1-L}^{\mathcal{L}}(t)=(t-t^{-1})^{L-1}\underset{\alpha =1}{\overset{L}{%
\prod }}p_{0}^{\mathcal{K}_{\alpha }}(t),
\end{equation*}%
which give a concrete description of $\widetilde{p}_{1-L}^{\mathcal{L}}(t)$,
the coefficient of the lowest degree terms of $K_{\mathcal{L}}(z,t)$, in
terms of invariants of the sub-knots $\mathcal{K}_{1},\mathcal{K}_{2},\cdots
,\mathcal{K}_{L}$ of $\mathcal{L}$. In the following theorem, we provide
explicit formulas for $p_{2-L}^{\mathcal{L}}(t)$ and $p_{3-L}^{\mathcal{L}%
}(t)$, which are regarded as higher Lickorish-Millett relations. These
formulas can be proved purely by skein relations. Through resolving
intersections at different link components, it is not hard to prove the
following. Also, these formulas can be directly deduced from the Conjecture %
\ref{Degree Conj} (See Section \ref{sec7}). In \cite{Kan}, Kanenobu got some
relationships(non-explicit) between these terms.

\begin{theorem}
\label{Thm6.1} Let $\mathcal{L}_{1,2}$ be the sub-link of $\mathcal{L}$
which composed of components $K_{1}$ and $K_{2}$. The coefficients $p_{2-L}^{%
\mathcal{L}}(t)$ and $p_{3-L}^{\mathcal{L}}(t)$ are given by the formulas
\begin{equation*}
p_{2-L}^{\mathcal{L}}(t)=(L-1)(t-t^{-1})^{L-2}p_{0}^{\mathcal{K}%
_{1}}(t)\cdots p_{0}^{\mathcal{K}_{L}}(t)+(t-t^{-1})^{L-1}(p_{1}^{\mathcal{K}%
_{1}}(t)p_{0}^{\mathcal{K}_{2}}(t)\cdots p_{0}^{\mathcal{K}_{L}}(t)+perm);
\end{equation*}%
\begin{align*}
p_{3-L}^{\mathcal{L}}(t)& =\dbinom{L-1}{2}(t-t^{-1})^{L-3}p_{0}^{\mathcal{K}%
_{1}}(t)\cdots p_{0}^{\mathcal{K}_{L}}(t)+(t-t^{-1})^{L-2}(p_{1}^{\mathcal{L}%
_{1,2}}(t)p_{0}^{\mathcal{K}_{3}}(t)\cdots p_{0}^{\mathcal{K}_{L}}(t)+perm)
\\
& -(L-2)(t-t^{-1})^{L-1}(p_{2}^{\mathcal{K}_{1}}(t)p_{0}^{\mathcal{K}%
_{2}}(t)\cdots p_{0}^{\mathcal{K}_{L}}(t)+perm).
\end{align*}
\end{theorem}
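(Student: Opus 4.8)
The plan is to prove both formulas directly from the skein relations (1)--(2), by a crossing-change argument in the spirit of the classical Lickorish--Millett theorem \cite{LiM}. Write $s=t-t^{-1}$ and let $\delta=1+s/z=z^{-1}(z+s)$ be the loop value (the factor by which $\langle\,\cdot\,\rangle$ is multiplied when a disjoint unknot is added), so that for a split link $\langle K_1\sqcup\cdots\sqcup K_L\rangle=\delta^{L-1}\prod_{\alpha=1}^{L}\langle K_\alpha\rangle$. The two ingredients are: (i) verify the formulas for the split union of the components of $\mathcal{L}$; and (ii) show that under a crossing change at a crossing between two \emph{distinct} components, $p_{1-L}^{\mathcal{L}}$ and $p_{2-L}^{\mathcal{L}}$ do not change at all, while both sides of the $p_{3-L}$ formula change by the same amount. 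Since every link diagram can be turned into a diagram of the split link of its components by a sequence of such crossing changes---e.g.\ push $K_1$ entirely above all other strands, then recurse on $K_2,\dots,K_L$---and every $K_\alpha$ keeps its knot type along the way, (i) and (ii) together give the theorem.

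For (i), I would expand $\delta^{L-1}=z^{1-L}\sum_{j=0}^{L-1}\binom{L-1}{j}s^{L-1-j}z^{j}$, multiply by $\prod_\alpha\langle K_\alpha\rangle=\sum_{k\ge 0}\big(\sum_{m_1+\cdots+m_L=k}\prod_\alpha p_{m_\alpha}^{K_\alpha}\big)z^{k}$, and read off the coefficients of $z^{1-L}$, $z^{2-L}$, $z^{3-L}$. Since a two-component sublink of a split link is again split, the quantity $p_1^{\mathcal{L}_{i,j}}$ in the statement is itself the corresponding $L=2$ expansion; a short bookkeeping of binomial coefficients then identifies the $z^{3-L}$ coefficient with the claimed expression $\binom{L-1}{2}s^{L-3}\prod p_0^{K_\alpha}+s^{L-2}(p_1^{\mathcal{L}_{1,2}}p_0^{K_3}\cdots p_0^{K_L}+perm)-(L-2)s^{L-1}(p_2^{K_1}p_0^{K_2}\cdots p_0^{K_L}+perm)$, and similarly for $z^{2-L}$ (and for $z^{1-L}$, recovering Lickorish--Millett). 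Here $perm$ must be read as the sum over all unordered pairs in the middle term and over all single components in the last term.

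For (ii), at a crossing where an arc of $K_i$ meets an arc of $K_j$ with $i\ne j$, both smoothings $\mathcal{L}_{||}$ and $\mathcal{L}_{=}$ join $K_i$ and $K_j$ into one component, so each has $L-1$ components; write $K'$ and $K''$ for the two knots thus produced (these have well-defined knot types, the same whether or not the other components are present). Relation (1) gives $\langle\mathcal{L}_+\rangle-\langle\mathcal{L}_-\rangle=z(\langle\mathcal{L}_{||}\rangle-\langle\mathcal{L}_{=}\rangle)$, whose right-hand side has $z$-valuation at least $3-L$; hence $p_{1-L}$ and $p_{2-L}$ are unchanged, while $p_{3-L}^{\mathcal{L}_+}-p_{3-L}^{\mathcal{L}_-}$ equals the difference of the lowest-degree (Lickorish--Millett) coefficients of $\langle\mathcal{L}_{||}\rangle$ and $\langle\mathcal{L}_{=}\rangle$, i.e.\ $s^{L-2}(p_0^{K'}-p_0^{K''})\prod_{\gamma\ne i,j}p_0^{K_\gamma}$ by the classical formula. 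On the claimed side, everything is built from $p_0^{K_\alpha},p_1^{K_\alpha},p_2^{K_\alpha}$ and from the sublink invariants $p_1^{\mathcal{L}_{k,l}}$, all unaffected by a crossing change between $K_i$ and $K_j$ except $p_1^{\mathcal{L}_{i,j}}$; applying (1) to the two-component sublink $\mathcal{L}_{i,j}$ at the same crossing gives $p_1^{(\mathcal{L}_{i,j})_+}-p_1^{(\mathcal{L}_{i,j})_-}=p_0^{K'}-p_0^{K''}$, so the $\{i,j\}$-summand $s^{L-2}p_1^{\mathcal{L}_{i,j}}\prod_{\gamma\ne i,j}p_0^{K_\gamma}$ changes by exactly the amount above. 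Thus both sides vary identically, which proves (ii).

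The main obstacle is the combinatorial bookkeeping in (i): one must check that the crude coefficients coming from expanding $\delta^{L-1}$ (which naturally produce terms like $\sum_\alpha p_1^{K_\alpha}$ and binomials $\binom{L-1}{j}$) reorganize into the mixed form involving $p_1^{\mathcal{L}_{i,j}}$, $\binom{L-1}{2}$, and the corrective $-(L-2)$ term---this is where all the care lies, since the skein steps in (ii) are routine once one isolates the key point that the two smoothings at an inter-component crossing are precisely the bands computing the variation of $p_1^{\mathcal{L}_{i,j}}$. I would also remark, as the paper does, that these formulas follow alternatively from the vanishing of the first coefficients of $F_{\overrightarrow{\mu}}^{SO}$ for trivial $\overrightarrow{\mu}$ predicted by Conjecture~\ref{Degree Conj} and proved in Section~\ref{sec7}, but the skein argument above is self-contained.
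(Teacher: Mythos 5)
Your proposal is correct, and it organizes the argument differently from the paper. The paper proceeds by induction on the number of components: for an $(L+1)$-component link it applies the skein relation crossing-by-crossing between $\mathcal{K}_{L+1}$ and each other component until $\mathcal{K}_{L+1}$ splits off, carries explicit telescoped expressions for the jumps of $p_{2-L}$ and $p_{3-L}$ through every intermediate state, factorizes the bracket of the partially split link $\mathcal{L}_{1,\cdots,L}\sqcup\mathcal{K}_{L+1}$, and then invokes the induction hypothesis on the $L$-component sublink. You instead reduce in one stroke to the \emph{fully} split link, which you verify by directly expanding $\delta^{L-1}\prod_{\alpha}\langle K_{\alpha}\rangle$ in powers of $z$, and you replace the inductive bookkeeping by the observation that both sides of each claimed identity vary identically under an inter-component crossing change: not at all for the $p_{2-L}$ formula (since the skein difference has valuation $\geq 3-L$), and by $(t-t^{-1})^{L-2}(p_{0}^{K'}-p_{0}^{K''})\prod_{\gamma\neq i,j}p_{0}^{\mathcal{K}_{\gamma}}$ for the $p_{3-L}$ formula, matched on the right-hand side by the variation of $p_{1}^{\mathcal{L}_{i,j}}$ under the same skein move applied to the two-component sublink. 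That matching step is precisely the ``trick'' the paper uses at each crossing, so the core skein computation is shared; what your version buys is the elimination of the induction on $L$ and of the intermediate explicit formulas, at the cost of the binomial bookkeeping in the split case, which does check out (the $-(L-2)$ term arises exactly because summing $p_{1}^{\mathcal{L}_{i,j}}$ over unordered pairs of split components produces each $p_{2}^{\mathcal{K}_{\alpha}}\prod_{\beta\neq\alpha}p_{0}^{\mathcal{K}_{\beta}}$ with multiplicity $L-1$ rather than $1$). Your reduction-to-the-split-link strategy is in fact closer in spirit to the paper's later proof of Theorem \ref{Thm7.3}, which handles the disjoint union directly and then propagates by crossing changes.
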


\begin{proof}
The formulas in the theorem are obvious when $L=1$, and the formula for $%
p_{3-L}^{\mathcal{L}}(t)$ is also valid for $L=2$. We prove the theorem by
induction. Let $\mathcal{L}$ be a link with $L+1$ components. The main idea
is using skein relations at the intersection points of different components
of the $\mathcal{L}$ until the component $\mathcal{K}_{L+1}$ split from the
link.

First we perform the skein relation at the crossings between $\mathcal{K}%
_{1} $ and $\mathcal{K}_{L+1}$ until there is no intersection between them.
We need do skein relation $(n_{1,L+1}^{+}+n_{1,L+1}^{-})/2$ times, where $%
n_{1,L+1}^{+}$ and $n_{1,L+1}^{-}$ denote the number of positive and
negative crossings between $\mathcal{K}_{1}$ and $\mathcal{K}_{L+1}$
respectively. Thus the linking number between $\mathcal{K}_{1}$ and $%
\mathcal{K}_{L+1}$ is $lk(\mathcal{L}%
_{1,L+1})=(n_{1,L+1}^{+}-n_{1,L+1}^{-})/2$.

From the calculation, one can see that do the skein relation at a positive
crossing will lead similar result. Thus without loss of generality, we can
assume $n_{1,L+1}^{-}>0$ and do the skein relation at a negative crossing
first:
\begin{equation*}
<\mathcal{L}_{+}>-<\mathcal{L}_{-}>=z(<\mathcal{L}_{(1\Vert L+1),2,\cdots
,L}>-<\mathcal{L}_{(1=L+1),2,\cdots ,L}>),
\end{equation*}%
where $\mathcal{L}_{-}$ is the original link $\mathcal{L}$, $(1\Vert L+1)$%
(resp$.(1=L+1)$) is the new knot component derived from $\mathcal{K}_{1}$
and $\mathcal{K}_{L+1}$ by taking vertical (resp.horizontal) lines as its
resolution at the intersection of $\mathcal{K}_{1}$ and $\mathcal{K}_{L+1}$
in the new link $\mathcal{L}_{(1\Vert L+1),2,\cdots ,L}$ (resp.$\mathcal{L}%
_{(1=L+1),2,\cdots ,L}$). Both $\mathcal{L}_{(1\Vert L+1),2,\cdots ,L}$ and $%
\mathcal{L}_{(1=L+1),2,\cdots ,L}$ are of $L$-components, while $\mathcal{L}%
_{+}$ is the link obtained simply by changing the sign of the chosen
crossing, thus has the same $L+1$ components as the original link $\mathcal{L%
}=\mathcal{L}_{-}$.

Taking the few leading terms in the skein relation formula
\begin{align*}
& (p_{-L}^{\mathcal{L}_{+}}(t)z^{-L}+p_{1-L}^{\mathcal{L}%
_{+}}(t)z^{1-L}+p_{2-L}^{\mathcal{L}_{+}}(t)z^{2-L})-(p_{-L}^{\mathcal{L}%
}(t)z^{-L}+p_{1-L}^{\mathcal{L}}(t)z^{1-L}+p_{2-L}^{\mathcal{L}}(t)z^{2-L})
\\
& =z(p_{1-L}^{\mathcal{L}_{(1||L+1),2,\cdots ,L}}(t)z^{1-L}-p_{1-L}^{%
\mathcal{L}_{(1=L+1),2,\cdots ,L}}(t)z^{1-L})
\end{align*}%
and comparing the coefficients, we have

1) $p_{-L}^{\mathcal{L}_{+}}(t)=p_{-L}^{\mathcal{L}}(t)$ (this one gives the
formula for $p_{1-L}^{\mathcal{L}}$, which we don't use.)

2) $p_{1-L}^{\mathcal{L}_{+}}(t)=p_{1-L}^{\mathcal{L}}(t)$

3) $p_{2-L}^{\mathcal{L}_{+}}(t)-p_{2-L}^{\mathcal{L}}(t)=p_{1-L} ^{\mathcal{%
L}_{(1\|L+1),2,\cdots,L}}-p_{1-L}^{\mathcal{L}_{(1=L+1),2,\cdots,L}}$

By the Lickorish-Millett Formula,
\begin{equation*}
p_{1-L}^{\mathcal{L}_{(1\Vert L+1),2,\cdots ,L}}=(t-t^{-1})^{L-1}p_{0}^{%
\mathcal{K}_{2}}(t)\cdots p_{0}^{\mathcal{K}_{L}}(t)p_{0}^{\mathcal{K}%
_{(1\Vert L+1)}}(t)
\end{equation*}%
and
\begin{equation*}
p_{1-L}^{\mathcal{L}_{(1=L+1),2,\cdots ,L}}=(t-t^{-1})^{L-1}p_{0}^{\mathcal{K%
}_{2}}(t)\cdots p_{0}^{\mathcal{K}_{L}}(t)p_{0}^{\mathcal{K}_{(1=L+1)}}(t),
\end{equation*}%
where $\mathcal{K}_{(1||L+1)}$ (resp.$\mathcal{K}_{(1=L+1)}$) is the knot
derived from the sub-link $\mathcal{L}_{1,L+1}$ by taking vertical
(resp.horizontal) lines as its resolution at the chosen crossing. Thus
\begin{equation}
p_{2-L}^{\mathcal{L}_{+}}-p_{2-L}^{\mathcal{L}}=(t-t^{-1})^{L-1}p_{0}^{%
\mathcal{K}_{2}}\cdots p_{0}^{\mathcal{K}_{L}}(p_{0}^{\mathcal{K}%
_{(1||L+1)}}-p_{0}^{\mathcal{K}_{(1=L+1)}}).
\end{equation}

We play a trick here to find the expression of $p_{0}^{\mathcal{K}%
_{(1||L+1)}}$ and $p_{0}^{\mathcal{K}_{(1=L+1)}}$. Consider the sub-link $%
\mathcal{L}_{1,L+1}$ of $\mathcal{L}$, which only have two components $%
\mathcal{K}_{1}$ and $\mathcal{K}_{L+1}$. Then do the skein relation at
exactly the same crossing as we did in the original link $\mathcal{L}$. The
same argument lead to
\begin{equation*}
p_{1}^{(\mathcal{L}_{1,L+1})_{+}}-p_{1}^{\mathcal{L}_{1,L+1}}=p_{0} ^{%
\mathcal{K}_{(1\|L+1)}}-p_{0}^{\mathcal{K}_{(1=L+1)}}.
\end{equation*}
which substitute back gives
\begin{equation}
p_{2-L}^{\mathcal{L}_{+}}-p_{2-L}^{\mathcal{L}}=(t-t^{-1}) ^{L-1}p_{0}^{%
\mathcal{K}_{2}}\cdots p_{0}^{\mathcal{K}_{L}}(p_{1}^{(\mathcal{L}%
_{1,L+1})_{+}}-p_{1}^{\mathcal{L}_{1,L+1}})
\end{equation}

In the above equation, $p_{2-L}^{\mathcal{L}}$ is expressed in terms of
invariants of $\mathcal{L}_{+}$ and some simple terms. Then we do skein
relations $\mathcal{L}_{+}$ at other intersection points between $\mathcal{K}%
_{1}$ and $\mathcal{K}_{L+1}$ until these two components become unlinked. We
cancel all the middle state in this procedure, and finally we reached
\begin{equation}
p_{2-L}^{\mathcal{L}^{(1)}}-p_{2-L}^{\mathcal{L}}=(t-t^{-1})^{L-1}p_{0}^{%
\mathcal{K}_{2}}\cdots p_{0}^{\mathcal{K}_{L}}(p_{1}^{\mathcal{L}%
_{1,L+1}^{(1)}}-p_{1}^{\mathcal{L}_{1,L+1}}).
\end{equation}%
Here $\mathcal{L}^{(1)}$ is the final state of $\mathcal{L}$, in which $%
\mathcal{K}_{1}$ and $\mathcal{K}_{L+1}$ are unlinked. $\mathcal{L}%
_{1,L+1}^{(1)}$ is the corresponding final state of $\mathcal{L}_{1,L+1}$
under the same procedure of skein relations. In $\mathcal{L}_{1,L+1}^{(1)}$,
the two components $\mathcal{K}_{1}$ and $\mathcal{K}_{L+1}$ are unlinked
too, i.e, $\mathcal{L}_{1,L+1}^{(1)}$ is the disjoint union of two knots $%
\mathcal{K}_{1}$ and $\mathcal{K}_{L+1}$, $W_{(1),(1)}^{SO}(\mathcal{L}%
_{1,L+1}^{(1)})=W_{(1)}^{SO}(\mathcal{K}_{1})W_{(1)}^{SO}(\mathcal{K}_{L+1})$%
.

By definition of Kauffman polynomials, $W^{SO}(\mathcal{L})=(1+\frac{t-t^{-1}%
}{z})<\mathcal{L}>$ for all links $\mathcal{L}$, we have
\begin{equation*}
<\mathcal{L}_{1,L+1}^{(1)}>=(1+\frac{t-t^{-1}}{z})<\mathcal{K}_{1}><\mathcal{%
K}_{L+1}>
\end{equation*}
Up to the third leading terms,
\begin{equation*}
p_{-1}^{\mathcal{L}_{1,L+1}^{(1)}}z^{-1}+p_{0}^{\mathcal{L}%
_{1,L+1}^{(1)}}+p_{1}^{\mathcal{L}_{1,L+1}^{(1)}}z=(1+\frac{t-t^{-1}}{z}%
)(p_{0}^{\mathcal{K}_{1}}+p_{1}^{\mathcal{K}_{1}}z+p_{2}^{\mathcal{K}%
_{1}}z^{2})(p_{0}^{\mathcal{K}_{L+1}}+p_{1}^{\mathcal{K}_{L+1}}z+p_{2}^{%
\mathcal{K}_{L+1}}z^{2})
\end{equation*}%
and comparing the coefficients:

1) $p_{-1}^{\mathcal{L}_{1,L+1}^{(1)}}=(t-t^{-1})p_{0}^{\mathcal{K}%
_{1}}p_{0}^{\mathcal{K}_{L+1}}$,

2) $p_{0}^{\mathcal{L}_{1,L+1}^{(1)}}=p_{0}^{\mathcal{K}_{1}}p_{0}^{\mathcal{%
K}_{L+1}} +(t-t^{-1})(p_{0}^{\mathcal{K}_{1}}p_{1}^{\mathcal{K}_{L+1}}
+p_{1}^{\mathcal{K}_{1}}p_{0}^{\mathcal{K}_{L+1}})$,

3) $p_{1}^{\mathcal{L}_{1,L+1}^{(1)}}=p_{0}^{\mathcal{K}_{1}}p_{1}^{\mathcal{%
K}_{L+1}} +p_{1}^{\mathcal{K}_{1}}p_{0}^{\mathcal{K}_{L+1}}
+(t-t^{-1})(p_{0}^{\mathcal{K}_{1}}p_{2}^{\mathcal{K}_{L+1}} +p_{1}^{%
\mathcal{K}_{1}}p_{1}^{\mathcal{K}_{L+1}}+p_{2}^{\mathcal{K}_{1}}p_{0}^{%
\mathcal{K}_{L+1}})$.

In summary, we have the following formula now
\begin{align*}
& p_{2-L}^{\mathcal{L}^{(1)}}-p_{2-L}^{\mathcal{L}} \\
& =(t-t^{-1})^{L-1}p_{0}^{\mathcal{K}_{1}}\cdots p_{0}^{\mathcal{K}%
_{L}}p_{1}^{\mathcal{K}_{L+1}}+(t-t^{-1})^{L-1}p_{1}^{\mathcal{K}_{1}}p_{0}^{%
\mathcal{K}_{2}}\cdots p_{0}^{\mathcal{K}_{L}}p_{0}^{\mathcal{K}_{L+1}} \\
& +(t-t^{-1})^{L}p_{0}^{\mathcal{K}_{1}}\cdots p_{0}^{\mathcal{K}_{L}}p_{2}^{%
\mathcal{K}_{L+1}}+(t-t^{-1})^{L}p_{1}^{\mathcal{K}_{1}}p_{0}^{\mathcal{K}%
_{2}}\cdots p_{0}^{\mathcal{K}_{L}}p_{1}^{\mathcal{K}_{L+1}} \\
& +(t-t^{-1})^{L}p_{2}^{\mathcal{K}_{1}}p_{0}^{\mathcal{K}_{2}}\cdots p_{0}^{%
\mathcal{K}_{L}}p_{0}^{\mathcal{K}_{L+1}}-(t-t^{-1})^{L-1}p_{1}^{\mathcal{L}%
_{1,L+1}}p_{0}^{\mathcal{K}_{2}}\cdots p_{0}^{\mathcal{K}_{L}}.
\end{align*}

Next perform all the above procedures for the link $\mathcal{L}^{(1)}$ to
the final state $\mathcal{L}^{(1)(2)}$ in which the components $\mathcal{K}%
_{2}$ and $\mathcal{K}_{L+1}$ are unlinked.

Repeat this process totally $L$ times until $\mathcal{K}_{L+1}$ is unlinked
to the sub-link $\mathcal{L}_{1,\cdots ,L}$ of $\mathcal{L}$, the result is
given by
\begin{align*}
& p_{2-L}^{\mathcal{L}^{(1)\cdots (L)}}-p_{2-L}^{\mathcal{L}} \\
& =L(t-t^{-1})^{L-1}p_{0}^{\mathcal{K}_{1}}\cdots p_{0}^{\mathcal{K}%
_{L}}p_{1}^{\mathcal{K}_{L+1}}+(t-t^{-1})^{L-1}(p_{1}^{\mathcal{K}%
_{1}}p_{0}^{\mathcal{K}_{2}}\cdots p_{0}^{\mathcal{K}_{L}}+perm)p_{0}^{%
\mathcal{K}_{L+1}} \\
& +L(t-t^{-1})^{L}p_{0}^{\mathcal{K}_{1}}\cdots p_{0}^{\mathcal{K}%
_{L}}p_{2}^{\mathcal{K}_{L+1}}+(t-t^{-1})^{L}(p_{1}^{\mathcal{K}_{1}}p_{0}^{%
\mathcal{K}_{2}}\cdots p_{0}^{\mathcal{K}_{L}}+perm)p_{1}^{\mathcal{K}_{L+1}}
\\
& +(t-t^{-1})^{L}(p_{2}^{\mathcal{K}_{1}}p_{0}^{\mathcal{K}_{2}}\cdots
p_{0}^{\mathcal{K}_{L}}+perm)p_{0}^{\mathcal{K}%
_{L+1}}-(t-t^{-1})^{L-1}(p_{1}^{\mathcal{L}_{1,L+1}}p_{0}^{K_{2}}\cdots
p_{0}^{\mathcal{K}_{L}}+perm).
\end{align*}

As the link $\mathcal{L}^{(1)\cdots (L)}$ is the disjoint union of the
sub-link $\mathcal{L}_{1,\cdots ,L}$ of $\mathcal{L}$ and the knot $\mathcal{%
K}_{L+1}$, $W_{(1)^{L+1}}^{SO}(\mathcal{L}^{(1)\cdots (L)})=W_{(1)^{L}}^{SO}(%
\mathcal{L}_{1,\cdots ,L})W_{(1)}^{SO}(\mathcal{K}_{L+1})$. Again, this can
be rewrite into the form
\begin{equation*}
<\mathcal{L}^{(1)\cdots (L)}>=(1+\frac{t-t^{-1}}{z})<\mathcal{L}_{1,\cdots
,L}><\mathcal{K}_{L+1}>
\end{equation*}%
as $lk(\mathcal{L}^{(1)\cdots (L)})=lk(\mathcal{L}_{1,\cdots ,L})$. Up to
third leading terms,
\begin{align*}
& p_{-L}^{\mathcal{L}^{(1)\cdots (L)}}z^{-L}+p_{-L}^{\mathcal{L}^{(1)\cdots
(L)}}z^{1-L}+p_{2-L}^{\mathcal{L}^{(1)\cdots (L)}}z^{2-L} \\
& =(1+\frac{t-t^{-1}}{z})(p_{1-L}^{\mathcal{L}_{1,\cdots
,L}}z^{1-L}+p_{2-L}^{\mathcal{L}_{1,\cdots ,L}}z^{2-L}+p_{3-L}^{\mathcal{L}%
_{1,\cdots ,L}}z^{3-L})(p_{0}^{\mathcal{K}_{L+1}}+p_{1}^{\mathcal{K}%
_{L+1}}z+p_{2}^{\mathcal{K}_{L+1}}z^{2})
\end{align*}%
and comparing the coefficients

1) $p_{-L}^{\mathcal{L}^{(1)\cdots(L)}}=(t-t^{-1})p_{1-L} ^{\mathcal{L}%
_{1,\cdots,L}}p_{0}^{\mathcal{K}_{L+1}}$,

2) $p_{1-L}^{\mathcal{L}^{(1)\cdots(L)}}=p_{1-L}^{\mathcal{L}
_{1,\cdots,L}}p_{0}^{\mathcal{K}_{L+1}}+(t-t^{-1})(p_{1-L}^{\mathcal{L}%
_{1,\cdots,L}} p_{1}^{\mathcal{K}_{L+1}}+p_{2-L}^{\mathcal{L}%
_{1,\cdots,L}}p_{0}^{\mathcal{K}_{L+1}})$,

3)$p_{2-L}^{\mathcal{L}^{(1)\cdots(L)}}=p_{1-L}^{\mathcal{L}_{1,\cdots,L}}
p_1^{\mathcal{K}_{L+1}}+p_{2-L}^{\mathcal{L}_{1,\cdots,L}}p_{0}^{\mathcal{K}%
_{L+1}} +(t-t^{-1})(p_{1-L}^{\mathcal{L}_{1,\cdots,L}}p_{2}^{\mathcal{K}%
_{L+1}}+p_{2-L} ^{\mathcal{L}_{1,\cdots,L}}p_{1}^{\mathcal{K}%
_{L+1}}+p_{3-L}^{\mathcal{L}_{1,\cdots,L}} p_{0}^{\mathcal{K}_{L+1}})$.

We now can finish the proof by induction. Be careful that our link $\mathcal{%
L}$ has $L+1$ components. The sub-link $\mathcal{L}_{1,\cdots ,L}$ has $L$
components and by induction
\begin{equation*}
p_{2-L}^{\mathcal{L}_{1,\cdots ,L}}=(L-1)(t-t^{-1})^{L-1}p_{0}^{\mathcal{K}%
_{1}}\cdots p_{0}^{\mathcal{K}_{L}}+(t-t^{-1})^{L}(p_{1}^{\mathcal{K}%
_{1}}p_{0}^{\mathcal{K}_{2}}\cdots p_{0}^{\mathcal{K}_{L}}+perm),
\end{equation*}%
so
\begin{align*}
& p_{2-(L+1)}^{\mathcal{L}}(t)=p_{1-L}^{\mathcal{L}^{(1)\cdots (L)}} \\
& =p_{1-L}^{\mathcal{L}_{1,\cdots ,L}}p_{0}^{\mathcal{K}%
_{L+1}}+(t-t^{-1})(p_{1-L}^{\mathcal{L}_{1,\cdots ,L}}p_{1}^{\mathcal{K}%
_{L+1}}+p_{2-L}^{\mathcal{L}_{1,\cdots ,L}}p_{0}^{\mathcal{K}_{L+1}}) \\
& =L(t-t^{-1})^{L-1}p_{0}^{\mathcal{K}_{1}}\cdots p_{0}^{\mathcal{K}%
_{L}}p_{0}^{\mathcal{K}_{L+1}}+(t-t^{-1})^{L}(p_{1}^{\mathcal{K}_{1}}\cdots
p_{0}^{\mathcal{K}_{L}}p_{0}^{\mathcal{K}_{L+1}}+perm).
\end{align*}%
This finishes the proof of the first part of the theorem.

Now we have sufficient results to prove the second part. We have seen that
\begin{align*}
p_{2-L}^{\mathcal{L}^{(1)\cdots (L)}}& =p_{1-L}^{\mathcal{L}_{1,\cdots
,L}}p_{1}^{\mathcal{K}_{L+1}}+p_{2-L}^{\mathcal{L}_{1,\cdots ,L}}p_{0}^{%
\mathcal{K}_{L+1}}+(t-t^{-1})(p_{1-L}^{\mathcal{L}_{1,\cdots ,L}}p_{2}^{%
\mathcal{K}_{L+1}}+p_{2-L}^{\mathcal{L}_{1,\cdots ,L}}p_{1}^{\mathcal{K}%
_{L+1}}+p_{3-L}^{\mathcal{L}_{1,\cdots ,L}}p_{0}^{\mathcal{K}_{L+1}}) \\
& =L(t-t^{-1})^{L-1}p_{0}^{\mathcal{K}_{1}}\cdots p_{0}^{\mathcal{K}%
_{L}}p_{1}^{\mathcal{K}_{L+1}}+(L-1)(t-t^{-1})^{L-2}p_{0}^{\mathcal{K}%
_{1}}\cdots p_{0}^{\mathcal{K}_{L}}p_{0}^{\mathcal{K}_{L+1}} \\
& +(t-t^{-1})^{L-1}(p_{1}^{\mathcal{K}_{1}}\cdots p_{0}^{\mathcal{K}%
_{L}}+perm)p_{0}^{\mathcal{K}_{L+1}}+(t-t^{-1})^{L}p_{0}^{\mathcal{K}%
_{1}}\cdots p_{0}^{K_{L}}p_{2}^{\mathcal{K}_{L+1}} \\
& +(t-t^{-1})^{L}(p_{1}^{\mathcal{K}_{1}}\cdots p_{0}^{\mathcal{K}%
_{L}}+perm)p_{1}^{\mathcal{K}_{L+1}}+(t-t^{-1})p_{3-L}^{\mathcal{L}%
_{1,\cdots ,L}}p_{0}^{\mathcal{K}_{L+1}}.
\end{align*}%
Combined with the expression of $p_{2-L}^{\mathcal{L}^{(1)\cdots
(L)}}-p_{2-L}^{\mathcal{L}},$we got an expression of $p_{2-L}^{\mathcal{L}}$
in terms of sub-links
\begin{align*}
p_{2-L}^{\mathcal{L}}& =(L-1)(t-t^{-1})^{L-2}p_{0}^{\mathcal{K}_{1}}\cdots
p_{0}^{\mathcal{K}_{L}}p_{0}^{\mathcal{K}_{L+1}}-(L-1)(t-t^{-1})^{L}p_{0}^{%
\mathcal{K}_{1}}\cdots p_{0}^{\mathcal{K}_{L}}p_{2}^{\mathcal{K}_{L+1}} \\
& -(t-t^{-1})^{L}(p_{2}^{\mathcal{K}_{1}}p_{0}^{\mathcal{K}_{2}}\cdots
p_{0}^{\mathcal{K}_{L}}+perm)p_{0}^{\mathcal{K}_{L+1}} \\
& +(t-t^{-1})^{L-1}(p_{1}^{\mathcal{L}_{1,L+1}}p_{0}^{\mathcal{K}_{2}}\cdots
p_{0}^{\mathcal{K}_{L}}+perm)+(t-t^{-1})p_{3-L}^{\mathcal{L}_{1,\cdots
,L}}p_{0}^{\mathcal{K}_{L+1}}.
\end{align*}%
Since the sub-link $\mathcal{L}_{1,\cdots ,L}$ of $\mathcal{L}$ contains $L$
components, by induction we have
\begin{align*}
p_{3-L}^{\mathcal{L}_{1,\cdots ,L}}(t)& =\dbinom{L-1}{2}%
(t-t^{-1})^{L-3}p_{0}^{\mathcal{K}_{1}}(t)\cdots p_{0}^{\mathcal{K}%
_{L}}(t)+(t-t^{-1})^{L-2}(p_{1}^{\mathcal{L}_{1,2}}(t)p_{0}^{\mathcal{K}%
_{3}}(t)\cdots p_{0}^{\mathcal{K}_{L}}(t)+perm) \\
& -(L-2)(t-t^{-1})^{L-1}(p_{2}^{\mathcal{K}_{1}}(t)p_{0}^{\mathcal{K}%
_{2}}(t)\cdots p_{0}^{\mathcal{K}_{L}}(a)+perm).
\end{align*}%
Here the permutation only involves the first $L$ components. Later, when
computing the invariants of $\mathcal{L}$, the permutations will also
include the $L+1$'th component. As the content is self-evident, we will not
mention this issue in the future. Substitute the above induction formula to
the expression of $\mathcal{L}$, the proof of the second part of the theorem
is finished:
\begin{align*}
p_{2-L}^{\mathcal{L}}& =(L-1)(t-t^{-1})^{L-2}p_{0}^{\mathcal{K}_{1}}\cdots
p_{0}^{\mathcal{K}_{L}}p_{0}^{\mathcal{K}_{L+1}}-(L-1)(t-t^{-1})^{L}p_{0}^{%
\mathcal{K}_{1}}\cdots p_{0}^{\mathcal{K}_{L}}p_{2}^{\mathcal{K}_{L+1}} \\
& -(t-t^{-1})^{L}(p_{2}^{\mathcal{K}_{1}}p_{0}^{\mathcal{K}_{2}}\cdots
p_{0}^{\mathcal{K}_{L}}+perm)p_{0}^{\mathcal{K}%
_{L+1}}+(t-t^{-1})^{L-1}(p_{1}^{\mathcal{L}_{1,L+1}}p_{0}^{\mathcal{K}%
_{2}}\cdots p_{0}^{\mathcal{K}_{L}}+perm) \\
& +[\dbinom{L-1}{2}(t-t^{-1})^{L-2}p_{0}^{\mathcal{K}_{1}}\cdots p_{0}^{%
\mathcal{K}_{L}}+(t-t^{-1})^{L-1}(p_{1}^{\mathcal{L}_{1,2}}p_{0}^{\mathcal{K}%
_{3}}\cdots p_{0}^{\mathcal{K}_{L}}+perm) \\
& -(L-2)(t-t^{-1})^{L}(p_{2}^{\mathcal{K}_{1}}p_{0}^{\mathcal{K}_{2}}\cdots
p_{0}^{\mathcal{K}_{L}}+perm)]p_{0}^{\mathcal{K}_{L+1}} \\
& =\dbinom{L}{2}(t-t^{-1})^{L-2}p_{0}^{\mathcal{K}_{1}}\cdots p_{0}^{%
\mathcal{K}_{L+1}}+(t-t^{-1})^{L-1}(p_{1}^{\mathcal{L}_{1,2}}p_{0}^{\mathcal{%
K}_{3}}\cdots p_{0}^{\mathcal{K}_{L+1}}+perm) \\
& -(L-1)(t-t^{-1})^{L}(p_{2}^{\mathcal{K}_{1}}p_{0}^{\mathcal{K}_{2}}\cdots
p_{0}^{\mathcal{K}_{L+1}}+perm).
\end{align*}
\end{proof}

\section{The Proof of the Conjecture for Column Diagram}

\label{sec7}

In the last section, we provide two formulas of Lickorish-Millett type. In
general, similar computation leads to expressions of $p_{n}^{\mathcal{L}}(t)$
in terms of invariants of sub-links of $\mathcal{L}$. Each additional
component of $\mathcal{L}$ gives rise to two such relations; thus we expect
there should be $2L-2$ such relations, i.e., all the $p_{n}^{\mathcal{L}}(t)$%
's for $1-L\leq n\leq L$ should be able to be described by sub-links of $%
\mathcal{L}$.

When the index $n$ increases, the expression become messy. To give a unified
treatment, we formulate the problem in terms of the partition function $%
Z_{CS}^{SO}(\mathcal{L};q,t)$ and free energy $F^{SO}(\mathcal{L};q,t)$.
Recall that we write
\begin{equation*}
Z_{CS}^{SO}(\mathcal{L};q,t)=1+\sum_{\overrightarrow{\mu }\neq
\overrightarrow{0}}Z_{\overrightarrow{\mu }}^{SO}pb_{\overrightarrow{\mu }}
\end{equation*}%
and
\begin{equation*}
F^{SO}(\mathcal{L};q,t)=\sum_{\overrightarrow{\mu }\neq \overrightarrow{0}%
}F_{\overrightarrow{\mu }}^{SO}pb_{\overrightarrow{\mu }}.
\end{equation*}%
Where $\overrightarrow{\mu }=(\mu ^{1},\cdots ,\mu ^{L})$ for partitions $%
\mu ^{1},\cdots ,\mu ^{L}$. In this section, we mainly focus on the
situation when all $\mu ^{i}$ are column like partitions. We look at the
first situation that all $\mu ^{i}$ are partitions $1$ now. We simply denote
such $\overrightarrow{\mu }$ by $(1)^{L}=(1),\cdots ,(1)$, since the
partition of $1$ is unique and there is no ambiguity. The coefficients $%
Z_{(1)^{L}}^{SO}=W_{(1)^{L}}^{SO}$.

Let $\Delta $ be a subset of the set $[L]:=\{1,\cdots ,L\}$. Write $\mathcal{%
L}_{\Delta }$ for the sub-link of $\mathcal{L}$ composed only by the
components with labels in $\Delta $. For example, when $\Delta =\{1,2\}$, $%
\mathcal{L}_{\Delta }$ is the link $\mathcal{L}_{1,2}$ discussed in the
previous section. We also denote by $\Delta $ the partition $\overrightarrow{%
\mu }=(\mu ^{1},\cdots ,\mu ^{L})$ such that $\mu ^{i}=(1)$ if $i\in \Delta $%
, and $0$ otherwise. The convention in the definition of quantum group
invariants is $W_{\Delta }^{SO}(\mathcal{L}):=W_{(1)^{|\Delta |}}^{SO}(%
\mathcal{L}_{\Delta })$. The formula (\ref{E:4.11}) then can be written as
\begin{equation}
F_{(1)^{L}}^{SO}(\mathcal{L})=\sum_{r=1}^{L}\frac{(-1)^{r-1}}{r}\sum_{\Delta
_{1},\cdots ,\Delta _{r}}\prod_{i=1}^{r}W_{\Delta _{i}}^{SO}(\mathcal{L})
\label{E:7.1}
\end{equation}%
where the second summation is over all nonempty subsets $\Delta _{1},\cdots
,\Delta _{r}$ which form a partition of the set $[L]$. We have seen that $%
F_{(1)^{L}}^{SO}(\mathcal{L})\in \mathbb{Q}(t)((z))$ for $z=q-q^{-1}$ has an
expansion
\begin{equation*}
F_{(1)^{L}}^{SO}(\mathcal{L})=\sum_{i\geq -L}a_{i}(t)z^{i}.
\end{equation*}

Conjecture \ref{Degree Conj} predict that $val_{z}F_{(1)^{L}}^{SO}(\mathcal{L%
})\geq L-2$, i.e, $a_{-L}=a_{1-L}=\cdots =a_{L-3}=0$. We now prove $%
a_{-L}=a_{1-L}=a_{2-L}=0$ by the classical Lichorish-Millett theorem and the
two formulas derived in last section.

\begin{theorem}
\label{Thm7.1} Expand $F_{(1)^{L}}^{SO}(\mathcal{L})$ as above, then we have
the vanishing result $a_{-L}=a_{1-L}=a_{2-L}=0$ if $L\geq 3$. In other
words, $val_{u}(F_{(1)^{L}}^{SO}(\mathcal{L}))=val_{z}(F_{(1)^{L}}^{SO}(%
\mathcal{L}))\geq 3-L$. In the case $L=2$, the second formula in Theorem \ref%
{Thm6.1} is empty, thus we only have $a_{-2}=a_{-1}=0$ and $%
val_{z}(F_{(1)^{2}}^{SO}(\mathcal{L}))\geq 0$.
\end{theorem}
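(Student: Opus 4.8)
The plan is to compare $\mathcal{L}$ with the split link $\mathcal{L}^{0}=\mathcal{K}_{1}\sqcup\cdots\sqcup\mathcal{K}_{L}$, for which the coefficient $F_{(1)^{L}}^{SO}$ vanishes, and to show that $F_{(1)^{L}}^{SO}(\mathcal{L})$ and $F_{(1)^{L}}^{SO}(\mathcal{L}^{0})$ have the same Laurent coefficients in $z=q-q^{-1}$ up through order $z^{2-L}$. First I would observe that $W_{\overrightarrow{A}}^{SO}$ is multiplicative under split union, so $Z_{CS}^{SO}(\mathcal{L}^{0};q,t)=\prod_{\alpha=1}^{L}Z_{CS}^{SO}(\mathcal{K}_{\alpha};q,t)$ with the $\alpha$-th factor a series in the single block of variables $z_{\alpha}$; taking logarithms, $F^{SO}(\mathcal{L}^{0})$ is a sum of terms each involving only one $z_{\alpha}$, so it contains no monomial divisible by $\prod_{\alpha=1}^{L}pb_{1}(z_{\alpha})$ once $L\geq 2$. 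Therefore $F_{(1)^{L}}^{SO}(\mathcal{L}^{0})=0$, and it suffices to bound $val_{z}\bigl(F_{(1)^{L}}^{SO}(\mathcal{L})-F_{(1)^{L}}^{SO}(\mathcal{L}^{0})\bigr)$ using formula (\ref{E:7.1}).

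Next I would analyze the building blocks $W_{\Delta}^{SO}(\mathcal{L})=(1+\tfrac{t-t^{-1}}{z})\langle\mathcal{L}_{\Delta}\rangle$, writing $\langle\mathcal{L}_{\Delta}\rangle=\sum_{g\geq 0}p_{g+1-|\Delta|}^{\mathcal{L}_{\Delta}}(t)\,z^{g+1-|\Delta|}$. The classical Lickorish--Millett formula gives $p_{1-|\Delta|}^{\mathcal{L}_{\Delta}}$, and the two formulas of Theorem \ref{Thm6.1} applied to the sublink $\mathcal{L}_{\Delta}$ give $p_{2-|\Delta|}^{\mathcal{L}_{\Delta}}$ and $p_{3-|\Delta|}^{\mathcal{L}_{\Delta}}$. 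The first two of these are polynomials in the individual knot invariants $p_{0}^{\mathcal{K}_{\alpha}},p_{1}^{\mathcal{K}_{\alpha}}$ only, so the first two Laurent coefficients of $W_{\Delta}^{SO}$ (orders $z^{-|\Delta|}$ and $z^{1-|\Delta|}$) agree for $\mathcal{L}$ and $\mathcal{L}^{0}$; hence $\delta_{\Delta}:=W_{\Delta}^{SO}(\mathcal{L})-W_{\Delta}^{SO}(\mathcal{L}^{0})=O(z^{2-|\Delta|})$, and $\delta_{\Delta}=0$ when $|\Delta|=1$. The only part of $p_{3-|\Delta|}^{\mathcal{L}_{\Delta}}$ not expressible through the individual knot invariants is the sum over two-element subsets $\{\beta_{1},\beta_{2}\}\subseteq\Delta$ of $p_{1}^{\mathcal{L}_{\beta_{1},\beta_{2}}}$, so the leading term of $\delta_{\Delta}$ is
\[
(t-t^{-1})^{|\Delta|-1}\Bigl(\sum_{\{\beta_{1},\beta_{2}\}\subseteq\Delta}\bigl(p_{1}^{\mathcal{L}_{\beta_{1},\beta_{2}}}-p_{1}^{\mathcal{L}^{0}_{\beta_{1},\beta_{2}}}\bigr)\prod_{\alpha\in\Delta\setminus\{\beta_{1},\beta_{2}\}}p_{0}^{\mathcal{K}_{\alpha}}\Bigr)\,z^{2-|\Delta|}.
\]

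Then I would expand $\prod_{i}W_{\Delta_{i}}^{SO}(\mathcal{L})-\prod_{i}W_{\Delta_{i}}^{SO}(\mathcal{L}^{0})$ telescopically as $\sum_{i_{0}}(\cdots)\,\delta_{\Delta_{i_{0}}}\,(\cdots)$, the remaining factors being $W_{\Delta_{j}}^{SO}=O(z^{-|\Delta_{j}|})$; since $\delta_{\Delta_{i_{0}}}=O(z^{2-|\Delta_{i_{0}}|})$ each such term is $O(z^{2-L})$, which already yields $a_{-L}=a_{1-L}=0$. The coefficient of $z^{2-L}$ receives a contribution only from terms with a single $\delta_{\Delta_{i_{0}}}$ at leading order and every other $W_{\Delta_{j}}^{SO}$ at its leading order $(t-t^{-1})^{|\Delta_{j}|}\bigl(\prod_{\alpha\in\Delta_{j}}p_{0}^{\mathcal{K}_{\alpha}}\bigr)z^{-|\Delta_{j}|}$; after factoring out $(t-t^{-1})^{L-1}\sum_{\{\beta_{1},\beta_{2}\}\subseteq[L]}\bigl(p_{1}^{\mathcal{L}_{\beta_{1},\beta_{2}}}-p_{1}^{\mathcal{L}^{0}_{\beta_{1},\beta_{2}}}\bigr)\prod_{\alpha\notin\{\beta_{1},\beta_{2}\}}p_{0}^{\mathcal{K}_{\alpha}}$, what remains from the sum in (\ref{E:7.1}) is the numerical constant
\[
C_{L}=\sum_{r\geq 1}\frac{(-1)^{r-1}}{r}\,\#\{\text{ordered set partitions of }[L]\text{ into }r\text{ blocks with }\beta_{1},\beta_{2}\text{ in one block}\}=\sum_{r\geq 1}(-1)^{r-1}(r-1)!\,S(L-1,r),
\]
where $S(\cdot,\cdot)$ is the Stirling number of the second kind. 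The identity $\sum_{r\geq 1}\tfrac{(-1)^{r-1}}{r}(e^{x}-1)^{r}=\log e^{x}=x$ gives $\sum_{r\geq1}(-1)^{r-1}(r-1)!\,S(n,r)=1$ for $n=1$ and $0$ for $n\geq 2$, so $C_{L}=0$ for $L\geq 3$ — whence $a_{2-L}=0$ and $val_{z}(F_{(1)^{L}}^{SO}(\mathcal{L}))\geq 3-L$ — while $C_{2}=1$ does not annihilate the $z^{0}=z^{2-L}$ coefficient, which is why for $L=2$ only $a_{-2}=a_{-1}=0$ and $val_{z}(F_{(1)^{2}}^{SO}(\mathcal{L}))\geq 0$; this matches the vacuity of the second formula of Theorem \ref{Thm6.1} there. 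Finally $val_{u}=val_{z}$ since $z=e^{u}-e^{-u}=2u+O(u^{3})$ is a uniformizer at $q=1$.

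The main obstacle is the precise accounting of which Laurent coefficient of $W_{\Delta}^{SO}$ first detects the linking of $\mathcal{L}$: it is the third one, governed entirely by the $p_{3-|\Delta|}$ formula of Theorem \ref{Thm6.1}, and one must verify that its only link-dependent part is the indicated sum over pairs of components. Granting this, the proof reduces to evaluating $C_{L}$; the vanishing $C_{L}=0$ for $L\geq 3$ is exactly what forces $a_{2-L}=0$, while $C_{2}=1$ is the structural reason no further coefficient can be made to vanish when $L=2$.
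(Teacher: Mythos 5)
Your proof is correct, and it takes a genuinely different route from the paper's. The paper proves Theorem \ref{Thm7.1} by computing the coefficients directly from (\ref{E:7.1}): it substitutes the Lickorish--Millett expressions into $[z^{1-L}]\prod_i W_{\Delta_i}^{SO}$, notes that every collection $\Delta_1,\dots,\Delta_r$ of a given shape contributes the same quantity, counts the collections of each shape, and kills the sum with the combinatorial Lemma \ref{Lemma7.2}; the cases $a_{-L}$ and $a_{2-L}$ are declared ``similar'' and left to the reader. You instead subtract the split link $\mathcal{L}^{0}$ (whose $F_{(1)^{L}}^{SO}$ vanishes because the free energy of a disjoint union is additive) and telescope, so that $a_{-L}=a_{1-L}=0$ falls out with no combinatorics, and only $a_{2-L}$ needs an identity --- your $C_{L}=\sum_{r}(-1)^{r-1}(r-1)!\,S(L-1,r)=0$ for $L\geq 3$, which is the same $\log\circ\exp=\mathrm{id}$ identity that underlies Lemma \ref{Lemma7.2}. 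Your version is actually more careful on precisely the point the paper elides: for $a_{2-L}$ the contribution of a collection genuinely depends on which pairs of components share a block (through $p_{1}^{\mathcal{L}_{\beta_{1},\beta_{2}}}$), so the paper's ``same contribution for each shape'' argument must be refined to the per-pair count you carry out, and your $C_{2}=1$ cleanly explains why the $L=2$ case stops at $a_{-1}$. Two small points to make explicit: in the telescoping step the surviving factors $W_{\Delta_{j}}^{SO}$ are evaluated at a mix of $\mathcal{L}$ and $\mathcal{L}^{0}$, so you need their leading Laurent coefficients to agree (they do, by the classical Lickorish--Millett formula); and the sum in (\ref{E:7.1}) must be read as running over ordered tuples $(\Delta_{1},\dots,\Delta_{r})$ for your count $r!\,S(L-1,r)$ to be the right one, which is consistent with the paper's own count $\frac{r!}{|Aut\Lambda|}\cdot\frac{L!}{\Lambda_{1}!\cdots\Lambda_{r}!}$.
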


\begin{proof}
We prove the theorem for $a_{1-L}$ when $L\geqslant 2$ by calculating (\ref%
{E:7.1}). The proof for $a_{-L}$ and $a_{2-L}$ are similar and we leave them
to the reader.
\begin{equation*}
W_{\Delta }^{SO}(\mathcal{L})\cong (1+\frac{t-t^{-1}}{z})(p_{1-|\Delta |}^{%
\mathcal{L}_{\Delta }}z^{1-|\Delta |}+p_{2-|\Delta |}^{\mathcal{L}_{\Delta
}}z^{2-|\Delta |}+p_{3-|\Delta |}^{\mathcal{L}_{\Delta }}z^{3-|\Delta |})(%
\bmod z^{3-|\Delta |})
\end{equation*}%
Denote by $[z^{n}]f$ the coefficient of $z^{n}$ in $f\in \mathbb{Q}(t)((z))$%
.
\begin{equation*}
a_{1-L}=\sum_{r=1}^{L}\frac{(-1)^{r-1}}{r}\sum_{\Delta _{1},\cdots ,\Delta
_{r}}[z^{1-L}](1+\frac{t-t^{-1}}{z})^{r}\prod_{i=1}^{r}<\mathcal{L}_{\Delta
_{i}}>
\end{equation*}

For each possible collection $\Delta _{1},\cdots ,\Delta _{r}$,
\begin{align*}
& [z^{1-L}](1+\frac{t-t^{-1}}{z})^{r}\prod_{i=1}^{r}<\mathcal{L}_{\Delta
_{i}}> \\
& =r(t-t^{-1})^{r-1}\prod_{i=1}^{r}p_{1-|\Delta _{i}|}^{\mathcal{L}_{\Delta
_{i}}}+(t-t^{-1})^{r}\sum_{i=1}^{r}p_{1-|\Delta _{1}|}^{\mathcal{L}_{\Delta
_{1}}}\cdots \widehat{p_{1-|\Delta _{i}|}^{\mathcal{L}_{\Delta _{i}}}}\cdots
p_{1-|\Delta _{r}|}^{\mathcal{L}_{\Delta _{r}}}\cdot p_{2-|\Delta _{i}|}^{%
\mathcal{L}_{\Delta _{i}}} \\
& =r(t-t^{-1})^{L-1}\prod_{i=1}^{r}\prod_{j\in \Delta
_{i}}p_{0}^{K_{j}}(t)+(t-t^{-1})^{L-1}\sum_{k=1}^{r}\underset{i\neq k}{%
\prod_{i=1}^{r}}\prod_{j\in \Delta _{i}}p_{0}^{K_{j}}(t)\cdot \lbrack
(|\Delta _{k}|-1)\prod_{j\in \Delta _{k}}p_{0}^{K_{j}} \\
& +(t-t^{-1})\sum_{l\in \Delta _{k}}p_{1}^{K_{l}}\underset{j\neq l}{%
\prod_{j\in \Delta _{k}}}p_{0}^{K_{j}}] \\
& =L(t-t^{-1})^{L-1}\prod_{\alpha =1}^{L}p_{0}^{K_{\alpha
}}(t)+(t-t^{-1})^{L}\sum_{j=1}^{L}p_{1}^{K_{j}}\underset{i\neq j}{%
\prod_{i=1}^{L}}p_{0}^{K_{i}}
\end{align*}%
has the same contribution. We need to count the number of these collections.
Let $\Lambda $ be a partition of $L$ of length $r$, the number of
collections $\{\Delta _{1},\cdots ,\Delta _{r}\}$ with $\{|\Delta
_{1}|,\cdots ,|\Delta _{r}|\}$ equal to the partition $\Lambda $ is given by
$\frac{r!}{|Aut\Lambda |}\cdot \frac{L!}{\Lambda _{1}!\cdots \Lambda _{r}!}$%
, hence
\begin{equation*}
a_{1-L}=(L(t-t^{-1})^{L-1}\prod_{\alpha =1}^{L}p_{0}^{K_{\alpha
}}(t)+(t-t^{-1})^{L}\sum_{j=1}^{L}p_{1}^{K_{j}}\underset{i\neq j}{%
\prod_{i=1}^{L}}p_{0}^{K_{i}})\cdot \sum_{\Lambda \vdash L}\frac{(-1)^{\ell
(\Lambda )-1}\ell (\Lambda )!}{\ell (\Lambda )|Aut\Lambda |}\cdot \frac{L!}{%
\Lambda !}
\end{equation*}%
which is zero by the following Lemma \ref{Lemma7.2} as $L\geq 2$.
\end{proof}

\begin{lemma}
\label{Lemma7.2} Assume $d_{\alpha }\geq 1$ for $\alpha =1,2,\cdots ,L$ and
the sum $d=d_{1}+\cdots +d_{L}$ is strictly greater than $1$ (i.e., if all $%
d_{i}=1$, then we assume $L>1$), then
\begin{equation}
\sum_{\overrightarrow{\lambda }\vdash \overrightarrow{d}}\frac{(-1)^{\ell (%
\overrightarrow{\lambda })-1}\ell (\overrightarrow{\lambda })!}{\ell (%
\overrightarrow{\lambda })|Aut\overrightarrow{\lambda }|\underset{\alpha =1}{%
\overset{L}{\prod }}\underset{j=1}{\overset{\ell (\lambda ^{\alpha })}{\prod
}}\lambda _{j}^{\alpha }!}=0.
\end{equation}
\end{lemma}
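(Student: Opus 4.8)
The plan is to recognize the left-hand side as the coefficient of a fixed monomial in the formal power series $\log$ of a product of exponential series, so that the vanishing reduces to the trivial identity $\log\circ\exp=\mathrm{id}$; this is the generating-function shadow of the fact that $\mathrm{Log}$ inverts $\mathrm{Exp}$, which was used to pass from $Z_{CS}^{SO}$ to $F^{SO}$ in Section \ref{sec4}.

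Concretely, I would work in $\mathbb{Q}[[x_{1},\dots,x_{L}]]$, set $\mathbf{x}=(x_{1},\dots,x_{L})$, and define
\[
h(\mathbf{x})=\sum_{\mathbf{e}\in\mathbb{Z}_{\geq 0}^{L}\setminus\{0\}}\frac{\mathbf{x}^{\mathbf{e}}}{\mathbf{e}!},\qquad \mathbf{x}^{\mathbf{e}}=\prod_{\alpha=1}^{L}x_{\alpha}^{e_{\alpha}},\qquad \mathbf{e}!=\prod_{\alpha=1}^{L}e_{\alpha}!.
\]
The first step is the elementary identity $1+h(\mathbf{x})=\prod_{\alpha=1}^{L}e^{x_{\alpha}}=e^{x_{1}+\cdots+x_{L}}$, which immediately gives $\log(1+h(\mathbf{x}))=x_{1}+\cdots+x_{L}$.

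The second step is to expand the same quantity via $\log(1+h)=\sum_{r\geq 1}\frac{(-1)^{r-1}}{r}h^{r}$ and to read off the coefficient of $\mathbf{x}^{\overrightarrow{d}}$. The monomials of $h^{r}$ are indexed by ordered $r$-tuples $(\mathbf{e}^{(1)},\dots,\mathbf{e}^{(r)})$ of nonzero vectors in $\mathbb{Z}_{\geq 0}^{L}$ with $\mathbf{e}^{(1)}+\cdots+\mathbf{e}^{(r)}=\overrightarrow{d}$; such a tuple has the same monomial weight as its underlying multiset $\overrightarrow{\lambda}$ (with $\ell(\overrightarrow{\lambda})=r$ parts), namely $1/\prod_{\alpha=1}^{L}\prod_{j=1}^{\ell(\lambda^{\alpha})}\lambda_{j}^{\alpha}!$ after sorting the $\alpha$-th coordinates of the parts into the partition $\lambda^{\alpha}$ of $d_{\alpha}$, and each multiset arises from exactly $\ell(\overrightarrow{\lambda})!/|Aut\,\overrightarrow{\lambda}|$ ordered tuples. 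Hence
\[
[\mathbf{x}^{\overrightarrow{d}}]\log(1+h(\mathbf{x}))=\sum_{\overrightarrow{\lambda}\vdash\overrightarrow{d}}\frac{(-1)^{\ell(\overrightarrow{\lambda})-1}\ell(\overrightarrow{\lambda})!}{\ell(\overrightarrow{\lambda})\,|Aut\,\overrightarrow{\lambda}|\,\prod_{\alpha=1}^{L}\prod_{j=1}^{\ell(\lambda^{\alpha})}\lambda_{j}^{\alpha}!},
\]
which is exactly the left-hand side of the lemma. This passage from ordered tuples to multisets — keeping the automorphism factor and the product of factorials correctly aligned — is the only point that needs care, and it is the same bookkeeping already performed (in the case $d_{\alpha}=1$) in the proof of Theorem \ref{Thm7.1}, where collections $\Delta_{1},\dots,\Delta_{r}$ were regrouped according to a partition $\Lambda$ of $L$.

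Finally I would compare the two expansions: $[\mathbf{x}^{\overrightarrow{d}}](x_{1}+\cdots+x_{L})$ equals $1$ when $\overrightarrow{d}$ is a standard basis vector and $0$ otherwise. The hypotheses $d_{\alpha}\geq 1$ for all $\alpha$ together with $d=d_{1}+\cdots+d_{L}>1$ force $\overrightarrow{d}$ to have coordinate sum at least $2$, hence $\overrightarrow{d}$ is never a standard basis vector — the only vector with all coordinates $\geq 1$ and coordinate sum $1$ occurs for $L=1$, $d_{1}=1$, which is exactly the case excluded by $d>1$. Therefore the coefficient vanishes and the lemma follows.
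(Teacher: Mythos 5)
Your proposal is correct and is essentially the paper's own proof: both expand $\log\bigl(\prod_{\alpha}e^{x_{\alpha}}\bigr)=x_{1}+\cdots+x_{L}$ via the logarithm series of $1+\sum_{\mathbf{e}\neq 0}\mathbf{x}^{\mathbf{e}}/\mathbf{e}!$ and compare coefficients of $\mathbf{x}^{\overrightarrow{d}}$. Your treatment of the ordered-tuple-to-multiset bookkeeping (the factor $\ell(\overrightarrow{\lambda})!/|Aut\,\overrightarrow{\lambda}|$) is in fact spelled out more carefully than in the paper, which elides it.
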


\begin{proof}
Let $\overrightarrow{t}=(t_{1},\cdots ,t_{L})$ and $|\overrightarrow{t}|=$ $%
t_{1}+\cdots +t_{L}$ in the trivial equality
\begin{equation*}
|\overrightarrow{t}|=\log (\exp (|\overrightarrow{t}|))=\log
(1+\sum_{n=1}^{+\infty }\frac{|\overrightarrow{t}|^{n}}{n!})
\end{equation*}%
so we have
\begin{equation*}
t_{1}+\cdots +t_{L}=\log (1+\underset{\overrightarrow{\beta }\neq 0}{\sum_{%
\overrightarrow{\beta }\in \mathbb{Z}_{\geq 0}^{L}}}\frac{\overrightarrow{t}%
^{\overrightarrow{\beta }}}{\overrightarrow{\beta }!})
\end{equation*}%
where we have adopt the notation $\overrightarrow{t}^{\overrightarrow{\beta }%
}=\underset{\alpha =1}{\overset{L}{\prod }}t_{\alpha }^{\beta _{\alpha }}$
and $\overrightarrow{\beta }!=\underset{\alpha =1}{\overset{L}{\prod }}%
(\beta _{\alpha }!)$. Expand the logarithm
\begin{equation*}
t_{1}+\cdots +t_{L}=\sum_{\overrightarrow{\beta }\in \mathbb{Z}_{\geq 0}^{L}}%
\overrightarrow{t}^{\overrightarrow{\beta }}\sum_{\overrightarrow{\lambda }%
\vdash \overrightarrow{\beta }}\frac{(-1)^{\ell (\overrightarrow{\lambda }%
)-1}}{\ell (\overrightarrow{\lambda })|Aut\overrightarrow{\lambda }|\underset%
{\alpha =1}{\overset{L}{\prod }}{\lambda }^{\alpha }{!}}.
\end{equation*}%
and comparing the coefficients of the term $t_{1}^{d_{1}}\cdots
t_{L}^{d_{L}} $ gives the vanishing formula.
\end{proof}

We remark that the vanishing of $a_{1-L}$ and $a_{2-L}$ also imply the
formulas for $p_{2-L}^{\mathcal{L}}$ and $p_{3-L}^{\mathcal{L}}$ proved in
last section. The approach in the previous section has the merit that it
produces explicit expressions, while the statement in terms of free energy
can give a uniform treatment to contain all the relations of
Lickorish-Millett type, as in the following theorem.

\begin{theorem}
\label{Thm7.3} Under the same notations as above, then we have the vanishing
result $a_{-L}=a_{1-L}=\cdots =a_{L-3}=0$. In other words, $%
val_{z}(F_{(1)^{L}}^{SO}(\mathcal{L}))\geq L-2$. Indeed, we have
\begin{equation*}
(q-q^{-1})^{2-L}F_{(1)^{L}}^{SO}(\mathcal{L})\in \mathbb{Z}%
[t,t^{-1}][q-q^{-1}].
\end{equation*}%
As a corollary, Conjecture \ref{Main Conj} is true for partitions $%
\overrightarrow{\mu }=(1,1,\cdots ,1)$.
\end{theorem}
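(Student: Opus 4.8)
The plan is to reduce the statement to one $z$-valuation estimate and then prove that estimate by induction on $L$, the engine being a skein recursion for the single-box free energies. Write $z=q-q^{-1}$, and for a subset $S$ of the components of a link $\mathcal{L}$ write $W_{S}^{SO}(\mathcal{L})=W_{(1)^{|S|}}^{SO}(\mathcal{L}_{S})=\bigl(1+\tfrac{t-t^{-1}}{z}\bigr)\langle\mathcal{L}_{S}\rangle$. Specializing \eqref{E:4.11} to $\overrightarrow{\mu}=(1)^{L}$: a $\mathcal{P}^{L}$-partition $\Lambda$ with $|\Lambda|=(1)^{L}$ is exactly a set partition $\pi$ of $[L]=\{1,\dots,L\}$ (a block $B$ being the part equal to $(1)$ on $B$ and $(0)$ off $B$), its parts are pairwise distinct so $|\mathrm{Aut}\,\Lambda|=1$, $\ell(\Lambda)=|\pi|$, and $Z_{\Lambda}^{SO}=\prod_{B\in\pi}W_{B}^{SO}(\mathcal{L})$, whence
\[
F_{(1)^{L}}^{SO}(\mathcal{L})=\sum_{\pi\vdash[L]}(-1)^{|\pi|-1}(|\pi|-1)!\prod_{B\in\pi}W_{B}^{SO}(\mathcal{L})
\]
with every coefficient an integer. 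By the Section \ref{sec6} expansion of $\langle\cdot\rangle$ (an integral Laurent polynomial in $z$ of valuation $\ge 1-|B|$ on a $|B|$-component link), $W_{B}^{SO}(\mathcal{L})\in z^{-|B|}\mathbb{Z}[t,t^{-1}][z]$, and since $\sum_{B\in\pi}|B|=L$ this yields $F_{(1)^{L}}^{SO}(\mathcal{L})\in z^{-L}\mathbb{Z}[t,t^{-1}][z]$. It therefore suffices to prove $val_{z}F_{(1)^{L}}^{SO}(\mathcal{L})\ge L-2$: then $F_{(1)^{L}}^{SO}(\mathcal{L})\in z^{-L}\mathbb{Z}[t,t^{-1}][z]\cap z^{L-2}\mathbb{Q}[t,t^{-1}]((z))=z^{L-2}\mathbb{Z}[t,t^{-1}][z]$, which is the integrality statement, and Conjecture \ref{Main Conj} for $(1)^{L}$ follows since $g_{(1)^{L}}=F_{(1)^{L}}^{SO}$ (only $k=1$ divides $(1)^{L}$), $\mathrm{z}_{(1)^{L}}=1$, the denominator in that conjecture equals $2z^{L}$, and $f(q,t)-f(q,-t)$ has all coefficients divisible by $2$ whenever $f\in\mathbb{Z}[t,t^{-1}][z]$.

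The core of the valuation estimate is the skein recursion
\[
F_{(1)^{L}}^{SO}(\mathcal{L}_{+})-F_{(1)^{L}}^{SO}(\mathcal{L}_{-})=(q-q^{-1})\bigl[F_{(1)^{L-1}}^{SO}(\mathcal{L}_{||})-F_{(1)^{L-1}}^{SO}(\mathcal{L}_{=})\bigr],
\]
valid when $\mathcal{L}_{\pm}$ differ at one crossing between two distinct components $\mathcal{K}_{a},\mathcal{K}_{b}$ and $\mathcal{L}_{||},\mathcal{L}_{=}$ are its two resolutions, each an $(L-1)$-component link in which $\mathcal{K}_{a}$ and $\mathcal{K}_{b}$ are merged. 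To prove it: $W_{S}^{SO}(\mathcal{L}_{+})=W_{S}^{SO}(\mathcal{L}_{-})$ whenever $S\not\supseteq\{a,b\}$ (the sublink on $S$ misses the crossing), while for $S\supseteq\{a,b\}$ the Kauffman skein relation for $\langle\cdot\rangle$ gives $W_{S}^{SO}(\mathcal{L}_{+})-W_{S}^{SO}(\mathcal{L}_{-})=z\bigl[W_{\bar{S}}^{SO}(\mathcal{L}_{||})-W_{\bar{S}}^{SO}(\mathcal{L}_{=})\bigr]$ with $\bar{S}$ the image of $S$ after merging. Inserting this into the displayed formula for $F_{(1)^{L}}^{SO}$, only the unique block $B_{0}(\pi)$ containing both $a$ and $b$ gives a nonzero difference; reindexing the partitions $\pi$ of $[L]$ by partitions $\rho$ of the merged index set (a size-preserving bijection) and using $W_{C}^{SO}(\mathcal{L}_{||})=W_{C}^{SO}(\mathcal{L}_{=})$ for every block $C$ avoiding the merged component, the entire difference collapses to $zF_{(1)^{L-1}}^{SO}(\mathcal{L}_{||})-zF_{(1)^{L-1}}^{SO}(\mathcal{L}_{=})$. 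I also record a base link: if $\mathcal{L}=\mathcal{N}\sqcup\mathcal{K}$ is a split union of an $(L-1)$-component link and a knot, then $Z_{CS}^{SO}(\mathcal{L})$ factors as a product in the two disjoint sets of $pb$-variables, so $F^{SO}(\mathcal{L})$ is a sum of a function of $(z_{1},\dots,z_{L-1})$ and a function of $z_{L}$, and its coefficient of $pb_{1}(z_{1})\cdots pb_{1}(z_{L})$, which is $F_{(1)^{L}}^{SO}(\mathcal{L})$, vanishes for $L\ge2$.

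Now the induction. The base case $L=1$ is $F_{(1)}^{SO}(\mathcal{K})=\bigl(1+\tfrac{t-t^{-1}}{z}\bigr)\langle\mathcal{K}\rangle$, of valuation $\ge -1=L-2$. For $L\ge2$ and an arbitrary $L$-component $\mathcal{L}$, switch crossings between $\mathcal{K}_{L}$ and the remaining components one at a time until $\mathcal{K}_{L}$ is split off, reaching $\widetilde{\mathcal{L}}=\mathcal{L}_{[L-1]}\sqcup\mathcal{K}_{L}$; by the recursion each switch changes $F_{(1)^{L}}^{SO}$ by $\pm(q-q^{-1})$ times a difference of $F_{(1)^{L-1}}^{SO}$'s of $(L-1)$-component links, which by the inductive hypothesis has $z$-valuation $\ge L-3$, so each change has valuation $\ge L-2$; since $F_{(1)^{L}}^{SO}(\widetilde{\mathcal{L}})=0$, summing the telescope gives $val_{z}F_{(1)^{L}}^{SO}(\mathcal{L})\ge L-2$, which completes the proof. (In particular this subsumes Theorem \ref{Thm7.1}.) The step that will demand the most care is the skein recursion itself --- tracking exactly which blocks of $\pi$ are affected when the crossing is resolved, verifying the bijection between set partitions across the merge, and confirming that the ``$\mathcal{L}_{||}$ versus $\mathcal{L}_{=}$'' mismatch among the untouched factors genuinely cancels --- since a sign or index slip there would break the whole argument.
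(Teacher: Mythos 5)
Your proposal is correct and follows essentially the same route as the paper's proof: the identity $F_{(1)^{L}}^{SO}(\mathcal{L}_{+})-F_{(1)^{L}}^{SO}(\mathcal{L}_{-})=(q-q^{-1})[F_{(1)^{L-1}}^{SO}(\mathcal{L}_{||})-F_{(1)^{L-1}}^{SO}(\mathcal{L}_{=})]$ derived from the Kauffman skein relation applied blockwise to the set-partition expansion of $F_{(1)^{L}}^{SO}$, induction on $L$ with knots as the base case, and the vanishing of $F_{(1)^{L}}^{SO}$ on split links as the terminal state of the crossing changes. The only (harmless) deviations are that you unlink a single component rather than fully splitting the link, and that you obtain integrality from the a priori bound $F_{(1)^{L}}^{SO}\in z^{-L}\mathbb{Z}[t,t^{-1}][z]$ combined with the valuation estimate, whereas the paper carries the integrality statement through the induction directly.
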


\begin{proof}
We prove the theorem by induction. When $L=1$, $\mathcal{L}$ is a knot, $%
F_{(1)^{1}}^{SO}(\mathcal{L})=W_{(1)}^{SO}(\mathcal{L})=(1+\frac{t-t^{-1}}{z}%
)<\mathcal{L}>=(1+\frac{t-t^{-1}}{z})K_{\mathcal{L}}$ for the Kauffman
polynomial of $\mathcal{L}$ obviously has $z$-valuation equal to $-1=L-2$.
The theorem thus holds for knots.

Now Assume $\mathcal{L}$ is a link with $L>1$ components $\mathcal{K}%
_{1},\cdots ,\mathcal{K}_{L}$. We first deal with the simple case when $%
\mathcal{L}$ is the disjoint union of $\mathcal{K}_{\alpha }$'s. Then for
any partition $\Delta _{1},\cdots ,\Delta _{r}$ of the set $[L]$, the
product $\overset{r}{\underset{i=1}{\prod }}W_{\Delta _{i}}^{SO}(\mathcal{L}%
)=\underset{\alpha =1}{\overset{L}{\prod }}W_{(1)}^{SO}(\mathcal{K}_{\alpha
})$ is independent of the partition. Again let $\Lambda $ be a partition of $%
L$ of length $r$, the number of collections $\{\Delta _{1},\cdots ,\Delta
_{r}\}$ with $\{|\Delta _{1}|,\cdots ,|\Delta _{r}|\}$ equal to the
partition $\Lambda $ is given by $\frac{r!}{|Aut\Lambda |}\cdot \frac{L!}{%
\Lambda _{1}!\cdots \Lambda _{r}!}$, hence
\begin{equation*}
F_{(1)^{L}}^{SO}(\mathcal{L})=\prod_{\alpha =1}^{L}W_{(1)}^{SO}(\mathcal{K}%
_{\alpha })\cdot \sum_{\Lambda \vdash L}\frac{(-1)^{\ell (\Lambda )-1}\ell
(\Lambda )!L!}{\ell (\Lambda )|Aut\Lambda |\Lambda !}=0.
\end{equation*}%
There is another way to see this directly. For if the link $\mathcal{L}$ is
the disjoint union of $\mathcal{K}_{\alpha }$'s, then the free energy $%
F^{SO}(\mathcal{L},pb(z_{1}),\cdots ,pb(z_{L}))$ is the sum of the free
energy $F^{SO}(\mathcal{K}_{\alpha };pb(z_{\alpha }))$. The expansion of
such a sum $F^{SO}(\mathcal{L})$ with respect to $pb_{\overrightarrow{\mu }}$
does not contain terms of the form $\overset{L}{\underset{\alpha =1}{\prod }}%
p_{1}(z_{\alpha })$. Thus the theorem is true for links of the type of
disjoint union.

Finally, consider the Skein relation
\begin{equation*}
<\mathcal{L}_{+}>-<\mathcal{L}_{-}>=z(<\mathcal{L}_{||}>-<\mathcal{L}_{=}>),
\end{equation*}%
where $<\mathcal{L}_{+}>$ and $<\mathcal{L}_{-}>$ are two links coincide
everywhere except at one crossing $P$ between two different components $%
\mathcal{K}_{a}$ and $\mathcal{K}_{b}$ of the link $\mathcal{L}$ for $1\leq
a<b\leq L$. The link $<\mathcal{L}_{||}>$ (resp. $<\mathcal{L}_{=}>$) is the
link by replacing the crossing $P$ by two parallel vertical (resp.
horizontal) lines. Both $<\mathcal{L}_{||}>$ and $<\mathcal{L}_{=}>$ have $%
L-1$ components. Let's compute the difference
\begin{equation*}
F_{(1)^{L}}^{SO}(\mathcal{L}_{+})-F_{(1)^{L}}^{SO}(\mathcal{L}%
_{-})=\sum_{r=1}^{L}\frac{(-1)^{r-1}}{r}\sum_{\Delta _{1},\cdots ,\Delta
_{r}}(\prod_{i=1}^{r}W_{\Delta _{i}}^{SO}(\mathcal{L}_{+})-%
\prod_{i=1}^{r}W_{\Delta _{i}}^{SO}(\mathcal{L}_{-})).
\end{equation*}%
The summation is again over all partitions $\Delta _{1},\cdots ,\Delta _{r}$
of the set $[L]$. An important observation is that $\underset{i=1}{\overset{r%
}{\prod }}W_{\Delta _{i}}^{SO}(\mathcal{L}_{+})-\overset{r}{\underset{i=1}{%
\prod }}W_{\Delta _{i}}^{SO}(\mathcal{L}_{-})=0$ if $a$ and $b$ are not in
the same set $\Delta _{i}$ for some $i$, because in this situation the
sub-links $\mathcal{L}_{+,\Delta _{i}}$ coincide to the sub-links $\mathcal{L%
}_{-,\Delta _{i}}$. In particular, this is the case if $r=L$. The above
difference can be simplified
\begin{align*}
& F_{(1)^{L}}^{SO}(\mathcal{L}_{+})-F_{(1)^{L}}^{SO}(\mathcal{L}_{-}) \\
& =\sum_{r=1}^{L-1}\frac{(-1)^{r-1}}{r}\sum_{i=1}^{r}\sum_{\Delta
_{1},\cdots ,\Delta _{r};a,b\in \Delta _{i}}(W_{\Delta _{i}}^{SO}(\mathcal{L}%
_{+})-W_{\Delta _{i}}^{SO}(\mathcal{L}_{-}))\prod_{j=1,j\neq i}^{r}W_{\Delta
_{j}}^{SO}(\mathcal{L}_{+}) \\
& =\sum_{r=1}^{L-1}\frac{(-1)^{r-1}}{r}\sum_{i=1}^{r}\sum_{\Delta
_{1},\cdots ,\Delta _{r};a,b\in \Delta _{i}}z\cdot (W_{\Delta _{i}}^{SO}(%
\mathcal{L}_{||})-W_{\Delta _{i}}^{SO}(\mathcal{L}_{=}))\prod_{j=1,j\neq
i}^{r}W_{\Delta _{j}}^{SO}(\mathcal{L}_{+}) \\
& =z\cdot (F_{(1)^{L-1}}^{SO}(\mathcal{L}_{||})-F_{(1)^{L-1}}^{SO}(\mathcal{L%
}_{=})).
\end{align*}%
By induction, both $z^{2-(L-1)}\cdot F_{(1)^{L-1}}^{SO}(\mathcal{L}_{||})$
and $z^{3-L}\cdot F_{(1)^{L-1}}^{SO}(\mathcal{L}_{=})$ are in the ring $%
\mathbb{Z}[t,t^{-1}][z]$, thus if the theorem is true for link $\mathcal{L}%
_{+}$ if and only if it is true for link $\mathcal{L}_{-}$.

For a general link $\mathcal{L}$ which is not necessarily a disjoint union,
changing crossings between different components of $\mathcal{L}$
respectively until it becomes a disjoint union of $L$ knots. As the theorem
is true for disjoint union, it is true for $\mathcal{L}$.
\end{proof}

The whole results of Section \ref{sec6} can be view as application of
Theorem \ref{Thm7.3} combined with some combinatorial identities like Lemma %
\ref{Lemma7.2}.

To study the cases of partitions with more boxes, we first develop the
cabling technique. Let $\beta $ be a braid of which the closure is the link $%
\mathcal{L}$. For each $\overrightarrow{d}\in \mathbb{Z}_{+}^{L}$, denote $%
\beta _{\overrightarrow{d}}$ the braid obtained by cabling the $k$-th strand
of $\beta $ to $d_{\alpha }$ parallel ones if it in the $\alpha $-th
component of $\mathcal{L}$. The partition function of $\mathcal{L}$ and the
Kauffman polynomials are related by the following Lemma.

\begin{lemma}
\label{Lemma7.4} Assume $\beta $ is of writhe zero on every components, then
the partition function of $\mathcal{L}$ is related to the Kauffman
polynomial of the cabling link by
\begin{equation*}
W_{(1)^{d}}^{SO}(\beta _{\overrightarrow{d}})=\sum_{\overrightarrow{A}\in
\widehat{Br}_{\overrightarrow{d}}}\chi _{\overrightarrow{A}}(\mathrm{id})W_{%
\overrightarrow{A}}^{SO}(\mathcal{L};q,t)={\overrightarrow{d}!}\cdot
Z_{((1^{d_{1}}),\cdots ,(1^{d_{L}}))}^{SO}(\mathcal{L}),
\end{equation*}

where $d=\overset{L}{\underset{\alpha =1}{\sum }}d_{\alpha }$ and $%
\overrightarrow{d}!=\mathrm{z}_{((1^{d_{1}}),\cdots
,(1^{d_{L}}))}=d_{1}!\cdot \cdot \cdot d_{L}!$.
\end{lemma}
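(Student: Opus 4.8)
The plan is to evaluate $W_{(1)^{d}}^{SO}(\beta_{\overrightarrow d})$ as a quantum trace, insert a resolution of the identity of the relevant centralizer algebras, and match the outcome with $\sum_{\overrightarrow A}\chi_{\overrightarrow A}(\mathrm{id})\,W_{\overrightarrow A}^{SO}(\mathcal L)$ by means of the cabling Lemma~\ref{Lemma3.1}; the last equality in the statement will then be pure bookkeeping. Indeed, for the second equality: the partition $(1^{d_\alpha})$ of $d_\alpha$ labels the identity conjugacy class of $Br_{d_\alpha}$, so $\chi_{\overrightarrow A}(\gamma_{((1^{d_1}),\dots,(1^{d_L}))})=\chi_{\overrightarrow A}(\mathrm{id})$, while $\mathrm{z}_{((1^{d_1}),\dots,(1^{d_L}))}=\prod_\alpha d_\alpha!=\overrightarrow d!$; substituting these into the defining sum of $Z_{CS}^{SO}$ gives at once $\overrightarrow d!\cdot Z_{((1^{d_1}),\dots,(1^{d_L}))}^{SO}(\mathcal L)=\sum_{\overrightarrow A\in\widehat{Br}_{\overrightarrow d}}\chi_{\overrightarrow A}(\mathrm{id})\,W_{\overrightarrow A}^{SO}(\mathcal L)$. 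Thus everything reduces to the first equality.

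For the first equality I would first dispose of framing. Since $\beta$ has writhe zero on every component, so does every component of its cable $\beta_{\overrightarrow d}$ (a self-crossing of $\mathcal K_\alpha$ contributes exactly one self-crossing, of the same sign, to each of the $d_\alpha$ parallel copies). Because $\kappa_{(1)}=0$, all framing prefactors in the definition of $W^{SO}$ equal $1$, so $W_{(1)^{d}}^{SO}(\beta_{\overrightarrow d})=\mathrm{tr}_{V^{\otimes n}}h(\beta_{\overrightarrow d})$, where $n$ is the number of strands of $\beta_{\overrightarrow d}$, and likewise, by Lemma~\ref{Lemma3.1}, $W_{\overrightarrow A}^{SO}(\mathcal L)=\mathrm{tr}_{V^{\otimes n}}\big(h(\beta_{\overrightarrow d})\cdot(p^{(i_1)}\otimes\cdots\otimes p^{(i_m)})\big)$ for any system of minimal idempotents $p^{(\alpha)}\in C_{d_\alpha}$ of shape $A^\alpha$, the same $p^{(\alpha)}$ being inserted on every block of $d_\alpha$ strands coming from the $\alpha$-th component.

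Next I would insert $\mathrm{id}_{V^{\otimes n}}=\bigotimes_{k=1}^{m}\big(\sum_{A\in\widehat{Br}_{d_{i_k}}}\pi_A\big)$ into $\mathrm{tr}_{V^{\otimes n}}h(\beta_{\overrightarrow d})$ and expand each central idempotent as $\pi_A=\sum_{|\Lambda|=A}p_\Lambda$ over the Beliakova--Blanchet path idempotents, which are mutually orthogonal, $p_\Lambda p_{\Lambda'}=\delta_{\Lambda\Lambda'}p_\Lambda$. As in the proof of Lemma~\ref{Lemma3.7}, naturality of $\check{\mathcal{R}}$ applied blockwise shows that $h(\beta_{\overrightarrow d})$ conjugates $\bigotimes_k p_{\Lambda_k}$ into $\bigotimes_k p_{\Lambda_{\pi^{-1}(k)}}$, where $\pi\in S_m$ is the block permutation underlying $\beta$. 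Combining this with cyclicity of the quantum trace and the orthogonality relation, a term $\mathrm{tr}_{V^{\otimes n}}\big(h(\beta_{\overrightarrow d})\cdot\bigotimes_k p_{\Lambda_k}\big)$ vanishes unless the tableau $\Lambda_k$ is constant along each $\pi$-orbit, i.e.\ on each component of $\mathcal L$. The surviving terms are parametrized, for fixed $\overrightarrow A=(A^1,\dots,A^L)\in\widehat{Br}_{\overrightarrow d}$, by a choice of one up-down tableau $\Lambda^\alpha$ of shape $A^\alpha$ for each $\alpha$; there are $\prod_\alpha d_{A^\alpha}=\chi_{\overrightarrow A}(\mathrm{id})$ of them, and by Lemma~\ref{Lemma3.1} each equals $W_{\overrightarrow A}^{SO}(\mathcal L)$. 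Summing over $\overrightarrow A$ yields $\mathrm{tr}_{V^{\otimes n}}h(\beta_{\overrightarrow d})=\sum_{\overrightarrow A}\chi_{\overrightarrow A}(\mathrm{id})\,W_{\overrightarrow A}^{SO}(\mathcal L)$, as desired.

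I expect the main obstacle to be this third step: checking that the path idempotents of a given shape really are orthogonal and sum to the minimal central idempotent, fixing the correct variance in the conjugation relation for $h(\beta_{\overrightarrow d})$, and — the real crux — seeing that the orthogonality forces all $m_\alpha$ strands of a component to carry the \emph{same} path idempotent, so that the multiplicity of $W_{\overrightarrow A}^{SO}(\mathcal L)$ comes out as $\prod_\alpha d_{A^\alpha}$ rather than $\prod_\alpha d_{A^\alpha}^{m_\alpha}$. The remaining ingredients — the writhe computation for the cable and the two quantum-trace identities — are routine given Lemma~\ref{Lemma3.1}, Lemma~\ref{Lemma3.7} and the Schur--Weyl decomposition of $V^{\otimes n}$.
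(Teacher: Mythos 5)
Your proposal is correct and follows essentially the same route as the paper: the second equality is the same bookkeeping with $\chi_{\overrightarrow{A}}(\gamma_{((1^{d_1}),\dots,(1^{d_L}))})=\chi_{\overrightarrow{A}}(\mathrm{id})$ and $\mathrm{z}_{((1^{d_1}),\dots,(1^{d_L}))}=\overrightarrow{d}!$, and the first equality is obtained in the paper by the identity $\sum_{\overrightarrow{A}}\chi_{\overrightarrow{A}}(\mathrm{id})(p_{A^{1}}\otimes\cdots\otimes p_{A^{L}})=\mathrm{id}$ (i.e. $\chi_{A}(\mathrm{id})=d_{A}$ is the multiplicity in the semisimple decomposition) combined with Lemma \ref{Lemma3.1} and the naturality/trace argument that one idempotent per component suffices. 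Your expansion of $\mathrm{id}$ into Beliakova--Blanchet path idempotents on every block, with the orbit-constancy argument forcing all blocks of a component to carry the same $p_{\Lambda}$, is just a more explicit unpacking of that same identity, and the step you flag as the crux is exactly the point the paper disposes of by "naturality plus the trace property".
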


\begin{proof}
Take a $\beta $ of zero writhe on every component, the cabling link $\beta _{%
\overrightarrow{d}}$ is also of zero writhe on every component, and the
quantum group invariants $W_{\overrightarrow{A}}$ are equal to the trace of
\begin{equation}
\beta _{\overrightarrow{d}}\cdot (p_{A^{1}}\otimes \cdots \otimes p_{A^{L}})
\end{equation}%
in the Birman-Murakami-Wenzl algebra $C_{M}$ for $M=d_{1}r_{1}+\cdots
+d_{L}r_{L}$ and $p_{A^{\alpha }}$ is the minimal idempotents in $%
C_{d_{\alpha }}$ corresponding to the irreducible representation numbered by
the partition $A^{\alpha }$. Apparently, each $p_{A^{\alpha }}$ should
appear $r_{i}$ times in the above tensor. However, the naturality of the
universal $\mathcal{R}$-matrices plus the trace property will move all $%
p_{A^{\alpha }}$ to the same strand and thus one $p_{A^{\alpha }}$ for each $%
\alpha =1,2,\cdots ,L$ is enough.

The expansion coefficients $Z_{(1^{d_{1}},\cdots ,1^{d_{L}})}^{SO}(\mathcal{L%
})$ of the partition function can be calculated directly
\begin{align*}
{\overrightarrow{d}!}\cdot Z_{(1^{d_{1}},\cdots ,1^{d_{L}})}^{SO}(\mathcal{L}%
)& =\sum_{\overrightarrow{A}\in \widehat{Br}_{\overrightarrow{d}}}\chi _{%
\overrightarrow{A}}(\mathrm{id})W_{\overrightarrow{A}}^{SO}(\mathcal{L};q,t)
\\
& =\sum_{\overrightarrow{A}\in \widehat{Br}_{\overrightarrow{d}}}\chi _{%
\overrightarrow{A}}(\mathrm{id})\mathrm{tr}_{V^{M}}(\beta _{\overrightarrow{d%
}}\cdot (p_{A^{1}}\otimes \cdots \otimes p_{A^{L}})) \\
& =\mathrm{tr}_{V^{M}}(\beta _{\overrightarrow{d}}) \\
& =W_{(1)^{d}}^{SO}(\beta _{\overrightarrow{d}};q,t).
\end{align*}%
We have used the fact that for a semi-simple algebra, the dimension of an
irreducible representation $\chi _{A^{i}}(\mathrm{id})$ is the same as the
multiplicity of $A^{i}$ in the semi-simple decomposition of the algebra, so
\begin{equation*}
\sum_{\overrightarrow{A}\in \widehat{Br}_{\overrightarrow{d}}}\chi _{%
\overrightarrow{A}}(\mathrm{id})(p_{A^{1}}\otimes \cdots \otimes p_{A^{L}})=%
\mathrm{id}
\end{equation*}%
in the third equality.
\end{proof}

\begin{remark}
A similar formula holds for HOMFLY polynomials and can be proved in the same
way.
\end{remark}

\begin{theorem}
\label{Thm7.5} For partitions $\overrightarrow{\mu }=(\mu ^{1},\cdots ,\mu
^{L})\in \mathcal{P}^{L}$ such that $\mu ^{\alpha }=(1,1,\cdots ,1)\vdash
d_{\alpha }$ for each $\alpha =1,\cdots ,L$, we have
\begin{equation*}
\overrightarrow{d}!(q-q^{-1})^{2-d}\cdot F_{\overrightarrow{\mu }}(\mathcal{L%
},q,t)\in \mathbb{Z}[t,t^{-1}][q-q^{-1}].
\end{equation*}%
In particular, the Conjecture \ref{Main Conj} (Orthgonal LMOV Conj.) is
valid for such column like partitions.
\end{theorem}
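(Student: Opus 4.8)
The strategy is to reduce the claim to the case of a column of single boxes, which is Theorem \ref{Thm7.3}, by means of the cabling identity of Lemma \ref{Lemma7.4}. I would fix a braid $\beta$ whose closure is $\mathcal{L}$ and whose writhe is zero on every component (always achievable by inserting cancelling kinks), and let $\mathcal{L}_{\overrightarrow{d}}$ be the closure of the cabling braid $\beta_{\overrightarrow{d}}$; this is a link with $d=d_{1}+\cdots+d_{L}$ components, namely $d_{\alpha}$ zero-framed parallel pushoffs of $\mathcal{K}_{\alpha}$ for each $\alpha$. The core of the proof is the identity
\begin{equation*}
\overrightarrow{d}!\,F_{\overrightarrow{\mu}}^{SO}(\mathcal{L};q,t)=F_{(1)^{d}}^{SO}(\mathcal{L}_{\overrightarrow{d}};q,t).
\end{equation*}
Granting it, Theorem \ref{Thm7.3} applied to the $d$-component link $\mathcal{L}_{\overrightarrow{d}}$ gives $(q-q^{-1})^{2-d}F_{(1)^{d}}^{SO}(\mathcal{L}_{\overrightarrow{d}})\in\mathbb{Z}[t,t^{-1}][q-q^{-1}]$, hence $\overrightarrow{d}!(q-q^{-1})^{2-d}F_{\overrightarrow{\mu}}^{SO}(\mathcal{L};q,t)\in\mathbb{Z}[t,t^{-1}][q-q^{-1}]$, which is the assertion.

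To establish the identity I would expand each side through (\ref{E:7.1}) and (\ref{E:4.11}). For a nonempty subset $\Delta$ of the $d$ components of $\mathcal{L}_{\overrightarrow{d}}$, the sublink $(\mathcal{L}_{\overrightarrow{d}})_{\Delta}$ is again a zero-framed cabling $\mathcal{L}_{\overrightarrow{c}}$ of $\mathcal{L}$, where $\overrightarrow{c}=\overrightarrow{c}(\Delta)\in\mathbb{Z}_{\geq 0}^{L}$ records how many elements of $\Delta$ lie over each $\mathcal{K}_{\alpha}$; since $\beta$ has writhe zero so does $\beta_{\overrightarrow{c}}$, and Lemma \ref{Lemma7.4} yields $W_{\Delta}^{SO}(\mathcal{L}_{\overrightarrow{d}})=\overrightarrow{c}!\cdot Z_{((1^{c_{1}}),\ldots,(1^{c_{L}}))}^{SO}(\mathcal{L})$ with $\overrightarrow{c}!=c_{1}!\cdots c_{L}!$. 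Substituting this into (\ref{E:7.1}) for $\mathcal{L}_{\overrightarrow{d}}$ and sorting the ordered set partitions $(\Delta_{1},\ldots,\Delta_{r})$ of $\{1,\ldots,d\}$ according to their type $(\overrightarrow{c}(\Delta_{1}),\ldots,\overrightarrow{c}(\Delta_{r}))$, the number of set partitions of a prescribed type equals $\prod_{\alpha}\dfrac{d_{\alpha}!}{c^{1}_{\alpha}!\cdots c^{r}_{\alpha}!}=\overrightarrow{d}!\big/\bigl(\overrightarrow{c}^{\,1}!\cdots\overrightarrow{c}^{\,r}!\bigr)$, which exactly cancels the factors coming from Lemma \ref{Lemma7.4}. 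What remains is $\overrightarrow{d}!$ times the sum, over ordered tuples of nonzero vectors $\overrightarrow{c}^{\,1},\ldots,\overrightarrow{c}^{\,r}$ with $\overrightarrow{c}^{\,1}+\cdots+\overrightarrow{c}^{\,r}=\overrightarrow{d}$, of $\tfrac{(-1)^{r-1}}{r}\prod_{i}Z_{((1^{c^{i}_{1}}),\ldots,(1^{c^{i}_{L}}))}^{SO}(\mathcal{L})$. On the other side, the $\mathcal{P}^{L}$-partitions $\Lambda$ with $|\Lambda|=\overrightarrow{\mu}=((1^{d_{1}}),\ldots,(1^{d_{L}}))$ are exactly the multisets of parts of the form $((1^{c_{1}}),\ldots,(1^{c_{L}}))$ summing to $\overrightarrow{\mu}$, because a union of partitions can equal $(1^{m})$ only when every summand is a single column; rewriting the unordered sum (\ref{E:4.11}), with its weight $\ell(\Lambda)!/|Aut\,\Lambda|$, as a sum over ordered tuples produces exactly the same expression. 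Comparing the two yields the identity.

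For the statement about Conjecture \ref{Main Conj}: a column multipartition $\overrightarrow{\mu}=((1^{d_{1}}),\ldots,(1^{d_{L}}))$ has no divisor $k>1$, since multiplying every part of $(1^{d_{\alpha}})$ by $k$ cannot reproduce $(1^{d_{\alpha}})$ unless $k=1$; hence $g_{\overrightarrow{\mu}}(q,t)=F_{\overrightarrow{\mu}}^{SO}(\mathcal{L};q,t)$. Since $\mathrm{z}_{\overrightarrow{\mu}}=d_{1}!\cdots d_{L}!=\overrightarrow{d}!$ and $\prod_{\alpha=1}^{L}\prod_{i=1}^{\ell(\mu^{\alpha})}(q^{\mu^{\alpha}_{i}}-q^{-\mu^{\alpha}_{i}})=(q-q^{-1})^{d}$, the quantity appearing in Conjecture \ref{Main Conj} equals $\tfrac12\bigl(P(q,t)-P(q,-t)\bigr)$ with $P(q,t)=\overrightarrow{d}!(q-q^{-1})^{2-d}F_{\overrightarrow{\mu}}^{SO}(\mathcal{L};q,t)\in\mathbb{Z}[t,t^{-1}][q-q^{-1}]$; in particular it lies in $\mathbb{Z}[t,t^{-1}][q-q^{-1}]$, and in fact $P$ itself already does, so for column partitions no anti-symmetrization is necessary.

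The only real difficulty I anticipate is the bookkeeping in the second paragraph: one must track carefully the passage between ordered and unordered partitions and the matching automorphism and multinomial factors so that the two expansions agree termwise, and one must verify the topological point that a sublink of $\mathcal{L}_{\overrightarrow{d}}$ is genuinely a zero-framed cabling of the corresponding sublink of $\mathcal{L}$, which is what legitimizes applying Lemma \ref{Lemma7.4} to each $W_{\Delta}^{SO}$. Everything else is a formal manipulation of the free-energy expansion.
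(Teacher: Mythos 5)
Your proposal is correct and follows essentially the same route as the paper: reduce to Theorem \ref{Thm7.3} via the cabling equality $\overrightarrow{d}!\,F_{\overrightarrow{\mu }}^{SO}(\mathcal{L})=F_{(1)^{d}}^{SO}(\mathcal{L}_{\overrightarrow{d}})$, and prove that equality by expanding both free energies, applying Lemma \ref{Lemma7.4} to each sublink of the zero-writhe cabling, and matching the multinomial count of set partitions of a given type against the ordered-tuple expansion. Your explicit verification of the ``in particular'' clause (that $g_{\overrightarrow{\mu }}=F_{\overrightarrow{\mu }}^{SO}$ for column partitions and that the denominator is $(q-q^{-1})^{d}$) is a small addition the paper leaves implicit, but the argument is the same.
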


\begin{proof}
We will use the symbol $(1)^{\overrightarrow{d}}$ to denote the partition $%
\overrightarrow{\mu }$ in the theorem. Let $\beta $ be a braid whose closure
is the link $\mathcal{L}$ with zero writhe. Let $\beta _{\overrightarrow{d}}$
be the cabling braid as in section \ref{sec3}. The calculation in Lemma \ref%
{Lemma7.4} in fact provide that
\begin{equation*}
Z_{(1)^{\overrightarrow{d}}}^{SO}(\mathcal{L})=\frac{1}{\overrightarrow{d}!}%
W_{(1)^{d}}^{SO}(\beta _{\overrightarrow{d}}),
\end{equation*}%
which reduce the situation back to the Kauffman case. A more careful
observation is the following cabling equality
\begin{equation*}
F_{(1)^{\overrightarrow{d}}}^{SO}(\mathcal{L})=\frac{1}{\overrightarrow{d}!}%
F_{(1)^{d}}^{SO}(\beta _{\overrightarrow{d}}),
\end{equation*}%
which, together with Theorem \ref{Thm7.3}, finishes the proof.

We now prove the cabling equality by comparing both sides. The LHS equal to
\begin{align*}
& \sum_{r=1}^{d}\frac{(-1)^{r-1}}{r}\sum_{\overrightarrow{A_{1}},\cdots ,%
\overrightarrow{A_{r}}}Z_{\overrightarrow{A_{1}}}^{SO}(\mathcal{L})\cdots Z_{%
\overrightarrow{A_{r}}}^{SO}(\mathcal{L}) \\
& =\sum_{r=1}^{d}\frac{(-1)^{r-1}}{r}\sum_{\overrightarrow{A_{1}},\cdots ,%
\overrightarrow{A_{r}}}\frac{W_{(1)^{||\overrightarrow{A_{1}}||}}^{SO}(\beta
_{|\overrightarrow{A_{1}}|})\cdots W_{(1)^{||\overrightarrow{A_{r}}%
||}}^{SO}(\beta _{|\overrightarrow{A_{r}}|})}{\overrightarrow{A_{1}}!\cdots
\overrightarrow{A_{r}}!},
\end{align*}%
where the summation is over all partitions $(\overrightarrow{A_{1}},\cdots ,%
\overrightarrow{A_{r}})$ of length of the partition $(1)^{\overrightarrow{d}%
} $. As each $\overrightarrow{A_{i}}$ must be of the form $%
((1^{a_{i}^{1}}),(1^{a_{i}^{2}}),\cdots ,(1^{a_{i}^{L}}))$ such that $%
\underset{i=1}{\overset{r}{\sum }}a_{i}^{\alpha }=d_{\alpha }$ for every $%
\alpha =1,2,\cdots ,L$. Then the symbols $|\overrightarrow{A_{i}}%
|=(a_{i}^{1},a_{i}^{2},\cdots ,a_{i}^{L})$, $||\overrightarrow{A_{i}}%
||=a_{i}^{1}+a_{i}^{2}+\cdots +a_{i}^{L}$ and $\overrightarrow{A_{i}}%
!=a_{i}^{1}!a_{i}^{2}!\cdots a_{i}^{L}!$ as in the introduction. Again $%
\beta $ is the braid with zero writhe on every components representing the
link $\mathcal{L}$, and $\beta _{|\overrightarrow{A_{i}}|}$ is the cabling
link.

The RHS equal to
\begin{equation*}
\frac{1}{\overrightarrow{d}!}\sum_{r=1}^{d}\frac{(-1)^{r-1}}{r}\sum_{\Delta
_{1},\cdots ,\Delta _{r}}\prod_{i=1}^{r}W_{\Delta _{i}}^{SO}(\beta _{%
\overrightarrow{d}})
\end{equation*}%
for $\Delta _{1},\cdots ,\Delta _{r}$ are non-empty sets which form a
partition of the set $[d]$. Each $\Delta _{i}$ can be further decompose into
a partition $\Xi _{i}^{1},\Xi _{i}^{2},\cdots ,\Xi _{i}^{L}$, such that
elements in $\Xi _{i}^{\alpha }$ labelling the components in $\beta _{%
\overrightarrow{d}}$ arising from the cabling of the $\alpha $th component
of $\mathcal{L}$. Write $a_{i}^{\alpha }=|\Xi _{i}^{\alpha }|$ for the
number of elements in $\Xi _{i}^{\alpha }$, which can be zero. Then the
vectors $\overrightarrow{A_{i}}$'s defined by $\overrightarrow{A_{i}}%
=((1^{a_{i}^{1}}),(1^{a_{i}^{2}}),\cdots ,(1^{a_{i}^{L}}))$ become one term
in the summation appear in the LHS. Furthermore, for each fixed such $%
\overrightarrow{A_{i}}$'s, there are $\overset{L}{\underset{\alpha =1}{\prod
}}\frac{d_{\alpha }!}{a_{1}^{\alpha }!\cdots a_{r}^{\alpha }!}$ possible
partition sets $\Xi _{i}^{\alpha }$'s. The equality holds.
\end{proof}

\section{The Case of Rows Implies the Conjecture}

\label{sec8} In this section, we discuss the case for a general partition $%
\overrightarrow{\mu }$, and reduce it to the case of rectangular ones.

We first define an equivalence relation on BMW algebra $C_{n}$: two elements
$x,y\in C_{n}$ are equivalent, denoted by $x\sim y$, if $\mathrm{tr}(xz)=%
\mathrm{tr}(yz)$ for all central elements $z\in C_{n}$. Obviously, if two
elements $x$ and $y$ are conjugate, say if there exist an invertible element
$g\in C_{n}$, such that $gxg^{-1}=y$, then $x\sim y$. As the algebra $C_{n}$
is semi-simple, two idempotents $p_{1}$ and $p_{2}$ are equivalent, if and
only if they give isomorphic representations of $C_{n}$.

Let $p_{\lambda }$ be a minimal path idempotent in $C_{n}$. Denote by $%
m_{\mu }=\underset{\lambda }{\sum }\chi _{\lambda }(\gamma _{\mu
})p_{\lambda }$, and also regard this as an element in the Grothendieck
group of representations of the Birman-Murakami-Wenzl algebra. The branching
rule \cite{BB} for Birman-Murakami-Wenzl algebra is
\begin{equation*}
p_{\lambda }\otimes 1=\sum_{\lambda ^{\prime }}p_{\lambda ^{\prime }},
\end{equation*}%
where the summation is over all partitions $\lambda ^{\prime }$ which is
either add one box to $\lambda $ or remove one box from $\lambda $. As the
characters $\chi _{A}(\gamma _{\mu })$ of Brauer algebra are all integers,
repeated using the branching rule lead the decomposition of the tensor
product of minimal idempotents:
\begin{equation}
m_{(\mu _{1})}\otimes m_{(\mu _{2})}\otimes \cdots \otimes m_{(\mu _{\ell
})}\sim \sum_{A}b_{A}p_{A},  \label{E:tensor}
\end{equation}%
where the summation is over all possible partitions $A$ and the multiplicity
$b_{A}$ are all integers. Furthermore, the integers $b_{A}$ are uniquely
determined by this equivalence relation, by multiplying both sides the
minimal central idempotents $\pi _{A}$ of $C_{n}$.

\begin{lemma}
The integers $b_{A}=\chi _{A}(\gamma _{\mu })$ for the characters of Brauer
algebras.
\end{lemma}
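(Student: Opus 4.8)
The plan is to turn the equivalence relation (\ref{E:tensor}) into an identity of characters, and then to observe that, by its very construction, $m_{(k)}$ carries exactly the character values of a $k$-cycle. Since $C_n$ is semisimple in the range considered, for $x,y\in C_n$ one has $x\sim y$ precisely when $\chi_A(x)=\chi_A(y)$ for every $A\in\widehat{Br}_n$ (the condition $\mathrm{tr}(xz)=\mathrm{tr}(yz)$ for all central $z$ reads block by block, and the trace pairing is non-degenerate on each matrix block). A minimal path idempotent $p_B$ acts as a rank-one projection on $U_B$ and as $0$ on $U_A$ for $A\neq B$, so $\chi_A(p_B)=\delta_{AB}$; applying $\chi_A$ to (\ref{E:tensor}) gives $b_A=\chi_A\big(m_{(\mu_1)}\otimes\cdots\otimes m_{(\mu_\ell)}\big)$. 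Hence it suffices to prove
\begin{equation*}
\chi_A\big(m_{(\mu_1)}\otimes\cdots\otimes m_{(\mu_\ell)}\big)=\chi_A(\gamma_\mu),\qquad A\in\widehat{Br}_n .
\end{equation*}

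Next I would pass to the natural subalgebra $C_{\mu_1}\otimes\cdots\otimes C_{\mu_\ell}\hookrightarrow C_n$, with $n=\mu_1+\cdots+\mu_\ell$ and the $i$-th factor acting on the $i$-th block of $\mu_i$ consecutive strands. Restricting an irreducible $C_n$-module yields $U_A\big|_{C_{\mu_1}\otimes\cdots\otimes C_{\mu_\ell}}\cong\bigoplus_{\vec\lambda}m^A_{\vec\lambda}\,(U_{\lambda^1}\otimes\cdots\otimes U_{\lambda^\ell})$, where the $m^A_{\vec\lambda}$ are nonnegative integers obtained by iterating the branching rule of \cite{BB}; semisimplicity forces them to be the same integers as for the Brauer algebra $Br_n$. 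This gives the multiplicative formula $\chi_A(y_1\otimes\cdots\otimes y_\ell)=\sum_{\vec\lambda}m^A_{\vec\lambda}\prod_i\chi_{\lambda^i}(y_i)$ for $y_i\in C_{\mu_i}$. Taking $y_i=m_{(\mu_i)}=\sum_\lambda\chi_\lambda(\gamma_{(\mu_i)})p_\lambda$ and using $\chi_{\lambda^i}(p_\lambda)=\delta_{\lambda^i\lambda}$ once more, one gets $\chi_{\lambda^i}(m_{(\mu_i)})=\chi_{\lambda^i}(\gamma_{(\mu_i)})$, so the left-hand side above equals $\sum_{\vec\lambda}m^A_{\vec\lambda}\prod_i\chi_{\lambda^i}(\gamma_{(\mu_i)})$.

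For the right-hand side I would represent the conjugacy class $\gamma_\mu$ by a permutation of block form $c_1\otimes\cdots\otimes c_\ell$, where $c_i\in S_{\mu_i}\subset C_{\mu_i}$ is a full $\mu_i$-cycle (a product of disjoint cycles of lengths $\mu_1,\dots,\mu_\ell$ on the successive blocks). Since the $Br_n$-character of a permutation depends only on its cycle type, $\chi_{\lambda^i}(c_i)=\chi_{\lambda^i}(\gamma_{(\mu_i)})$, and the same multiplicativity gives $\chi_A(\gamma_\mu)=\chi_A(c_1\otimes\cdots\otimes c_\ell)=\sum_{\vec\lambda}m^A_{\vec\lambda}\prod_i\chi_{\lambda^i}(\gamma_{(\mu_i)})$. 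Comparing with the previous paragraph yields $\chi_A\big(m_{(\mu_1)}\otimes\cdots\otimes m_{(\mu_\ell)}\big)=\chi_A(\gamma_\mu)$, whence $b_A=\chi_A(\gamma_\mu)$.

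The step I expect to be the main obstacle is the middle one: one must verify carefully that the iterated restriction multiplicities $m^A_{\vec\lambda}$ for the BMW algebra $C_n$ genuinely coincide with those of $Br_n$ (this is exactly where semisimplicity and the explicit path-idempotent basis of \cite{BB} are needed), and that the value of $\chi_A$ on a permutation element is the integral Brauer character $\chi_A(\gamma_\mu)$ rather than the a priori $q,t$-dependent BMW character. Everything else is formal: the first paragraph uses only $\chi_A(p_B)=\delta_{AB}$ and semisimplicity, and the last two paragraphs are the same multiplicative character formula evaluated on two elements with identical block-wise character values.
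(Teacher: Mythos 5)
Your route is genuinely different from the paper's. You work entirely inside the algebras: reduce $\sim$ to equality of all irreducible characters, use $\chi_A(p_B)=\delta_{AB}$ to turn the claim into the identity $\chi_A\bigl(m_{(\mu_1)}\otimes\cdots\otimes m_{(\mu_\ell)}\bigr)=\chi_A(\gamma_\mu)$, and then evaluate both sides through the parabolic subalgebra $C_{\mu_1}\otimes\cdots\otimes C_{\mu_\ell}\subset C_n$ and the multiplicativity of characters on such a subalgebra. The paper instead specializes (\ref{E:tensor}) to the Brauer algebra at once (invoking deformation invariance of the integers $b_A$), identifies $Br_n$ with $\mathrm{End}_{\mathfrak{so}(2N+1)}(V^{\otimes n})$, and reads off $b_A$ from the symmetric-function identity $\prod_i pb_{\mu_i}=pb_{\mu}$ expanded in type-B Schur functions via $pb_{\lambda}=\sum_A\chi_A(\gamma_\lambda)sb_A$. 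Your first paragraph and the decomposition $\gamma_\mu=c_1\otimes\cdots\otimes c_\ell$ (interpreted inside $Br_n$, where permutations are honest elements and the character is a class function) are fine.

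However, the step you yourself flag as the obstacle is a genuine gap, and your proposed justification does not close it. You need the restriction multiplicities $m^A_{\vec\lambda}=\dim\mathrm{Hom}_{C_{\mu_1}\otimes\cdots\otimes C_{\mu_\ell}}(U_{\lambda^1}\otimes\cdots\otimes U_{\lambda^\ell},U_A)$ for the BMW tower to coincide with those of the Brauer tower, and you claim this is "obtained by iterating the branching rule of \cite{BB}." But that branching rule only governs the one-strand restriction $C_{n-1}\subset C_n$ (add or remove one box); iterating it computes $U_A$ restricted to $C_{n-k}\otimes C_1^{\otimes k}$, which is strictly coarser than the restriction to $C_{\mu_1}\otimes\cdots\otimes C_{\mu_\ell}$ when some $\mu_i>1$. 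What is really needed is a Littlewood--Richardson-type rule, or a rigidity argument showing these Hom-space dimensions are constant along the semisimple deformation from $Br_n$ to $C_n$. This is exactly the point the paper's detour through $SO(2N+1)$ is designed to avoid: once one accepts that the $b_A$ are deformation-invariant, the entire combinatorics is absorbed into the trivial identity $\prod_i pb_{\mu_i}=pb_\mu$ in the character ring of $SO(2N+1)$, and no parabolic branching data for the BMW algebra is ever required. To complete your argument you would either have to prove the invariance of the $m^A_{\vec\lambda}$ directly, or graft in the paper's Schur--Weyl computation at precisely this point.
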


\begin{proof}
As the Birman-Murakami-Wenzl algebras are deformations of the Brauer
algebras, they share the same branching rules. Specialize (\ref{E:tensor})
to the Brauer algebras by fixing $x=1+\frac{t-t^{-1}}{q-q^{-1}}$ and let $t$
and $q$ goes to $1$, and using the isomorphism $Br_{n}\cong \mathrm{End}_{%
\mathfrak{so}(2N+1)}(V^{\otimes n})$ for $x=2N+1$, we get
\begin{equation}
\widetilde{m}_{(\mu _{1})}\otimes \widetilde{m}_{(\mu _{2})}\otimes \cdots
\otimes \widetilde{m}_{(\mu _{\ell })}\sim \sum_{A}b_{A}\widetilde{p}_{A},
\label{E:SObranching}
\end{equation}%
where $\widetilde{m}_{(\mu _{i})}=\underset{A\in \widehat{Br}_{\mu _{i}}}{%
\sum }\chi _{A}(\gamma _{(\mu _{i})})\widetilde{p}_{A}$ and $\widetilde{p}%
_{A}$ is a minimal idempotent in $\mathrm{\ End}_{\mathfrak{so}%
(2N+1)}(V^{\otimes n})$. Regard (\ref{E:SObranching}) as an equality in the
Grothendieck group of (finite dimensional representations) of the Lie group $%
SO(2N+1)$, and the character is given by
\begin{align*}
& \prod_{i=1}^{\ell }[\underset{h=0}{\overset{[\frac{\mu _{i}}{2}]}{\sum }}%
\sum_{\lambda \vdash \mu _{i}-2h}\chi _{\lambda }(\gamma _{(\mu
_{i})})sb_{\lambda }(z_{-N},z_{1-N},\cdots ,z_{-1},z_{0},z_{1},\cdots
,z_{N-1},z_{N})] \\
& =\prod_{i=1}^{\ell }p_{\mu _{i}}(z_{-N},z_{1-N},\cdots
,z_{-1},z_{0},z_{1},\cdots ,z_{N-1},z_{N}) \\
& =p_{\mu }(z_{-N},z_{1-N},\cdots ,z_{-1},z_{0},z_{1},\cdots ,z_{N-1},z_{N})
\\
& =\underset{h=0}{\overset{[\frac{|\mu |}{2}]}{\sum }}\sum_{\lambda \vdash
|\mu |-2h}\chi _{\lambda }(\gamma _{\mu })sb_{\lambda
}(z_{-N},z_{1-N},\cdots ,z_{-1},z_{0},z_{1},\cdots ,z_{N-1},z_{N}).
\end{align*}

Thus the two elements $\widetilde{m}_{(\mu _{1})}\otimes \widetilde{m}_{(\mu
_{2})}\otimes \cdots \otimes \widetilde{m}_{\mu _{\ell }}$ and $\widetilde{m}%
_{\mu }$ equal in the Grothendieck group of $SO(2N+1)$, which determines the
integers $b_{A}=\chi _{A}(\gamma _{\mu })$, i.e, we have
\begin{equation*}
m_{(\mu _{1})}\otimes m_{(\mu _{2})}\otimes \cdots \otimes m_{(\mu _{\ell
})}\sim m_{\mu }.
\end{equation*}
\end{proof}

Let $\overrightarrow{\ell }=(\ell _{1},\cdots ,\ell _{L})$, and let $%
\mathcal{L}_{\overrightarrow{\ell }}$ be closure of the cabling braid $\beta
_{\overrightarrow{\ell }}$, which is obtained by cabling the $\alpha $-th
component of $\beta $ into $\ell _{\alpha }$ parallel ones. Then we have
\begin{align*}
\mathrm{z}_{\overrightarrow{\mu }}\cdot Z_{\overrightarrow{\mu }}(\mathcal{L}%
)& =\sum_{\overrightarrow{A}}\chi _{\overrightarrow{A}}(\gamma _{%
\overrightarrow{\mu }})\mathrm{tr}(\beta _{\overrightarrow{\ell }}\cdot p_{%
\overrightarrow{A}}) \\
& =\mathrm{tr}(\beta _{\overrightarrow{\ell }}\cdot m_{\overrightarrow{\mu }%
}) \\
& =\mathrm{tr}[\beta _{\overrightarrow{\ell }}\cdot \underset{\alpha =1}{%
\overset{L}{\bigotimes }}(m_{(\mu _{1}^{\alpha })}\otimes \cdots \otimes
m_{(\mu _{\ell _{\alpha }}^{\alpha })})] \\
& =(\prod_{\alpha =1}^{L}\prod_{i=1}^{\ell _{\alpha }}\mu _{i}^{\alpha
})\cdot Z_{(\mu _{1}^{1}),(\mu _{2}^{1}),\cdots ,(\mu _{\ell _{1}}^{1})(\mu
_{1}^{2}),(\mu _{2}^{2}),\cdots ,(\mu _{\ell _{2}}^{2}),...,(\mu
_{1}^{L}),(\mu _{2}^{L}),\cdots ,(\mu _{\ell _{L}}^{L})}(\mathcal{L}_{%
\overrightarrow{\ell }})
\end{align*}%
and
\begin{equation}
\mathrm{z}_{\overrightarrow{\mu }}\cdot F_{\overrightarrow{\mu }}(\mathcal{L}%
)=(\prod_{\alpha =1}^{L}\prod_{i=1}^{\ell _{\alpha }}\mu _{i}^{\alpha
})\cdot F_{(\mu _{1}^{1}),(\mu _{2}^{1}),\cdots ,(\mu _{\ell _{1}}^{1}),(\mu
_{1}^{2}),(\mu _{2}^{2}),\cdots ,(\mu _{\ell _{2}}^{2}),...,(\mu
_{1}^{L}),(\mu _{2}^{L}),\cdots ,(\mu _{\ell _{L}}^{L})}(\mathcal{L}_{%
\overrightarrow{\ell }}).  \label{E:lmu1}
\end{equation}

Now the partition $(\mu _{1}^{1}),(\mu _{2}^{1}),\cdots ,(\mu _{\ell
_{1}}^{1}),(\mu _{1}^{2}),(\mu _{2}^{2}),\cdots ,(\mu _{\ell
_{2}}^{2}),...,(\mu _{1}^{L}),(\mu _{2}^{L}),\cdots ,(\mu _{\ell
_{L}}^{L})\in \mathcal{P}^{|\ell (\overrightarrow{\mu })|}$ has the property
that each component is of length one. In particular, it is rectangular, and
we have the following theorem.

\begin{theorem}
The Conjecture \ref{Main Conj} is true for all partition $\overrightarrow{\mu%
}$, if and only if it is true for rectangular $\overrightarrow{\mu}$, if and
only if it is true for $\overrightarrow{\mu}=(\mu^1,\cdots,\mu^L)$ such that
each $\mu^\alpha=(d_\alpha)$ is of length one.
\end{theorem}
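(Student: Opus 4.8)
The plan is to prove the three-way equivalence by a chain of implications, with the nontrivial directions all flowing from the cabling identity~(\ref{E:lmu1}) just established. The implication ``true for all $\overrightarrow{\mu}$'' $\Rightarrow$ ``true for rectangular $\overrightarrow{\mu}$'' $\Rightarrow$ ``true for $\overrightarrow{\mu}$ with each $\mu^\alpha=(d_\alpha)$ of length one'' is trivial, since each class of partitions is contained in the previous one. So the real content is the reverse implication: knowing the Orthogonal LMOV Conjecture for partitions whose every component is a single row (length one) should imply it for all $\overrightarrow{\mu}$.

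First I would use~(\ref{E:lmu1}) to write, for an arbitrary $\overrightarrow{\mu}=(\mu^1,\cdots,\mu^L)\in\mathcal{P}^L$,
\begin{equation*}
\mathrm{z}_{\overrightarrow{\mu}}\cdot F_{\overrightarrow{\mu}}(\mathcal{L})=\Big(\prod_{\alpha=1}^{L}\prod_{i=1}^{\ell_\alpha}\mu_i^\alpha\Big)\cdot F_{\overrightarrow{\nu}}(\mathcal{L}_{\overrightarrow{\ell}}),
\end{equation*}
where $\overrightarrow{\nu}$ is the $|\ell(\overrightarrow{\mu})|$-component partition obtained by listing the rows $(\mu_i^\alpha)$ of each $\mu^\alpha$ as separate single-row components, and $\mathcal{L}_{\overrightarrow{\ell}}$ is the cabling of $\mathcal{L}$. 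The key point is that $\overrightarrow{\nu}$ has \emph{every} component of length one, so by hypothesis the Orthogonal LMOV Conjecture holds for $(\mathcal{L}_{\overrightarrow{\ell}},\overrightarrow{\nu})$. I then need to translate the integrality statement for $g_{\overrightarrow{\nu}}(\mathcal{L}_{\overrightarrow{\ell}})$ into the integrality statement for $g_{\overrightarrow{\mu}}(\mathcal{L})$. This requires passing from $F_{\overrightarrow{\mu}}$ to $g_{\overrightarrow{\mu}}$ via the Möbius-inversion formula $g_{\overrightarrow{\mu}}(q,t)=\sum_{k\mid\overrightarrow{\mu}}\frac{\mu(k)}{k}F_{\overrightarrow{\mu}/k}^{SO}(q^k,t^k)$, and checking that the cabling identity is compatible with this operation. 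Concretely, the divisors $k\mid\overrightarrow{\mu}$ correspond to divisors $k\mid\overrightarrow{\nu}$ (removing a common factor $k$ from all parts of all $\mu^\alpha$ is the same as removing it from all parts of $\overrightarrow{\nu}$), and $\mathrm{z}_{\overrightarrow{\mu}/k}$, the row-products $\prod_{\alpha,i}(\mu_i^\alpha/k)$, and the $\psi_k$-operator all interact in the expected multiplicative way; so one gets a clean relation
\begin{equation*}
\mathrm{z}_{\overrightarrow{\mu}}\,g_{\overrightarrow{\mu}}(\mathcal{L};q,t)=\Big(\prod_{\alpha=1}^{L}\prod_{i=1}^{\ell_\alpha}\mu_i^\alpha\Big)\,g_{\overrightarrow{\nu}}(\mathcal{L}_{\overrightarrow{\ell}};q,t),
\end{equation*}
after accounting for the change $\mathrm{z}_{\overrightarrow{\nu}}=\prod_{\alpha,i}\mathrm{z}_{(\mu_i^\alpha)}=\prod_{\alpha,i}\mu_i^\alpha$ in the normalization constant. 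Antisymmetrizing in $t$ preserves this.

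Finally I would check that the denominators match: the product $\prod_{\alpha=1}^{L}\prod_{i=1}^{\ell(\mu^\alpha)}(q^{\mu_i^\alpha}-q^{-\mu_i^\alpha})$ appearing in the conjecture for $(\mathcal{L},\overrightarrow{\mu})$ is literally equal to the analogous product $\prod_{\beta}(q^{\nu_i^\beta}-q^{-\nu_i^\beta})$ for $(\mathcal{L}_{\overrightarrow{\ell}},\overrightarrow{\nu})$, since the multiset of row lengths is the same. Combining the displayed relation with this, the conjectured quantity for $(\mathcal{L},\overrightarrow{\mu})$ equals the conjectured quantity for $(\mathcal{L}_{\overrightarrow{\ell}},\overrightarrow{\nu})$ up to the integer factor $\big(\prod_{\alpha,i}\mu_i^\alpha\big)/\mathrm{z}_{\overrightarrow{\mu}}\cdot\mathrm{z}_{\overrightarrow{\nu}}$, which is an integer (in fact the two $\mathrm{z}$'s differ only by the automorphism-factor $|Aut\,\overrightarrow{\mu}|$ worth of $m_i!$'s, which is a positive integer), so membership in $\mathbb{Z}[q-q^{-1}][t,t^{-1}]$ transfers. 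The main obstacle I anticipate is the bookkeeping in the second paragraph: verifying that the Möbius sum defining $g_{\overrightarrow{\mu}}$ passes through the cabling identity~(\ref{E:lmu1}) cleanly requires checking that ``dividing $\overrightarrow{\mu}$ by $k$'' commutes with ``splitting rows into single-row components,'' and that the combinatorial prefactors (the $\mathrm{z}$'s and the row-products) are multiplicative under this operation — all of which is elementary but must be done carefully, and is the one place where the structure of the partitionable set $\mathcal{P}^L$ versus $\mathcal{P}^{|\ell(\overrightarrow{\mu})|}$ enters. Everything else is formal.
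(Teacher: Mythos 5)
Your proposal is correct and follows the paper's own route: the paper derives exactly the cabling identity (\ref{E:lmu1}) via the equivalence $m_{(\mu _{1})}\otimes \cdots \otimes m_{(\mu _{\ell })}\sim m_{\mu }$ and then states the theorem, leaving implicit precisely the M\"{o}bius-inversion and normalization bookkeeping that you carry out. One small correction: since $\mathrm{z}_{\overrightarrow{\nu }}=\prod_{\alpha ,i}\mu _{i}^{\alpha }$, your displayed relation already gives $\mathrm{z}_{\overrightarrow{\mu }}g_{\overrightarrow{\mu }}(\mathcal{L})=\mathrm{z}_{\overrightarrow{\nu }}g_{\overrightarrow{\nu }}(\mathcal{L}_{\overrightarrow{\ell }})$ on the nose, so the two conjectured quantities are literally equal --- the extra ``integer factor'' in your last paragraph is $1$, and as you wrote it (equal to $(\prod_{\alpha ,i}\mu _{i}^{\alpha })/|Aut\,\overrightarrow{\mu }|$) it need not even be an integer, so it is fortunate that it is not actually needed.
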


(\ref{E:lmu1}) together with Proposition \ref{degree} implies Conjecture \ref%
{Degree Conj} (Degree Conj.), i.e, the degree estimate at $q=1$ is valid for
all partitions $\overrightarrow{\mu }$.

\begin{theorem}
\label{Thm8.3} Conjecture \ref{Degree Conj} is true for all links and all
partitions.
\end{theorem}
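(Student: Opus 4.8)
The plan is to reduce the statement to the case of single-row partitions by means of the cabling identity \eqref{E:lmu1}, and then to establish the degree estimate in that reduced case. For the reduction, fix a link $\mathcal{L}$ with $L$ components and a partition $\overrightarrow{\mu }=(\mu ^{1},\dots ,\mu ^{L})\in \mathcal{P}^{L}$, put $\overrightarrow{\ell }=(\ell _{1},\dots ,\ell _{L})$ with $\ell _{\alpha }=\ell (\mu ^{\alpha })$, and let $\mathcal{L}_{\overrightarrow{\ell }}$ be the cabling link introduced above. Identity \eqref{E:lmu1} reads
\begin{equation*}
\mathrm{z}_{\overrightarrow{\mu }}\,F_{\overrightarrow{\mu }}^{SO}(\mathcal{L})=\Bigl(\prod_{\alpha =1}^{L}\prod_{i=1}^{\ell _{\alpha }}\mu _{i}^{\alpha }\Bigr)F_{\overrightarrow{\nu }}^{SO}(\mathcal{L}_{\overrightarrow{\ell }}),\qquad \overrightarrow{\nu }=\bigl((\mu _{1}^{1}),\dots ,(\mu _{\ell _{1}}^{1}),\dots ,(\mu _{1}^{L}),\dots ,(\mu _{\ell _{L}}^{L})\bigr).
\end{equation*}
Both prefactors are nonzero integers independent of $q$ and $t$, so $\mathrm{val}_{u}F_{\overrightarrow{\mu }}^{SO}(\mathcal{L})=\mathrm{val}_{u}F_{\overrightarrow{\nu }}^{SO}(\mathcal{L}_{\overrightarrow{\ell }})$, while $\ell (\overrightarrow{\nu })=\ell (\overrightarrow{\mu })$ and every component of $\overrightarrow{\nu }$ is a single row. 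Hence it is enough to prove $\mathrm{val}_{u}F_{\overrightarrow{\nu }}^{SO}\geq \ell (\overrightarrow{\nu })-2$ for an arbitrary link and an arbitrary partition $\overrightarrow{\nu }$ all of whose components are single rows; this is the content of Proposition \ref{degree}.

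For the row case I would argue as follows. Writing $q=e^{u}$ and using the inversion formula \eqref{E:4.11}, express $F_{\overrightarrow{\nu }}^{SO}$ as the alternating sum over $\Lambda \in \mathcal{P}(\mathcal{P}^{L})$ with $|\Lambda |=\overrightarrow{\nu }$ of the coefficients $Z_{\Lambda }^{SO}=\sum_{\overrightarrow{A}}\mathrm{z}_{\Lambda }^{-1}\chi _{\overrightarrow{A}}(\gamma _{\Lambda })W_{\overrightarrow{A}}^{SO}$. The key input is a Laurent estimate at $u=0$: by the cabling formula \eqref{E:Colored Kauffman} together with Theorem \ref{Wenzl} — the pole of $\mathcal{Q}_{\lambda }(q,t)$ at $q=1$ has order $|\lambda |\leq \|\overrightarrow{A}\|$, and the framing factors $q^{-\kappa _{A^{\alpha }}w(\mathcal{K}_{\alpha })}t^{-|A^{\alpha }|w(\mathcal{K}_{\alpha })}$ are units there — one obtains $\mathrm{ord}_{u}W_{\overrightarrow{A}}^{SO}\geq -\|\overrightarrow{A}\|$, together with an explicit control of the initial segment of its expansion in powers of $u$ in terms of the $q\to 1$ behaviour of $\mathcal{Q}_{\lambda }$ and of the Markov traces $\zeta _{n}^{\lambda }$. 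Feeding these expansions into \eqref{E:4.11} and regrouping the sum over $\Lambda $ according to the underlying set partition of $\{1,\dots ,\ell (\overrightarrow{\nu })\}$, the singular parts of the $W_{\overrightarrow{A}}^{SO}$ assemble, through the Schur--Weyl decomposition of the spaces $V^{\otimes n}$, into a product-type expression; passing to the plethystic logarithm (the free energy) then annihilates all Laurent coefficients of $F_{\overrightarrow{\nu }}^{SO}$ of order below $\ell (\overrightarrow{\nu })-2$, just as in the $(1)^{L}$ argument of Theorem \ref{Thm7.3}, the cancellation being governed by the $\log$--$\exp$ identity of Lemma \ref{Lemma7.2} (equivalently, by the orthogonal cut-join equation of \cite{Che1}).

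The reduction in the first paragraph is routine. The hard part will be the row case: one must control not merely the leading order $-\|\overrightarrow{A}\|$ of $W_{\overrightarrow{A}}^{SO}$ at $q=1$ but a whole initial segment of its Laurent expansion, uniformly in $\overrightarrow{A}$, which requires a uniform $q\to 1$ expansion of the Birman--Murakami--Wenzl Markov traces $\zeta _{n}^{\lambda }$ and careful bookkeeping of how the framing factors interact with that expansion. Once these uniform expansions are available, the vanishing of the low-order coefficients of $F_{\overrightarrow{\nu }}^{SO}$ follows formally from combinatorial identities of the type already exploited in Section \ref{sec7}.
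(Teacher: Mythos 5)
Your first paragraph is correct and is exactly the reduction the paper performs: \eqref{E:lmu1} replaces $\overrightarrow{\mu }$ by a tuple of single rows on the cabled link $\mathcal{L}_{\overrightarrow{\ell }}$ with the same total length $\ell (\overrightarrow{\mu })$, so everything hinges on the row (equivalently rectangular) case of Proposition \ref{degree}. The gap is in your treatment of that case. The only quantitative input you actually name --- $\mathrm{ord}_{u}W_{\overrightarrow{A}}^{SO}\geq -\Vert \overrightarrow{A}\Vert $ from Wenzl's formula --- yields no more than $val_{u}F_{\overrightarrow{\nu }}^{SO}\geq -\Vert \overrightarrow{\nu }\Vert $, which is far below the target $\ell (\overrightarrow{\nu })-2$. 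Closing that distance means proving the vanishing of every Laurent coefficient in a window of length about $\Vert \overrightarrow{\nu }\Vert +\ell (\overrightarrow{\nu })-2$, and your sketch supplies no mechanism for it: Lemma \ref{Lemma7.2} kills a coefficient only after one has shown that the corresponding coefficient of $\prod_{i}Z_{\Lambda _{i}}^{SO}$ depends only on $\ell (\Lambda )$ and not on the set partition itself, and in Section \ref{sec7} this is verified only for the lowest few orders and only for $(1)^{L}$. The ``uniform $q\to 1$ expansion of the Markov traces $\zeta _{n}^{\lambda }$'' that you defer to future bookkeeping, and the unexplained ``assembly into product-type expressions,'' are precisely the entire difficulty; as written the row case is a statement of intent rather than a proof.

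The paper sidesteps all of this with an idea your proposal is missing: twisted cabling at fractional framing. For a rectangular component $(a_{\alpha }{}^{\ell _{\alpha }})$ one introduces the framing-dependent invariants $W_{\overrightarrow{A}}(\mathcal{L},q,t,\overrightarrow{\tau })$ and evaluates at $\tau _{\alpha }=w_{\alpha }+\frac{1}{a_{\alpha }}$; the extra twist $\omega _{\lambda ^{\alpha }}=(\delta _{n_{\alpha }})^{\ell _{\alpha }}$ inserted into the cabled braid absorbs the factors $q^{\kappa _{A^{\alpha }}/a_{\alpha }}t^{|A^{\alpha }|/a_{\alpha }}$ and turns $\sum_{\overrightarrow{A}}\chi _{\overrightarrow{A}}(\gamma _{\overrightarrow{\lambda }})\,p_{\overrightarrow{A}}$ into the plain trace of an honest link $\mathcal{L}_{\overrightarrow{n},\overrightarrow{w}}^{twist}$ with $\ell (\overrightarrow{\lambda })$ components, giving \eqref{E:Rectangle}. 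Theorem \ref{Thm7.3} (proved by skein relations) then supplies the bound $val_{u}\geq \ell (\overrightarrow{\lambda })-2$ for every integer vector $\overrightarrow{w}$, and since the coefficients $P_{k,i}(\overrightarrow{\tau })$ are polynomials in $\overrightarrow{\tau }$ vanishing on a lattice, they vanish identically --- in particular at $\overrightarrow{\tau }=0$, which is the zero-framing statement you need. Without this interpolation trick, or a genuine substitute for your deferred uniform expansions of the BMW traces, the argument does not go through.
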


Theorem \ref{Thm8.3} implies that Conjecture \ref{Main Conj} is "true at $%
q=1 $" (Theorem \ref{Thm1.6}), i.e., the left hand side of Conjecture \ref%
{Main Conj} is regular at $q-1$. The situation at other roots of unity seems
to be more difficult. There are some torus knots and links examples verified
in Section \ref{sec5}, which can be treated as the conjecture at roots of
unity besides $1$.

\section{Estimation of Degree}

\label{sec9} Call a partition $\lambda \vdash n$ rectangular, if the Young
diagram of $\lambda =(\lambda _{1},\lambda _{2},\cdots ,\lambda _{\ell })$
is rectangular, i.e. $\lambda _{1}=\lambda _{2}=\cdots =\lambda _{\ell }$. A
rectangular partition is determined by its length $\ell $ and its size $n$.

Let $\delta _{n}=\sigma _{1}\sigma _{2}\cdots \sigma _{n-1}$ be a braid in $%
B_{n}$. Let $\ell $ be an integer dividing $n$ and write $a=n/\ell $, then
the braid $(\delta _{n})^{\ell }$ is associated to the rectangular partition
$\lambda =(a,a,\cdots ,a)\vdash n$. It is easy to see that $h((\delta
_{n})^{n})$ is in the center of $C_{n}$. Let $A$ be a partition of $n-2f$
for some integer $f$, and let $\pi _{A}$ be a minimal central idempotent in $%
C_{n}$, and let $p_{A}$ be a minimal idempotent such that $p_{A}\pi
_{A}=p_{A}$. Under the isomorphism
\begin{equation*}
C_{n}\cong \underset{A\in \widehat{Br}_{n}}{\bigoplus }M_{d_{A}\times d_{A}}(%
\mathbb{C}),
\end{equation*}%
the product $h((\delta _{n})^{n})\cdot \pi _{A}$ is a scaler matrix at the
block corresponding to $A$, and zero at other places. In Section \ref{sec3},
we know that $h((\delta _{n})^{n})\cdot \pi _{A}=q^{\kappa _{A}}t^{-2f}\pi
_{A}$, which implies that the eigenvalues of $h((\delta _{n})^{\ell })\cdot
\pi _{A}$ are either $0$ or $q^{\kappa _{A}/a}t^{-2f/a}$ times $n$-th roots
of unity. We conclude that $\mathrm{tr}(h((\delta _{n})^{\ell })\cdot
p_{A})=b_{A}\cdot q^{\kappa _{A}/a}t^{-2f/a}$ for some rational number $%
b_{A} $. Taking the specialization $q,t\rightarrow 1$, we obtain the value $%
b_{A}=\chi _{A}(\gamma _{\lambda })$ for the character $\chi _{A}$ of Brauer
algebra.

Now we compute $Z_{\overrightarrow{\lambda }}(\mathcal{L},q,t)$ for
rectangular partition $\overrightarrow{\lambda }$. Write $\overrightarrow{%
\lambda }=(\lambda ^{1},\cdots ,\lambda ^{L})$ such that $\lambda ^{\alpha
}=(a_{\alpha },\cdots ,a_{\alpha })=(a_{\alpha }{}^{\ell _{\alpha }})$ for
each $\alpha =1,2,\cdots ,L$. Our goal in this section is to estimate the $u$
degree of $F_{\overrightarrow{\lambda }}(\mathcal{L};,q,t)$ for $u=\log q$
and rectangular partition $\overrightarrow{\lambda }$.

\begin{definition}
Let $\overrightarrow{\tau }=(\tau _{1},\cdots ,\tau _{L})\in \mathbb{C}^{L}$
be a vector, define the framing dependent link invariants $W_{%
\overrightarrow{A}}(\mathcal{L},q,t,\overrightarrow{\tau }):=W_{%
\overrightarrow{A}}(\mathcal{L},q,t)q^{\underset{\alpha =1}{\overset{L}{\sum
}}\kappa _{A^{\alpha }}\tau _{\alpha }}t^{\underset{\alpha =1}{\overset{L}{%
\sum }}|A^{\alpha }|\tau _{\alpha }}$, and the framing dependent partition
function by
\begin{equation}
Z_{CS}^{SO}(\mathcal{L};q,t,\overrightarrow{\tau })=\sum_{\overrightarrow{%
\mu }\in \mathcal{P}^{L}}\frac{pb_{\overrightarrow{\mu }}}{\mathrm{z}_{%
\overrightarrow{\mu }}}\cdot \sum_{\overrightarrow{A}\in \widehat{Br}_{|%
\overrightarrow{\mu }|}}\chi _{\overrightarrow{A}}(\gamma _{\overrightarrow{%
\mu }})W_{\overrightarrow{A}}^{SO}(\mathcal{L};q,t,\overrightarrow{\tau }).
\end{equation}
\end{definition}

Similarly we define the free energy $F^{SO}(\mathcal{L};q,t,\overrightarrow{%
\tau})=\log Z_{CS}^{SO}(\mathcal{L};q,t,\overrightarrow{\tau})$ and the
coefficients $F_{\overrightarrow{\mu}}^{SO}(\mathcal{L};q,t,\overrightarrow{%
\tau})$ and $W_{\overrightarrow{\mu}}^{SO}(\mathcal{L};q,t,\overrightarrow{%
\tau})$ as before, replacing the link invariants by the framing dependent
invariants. The specialization $\overrightarrow{\tau}=0$ gives the framing
independent invariants.

We compute the partition functions at the special values $\tau _{\alpha
}=w_{\alpha }+\frac{1}{a_{\alpha }}$ for $w_{\alpha }\in \mathbb{Z}$, by
taking a braid $\beta (\overrightarrow{w})$ with writhe number $w_{\alpha }$
on each component $\mathcal{K}_{\alpha }$ of$\ \mathcal{L}$. Let $\mathcal{L}%
_{\overrightarrow{n},\overrightarrow{w}}^{twist}$ be the closure of the
product of the cabling braid $\beta _{\overrightarrow{n}}(\overrightarrow{w}%
) $ of $\beta (\overrightarrow{w})$ and the braid $(\omega _{\lambda
^{1}}\otimes \cdots \otimes \omega _{\lambda ^{L}})$, where $n_{\alpha
}=a_{\alpha }\ell _{\alpha }$ and $\omega _{\lambda ^{\alpha }}=(\delta
_{n_{\alpha }})^{\ell _{\alpha }}$. The diagrams below provide an example to
illustrate the twisted cabling process in the case $a=2$, $\ell =1$ and $%
n=a\ell =2$. Suppose $\mathcal{L}$ is a braid in Picture a, which represents
a knot with writhe number $w=4$. Picture b is obtained by cabling each
component into two strands. The twist $\omega _{\lambda }$ is then as in the
bottom of Picture c, which add a crossing to Picture b. The final twisted
cabling link $\mathcal{L}_{\overrightarrow{n},\overrightarrow{w}}^{twist}$
is the closure of the braid in Picture d.
\begin{equation*}
{\includegraphics[height=2.0721in, width=1.5013in]
{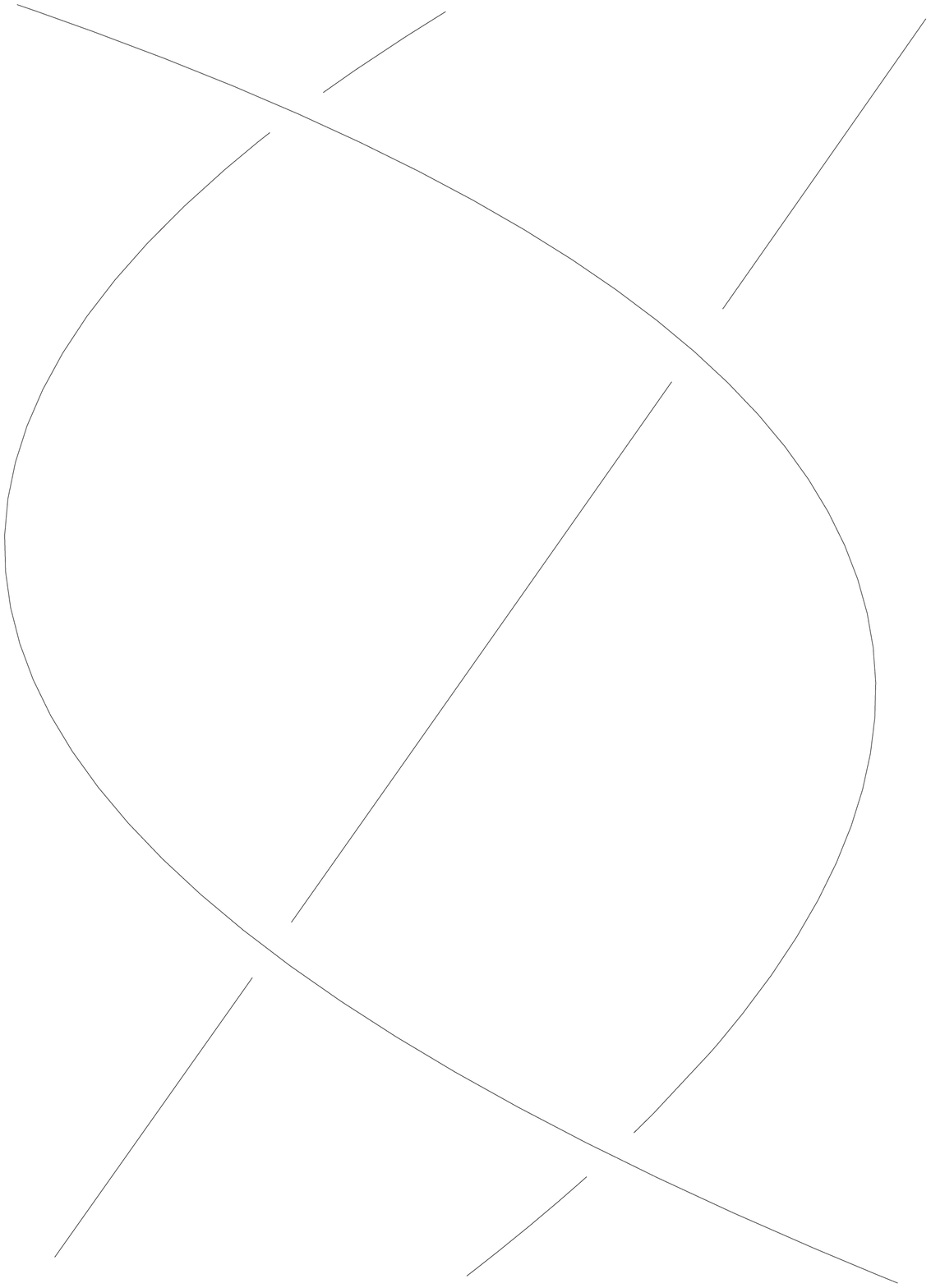}}
\end{equation*}%
\ \ \ \ \ \ \ \ \ \ \ \ \ \ \ \ \ \ \ \ \ \ \ \ \ \ \ \ \ \ \ \ \ \ \ \ \ \
\ \ \ \ \ \ \ \ \ \ \ \ \ \ \ \ \ \ \ \ Picture a

\vskip 0.4cm

\begin{equation*}
{\includegraphics[height=2.0903in, width=1.7504in]
{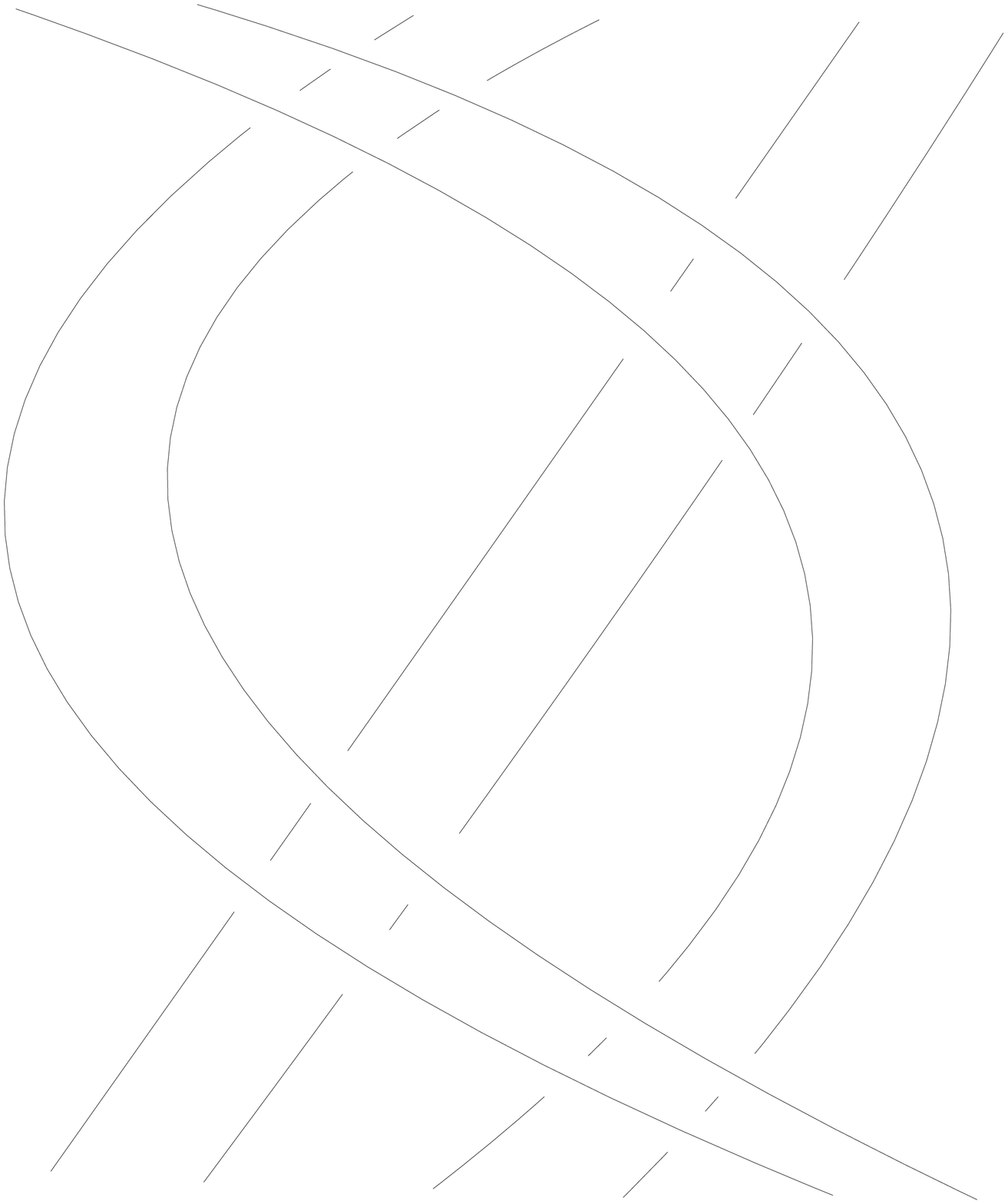}} \hskip 0.5cm {%
\includegraphics[ height=2.0998in, width=1.4356in]
{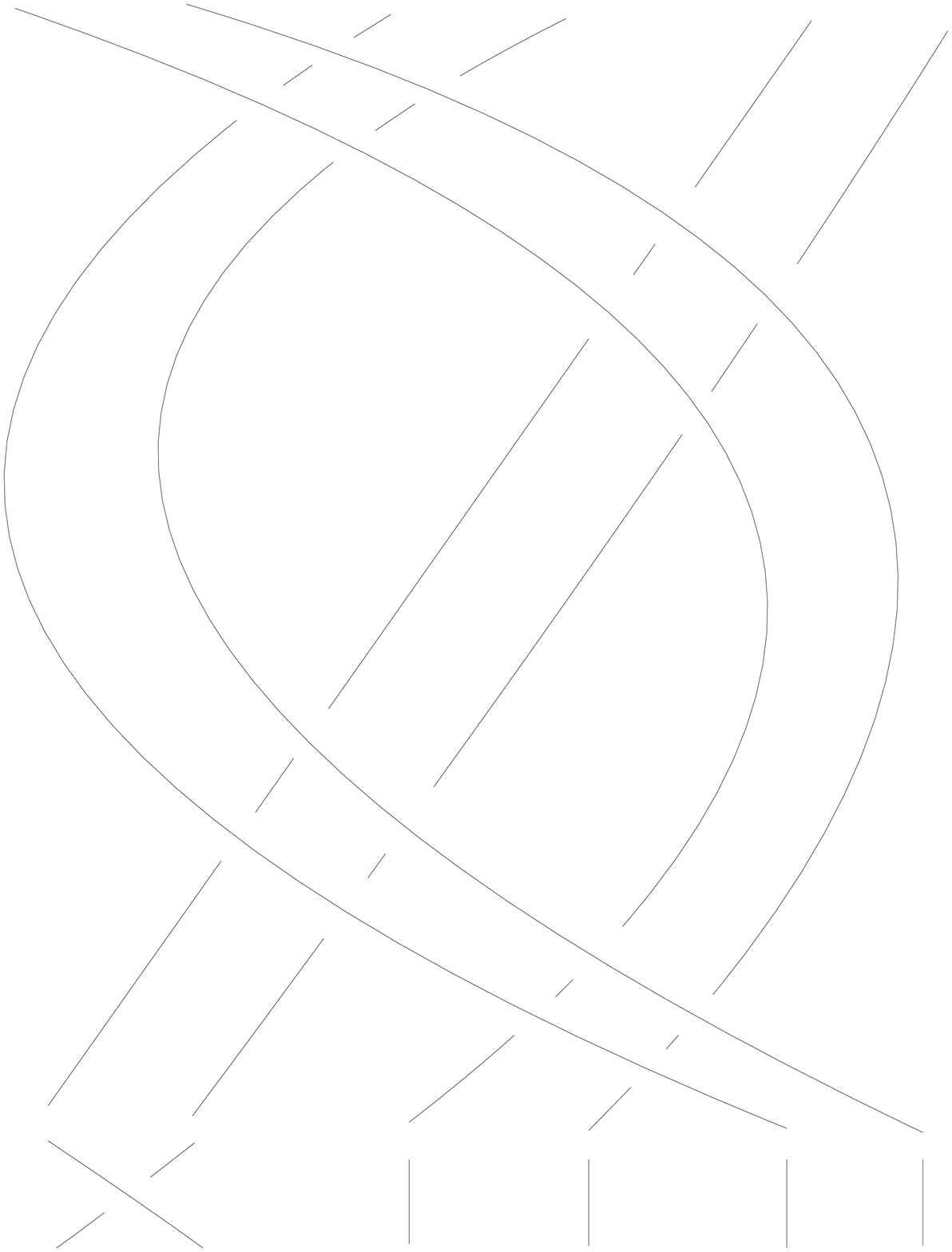}} \hskip 0.8cm {%
\includegraphics[ height=2.0678in, width=1.3578in]
{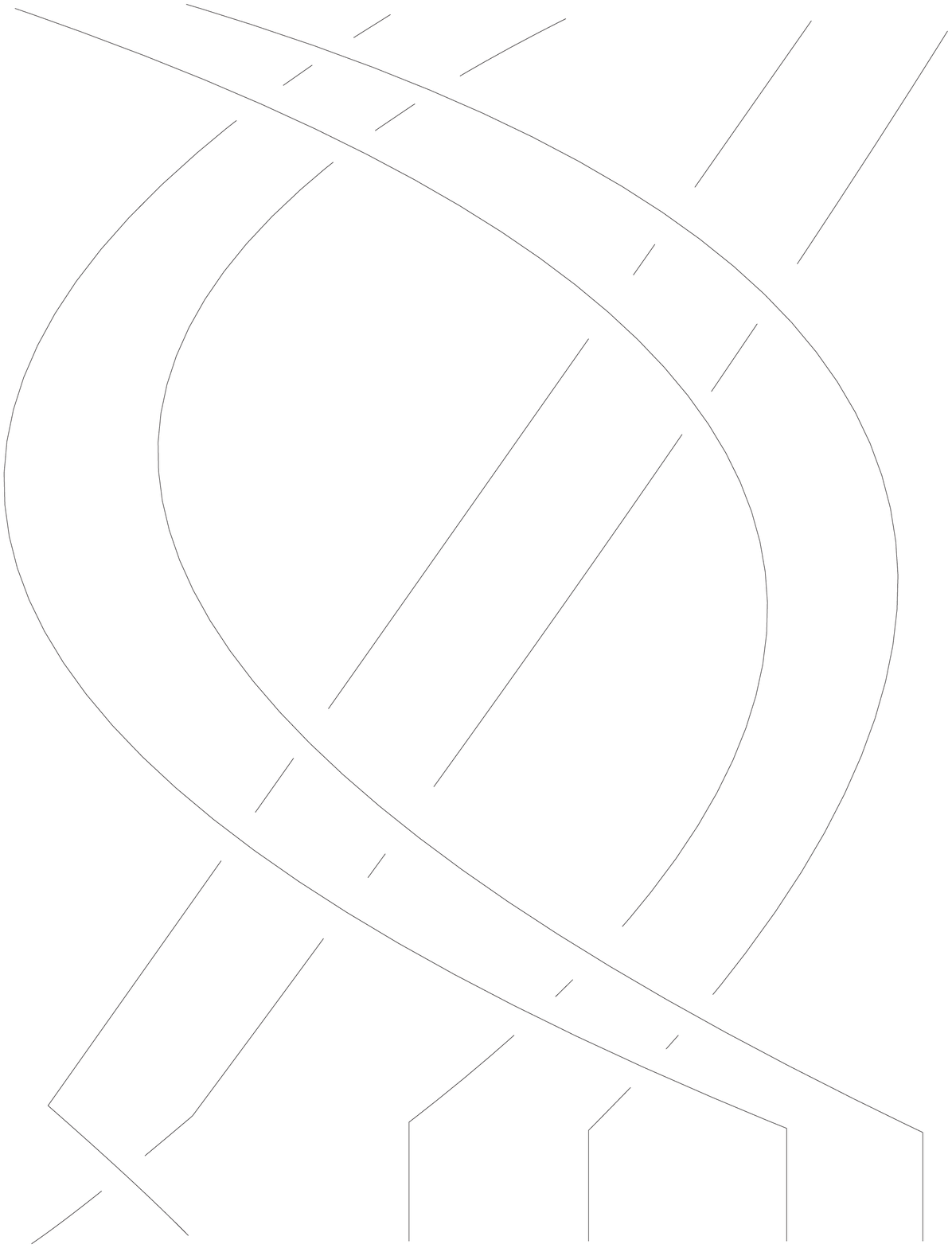}}
\end{equation*}

\ \ \ \ \ \ \ \ \ \ \ \ \ \ \ \ \ \ \ \ Picture b \ \ \ \ \ \ \ \ \ \ \ \ \
\ \ \ \ \ \ \ \ \ \ \ Picture c \ \ \ \ \ \ \ \ \ \ \ \ \ \ \ \ \ \ \ \ \ \
\ Picture d

\vskip 0.4cm

The link $\mathcal{L}_{\overrightarrow{n},\overrightarrow{w}}^{twist}$ is of
$\ell (\overrightarrow{\lambda })=\ell _{1}+\ell _{2}+\cdots +\ell _{L}$
components, and there are $\ell _{i}$ components of writhe $w_{\alpha
}a_{\alpha }^{2}+a_{\alpha }-1$.

\begin{align*}
Z_{\overrightarrow{\lambda }}^{SO}(\mathcal{L};q,t,\overrightarrow{\tau })& =%
\frac{1}{\mathrm{z}_{\overrightarrow{\lambda }}}\cdot \sum_{\overrightarrow{A%
}\in \widehat{Br}_{|\overrightarrow{\lambda }|}}\chi _{\overrightarrow{A}%
}(\gamma _{\overrightarrow{\lambda }})W_{\overrightarrow{A}}^{SO}(\mathcal{L}%
;q,t,\overrightarrow{\tau }) \\
& =\frac{1}{\mathrm{z}_{\overrightarrow{\lambda }}}\cdot \mathrm{tr}[\beta _{%
\overrightarrow{n}}(\overrightarrow{w})\cdot \sum_{\overrightarrow{A}\in
\widehat{Br}_{|\overrightarrow{\lambda }|}}\chi _{\overrightarrow{A}}(\gamma
_{\overrightarrow{\lambda }})q^{\overset{L}{\underset{\alpha =1}{\sum }}%
\frac{\kappa _{A^{\alpha }}}{a_{\alpha }}}t^{\overset{L}{\underset{\alpha =1}%
{\sum }}\frac{|A^{\alpha }|}{a_{\alpha }}}\cdot p_{\overrightarrow{A}}] \\
& =\frac{t^{\overset{L}{\underset{\alpha =1}{\sum }}\ell _{\alpha }}}{%
\mathrm{z}_{\overrightarrow{\lambda }}}\cdot \mathrm{tr}(\beta _{%
\overrightarrow{n}}(\overrightarrow{w})\cdot (\omega _{\lambda ^{1}}\otimes
\cdots \otimes \omega _{\lambda ^{L}})) \\
& =\frac{t^{\overset{L}{\underset{\alpha =1}{\sum }}a_{\alpha }\ell _{\alpha
}(w_{\alpha }a_{\alpha }+1)}}{\mathrm{z}_{\overrightarrow{\lambda }}}\cdot
W_{(1)^{\ell (\overrightarrow{\lambda })}}(\mathcal{L}_{\overrightarrow{n},%
\overrightarrow{w}}^{twist},q,t).
\end{align*}%
As in the proof of Theorem \ref{Thm7.5}, we get
\begin{equation}
F_{\overrightarrow{\lambda }}^{SO}(\mathcal{L};q,t,\overrightarrow{\tau })=%
\frac{t^{\overset{L}{\underset{\alpha =1}{\sum }}a_{\alpha }\ell _{\alpha
}(w_{\alpha }a_{\alpha }+1)}}{\underset{\alpha =1}{\overset{L}{\prod }}\ell
_{\alpha }!a_{\alpha }^{\ell _{\alpha }}}\cdot F_{(1)^{\ell (\overrightarrow{%
\lambda })}}^{SO}(\mathcal{L}_{\overrightarrow{n},\overrightarrow{w}%
}^{twist};q,t).  \label{E:Rectangle}
\end{equation}%
In particular, we get the following proposition.

\begin{proposition}
For a rectangular partition $\overrightarrow{\lambda }$ such that $\mu
^{\alpha }=(a_{\alpha }{}^{\ell _{\alpha }})$, and any tube of integers $%
\overrightarrow{w}=(w_{1},\cdots ,w_{L})$, we have
\begin{equation*}
(q-q^{-1})^{2-\ell (\overrightarrow{\lambda })}\cdot F_{\overrightarrow{%
\lambda }}^{SO}(\mathcal{L};q,t,\overrightarrow{\tau })\in \mathbb{Q}%
[q-q^{-1}][t,t^{-1}]
\end{equation*}%
for $\overrightarrow{\tau }=(w_{1}+\frac{1}{a_{1}},w_{2}+\frac{1}{a_{2}}%
,\cdots ,w_{L}+\frac{1}{a_{L}})$.
\end{proposition}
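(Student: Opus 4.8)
The plan is to obtain the proposition as a direct corollary of the cabling identity (\ref{E:Rectangle}) derived just above, fed into Theorem \ref{Thm7.3}; no new argument beyond these two ingredients is needed, so the proof is a short assembly.

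First I would record the content of (\ref{E:Rectangle}): for the distinguished framing $\overrightarrow{\tau}=(w_{1}+\tfrac{1}{a_{1}},\dots,w_{L}+\tfrac{1}{a_{L}})$ it expresses
\[
F_{\overrightarrow{\lambda}}^{SO}(\mathcal{L};q,t,\overrightarrow{\tau})
=\frac{t^{\sum_{\alpha=1}^{L}a_{\alpha}\ell_{\alpha}(w_{\alpha}a_{\alpha}+1)}}{\prod_{\alpha=1}^{L}\ell_{\alpha}!\,a_{\alpha}^{\ell_{\alpha}}}\cdot F_{(1)^{\ell(\overrightarrow{\lambda})}}^{SO}\!\bigl(\mathcal{L}_{\overrightarrow{n},\overrightarrow{w}}^{twist};q,t\bigr),
\]
where $\mathcal{L}_{\overrightarrow{n},\overrightarrow{w}}^{twist}$ is a genuine link with exactly $\ell(\overrightarrow{\lambda})=\ell_{1}+\cdots+\ell_{L}$ components, as observed just above the statement. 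The prefactor is a Laurent monomial in $t$ with positive rational coefficient, hence lies in $\mathbb{Q}[t,t^{-1}]$.

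Next I would apply Theorem \ref{Thm7.3} to the link $\mathcal{L}_{\overrightarrow{n},\overrightarrow{w}}^{twist}$, whose component count is $\ell(\overrightarrow{\lambda})$, which gives
\[
(q-q^{-1})^{2-\ell(\overrightarrow{\lambda})}\,F_{(1)^{\ell(\overrightarrow{\lambda})}}^{SO}\!\bigl(\mathcal{L}_{\overrightarrow{n},\overrightarrow{w}}^{twist};q,t\bigr)\in\mathbb{Z}[t,t^{-1}][q-q^{-1}].
\]
Multiplying the previous display through by $(q-q^{-1})^{2-\ell(\overrightarrow{\lambda})}$ and substituting, the left-hand side becomes the product of an element of $\mathbb{Q}[t,t^{-1}]$ with an element of $\mathbb{Z}[t,t^{-1}][q-q^{-1}]$, hence lies in $\mathbb{Q}[t,t^{-1}][q-q^{-1}]=\mathbb{Q}[q-q^{-1}][t,t^{-1}]$; this is exactly the proposition.

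The genuine work lies not in this final deduction but in (\ref{E:Rectangle}) itself, which has already been carried out in the computation preceding the statement. There one must check that the choice $\tau_{\alpha}=w_{\alpha}+\tfrac{1}{a_{\alpha}}$ is precisely what makes the residual framing factor $q^{\kappa_{A^{\alpha}}/a_{\alpha}}t^{|A^{\alpha}|/a_{\alpha}}=t^{\ell_{\alpha}}\bigl(q^{\kappa_{A^{\alpha}}/a_{\alpha}}t^{-2f_{\alpha}/a_{\alpha}}\bigr)$ recombine, via $\mathrm{tr}\bigl(h((\delta_{n_{\alpha}})^{\ell_{\alpha}})p_{A^{\alpha}}\bigr)=\chi_{A^{\alpha}}(\gamma_{\lambda^{\alpha}})\,q^{\kappa_{A^{\alpha}}/a_{\alpha}}t^{-2f_{\alpha}/a_{\alpha}}$, with the twist braids $\omega_{\lambda^{\alpha}}=(\delta_{n_{\alpha}})^{\ell_{\alpha}}$, and that the cabling identity descends from the partition function to the free energy exactly as in the proof of Theorem \ref{Thm7.5} and Section \ref{sec8} (using that $\mathrm{Log}$ intertwines the cabling substitution). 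With (\ref{E:Rectangle}) in hand, however, the proposition follows purely formally by the assembly above, so the main obstacle is already dispatched before the statement is reached.
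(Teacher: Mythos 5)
Your proposal is correct and is essentially the paper's own argument: the proposition is stated as an immediate consequence of the cabling identity (\ref{E:Rectangle}) (derived just above it, "as in the proof of Theorem \ref{Thm7.5}") combined with Theorem \ref{Thm7.3} applied to the twisted cabling link $\mathcal{L}_{\overrightarrow{n},\overrightarrow{w}}^{twist}$, which has exactly $\ell(\overrightarrow{\lambda})$ components. Your observation that the monomial prefactor $t^{\sum_{\alpha}a_{\alpha}\ell_{\alpha}(w_{\alpha}a_{\alpha}+1)}/\prod_{\alpha}\ell_{\alpha}!\,a_{\alpha}^{\ell_{\alpha}}$ only costs integrality (hence the $\mathbb{Q}$ rather than $\mathbb{Z}$ coefficients in the statement) is exactly the intended final step.
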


Consider the embedding $\mathbb{Q}(q)[t,t^{-1}]\hookrightarrow \mathbb{Q}%
[[T]]((u))$ via the changing of variables $q=e^{u}$ and $t=e^{T}$, we can
expend the rational function $F_{\overrightarrow{\lambda }}^{SO}(\mathcal{L}%
;q,t,\overrightarrow{\tau })$ into a formal power series in variables $u$
and $T$
\begin{equation*}
F_{\overrightarrow{\lambda }}^{SO}(\mathcal{L};e^{u},e^{T},\overrightarrow{%
\tau })=\sum_{k=0}^{\infty }\sum_{i\geq -||\overrightarrow{\lambda }%
||}P_{k,i}(\overrightarrow{\tau })T^{k}u^{i}
\end{equation*}%
with coefficients $P_{k,i}\in \mathbb{Q}[\tau ^{1},\cdots ,\tau ^{L}]$.

The above proposition implies that the coefficients $P_{k,i}(\overrightarrow{%
\tau })$ for $i<\ell (\overrightarrow{\lambda })-2$ vanish when every $\tau
_{k}-\frac{1}{a_{k}}$ takes arbitrary integer values, which is possible only
when the polynomials $P_{k,i}(\overrightarrow{\tau })$ for $i<\ell (%
\overrightarrow{\lambda })-2$ are zero polynomials (a lattice is Zariski
dense). Now specialize at the framing $\tau _{1}=\tau _{2}=\cdots =\tau
_{L}=0$ leads to the following theorem.

\begin{proposition}
\label{degree} Suppose $\overrightarrow{\lambda }$ is a rectangular
partition, the formal power series $F_{\overrightarrow{\lambda }}^{SO}(%
\mathcal{L};e^{u},t)$ and $g_{\overrightarrow{\lambda }}(\mathcal{L}%
;e^{u},t) $ in the valuation field $\mathbb{Q}(t)((u))$ has $u$-valuation
greater or equal to $\ell (\overrightarrow{\lambda })-2$.
\end{proposition}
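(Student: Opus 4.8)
The plan is to derive Proposition~\ref{degree} directly from the framing-dependent identity (\ref{E:Rectangle}) together with Theorem~\ref{Thm7.3}, and then to promote the resulting statement about the special framings $\tau_\alpha=w_\alpha+\tfrac1{a_\alpha}$ to the zero framing by a Zariski-density argument. First I would combine (\ref{E:Rectangle}) with Theorem~\ref{Thm7.3}: since the twisted cabling link $\mathcal{L}_{\overrightarrow{n},\overrightarrow{w}}^{twist}$ has $\ell(\overrightarrow{\lambda})=\ell_1+\cdots+\ell_L$ components, Theorem~\ref{Thm7.3} gives that $(q-q^{-1})^{2-\ell(\overrightarrow{\lambda})}F_{(1)^{\ell(\overrightarrow{\lambda})}}^{SO}(\mathcal{L}_{\overrightarrow{n},\overrightarrow{w}}^{twist};q,t)$ lies in $\mathbb{Z}[t,t^{-1}][q-q^{-1}]$. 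Multiplying by the monomial prefactor in (\ref{E:Rectangle}) then yields
\begin{equation*}
(q-q^{-1})^{2-\ell(\overrightarrow{\lambda})}\cdot F_{\overrightarrow{\lambda}}^{SO}(\mathcal{L};q,t,\overrightarrow{\tau})\in\mathbb{Q}[q-q^{-1}][t,t^{-1}]
\end{equation*}
for every $\overrightarrow{\tau}=(w_1+\tfrac1{a_1},\cdots,w_L+\tfrac1{a_L})$ with $\overrightarrow{w}\in\mathbb{Z}^L$, which is precisely the unnumbered Proposition preceding~\ref{degree}.

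Next I would expand $F_{\overrightarrow{\lambda}}^{SO}(\mathcal{L};e^u,e^T,\overrightarrow{\tau})=\sum_{k\ge 0}\sum_{i\ge-\|\overrightarrow{\lambda}\|}P_{k,i}(\overrightarrow{\tau})\,T^k u^i$, observing that each framing-dependent invariant $W_{\overrightarrow{A}}^{SO}(\mathcal{L};q,t,\overrightarrow{\tau})$ depends on $\overrightarrow{\tau}$ only through the factor $q^{\sum_\alpha\kappa_{A^\alpha}\tau_\alpha}t^{\sum_\alpha|A^\alpha|\tau_\alpha}$; hence, after substituting $q=e^u$, $t=e^T$ and collecting powers of $u$ and $T$, the coefficients $P_{k,i}$ are genuine polynomials in $\tau_1,\dots,\tau_L$. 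The containment above forces $P_{k,i}(\overrightarrow{\tau})=0$ for all $i<\ell(\overrightarrow{\lambda})-2$ whenever $\overrightarrow{\tau}$ lies in the lattice $\prod_\alpha(\tfrac1{a_\alpha}+\mathbb{Z})$. Since a full-rank lattice is Zariski dense in $\mathbb{C}^L$, a polynomial vanishing on it vanishes identically, so $P_{k,i}\equiv 0$ for $i<\ell(\overrightarrow{\lambda})-2$. Specializing to the framing $\overrightarrow{\tau}=0$ gives $\mathrm{val}_u\big(F_{\overrightarrow{\lambda}}^{SO}(\mathcal{L};e^u,t)\big)\ge\ell(\overrightarrow{\lambda})-2$.

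Finally, for $g_{\overrightarrow{\lambda}}$ I would invoke the M\"obius formula $g_{\overrightarrow{\mu}}(q,t)=\sum_{k\mid\overrightarrow{\mu}}\tfrac{\mu(k)}{k}F_{\overrightarrow{\mu}/k}^{SO}(q^k,t^k)$. When $\overrightarrow{\lambda}$ is rectangular, every $\overrightarrow{\lambda}/k$ occurring in the sum is again rectangular with the \emph{same} number of parts, so by the previous paragraph $F_{\overrightarrow{\lambda}/k}^{SO}(e^u,t)$ has $u$-valuation $\ge\ell(\overrightarrow{\lambda})-2$; the substitution $q\mapsto q^k$ rescales $u\mapsto ku$ and therefore does not decrease the valuation, and a sum of series has valuation at least the minimum of the valuations. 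Hence $g_{\overrightarrow{\lambda}}(\mathcal{L};e^u,t)$ also has $u$-valuation $\ge\ell(\overrightarrow{\lambda})-2$. The only step that is not pure bookkeeping is the Zariski-density passage from special framings to the identically-zero conclusion, and the point requiring care is the verification that, for each fixed power of $u$, the dependence of $F_{\overrightarrow{\lambda}}^{SO}$ on $\overrightarrow{\tau}$ is polynomial rather than merely analytic, which is what licenses "vanishing on a lattice $\Rightarrow$ vanishing identically" at the level of the coefficients $P_{k,i}$.
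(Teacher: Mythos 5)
Your proposal is correct and follows essentially the same route as the paper: the twisted-cabling identity (\ref{E:Rectangle}) combined with Theorem \ref{Thm7.3} gives the integrality/valuation bound at the special framings $\tau_\alpha=w_\alpha+\tfrac{1}{a_\alpha}$, and the Zariski-density of the lattice forces the polynomial coefficients $P_{k,i}$ with $i<\ell(\overrightarrow{\lambda})-2$ to vanish identically, after which one specializes to $\overrightarrow{\tau}=0$. Your explicit treatment of $g_{\overrightarrow{\lambda}}$ via the M\"obius formula (noting that $\overrightarrow{\lambda}/k$ is again rectangular of the same length and that $u\mapsto ku$ preserves valuation) is a detail the paper leaves implicit, and your emphasis on why the $P_{k,i}$ are genuinely polynomial in $\overrightarrow{\tau}$ correctly identifies the one point needing care.
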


\section{Appendix}

\label{sec10}

\subsection{The Case of Unknot}

In this Appendix, we calculate the $F^{SO}(\bigcirc ^{L};q,t)$. We only deal
with the case of unknot, i.e., $L=1$, since the general case $L\geq 1$ can
be done exactly the same way, except that the notation will be more
complicated.

\begin{proposition}
\begin{equation*}
\sum_{A\in \widehat{Br}_{|\mu |}}\chi _{A}(\gamma _{\mu
})W_{A}^{SO}(\bigcirc ;q,t))=\prod_{i=1}^{\ell (\mu )}[1+\frac{t^{\mu
_{i}}-t^{-\mu _{i}}}{q^{\mu _{i}}-q^{-\mu _{i}}}].
\end{equation*}
\end{proposition}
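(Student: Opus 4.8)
The plan is to reduce this identity to the defining property of the type-$B$ Schur functions, followed by an elementary geometric-series computation and a standard specialization argument. Recall from the discussion following Theorem \ref{Wenzl} that $W_A^{SO}(\bigcirc;q,t)$ is the rational function characterized by $W_A^{SO}(\bigcirc;q,q^{2N})=\dim_q V_A=sb_A(q,q^{2N})$ for all large $N$, where $sb_A(q,t)$ denotes $sb_A$ evaluated at the torus point $z_0=1$, $z_j=q^{2j-1}$ and $z_{-j}=q^{1-2j}$ for $1\le j\le N$, and $z_{\pm j}=0$ for $j>N$. On the other hand, the $k=0$ instance of the system of equations defining the $sb_A$ (the instance in which the factor $x^k$ disappears and the tangle $e^{\otimes 0}\otimes\gamma_\mu$ is just the permutation $\gamma_\mu$) reads $pb_\mu(z)=\sum_{A\in\widehat{Br}_{|\mu|}}\chi_A(\gamma_\mu)\,sb_A(z)$ as an identity of symmetric functions. (Equivalently, this is Schur--Weyl duality for the pair $(\mathfrak{so}(2N+1),Br_{|\mu|})$ applied to the trace of $\gamma_\mu$ twisted by a torus element, since a single $m$-cycle on $V^{\otimes m}$ contributes $\mathrm{tr}_V(g^m)=pb_m$.)

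First I would evaluate $pb_\mu(z)=\sum_{A}\chi_A(\gamma_\mu)sb_A(z)$ at the torus point with $t=q^{2N}$. The right-hand side becomes $\sum_{A\in\widehat{Br}_{|\mu|}}\chi_A(\gamma_\mu)W_A^{SO}(\bigcirc;q,q^{2N})$, which is the left-hand side of the proposition with $t=q^{2N}$; the left-hand side becomes $pb_\mu$ evaluated at the torus point, which factors as $\prod_{i=1}^{\ell(\mu)}pb_{\mu_i}$. Then I would compute $pb_n$ at the torus point as a sum of two geometric progressions:
\begin{equation*}
pb_n=1+\sum_{j=1}^{N}\bigl(q^{(2j-1)n}+q^{-(2j-1)n}\bigr)=1+\frac{q^{n}(q^{2Nn}-1)}{q^{2n}-1}+\frac{q^{-n}(q^{-2Nn}-1)}{q^{-2n}-1}=1+\frac{t^{n}-t^{-n}}{q^{n}-q^{-n}},
\end{equation*}
with $t=q^{2N}$. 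Taking the product over the parts of $\mu$ yields exactly $\prod_{i=1}^{\ell(\mu)}\bigl(1+(t^{\mu_i}-t^{-\mu_i})/(q^{\mu_i}-q^{-\mu_i})\bigr)$.

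It remains to upgrade this from an identity valid at the specializations $t=q^{2N}$, $N\gg 0$, to an identity of rational functions in $q$ and $t$; this is the same reasoning used in Lemmas \ref{Lemma3.5} and \ref{Lemma3.7} and in Theorem \ref{Thm3.6}. Viewing $q$ as transcendental and $t$ as an independent indeterminate, the difference of the two sides is a rational function of $t$ that vanishes at the infinitely many distinct points $t=q^{2N}$, and hence is identically zero. I do not expect a genuine obstacle here; the only points requiring care are (i) invoking the $k=0$ instance of the defining system, so that the summation index is the full $\widehat{Br}_{|\mu|}$ and the coefficients are precisely the Brauer characters $\chi_A(\gamma_\mu)$ (including the contributions of $A$ of size smaller than $|\mu|$), and (ii) keeping the torus-point bookkeeping straight so that the two geometric sums collapse to $(t^n-t^{-n})/(q^n-q^{-n})$.
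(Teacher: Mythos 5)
Your proposal is correct and follows essentially the same route as the paper's own proof: specialize at $t=q^{2N}$, use the $k=0$ instance of the defining relation $pb_\mu=\sum_{A\in\widehat{Br}_{|\mu|}}\chi_A(\gamma_\mu)sb_A$ together with $W_A^{SO}(\bigcirc;q,q^{2N})=\dim_qV_A=sb_A$ at the torus point, and conclude by noting that two rational functions in $t$ agreeing at infinitely many values $t=q^{2N}$ must coincide. The only difference is that you carry out the geometric-series evaluation of $pb_n$ explicitly, which the paper simply asserts.
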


\begin{proof}
Let $t=q^{2N}$ and compare with the quantum group definition of the colored
Kauffman polynomials,
\begin{align*}
& \sum_{A\in \widehat{Br}_{|\mu |}}\chi _{A}(\gamma _{\mu
})W_{A}^{SO}(\bigcirc ;q,q^{2N})) \\
& =\sum_{A\in \widehat{Br}_{|\mu |}}\chi _{A}(\gamma _{\mu
})tr_{V_{A}}(K_{2\rho }) \\
& =\sum_{A\in \widehat{Br}_{|\mu |}}\chi _{A}(\gamma _{\mu })sb_{A} \\
& =pb_{\mu }(q^{1-2N},q^{3-2N},\cdots ,q^{-1},1,q,\cdots ,q^{2N-3},q^{2N-1})
\\
& =\prod_{i=1}^{\ell (\mu )}[1+\frac{t^{\mu _{i}}-t^{-\mu _{i}}}{q^{\mu
_{i}}-q^{-\mu _{i}}}].
\end{align*}%
As both the left and right hand side of the equation in the proposition are
rational functions in $t$, and they agree for arbitrary sufficiently large $%
N $, they must coincide.
\end{proof}

\begin{proposition}
\label{Prop10.2}
\begin{equation}
Z_{CS}^{SO}(\bigcirc ;q,t)=\exp (\sum_{k=1}^{+\infty }\frac{1}{k}(1+\frac{%
t^{k}-t^{-k}}{q^{k}-q^{-k}})pb_{k}).
\end{equation}
\end{proposition}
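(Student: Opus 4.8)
The plan is to substitute the closed formula of the previous proposition into the definition of the orthogonal Chern--Simons partition function and then recognize the resulting sum over partitions as a plethystic exponential. First I would specialize the definition of $Z_{CS}^{SO}$ to the one-component case, which reads
\begin{equation*}
Z_{CS}^{SO}(\bigcirc;q,t)=\sum_{\mu\in\mathcal{P}}\frac{pb_{\mu}}{\mathrm{z}_{\mu}}\sum_{A\in\widehat{Br}_{|\mu|}}\chi_{A}(\gamma_{\mu})W_{A}^{SO}(\bigcirc;q,t),
\end{equation*}
and then replace the inner sum by its evaluation from the previous proposition, so that
\begin{equation*}
Z_{CS}^{SO}(\bigcirc;q,t)=\sum_{\mu\in\mathcal{P}}\frac{pb_{\mu}}{\mathrm{z}_{\mu}}\prod_{i=1}^{\ell(\mu)}\Bigl(1+\frac{t^{\mu_{i}}-t^{-\mu_{i}}}{q^{\mu_{i}}-q^{-\mu_{i}}}\Bigr).
\end{equation*}

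Writing $c_{k}=1+\dfrac{t^{k}-t^{-k}}{q^{k}-q^{-k}}$, the key step is to reorganize the sum over partitions $\mu$ as a sum over multiplicity data $(m_{k})_{k\geq 1}$, $m_{k}=m_{k}(\mu)\in\mathbb{Z}_{\geq 0}$, with all but finitely many $m_{k}$ zero. Using $pb_{\mu}=\prod_{k}pb_{k}^{\,m_{k}}$, $\mathrm{z}_{\mu}=\prod_{k}k^{m_{k}}m_{k}!$ and $\prod_{i}c_{\mu_{i}}=\prod_{k}c_{k}^{\,m_{k}}$, the general term factorizes across $k$, and summing the exponential series in each $m_{k}$ gives
\begin{equation*}
Z_{CS}^{SO}(\bigcirc;q,t)=\prod_{k\geq 1}\,\sum_{m\geq 0}\frac{1}{m!}\Bigl(\frac{c_{k}\,pb_{k}}{k}\Bigr)^{m}=\prod_{k\geq 1}\exp\!\Bigl(\frac{c_{k}\,pb_{k}}{k}\Bigr)=\exp\!\Bigl(\sum_{k\geq 1}\frac{1}{k}\Bigl(1+\frac{t^{k}-t^{-k}}{q^{k}-q^{-k}}\Bigr)pb_{k}\Bigr),
\end{equation*}
which is the assertion. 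Equivalently, this last manipulation is precisely Lemma~\ref{Lemma2.1} applied to the partitionable set $S=\mathbb{Z}_{\geq 0}$ with $f(t)=e^{t}$ (so $a_{n}=1/n!$) and coefficients $A_{k}=c_{k}/k$: the right-hand side of that lemma, after unwinding $\mathfrak{u}_{\Lambda}=\ell(\Lambda)!/|Aut\Lambda|$, is exactly $\sum_{\mu}\mathrm{z}_{\mu}^{-1}pb_{\mu}\prod_{i}c_{\mu_{i}}$.

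There is no genuine analytic difficulty here: once the previous proposition is in hand, everything is a formal identity in the power-series ring $\mathbb{Q}(q,t)[[pb_{1},pb_{2},\dots]]$ (each monomial $pb_{\mu}$ occurs with finite multiplicity on both sides), so the interchange of the infinite sum and product is legitimate. The only point requiring a little care is the elementary combinatorial bookkeeping, namely that the factor $\mathrm{z}_{\mu}=\prod_{k}k^{m_{k}}m_{k}!$ is exactly what is needed so that the exponential series $\sum_{m}\frac{1}{m!}(\,\cdot\,/k)^{m}$ appears part-size by part-size. For the disjoint union $\bigcirc^{L}$ one runs the same computation componentwise, using that $W_{\overrightarrow{A}}^{SO}$ and $\chi_{\overrightarrow{A}}$ factor over the $L$ components, to obtain $Z_{CS}^{SO}(\bigcirc^{L})=\prod_{\alpha}Z_{CS}^{SO}(\bigcirc;z_{\alpha})$ and hence the free-energy formula quoted in Section~\ref{sec4}.
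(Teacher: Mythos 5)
Your argument is correct and follows essentially the same route as the paper: substitute the closed formula of the preceding proposition into the definition of $Z_{CS}^{SO}(\bigcirc;q,t)$ and then convert the weighted sum over partitions into an exponential. The only (cosmetic) difference is that you organize the combinatorics by part multiplicities $(m_{k})$ and factor over part sizes, whereas the paper groups partitions by length and invokes the multinomial expansion of $\bigl(\sum_{k}\frac{1}{k}c_{k}pb_{k}\bigr)^{n}$; both are the standard derivation of the same formal identity.
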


\begin{align*}
Z_{CS}^{SO}(\bigcirc ;q,t)=& \sum_{\lambda \in \mathcal{P}}\frac{1}{\mathrm{z%
}_{\lambda }}\cdot \prod_{i=1}^{\ell (\lambda )}[1+\frac{t^{\lambda
_{i}}-t^{-\lambda _{i}}}{q^{\lambda _{i}}-q^{-\lambda _{i}}}]\cdot
pb_{\lambda } \\
& =\sum_{n=1}^{+\infty }\sum_{l(\lambda )=n}\frac{1}{\mathrm{z}_{\lambda }}%
\cdot \prod_{i=1}^{\ell (\lambda )}[1+\frac{t^{\lambda _{i}}-t^{-\lambda
_{i}}}{q^{\lambda _{i}}-q^{-\lambda _{i}}}]\cdot pb_{\lambda } \\
& =\sum_{n=1}^{+\infty }\sum_{\lambda _{1},\cdots ,\lambda _{n}=1}^{+\infty }%
\frac{1}{n!}\prod_{i=1}^{n}\frac{1}{\lambda _{i}}\prod_{i=1}^{n}[1+\frac{%
t^{\lambda _{i}}-t^{-\lambda _{i}}}{q^{\lambda _{i}}-q^{-\lambda _{i}}}%
]\cdot pb_{\lambda } \\
& =\sum_{n=1}^{+\infty }\frac{1}{n!}[\sum_{k=1}^{+\infty }\frac{1}{k}(1+%
\frac{t^{k}-t^{-k}}{q^{k}-q^{-k}})pb_{k}]^{n} \\
& =\exp (\sum_{k=1}^{+\infty }\frac{1}{k}(1+\frac{t^{k}-t^{-k}}{q^{k}-q^{-k}}%
)pb_{k}).
\end{align*}%
So we get the expression of free energy
\begin{equation*}
F^{SO}(\bigcirc ;q,t)=\sum_{k=1}^{+\infty }\frac{1}{k}(1+\frac{t^{k}-t^{-k}}{%
q^{k}-q^{-k}})pb_{k}.
\end{equation*}

\begin{remark}
This expression appeared in \cite{BR}, but is computed from the path
integral definition of the Chern-Simons partition function. Our derivation
is based on our mathematical definition in terms of quantum group invariants
and representations of Brauer algebra.
\end{remark}

\subsection{An Alternative Definition of Colored Kauffman Polynomial via
Markov Trace and Hopf Link}

The quantum group approach to the knot/link theory has produce a lot of
invariants via the representation theory. However, the difficulty is that
the calculation involved are usually very complicated to compute.
Fortunately, only quantum trace are essentially used. This enable us to find
a combinatorial way instead of the quantum group method. Birman-Wenzl \cite%
{BW} and Wenzl \cite{Wen1} introduce a Markov trace definition. We will
briefly introduce their construction here.

There is a well-defined \emph{Markov trace} $\mathrm{tr}$ on the union of
BMW algebra $C_{n}$ with the following properties.

\begin{enumerate}
\item $\mathrm{tr}(h(\alpha _{1})h(\alpha _{2}))=\mathrm{tr}(h(\alpha
_{2})h(\alpha _{1}))$ for any $\alpha _{i}\in B_{n}$

\item $\mathrm{tr}(h(\beta )g_{n}^{\pm 1})=\frac{t^{\pm 1}}{x}\mathrm{tr}%
(h(\beta ))$ for any $\beta \in B_{n}$

\item $\mathrm{tr}(1)=1$

\item $\mathrm{tr}(h(\beta ))=x^{1-n}t^{2\underset{i<j}{\sum }lk_{ij}}K(%
\widehat{\beta },q,t)$,

where $\widehat{\beta }$ is the link by closing the braid $\beta \in B_{n}$
and $K(\mathcal{L},q,t)$ is the classic Kauffman polynomial of the link $%
\mathcal{L}$.
\end{enumerate}

First normalize the trace by setting

\begin{equation*}
\mathrm{Tr}(\xi )=x^{n}\cdot \mathrm{tr}(\xi )\text{ for }\xi \in C_{n}\text{%
.}
\end{equation*}

Let $\mathcal{L}$ be a link with $L$ components $\mathcal{K}_{\alpha }$, $%
\alpha =1,\ldots ,L$, represented by the closure of $\beta \in B_{m}$. We
associate each $\mathcal{K}_{\alpha }$ an irreducible representation $%
V_{A^{\alpha }}$ of quantized universal enveloping algebra $U_{q}(\mathfrak{%
so}(2N+1))$. Let $p_{\alpha }\in C_{d_{\alpha }},\alpha =1,\cdots ,L$ be $L$
minimal idempotents corresponding to the irreducible representations $%
V_{A^{1}},\cdots ,V_{A^{L}}$, where $A^{\alpha }$ denote the partition of $%
|A^{\alpha }|=d_{\alpha }$ labelling $V_{A^{\alpha }}$. Denote $%
\overrightarrow{d}=(d_{1},\cdots ,d_{L})$ and let $i_{1},\cdots ,i_{m}$ be
integers such that $i_{k}=\alpha $ if the $k$-th strand of $\beta $ belongs
to the $\alpha $-th component of $\mathcal{L}$. Let $\beta _{\overrightarrow{%
d}}$ be the cabling braid of $\beta $, replacing the $k$-th strand of $\beta
$ by $d_{i_{k}}$ parallel ones. Then
\begin{equation}
W_{\overrightarrow{A}}^{SO}(\mathcal{L};q,t)=q^{-\overset{L}{\underset{%
\alpha =1}{\sum }}\kappa _{A^{\alpha }}w(\mathcal{K}_{\alpha })}t^{-\overset{%
L}{\underset{\alpha =1}{\sum }}|A^{\alpha }|w(\mathcal{K}_{\alpha })}\cdot
\mathrm{Tr}\left( h(\beta \mathcal{_{\overrightarrow{d}}})\cdot
(p_{i_{1}}\otimes \cdots \otimes p_{i_{m}})\right) \text{.}
\end{equation}

Now we look at a concrete example to illustrate this method.

Let $\mathcal{L}$ be the Hopf link, represented by the braid $\beta
=g_{1}^{2}$. Set $z=q-q^{-1}$. Easy to get $W_{(1),(1)}^{SO}(\mathcal{L})=x(%
\frac{t-t^{-1}}{z}+1+z(t-t^{-1}))$. Let $\mu $ be a partition of $2$, and
let $A$ be a partition of $2$ or $0$ labelling the irreducible
representations of Brauer algebra $Br_{2}$. The character table reads $\chi
_{(1,1)}(\gamma _{(2)})=-1$ and $\chi _{(2)}(\gamma _{(2)})=\chi
_{(1,1)}(\gamma _{(1,1)})=\chi _{(1,1)}(\gamma _{(1,1)})=1$. The
representation labelled by $A=(2)$ is the trivial representation.

We want to compute $W_{(1),(2)}^{SO}(\mathcal{L})$. The minimal idempotents
(studied by \cite{BB}) in $C_{2}$ are%
\begin{equation*}
p_{(2)}=(\frac{q^{-1}+g_{2}}{q+q^{-1}})(1-x^{-1}e_{2})\text{, }p_{(1)^{2}}=(%
\frac{q-g_{2}}{q+q^{-1}})(1-x^{-1}e_{2})\text{ and }p_{\phi }=x^{-1}e_{2}%
\text{,}
\end{equation*}%
where $\phi $ is the empty partition.

Denote the cabling of $\beta $ by $\beta _{1,2}$, which is given by%
\begin{equation*}
h(\beta _{1,2})=g_{1}g_{2}^{2}g_{1}\text{.}
\end{equation*}

By using the definition of BMW algebras $C_{n}$\ and the properties of
Markov trace. We have the following formulas for the twisted cabling braids%
\begin{align*}
\mathrm{tr}(h(\beta _{1,2})\cdot g_{2})& =\mathrm{tr}%
(g_{1}g_{2}^{2}g_{1}g_{2}) \\
& =\mathrm{tr}(g_{1}^{4}g_{2}) \\
& =\frac{t}{x}\mathrm{tr}(g_{1}^{4}) \\
& =\frac{t}{x}\mathrm{tr}%
((z+t^{-1})g_{1}^{3}+(1-t^{-1}z)g_{1}^{2}-t^{-1}g_{1}) \\
& =\frac{t}{x^{2}}((z+t^{-1})(2t-t^{-1}+(1-t^{-2})z+(t-t^{-1})z^{2}) \\
& +(1-t^{-1}z)(\frac{t-t^{-1}}{z}+1+z(t-t^{-1}))-1) \\
& =\frac{t}{x^{2}}(\frac{t-t^{-1}}{z}%
+1+(-2t^{-1}+3t-t^{-3})z+(1-t^{-2})z^{2}+(t-t^{-1})z^{3})\text{,}
\end{align*}

where we used property $(P2)$ of BMW algebra $C_{n}$ as well as the classic
Kauffman polynomial of Trefoil knot and Hopf link.

Similarly, we have
\begin{equation*}
\mathrm{tr}(h(\beta _{1,2}))=\frac{(x+zt-zt^{-1})^{2}}{x^{2}},
\end{equation*}%
and
\begin{equation*}
\mathrm{tr}(h(\beta _{1,2})\cdot e_{2})=\frac{1}{x}\text{,}
\end{equation*}%
where $h(\beta _{1,2})\cdot e_{2}$ is actually the image of a link of the
disjoint union of two unknots.

As
\begin{eqnarray*}
p_{(2)}-p_{(1)^{2}}+p_{\phi } &=&\frac{-z}{q+q^{-1}}+\frac{2g}{q+q^{-1}}+%
\frac{x^{-1}(z+q+q^{-1}-2g_{2})e_{2}}{q+q^{-1}} \\
&=&\frac{-z}{q+q^{-1}}+\frac{2g}{q+q^{-1}}+\frac{(z+q+q^{-1}-2t^{-1})e_{2}}{%
x(q+q^{-1})}
\end{eqnarray*}%
Then we have%
\begin{eqnarray*}
2Z_{(1),(2)}(\mathcal{L};q,t) &=&W_{(1),(2)}(\mathcal{L};q,t)-W_{(1),(1,1)}(%
\mathcal{L};q,t)+W_{(1),(0)}(\mathcal{L};q,t) \\
&=&x^{3}\mathrm{tr}[h(\beta _{1,2})\cdot (p_{(1)}\otimes
(p_{(2)}-p_{(1)^{2}}+p_{\phi }))] \\
&=&\frac{x^{3}}{q+q^{-1}}\cdot \lbrack 2\mathrm{tr}(h(\beta _{1,2})\cdot
g_{2})-z\mathrm{tr}(h(\beta _{1,2}))+\frac{z+q+q^{-1}-2t^{-1}}{x}\mathrm{tr}%
(h(\beta _{1,2})\cdot e_{2})] \\
&=&\frac{x}{q+q^{-1}}[\frac{t^{2}-t^{-2}}{z}%
+(q+q^{-1})+(t^{2}-t^{-2})(z^{3}+4z)]
\end{eqnarray*}%
and
\begin{align*}
2Z_{(1)}(\bigcirc )Z_{(2)}(\bigcirc )& =x(1+\frac{t^{2}-t^{-2}}{q^{2}-q^{-2}}%
) \\
& =\frac{x}{q+q^{-1}}(q+q^{-1}+\frac{t^{2}-t^{-2}}{z}),
\end{align*}

Thus we have
\begin{align*}
2F_{(1),(2)}(\mathcal{L},q,t)& =2Z_{(1),(2)}(\mathcal{L})-2Z_{(1)}(\bigcirc
)Z_{(2)}(\bigcirc ) \\
& =\frac{x}{(q+q^{-1})}(t^{2}-t^{-2})(z^{3}+4z) \\
& =(q+q^{-1})(t^{2}-t^{-2})[z+(t-t^{-1})]
\end{align*}%
and
\begin{equation*}
\frac{2(q-q^{-1})^{2}F_{(1),(2)}}{(q-q^{-1})(q^{2}-q^{-2})}%
=(t^{2}-t^{-2})[(t-t^{-1})+z]\in \mathbb{Z}[z][t,t^{-1}]
\end{equation*}%
as predicted in the Conjecture \ref{Main Conj}. Actually this example has
already been discussed in Case C of Example 1 in Section \ref{sec5}.

\subsection{Explicit Computation of Quantum Trace for Orthogonal Quantum
Group}

The universal matrix $\check{\mathcal{R}}$ acting on $V\otimes V$ for the
natural representation of $U_{q}(\mathfrak{so}(2N+1))$ on $V$ is given by
Turaev \cite{Tur}:

\begin{align*}
\check{\mathcal{R}}=& q\sum_{i\neq N+1}E_{i,i}\otimes
E_{i,i}+E_{N+1,N+1}\otimes E_{N+1,N+1}+\sum_{j}\underset{i\neq 2N+2-j}{%
\sum_{i\neq j}}E_{j,i}\otimes E_{i,j} \\
& +q^{-1}\sum_{i\neq N+1}E_{2N+2-i,i}\otimes
E_{i,2N+2-i}+(q-q^{-1})\sum_{i<j}E_{i,i}\otimes E_{j,j} \\
& -(q-q^{-1})\sum_{i<j}q^{\overline{i}-\overline{j}}E_{2N+2-j,i}\otimes
E_{j,2N+2-i}\text{,}
\end{align*}%
where $E_{i,j}$ is the $(2N+1)\times (2N+1)$ matrix with%
\begin{equation*}
(E_{i,j})_{kl}=\left\{
\begin{array}{c}
1 \\
0%
\end{array}%
\right.
\begin{array}{c}
(k,l)=(i,j) \\
elsewhere%
\end{array}%
\end{equation*}

and

\begin{equation*}
\overline{i}=\left\{
\begin{array}{c}
i+\frac{1}{2} \\
i \\
i-\frac{1}{2}%
\end{array}%
\right.
\begin{array}{c}
1\leq i\leq N \\
i=N+1 \\
N+2\leq i\leq 2N+1%
\end{array}%
\text{.}
\end{equation*}

The enhancement of $\check{\mathcal{R}}$, $K_{2\rho }$ is given by
\begin{equation*}
K_{2\rho }(v_{i})=\left\{
\begin{array}{c}
q^{2i-1-2N}v_{i} \\
v_{i} \\
q^{2i-3-2N}v_{i}%
\end{array}%
\right.
\begin{array}{c}
1\leq i\leq N \\
i=N+1 \\
N+2\leq i\leq 2N+1%
\end{array}%
\text{.}
\end{equation*}

Then we compute the $\theta _{V}=\mathrm{tr}_{V}\check{\mathcal{R}}_{V,V}$
as follows

\begin{align*}
\mathrm{tr}_{V}\check{\mathcal{R}}_{V,V}& =q\sum_{i\neq N+1}\mathrm{tr}%
(E_{i,i}K_{2\rho })\cdot E_{i,i}+\mathrm{tr}(E_{N+1,N+1}K_{2\rho })\cdot
E_{N+1,N+1} \\
& +\sum_{j}\underset{i\neq 2N+2-j}{\sum_{i\neq j}}\mathrm{tr}%
(E_{i,j}K_{2\rho })\cdot E_{j,i}+q^{-1}\sum_{i\neq N+1}\mathrm{tr}%
(E_{i,2N+2-i}K_{2\rho })\cdot E_{2N+2-i,i} \\
& +(q-q^{-1})\sum_{i<j}\mathrm{tr}(E_{j,j}K_{2\rho })\cdot
E_{i,i}-(q-q^{-1})\sum_{i<j}q^{\overline{i}-\overline{j}}\mathrm{tr}%
(E_{j,2N+2-i}K_{2\rho })\cdot E_{2N+2-j,i} \\
& =q\left( \overset{N}{\sum_{i=1}}q^{2i-1-2N}E_{i,i}+\overset{2N+1}{%
\sum_{i=N+2}}q^{2i-3-2N}E_{i,i}\right) +E_{N+1,N+1} \\
& +(q-q^{-1})\left( \underset{j=1}{\overset{N}{\sum }}\overset{j-1}{%
\sum_{i=1}}q^{2j-1-2N}E_{i,i}+\overset{N}{\sum_{i=1}}E_{i,i}+\underset{j=N+2}%
{\overset{2N+1}{\sum }}\overset{j-1}{\sum_{i=1}}q^{2j-3-2N}E_{i,i}\right) \\
& -(q-q^{-1})\overset{2N+2}{\sum_{j=N+2}}q^{\overline{2N+2-j}-\overline{j}%
}q^{2j-3-2N}E_{2N+2-j,2N+2-j}\text{.}
\end{align*}

Then we need to check the action of $\mathrm{tr}_{V}\check{\mathcal{R}}%
_{V,V} $ on those basis $v_{i}$'s.

\begin{enumerate}
\item For $1\leq k\leq N$, we have
\begin{eqnarray*}
\mathrm{tr}_{V}\check{\mathcal{R}}_{V,V}(v_{k}) &=&q\cdot
q^{2k-1-2N}v_{k}+(q-q^{-1})\left( \underset{j=k+1}{\overset{N}{\sum }}%
q^{2j-1-2N}v_{k}+v_{k}+\underset{j=N+2}{\overset{2N+1}{\sum }}%
q^{2j-3-2N}v_{k}\right) \\
&&-(q-q^{-1})q^{\overline{k}-\overline{2N+2-k}}q^{2(2N+2-k)-3-2N}v_{k} \\
&=&(q^{2k-2N}+(q-q^{-1})(q^{2k+1-2N}\frac{q^{2N-2k}-1}{q^{2}-1}+1+q\frac{%
q^{2N}-1}{q^{2}-1} \\
&&-q^{k+\frac{1}{2}-(2N+2-k-\frac{1}{2})}q^{2N+1-2k}))v_{k} \\
&=&q^{2N}v_{k}\text{.}
\end{eqnarray*}

\item For $k=N+1$, we have
\begin{eqnarray*}
\mathrm{tr}_{V}\check{\mathcal{R}}_{V,V}(v_{N+1})
&=&v_{N+1}+(q-q^{-1})\left( \underset{j=N+2}{\overset{2N+1}{\sum }}%
q^{2j-3-2N}v_{N+1}\right) \\
&=&(1+(q-q^{-1})(q\frac{q^{2N}-1}{q^{2}-1}))v_{N+1} \\
&=&q^{2N}v_{N+1}\text{.}
\end{eqnarray*}

\item For $N+2\leq k\leq 2N+1$, we have
\begin{eqnarray*}
\mathrm{tr}_{V}\check{\mathcal{R}}_{V,V}(v_{k}) &=&q\cdot
q^{2k-3-2N}v_{k}+(q-q^{-1})\left( \underset{j=k+1}{\overset{2N+1}{\sum }}%
q^{2j-3-2N}v_{k}\right) \\
&=&(q^{2k-2-2N}+(q-q^{-1})(q^{2k-1-2N}\frac{q^{4N+2-2k}-1}{q^{2}-1}))v_{k} \\
&=&q^{2N}v_{k}\text{.}
\end{eqnarray*}%
Thus we get the following result%
\begin{equation*}
\theta _{V}=q^{2N}\mathrm{id}_{V}\text{.}
\end{equation*}
\end{enumerate}

\subsection{Character tables of Brauer Algebras and Type-B Schur Functions}

Here are some character tables for Brauer algebras. Write $pb_{\lambda }=%
\underset{A\in \widehat{Br}_{|\lambda |}}{\sum }\chi _{A}(\gamma _{\lambda
})sb_{A}$, and we compute the character table by the following formula
proved by A. Ram as Theorem 5.1 in \cite{Ram2}:%
\begin{equation*}
\chi _{\lambda }(\gamma _{\mu })=\underset{\nu \supset \lambda }{\underset{%
\nu \vdash |\mu |}{\sum }}(\underset{\beta \text{{}}even}{\sum }c_{\lambda
\beta }^{\nu })\chi _{\nu }^{S_{|\mu |}}(\gamma _{\mu })\text{.}
\end{equation*}

where $c_{\lambda \beta }^{\nu }$'s are called the Littlewood-Richardson
coefficients and defined via type-A Schur functions as follows

\begin{equation*}
s_{\alpha }s_{\beta }=\underset{|\gamma |=|\alpha |+|\beta |}{\sum }%
c_{\alpha \beta }^{\gamma }s_{\gamma }
\end{equation*}

Recall the character table for permutation group $S_{n}$

\begin{equation*}
\begin{array}{ccc}
\chi & (2) & (1,1) \\
(2) & 1 & 1 \\
(1,1) & -1 & 1%
\end{array}%
\end{equation*}

\begin{equation*}
\begin{array}{cccc}
\chi & (3) & (2,1) & (1,1,1) \\
(3) & 1 & 1 & 1 \\
(2,1) & -1 & 0 & 2 \\
(1,1,1) & 1 & -1 & 1%
\end{array}%
\end{equation*}

\begin{equation*}
\begin{array}{cccccc}
\chi & (4) & (3,1) & (2,2) & (2,1,1) & (1,1,1,1) \\
(4) & 1 & 1 & 1 & 1 & 1 \\
(3,1) & -1 & 0 & -1 & 1 & 3 \\
(2,2) & 0 & -1 & 2 & 0 & 2 \\
(2,1,1) & 1 & 0 & -1 & -1 & 3 \\
(1,1,1,1) & -1 & 1 & 1 & -1 & 1%
\end{array}%
\end{equation*}%
Combine the above formulas, we have the following tables of Brauer algebra
for small partitions
\begin{equation*}
\begin{array}{ccc}
\chi & (2) & (1,1) \\
(0) & 1 & 1%
\end{array}%
\end{equation*}

\begin{equation*}
\begin{array}{cccc}
\chi & (3) & (2,1) & (1,1,1) \\
(1) & 0 & 1 & 3%
\end{array}%
\end{equation*}

\begin{equation*}
\begin{array}{cccccc}
\chi & (4) & (3,1) & (2,2) & (2,1,1) & (1,1,1,1) \\
(2) & 0 & 0 & 2 & 2 & 6 \\
(1,1) & 0 & 0 & -2 & 0 & 6 \\
(0) & 1 & 0 & 3 & 1 & 3%
\end{array}%
\end{equation*}%
We thus obtain the following expressions
\begin{align*}
& pb_{(1)}=sb_{(1)} \\
& pb_{(2)}=sb_{(2)}-sb_{(1,1)}+1 \\
& pb_{(1,1)}=sb_{(2)}+sb_{(1,1)}+1 \\
& pb_{(3)}=sb_{(3)}-sb_{(2,1)}+sb_{(1,1,1)} \\
& pb_{(2,1)}=sb_{(3)}-sb_{(1,1,1)}+sb_{(1)} \\
& pb_{(1,1,1)}=sb_{(3)}+2sb_{(2,1)}+sb_{(1,1,1)}+3sb_{(1)} \\
& pb_{(4)}=sb_{(4)}-sb_{(3,1)}+sb_{(2,1,1)}-sb_{(1,1,1,1)}+1 \\
& pb_{(3,1)}=sb_{(4)}-sb_{(2,2)}+sb_{(1,1,1,1)} \\
&
pb_{(2,2)}=sb_{(4)}-sb_{(3,1)}+2sb_{(2,2)}-sb_{(2,,1,1)}+sb_{(1,1,1,1)}+2sb_{(2)}-2sb_{(1,1)}+3
\\
& pb_{(2,1,1)}=sb_{(4)}+sb_{(3,1)}-sb_{(2,1,1)}-sb_{(1,1,1,1)}+2sb_{(2)}+1 \\
&
pb_{(1,1,1,1)}=sb_{(4)}+3sb_{(3,1)}+2sb_{(2,2)}+3sb_{(2,1,1)}+sb_{(1,1,1,1)}+6sb_{(2)}+6sb_{(1,1)}+3,
\end{align*}%
and conversely
\begin{align*}
& sb_{(1)}=pb_{(1)} \\
& sb_{(2)}=\frac{1}{2}[pb_{(2)}+pb_{(1,1)}]-1 \\
& sb_{(1,1)}=\frac{1}{2}[-pb_{(2)}+pb_{(1,1)}] \\
& sb_{(3)}=\frac{1}{6}[2pb_{(3)}+3pb_{(2,1)}+pb_{(1,1,1)}]-pb_{(1)} \\
& sb_{(2,1)}=\frac{1}{3}[-pb_{(3)}+pb_{(1,1,1)}]-pb_{(1)} \\
& sb_{(1,1,1)}=\frac{1}{6}[2pb_{(3)}-3pb_{(2,1)}+pb_{(1,1,1)}] \\
& sb_{(4)}=\frac{1}{24}%
[6pb_{(4)}+8pb_{(3,1)}+3pb_{(2,2)}+6pb_{(2,1,1)}+pb_{(1,1,1,1)}]-\frac{1}{2}%
[pb_{(2)}+pb_{(1,1)}] \\
& sb_{(3,1)}=\frac{1}{8}%
[-2pb_{(4)}-pb_{(2,2)}+2pb_{(2,1,1)}+pb_{(1,1,1,1)}]-pb_{(1,1)}+1 \\
& sb_{(2,2)}=\frac{1}{12}[-4pb_{(3,1)}+3pb_{(2,2)}+pb_{(1,1,1,1)}]-\frac{1}{2%
}[pb_{(2)}+pb_{(1,1)}] \\
& sb_{(2,1,1)}=\frac{1}{8}%
[2pb_{(4)}-pb_{(2,2)}-2pb_{(2,1,1)}+pb_{(1,1,1,1)}]+\frac{1}{2}%
[pb_{(2)}-pb_{(1,1)}] \\
& sb_{(1,1,1,1)}=\frac{1}{24}%
[-6pb_{(4)}+8pb_{(3,1)}+3pb_{(2,2)}-6pb_{(2,1,1)}+pb_{(1,1,1,1)}].
\end{align*}

Some of the type-B Schur functions are listed as follows.
\begin{align*}
& sb_{(1)}=1+\frac{t-t^{-1}}{q-q^{-1}} \\
& sb_{(2)}=\left( 1+\frac{tq-t^{-1}q^{-1}}{q^{2}-q^{-2}}\right) \frac{%
t-t^{-1}}{q-q^{-1}} \\
& sb_{(1,1)}=\left( 1+\frac{tq^{-1}-t^{-1}q}{q^{2}-q^{-2}}\right) \frac{%
t-t^{-1}}{q-q^{-1}} \\
& sb_{(3)}=\left( 1+\frac{tq^{2}-t^{-1}q^{-2}}{q^{3}-q^{-3}}\right) \frac{%
t-t^{-1}}{q^{2}-q^{-2}}\frac{tq-t^{-1}q^{-1}}{q-q^{-1}} \\
& sb_{(2,1)}=\left( 1+\frac{t-t^{-1}}{q^{3}-q^{-3}}\right) \frac{%
tq^{-1}-t^{-1}q}{q-q^{-1}}\frac{tq-t^{-1}q^{-1}}{q-q^{-1}} \\
& sb_{(1,1,1)}=\left( 1+\frac{tq^{-2}-t^{-1}q^{2}}{q^{3}-q^{-3}}\right)
\frac{t-t^{-1}}{q^{2}-q^{-2}}\frac{tq^{-1}-t^{-1}q}{q-q^{-1}} \\
& sb_{(4)}=\left( 1+\frac{tq^{3}-t^{-1}q^{-3}}{q^{4}-q^{-4}}\right) \frac{%
t-t^{-1}}{q^{3}-q^{-3}}\frac{tq-t^{-1}q^{-1}}{q^{2}-q^{-2}}\frac{%
tq^{2}-t^{-1}q^{-2}}{q-q^{-1}} \\
& sb_{(3,1)}=\left( 1+\frac{tq-t^{-1}q^{-1}}{q^{4}-q^{-4}}\right) \frac{%
tq^{-1}-t^{-1}q}{q^{2}-q^{-2}}\frac{t-t^{-1}}{q-q^{-1}}\frac{%
tq^{2}-t^{-1}q^{-2}}{q-q^{-1}} \\
& sb_{(2,2)}=\left( 1+\frac{t-t^{-1}}{q^{3}-q^{-3}}\right) \left( 1+\frac{%
t-t^{-1}}{q-q^{-1}}\right) \frac{tq^{-2}-t^{-1}q^{2}}{q^{2}-q^{-2}}\frac{%
tq^{2}-t^{-1}q^{-2}}{q^{2}-q^{-2}} \\
& sb_{(2,1,1)}=\left( 1+\frac{tq^{-1}-t^{-1}q}{q^{4}-q^{-4}}\right) \frac{%
tq^{-2}-t^{-1}q^{2}}{q-q^{-1}}\frac{tq-t^{-1}q^{-1}}{q^{2}-q^{-2}}\frac{%
t-t^{-1}}{q-q^{-1}} \\
& sb_{(1,1,1,1)}=\left( 1+\frac{tq^{-3}-t^{-1}q^{3}}{q^{4}-q^{-4}}\right)
\frac{t-t^{-1}}{q^{3}-q^{-3}}\frac{tq^{-1}-t^{-1}q}{q^{2}-q^{-2}}\frac{%
tq^{-2}-t^{-1}q^{2}}{q-q^{-1}}
\end{align*}

\subsection{Colored Kauffman Polynomials of Torus Links/Knots and Tables of
Integer Coefficients $N_{\protect\overrightarrow{\protect\mu },g,\protect%
\beta }$}

The torus link $\mathcal{L}=T(rL,kL)$ has $L$ components if $(r,k)=1$. We
compute the orthogonal quantum group invariants by the following formula
proved in Theorem \ref{Thm3.6}.
\begin{equation}
W_{A}^{SO}(\mathcal{L};q,t)=q^{-kr\underset{\alpha =1}{\overset{L}{\sum }}%
\kappa _{A^{\alpha }}}t^{-k(r-1)n}\underset{f=0}{\overset{[\frac{rn}{2}]}{%
\sum }}\underset{\lambda \vdash rn-2f}{\sum }\widetilde{c}_{\overrightarrow{A%
}}^{\lambda }q^{\frac{k\kappa _{\lambda }}{r}}t^{-\frac{2fk}{r}}sb_{\lambda
}(q,t)  \label{E:10.3}
\end{equation}

The explicit formula for these type-B Schur functions $sb_{\lambda }(q,t)$
are computed in the above subsection.

Recall the definition of the constants $\widetilde{c}_{\overrightarrow{A}%
}^{\lambda }$ by the formula
\begin{equation*}
\underset{\alpha =1}{\overset{L}{{\prod }}}sb_{A^{\alpha }}(z^{r})=\underset{%
f=0}{\overset{[rn/2]}{\sum }}\underset{\lambda \vdash rn-2f}{\sum }%
\widetilde{c}_{\overrightarrow{A}}^{\lambda }sb_{\lambda }(z).
\end{equation*}%
In the case $r=2$ and $L=1$, we have
\begin{equation*}
\begin{array}{ccccccccc}
\widetilde{c} & (0) & (2) & (1,1) & (4) & (3,1) & (2,2) & (2,1,1) & (1,1,1,1)
\\
(1) & 1 & 1 & -1 & \times & \times & \times & \times & \times \\
(2) & 1 & 1 & -1 & 1 & -1 & 1 & 0 & 0 \\
(1,1) & 1 & 1 & -1 & 0 & 0 & 1 & -1 & 1%
\end{array}%
;
\end{equation*}%
in the case when $r=1$ and $L=2$, we have
\begin{equation*}
\begin{array}{ccccccccccccc}
\widetilde{c} & (0) & (1) & (2) & (1,1) & (3) & (2,1) & (1,1,1) & (4) & (3,1)
& (2,2) & (2,1,1) & (1,1,1,1) \\
(1),(1) & 1 & \times & 1 & 1 & \times & \times & \times & \times & \times &
\times & \times & \times \\
(2),(1) & \times & 1 & \times & \times & 1 & 1 & 0 & \times & \times & \times
& \times & \times \\
(1,1),(1) & \times & 1 & \times & \times & 0 & 1 & 1 & \times & \times &
\times & \times & \times \\
(2),(2) & 1 & \times & 1 & 1 & \times & \times & \times & 1 & 1 & 1 & 0 & 0
\\
(2),(1,1) & 0 & \times & 1 & 1 & \times & \times & \times & 0 & 1 & 0 & 1 & 0
\\
(1,1),(1,1) & 1 & \times & 1 & 1 & \times & \times & \times & 0 & 0 & 1 & 1
& 1 \\
(3),(1) & 0 & \times & 1 & 0 & \times & \times & \times & 1 & 1 & 0 & 0 & 0
\\
(2,1),(1) & 0 & \times & 1 & 1 & \times & \times & \times & 0 & 1 & 1 & 1 & 0
\\
(1,1,1),(1) & 0 & \times & 0 & 1 & \times & \times & \times & 0 & 0 & 0 & 1
& 1%
\end{array}%
;
\end{equation*}

in the case when $r=1$ and $L=3$, we have
\begin{equation*}
\begin{array}{ccccccccccccc}
\widetilde{c} & (0) & (1) & (2) & (1,1) & (3) & (2,1) & (1,1,1) & (4) & (3,1)
& (2,2) & (2,1,1) & (1,1,1,1) \\
(1),(1),(1) & \times & 3 & \times & \times & 1 & 2 & 1 & \times & \times &
\times & \times & \times \\
(2),(1),(1) & 1 & \times & 3 & 2 & \times & \times & \times & 1 & 2 & 1 & 1
& 0 \\
(1,1),(1),(1) & 1 & \times & 2 & 3 & \times & \times & \times & 0 & 1 & 1 & 2
& 1%
\end{array}%
\text{.}
\end{equation*}

In this subsection, we provide the tables for the values of the integers $N_{%
\overrightarrow{\mu },g,\beta }$ in the following formula

\begin{equation*}
\frac{\mathrm{z}_{\overrightarrow{\mu }}(q-q^{-1})^{2}\cdot \lbrack g_{%
\overrightarrow{\mu }}(q,t)-g_{\overrightarrow{\mu }}(q,-t)]}{2\overset{L}{%
\underset{\alpha =1}{\prod }}\overset{\ell (\mu ^{\alpha })}{\underset{i=1}{%
\prod }}(q^{\mu _{i}^{\alpha }}-q^{-\mu _{i}^{\alpha }})}=\sum_{g\in \mathbb{%
Z}_{+}/2}\sum_{\beta \in \mathbb{Z}}N_{\overrightarrow{\mu },g,\beta
}z^{2g}t^{\beta }.
\end{equation*}

Example 1: Take $r=1$, the torus link $T(2,2k)$ has $2$ components. By (\ref%
{E:10.3}), we have
\begin{align*}
& W_{(1)(1)}=q^{2k}sb_{(2)}+q^{-2k}sb_{(1,1)}+t^{-2k} \\
& W_{(2)(1)}=q^{4k}sb_{(3)}+q^{-2k}sb_{(2,1)}+q^{-2k}t^{-2k}sb_{(1)} \\
& W_{(1,1)(1)}=q^{2k}sb_{(2,1)}+q^{-4k}sb_{(1,1,1)}+q^{2k}t^{-2k}sb_{(1)} \\
&
W_{(2)(2)}=q^{8k}sb_{(4)}+sb_{(3,1)}+q^{-4k}sb_{(2,2)}+q^{-2k}t^{-2k}sb_{(2)}+q^{-6k}t^{-2k}sb_{(1,1)}+q^{-4k}t^{-4k}
\\
&
W_{(2)(1,1)}=q^{4k}sb_{(3,1)}+q^{-4k}sb_{(2,1,1)}+q^{2k}t^{-2k}sb_{(2)}+q^{-2k}t^{-2k}sb_{(1,1)}
\\
&
W_{(1,1)(1,1)}=q^{4k}sb_{(2,2)}+sb_{(2,1,1)}+q^{-8k}sb_{(1,1,1,1)}+q^{6k}t^{-2k}sb_{(2)}+q^{2k}t^{-2k}sb_{(1,1)}+q^{4k}t^{-4k}
\end{align*}

\begin{align*}
& W_{(3)(1)}=q^{6k}sb_{(4)}+q^{-2k}sb_{(3,1)}+q^{-4k}t^{-2k}sb_{(2)} \\
&
W_{(2,1)(1)}=q^{4k}sb_{(3,1)}+sb_{(2,2)}+q^{-4k}sb_{(2,1,1)}+q^{2k}t^{-2k}sb_{(2)}+q^{-2k}t^{-2k}sb_{(1,1)}
\\
&
W_{(1,1,1)(1)}=q^{2k}sb_{(2,1,1)}+q^{-6k}sb_{(1,1,1,1)}+q^{4k}t^{-2k}sb_{(1,1)}
\end{align*}

Case A: Torus link $T(2,2k)$ with partitions $(1),(1)$

$N_{\overrightarrow{\mu },g,\beta }=0$.

Case B: Torus link $T(2,2k)$ with partitions $(1,1),(1)$

\begin{equation*}
\begin{array}{ccccc}
k=1 & \beta=-3 & -1 & 1 & 3 \\
g=0 & -1 & 3 & -3 & 1%
\end{array}%
\end{equation*}

\begin{equation*}
\begin{array}{cccccc}
k=2 & \beta=-5 & -3 & -1 & 1 & 3 \\
g=0 & -4 & 4 & 12 & -20 & 8 \\
1 & -1 & 1 & 3 & -9 & 6 \\
2 & 0 & 0 & 0 & -1 & 1%
\end{array}%
\end{equation*}

\begin{equation*}
\begin{array}{ccccccc}
k=3 & \beta=-7 & -5 & -3 & -1 & 1 & 3 \\
g=0 & -9 & 9 & -3 & 45 & -72 & 30 \\
1 & -6 & 6 & -1 & 39 & -93 & 55 \\
2 & -1 & 1 & 0 & 11 & -47 & 36 \\
3 & 0 & 0 & 0 & 1 & -11 & 10 \\
4 & 0 & 0 & 0 & 0 & -1 & 1%
\end{array}%
\end{equation*}

Case C: Torus link $T(2,2k)$ partitions $(2),(1)$
\begin{equation*}
\begin{array}{ccccc}
k=1 & \beta =-3 & -1 & 1 & 3 \\
g=0 & 1 & -1 & -1 & 1%
\end{array}%
\end{equation*}

\begin{equation*}
\begin{array}{cccccc}
k=2 & \beta=-5 & -3 & -1 & 1 & 3 \\
g=0 & 2 & -2 & 2 & -6 & 4 \\
1 & 1 & -1 & 1 & -5 & 4 \\
2 & 0 & 0 & 0 & -1 & 1%
\end{array}%
\end{equation*}

\begin{equation*}
\begin{array}{ccccccc}
k=3 & \beta=-7 & -5 & -3 & -1 & 1 & 3 \\
g=0 & 3 & -3 & -1 & 9 & -18 & 10 \\
1 & 4 & -4 & -1 & 15 & -39 & 25 \\
2 & 1 & -1 & 0 & 7 & -29 & 22 \\
3 & 0 & 0 & 0 & 1 & -9 & 8 \\
4 & 0 & 0 & 0 & 0 & -1 & 1%
\end{array}%
\end{equation*}

Case D: Torus link $T(2,2k)$ with partitions $(2),(2)$
\begin{equation*}
\begin{array}{ccccc}
k=1 & \beta =-3 & -1 & 1 & 3 \\
g=1/2 & -2 & 2 & -2 & 2 \\
3/2 & -1 & 1 & -1 & 1%
\end{array}%
\end{equation*}

\begin{equation*}
\begin{array}{cccccc}
k=2 & \beta=-5 & -3 & -1 & 1 & 3 \\
g=1/2 & -8 & 4 & 20 & -36 & 20 \\
3/2 & -24 & 20 & 40 & -96 & 60 \\
5/2 & -22 & 21 & 29 & -97 & 69 \\
7/2 & -8 & 8 & 9 & -47 & 38 \\
9/2 & -1 & 1 & 1 & -11 & 10 \\
11/2 & 0 & 0 & 0 & -1 & 1%
\end{array}%
\end{equation*}

Case E: Torus link $T(2,2k)$ with partitions $(3),(1)$
\begin{equation*}
\begin{array}{ccccc}
k=1 & \beta =-3 & -1 & 1 & 3 \\
g=1/2 & -1 & 0 & 0 & 1%
\end{array}%
\end{equation*}

\begin{equation*}
\begin{array}{cccccc}
k=2 & \beta =-5 & -3 & -1 & 1 & 3 \\
g=1/2 & -4 & 4 & 0 & -8 & 8 \\
3/2 & -5 & 5 & 0 & -14 & 14 \\
5/2 & -1 & 1 & 0 & -7 & 7 \\
7/2 & 0 & 0 & 0 & -1 & 1%
\end{array}%
\end{equation*}

Example 2: The torus knots $T(2,k),$ where $k$ is an odd integer. Again we
compute the following quantum group invariants by (\ref{E:10.3}).
\begin{align*}
& W_{(1)}=t^{-k}(q^{k}sb_{(2)}(q,t)-q^{-k}sb_{(1,1)}(q,t)+t^{-k}) \\
&
W_{(2)}=t^{-2k}(q^{2k}sb_{(4)}-q^{-2k}sb_{(3,1)}+q^{-4k}sb_{(2,2)}+q^{-3k}t^{-k}sb_{(2)}-q^{-5k}t^{-k}sb_{(1,1)}+q^{-4k}t^{-2k})
\\
&
W_{(1,1)}=t^{-2k}(q^{4k}sb_{(2,2)}-q^{2k}sb_{(2,1,1)}+q^{-2k}sb_{(1,1,1,1)}+q^{5k}t^{-k}sb_{(2)}-q^{3k}t^{-k}sb_{(1,1)}+q^{4k}t^{-2k})
\end{align*}

Case A: Torus knot $T(2,k)$ with partition $(1,1)$

\begin{equation*}
\begin{array}{cccccc}
k=3 & \beta =-11 & -9 & -7 & -5 & -3 \\
g=1/2 & 36 & -132 & 180 & -108 & 24 \\
3/2 & 105 & -377 & 453 & -207 & 26 \\
5/2 & 112 & -450 & 494 & -165 & 9 \\
7/2 & 54 & -275 & 286 & -66 & 1 \\
9/2 & 12 & -90 & 91 & -13 & 0 \\
11/2 & 1 & -15 & 15 & -1 & 0 \\
13/2 & 0 & -1 & 1 & 0 & 0%
\end{array}%
\end{equation*}

Case B: Torus knot $T(2,k)$ with partition $(2)$

\begin{equation*}
\begin{array}{cccccc}
k=3 & \beta =-11 & -9 & -7 & -5 & -3 \\
g=1/2 & -6 & 26 & -42 & 30 & -8 \\
3/2 & -35 & 125 & -161 & 85 & -14 \\
5/2 & -56 & 210 & -238 & 91 & -7 \\
7/2 & -36 & 165 & -174 & 46 & -1 \\
9/2 & -10 & 66 & -67 & 11 & 0 \\
11/2 & -1 & 13 & -13 & 1 & 0 \\
13/2 & 0 & 1 & -1 & 0 & 0%
\end{array}%
\end{equation*}

Example 3: Take $r=1$, the torus link $T(3,3k)$ has $3$ components. By (\ref%
{E:10.3}), We have

\begin{eqnarray*}
W_{(1),(1),(1)}
&=&q^{6k}sb_{(3)}+2sb_{(2,1)}+q^{-6k}sb_{(1,1,1)}+3t^{-2k}sb_{(1)} \\
W_{(2),(1),(1)}
&=&q^{10k}sb_{(4)}+2q^{2k}sb_{(3,1)}+q^{-2k}sb_{(2,2)}+q^{-6k}sb_{(2,1,1)} \\
&&+3t^{-2k}sb_{(2)}+2q^{-4k}t^{-2k}sb_{(1,1)}+q^{-2k}t^{-4k} \\
W_{(1,1),(1),(1)}
&=&q^{6k}sb_{(3,1)}+q^{2k}sb_{(2,2)}+2q^{-2k}sb_{(2,1,1)}+q^{-10k}sb_{(1,1,1,1)}
\\
&&+2q^{4k}t^{-2k}sb_{(2)}+3t^{-2k}sb_{(1,1)}+q^{2k}t^{-4k}
\end{eqnarray*}

Torus link $T(3,3k)$ with partitions $(2),(1),(1)$

\begin{equation*}
\begin{array}{ccccc}
k=1 & \beta =-3 & -1 & 1 & 3 \\
g=1/2 & 2 & -2 & -10 & 10 \\
3/2 & 0 & 0 & -6 & 6 \\
5/2 & 0 & 0 & -1 & 1%
\end{array}%
\end{equation*}

\begin{equation*}
\begin{array}{cccccc}
k=2 & \beta =-5 & -3 & -1 & 1 & 3 \\
g=1/2 & 16 & -48 & 176 & -336 & 192 \\
3/2 & 12 & -68 & 452 & -1036 & 640 \\
5/2 & 2 & -38 & 494 & -1406 & 948 \\
7/2 & 0 & -10 & 286 & -1056 & 780 \\
9/2 & 0 & -1 & 91 & -467 & 377 \\
11/2 & 0 & 0 & 15 & -121 & 106 \\
13/2 & 0 & 0 & 1 & -17 & 16 \\
15/2 & 0 & 0 & 0 & -1 & 1%
\end{array}%
\end{equation*}


\begin{thebibliography}{99}
\bibitem{AKMV} M. Aganagic, A. Klemm, M. Mari\~{n}o and C.~Vafa, \emph{The
topological vertex}, Commun. Math. Phys. \textbf{254} (2005) 425-478,
arXiv:hep-th/0305132.

\bibitem{AM} A.K.~Aiston and H.R.~Morton, \emph{Idempotents of Hecke algebra
of type A}, J. Knot Theory Ramif. \textbf{7} (1998) 463-487.

\bibitem{AK} D. Auckly and S. Koshkin, \emph{Introduction to the
Gopakumar--Vafa large N duality}, Geo. Topol. Monogr. \textbf{8} (2006)
195-456, arXiv:math.GT/0701568.

\bibitem{BB} A.~Beliakova and C.~Blanchet, \emph{Skein construction of
idempotents in Birman-Murakami-Wenzl algebras}, Math. Ann. \textbf{321}
(2001) 347-373.

\bibitem{BW} J.S.~Birman and H.~Wenzl, \emph{Braids,link polynomials and a
new algebra}, Trans. Amer. Math. Soc. \textbf{313} (1989) 249-273.

\bibitem{BR} P.~Borhade and P.~Ramadevi, \emph{SO(N) reformulated link
invariants from topological strings}, Nucl. Phys. B \textbf{727} (2005)
471-498, arXiv:hep-th/0505008.

\bibitem{BFM} V.~Bouchard, B.~Florea and M.~Mari\~{n}o, \emph{Topological
open string amplitudes on orientifolds}, J. High Energy Phys. \textbf{02 }%
(2005) 002, arXiv:hep-th/0411227.

\bibitem{Bra} R.~Brauer, \emph{On algebras which are connected with the
semisimple continuous groups}, Ann. Math. \textbf{63} (1937) 854-872.

\bibitem{BD} T.~Brocker and T.T~Dieck, \emph{Representation Theory of
Compact Lie Groups}, Graduate Texts in Mathematics 98, Springer-Verlag, 1999.

\bibitem{Che1} L.~Chen, \emph{Chern-Simon theory of knot invariants}, Ph.D
thesis, UCLA.

\bibitem{Che2} Q. Chen, \emph{Some mathematical aspects of quantum field
theory}, Ph.D thesis, UC-Berkeley.

\bibitem{Dri} V.~Drinfeld, \emph{Quantum Groups}, Proceedings for the ICM
Berkeley (1986) 798-820.

\bibitem{HOMFLY} P.~Freyd, D.~Yetter, J.~Hoste, W.B.R.~Lichorish, K.~Millet
and A.~Ocneanu, \emph{A new polynomial invariant of knots and links}, Bull.
Amer. Math. Soc. \textbf{12} (1985) 239-246.

\bibitem{FH} W.~Fulton and J.~Harris, \emph{Representation Theory}, Graduate
Texts in Mathematics 129, Springer-Verlag, 2005.

\bibitem{GK} E.~Getzler and M.M.~Kapranov, \emph{Modular operads},
Compositio. Math. \textbf{110 }(1998) 65-125.

\bibitem{GV} R. Gopakuma and C. Vafa, \emph{On the gauge theory/geometry
correspondence}, Adv. Theor. Math. Phys. \textbf{3 }(1999) 1415-1443,
arXiv:hep-th/9811131.

\bibitem{Gyo} A.~Gyoja, \emph{A q-analogue of Young symmetrisers}, Osaka J.
Math. \textbf{23} (1996) 841-852.

\bibitem{J1} V.F.R. Jones, \emph{A Polynomial invariant for knots via Von
Neumann algebras}, Bull. of Amer. Math. Soc. \textbf{12} (1985) 103-111.

\bibitem{J2} V.F.R.~Jones, \emph{Hecke algebras representations of braid
groups and link polynomials}, Ann. Math. \textbf{126} (1987) 335-388.

\bibitem{Kan} T. Kanenobu, \emph{The first four terms of the Kauffman's link
polynomial}, Kyungpook Math J. \textbf{46} (2006) 509-525.

\bibitem{KaM} T. Kanenobu and Y. Miyazawa, \emph{The second and third terms
of the HOMFLY polynomial of a link}, Kobe J. Math \textbf{16} (1999) 147-159.

\bibitem{Kas} C.~Kassel, \emph{Quantum Group}, Graduate Texts in Mathematics
155, Springer-Verlag, 1995.

\bibitem{Kau} L.H.~Kauffman, \emph{An invariant of regular isotopy}, Trans.
Amer. Math. Soc. \textbf{318} (1990) 417-471.

\bibitem{Kon} M. Kontsevich, \emph{Intersection theory on the moduli space
of curves and the matrix Airy function}, Comm. Math. Phys. \textbf{147}
(1992) 1-23.

\bibitem{LM} J.M.F.~Labastida and M.~Mari\~{n}o, \emph{A new point of view
in the theory of knot and link invariants}, J. Knot Theory Ramif. \textbf{11}
(2002) 173-197, arXiv:math.QA/0104180.

\bibitem{LMV} J.M.F.~Labastida, M.~Mari\~{n}o and C.~Vafa, \emph{Knots,
links and branes at large N}, J. High Energy Phys. \textbf{11 (}2000) 007,
arXiv:hep-th/0010102.

\bibitem{LLLZ} J. Li, C.-C. Liu, K. Liu and J. Zhou, \emph{A mathematical
theory of the topological vertex}, arXiv:math.AG/0408426.

\bibitem{LiM} W.B.R.~Lichorish and K.C.~Millett, \emph{A polynomial
invariant of oriented links}, Topology \textbf{26} (1987) 107-141.

\bibitem{LZ} X.S.~Lin and H.~Zheng, \emph{On the Hecke algebra and the
colored HOMFLY polynomial}, Trans. Amer. Math. Soc. \textbf{362 }(2010)
1-18, arXiv:math.QA/0601267.

\bibitem{LLZ1} C.-C. Liu, K. Liu and J. Zhou, \emph{A proof of a conjecture
of Marino-Vafa on Hodge integrals}, J. Diff. Geom. \textbf{65} (2003)
289-340, arXiv:math.AG/0306434.

\bibitem{LLZ2} C.-C. Liu, K. Liu and J. Zhou, \emph{A formula of
two-partition Hodge integrals}, J. Amer. Math. Soc. \textbf{20} (2007)
149-184, arXiv:math.AG/0310272.

\bibitem{LP} K.~Liu and P. Peng, \emph{Proof of the
Labastida-Marino-Ooguri-Vafa Conjecture}, arXiv:math-ph/0704.1526.

\bibitem{Mac} I.G.~MacDonald, \emph{Symmetric Functions and Hall Polynomials}%
, 2nd edition, Clarendon Press, 1995.

\bibitem{Mar} M. Mari\~{n}o, \emph{String theory and the Kauffman polynomial}%
, arXiv:hep-th/0904.1088.

\bibitem{Mur} J. Murakami, \emph{The Kauffman polynomial of links and
representation theory}, Osaka J. Math. \textbf{24} (1987) 745--758.

\bibitem{OV} H.~Ooguri and C.~Vafa, \emph{Knot invariants and topological
strings}, Nucl. Phys. B \textbf{577 }(2000) 419-438, arXiv:hep-th/9912123.

\bibitem{Pan} R.~Pandharipande, \emph{Three questions in Gromov-Witten theory%
}, arXiv:math.AG/0302077.

\bibitem{PBR} Chandrima Paul, Pravina Borhade and P. Ramadevi, \emph{%
Composite invariants and unoriented topological string amplitudes},
arXiv:hep-th/1003.5282.

\bibitem{Pen} P.~Peng, \emph{A simple proof of Gopakumar-Vafa conjecture for
local toric Calabi-Yau manifolds}, Comm. Math. Phys. \textbf{276} (2007)
551--569, arXiv:math.AG/0410540.

\bibitem{Ram1} A.~Ram, \emph{A Frobenius formula for the character of the
Hecke algebras}, Invent. Math. \textbf{106} (1991) 461-488.

\bibitem{Ram2} A.~Ram, \emph{Characters of Brauer's centralizer algebras},
Pacific J. Math. \textbf{169} (1995) 173-200.

\bibitem{Ram3} A.~Ram, \emph{A "Second orthogonality relation" for
characters of Brauer algebras}, European J. Combin. \textbf{18} (1997)
685-706.

\bibitem{Res} N. Y. Reshetikhin, \emph{Quantized universal enveloping
algebras, the Yang-Baxter equation and invariants of links I and II},
preprint, E-4-87 E-17-87, LOMI, 1987.

\bibitem{RT1} N.~Reshetikhin and V.G.~Turaev, \emph{Invariants of
3-manifolds via link polynomials and quantum groups}, Invent. Math. \textbf{%
103} (1991) 547-597.

\bibitem{RT2} N.Y.~Reshetikhin and V.G.~Turaev, \emph{Ribbon graphs and
their invariants derived from quantum groups}, Comm. Math. Phys, \textbf{127}
(1990) 1-26.

\bibitem{RJ} M. Rosso and V. Jones, \emph{On the invariants of torus knots
derived from quantum groups}, J. Knot Theory Ramif, \textbf{2} (1993) 97-112.

\bibitem{SV} S.~Sinha and C.~Vafa, \emph{SO and Sp Chern-Simons at large N},
arXiv:hep-th/0012136.

\bibitem{Ste} S. Stevan, \emph{Chern-Simons invariants of torus knots and
links}, arXiv:hep-th/1003.2861.

\bibitem{tHo} G. 't Hooft, \emph{A planar diagram theory for strong
interactions}, Nucl. Phys. B\textbf{\ 72} (1974) 461-473.

\bibitem{Tur} V.G.~Turaev, \emph{The Yang-Baxter equation and invariants of
links}, Invent. Math. \textbf{92} (1988) 527-553.

\bibitem{Wen1} H.~Wenzl, \emph{On the structure of Brauer's centralizer
algebra}, Ann. Math. \textbf{128} (1988) 173-193.

\bibitem{Wen2} H.~Wenzl, \emph{Quantum groups and subfacors of type B, C and
D}, Comm. Math. Phys. \textbf{133} (1990) 383-432.

\bibitem{Wey} H.~Weyl, \emph{The Classical Groups}, Princeton University
Press, Princeton, 1946.

\bibitem{Wit} E. Witten, \emph{Quantum field theory and the Jones polynomial}%
, Comm. Math. Phys. \textbf{121} (1989) 351-399.
\end{thebibliography}
\end{document}